\newcommand{\rad}{\mathbf{rad}}
\newcommand{\Hom}{\mathrm{Hom}}
\newcommand{\End}{\mathrm{End}}
\newcommand{\inc}{\underline{\mathbf{inc}}}
\newcommand{\Ker}{\mathbf{Ker} }
\newcommand{\Img}{\mathbf{Im} }
\newcommand{\Rnk}{\mathbf{rk}}
\newcommand{\tr}{\mathbf{tr}}
\newcommand{\Dyn}{\mathbf{Dyn}}
\newcommand{\Z}{\mathbb{Z}}
\newcommand{\A}{\mathbb{A}}
\newcommand{\D}{\mathbb{D}}
\newcommand{\E}{\mathbb{E}}
\newcommand{\K}{\mathbb{K}}
\newcommand{\CRnk}{\mathbf{cork}} %Corango
\newcommand{\Id}{\mathbf{Id}}
\newcommand{\Quad}{\mathbf{UQuad}}
\newcommand{\quiv}{\mathbf{Quiv}}
\newcommand{\Adj}{\mathrm{Adj}}
\newcommand{\diag}{\mathrm{diag}}
\newcommand{\Star}{\mathbb{S}}
\newcommand{\bas}{\mathbf{e}}
\newcommand{\sou}{\mathbf{s}}
\newcommand{\tar}{\mathbf{t}}
\newcommand{\coxN}{\mathbf{C}}    %numero de Coxeter
\newcommand{\va}{\lambda}
\newcommand{\Part}{\mathcal{P}}
\newcommand{\ct}{\mathbf{ct}}
\newcommand{\lcm}{\mathrm{lcm}}
\newcommand{\bulito}{\mathmiddlescript{\bullet}}
\newcommand{\oper}[1]{\circledast_{#1}}
\newcommand{\wt}[1]{\widetilde{#1}}
\newcommand{\wc}[1]{\widecheck{#1}}
\newcommand{\pseudo}{\doteq}
\newcommand{\dd}{\mathbbm{d}}
\newcommand\mathmiddlescript[1]{\vcenter{\hbox{$\scriptstyle #1$}}}
\newtheorem{theoremMain}{Theorem}
\theoremstyle{definition}
\newtheorem{algorithm}{Algorithm}
\begin{document}

%%%%%%%  parameters to be filled in by copy-editor  %%%%%%%%%%

\setcounter{page}{17}
\publyear{24}
\papernumber{2171}
\volume{191}
\issue{1}

\finalVersionForARXIV
%\finalVersionForIOS

%%%%%%%%%%%%%%%%%%%%%%%%%%%%%%%%%%%%%%

\title{A Strong Gram Classification of  Non-negative Unit Forms of\\  Dynkin Type $\A_{r}$}

\author{Jes\'us Arturo~Jim\'enez Gonz\'alez\thanks{Address of correspondence: Instituto de Matem\'aticas, Mexico City, Mexico. \newline \newline
          \vspace*{-6mm}{\scriptsize{Received April 2022; \ revised February 2024.}}}
\\
 Instituto de Matem\'aticas\\
  UNAM, Mexico\\
jejim@im.unam.mx
}

\maketitle

\hbox{}\hspace{8cm} \emph{To the memory of Prof. Daniel Simson}

\runninghead{J.A. Jim\'enez Gonz\'alez}{A Strong Gram Classification of  Non-negative Unit Forms of Dynkin Type $\A_{r}$}

\begin{abstract}
An integral quadratic form $q:\Z^n \to \Z$ is usually identified with a bilinear form $\wc{\mathbbm{b}}_q: \Z^n \times \Z^n \to \Z$ satisfying $q(x)=\wc{\mathbbm{b}}_q(x,x)$ for any vector $x$ in $\Z^n$, and such that its Gram matrix with respect to the canonical basis of $\Z^n$ is upper triangular. Two integral quadratic forms are called strongly (resp. weakly) Gram congruent if their corresponding upper triangular bilinear forms (resp. their symmetrizations) are equivalent. If $q$ is unitary, such upper triangular bilinear form is unimodular, and one considers the associated Coxeter transformation and its characteristic polynomial, the so-called Coxeter polynomial of $q$ with this identification. Two strongly Gram congruent quadratic unit forms are weakly Gram congruent and have the same Coxeter polynomial.

Here we show that the converse of this statement holds for the connected non-negative case of Dynkin type $\A_{r}$ ($r \geq 1$) and arbitrary corank, and use this characterization to complete a combinatorial classification of such quadratic forms started in~[Fundamenta Informaticae~\textbf{184}(1):49--82, 2021] and~[Fundamenta Informaticae~\textbf{185}(3):221--246, 2022].
\end{abstract}

\begin{keywords}
Integral quadratic form,  Gram congruence,  Dynkin type,  Coxeter polynomial,  edge-bipartite graph, quiver, incidence matrix,  signed line graph.
2020 MSC:  15A63, 15A21, 15B36, 05C22, 05C50, 05C76, 05B20.\\
\end{keywords}

%------------------------------------------------------------------
%------------------------------------------------------------------
%------------------------------------------------------------------
\section*{Introduction} \label{S0}

%------------------
%------------------
%\subsection{Preliminary remarks}

An integral quadratic form $q(x_1,\ldots,x_n)=\sum_{1\leq i \leq j \leq n}q_{ij}x_ix_j$ is an integer homogeneous polynomial ($q_{ij} \in \Z$) of degree two on $n \geq 1$ integer variables $x_1,\ldots,x_n$, more generally considered as a function $q:\Z^n \to \Z$ whose associated map
\[
\mathbbm{b}_q:\Z^n \times \Z^n \to \Z,
\]
given for (column) vectors $x,y \in \Z^n$ by $\mathbbm{b}_q(x,y)=q(x+y)-q(x)-q(y)$, is a bilinear form, usually called \textbf{polarization} of $q$. The form $q$ is said to be positive (resp. non-negative) if $q(x)>0$ (resp. $q(x) \geq 0$) for any non-zero vector $x \in \Z^n$, that is, whenever the polarization $\mathbbm{b}_q$ is a positive (semi-) definite form, since $q(x)=\frac{1}{2}\mathbbm{b}_q(x,x)$ for any $x$ in $\Z^n$. Recall that two integral bilinear forms $\mathbbm{b}$ and $\mathbbm{b}'$ are called \textbf{equivalent} if there is a $\Z$-invertible matrix $B$ such that $\mathbbm{b}'(x,y)=\mathbbm{b}(Bx,By)$ for any $x,y \in \Z^n$.

\medskip
Integral quadratic forms appear frequently, sometimes implicitly, in Lie theory, in the representation theory of groups, algebras, posets and bocses, in cluster theory, and in the spectral graph theory of signed graphs, to mention some examples. Their usefulness, in representation theory alone, which is our main motivation, has prompted extensive original research for some decades now. For instance:
\begin{itemize}
\itemsep=-.9pt
 \item In the early stages of the representation theory of associative algebras of finite dimension, after the work of Gabriel~\cite{pG72}: Bernstein, Gelfand and Ponomarev~\cite{BGP72}, Ovsienko~\cite{saO79} (see also~\cite{DDGOZP95} and~\cite{cmR}), Dlab and Ringel~\cite{DR76,DR79}, Ringel~\cite{cmR},  Bongartz~\cite{kB83,kB85frt}, de la Pe\~na~\cite{JAP90,JAP90a}, Br{\"u}stle, de la Pe\~na, Skowro\'nski~\cite{BPS11}.

 \item Usually in a graphical context, considering arithmetical properties and classification problems of quadratic forms: Barot~\cite{B99,B01}, Barot and de la Pe\~na~\cite{BP99,BPnonNeg,BP06} von H{\"o}hne~\cite{vH88,vH95,vH96}, Dean and de la Pe\~na~\cite{DP96}, Happel~\cite{H}, Dr{\"a}xler and de la Pe\~na~\cite{DP99,DP00}.

 \item Within Lenzing's Coxeter formalism of bilinear forms~\cite{Le96}: Lenzing and Reiten~\cite{LeRe06}, Mr\'oz~\cite{aM16a,aM16b}, Mr\'oz and de la Pe\~na~\cite{MP14,MP16}.

 \item In a graphical context, considering morsifications, Weyl and isotropy groups, certain mesh geometries of orbits of roots, and classification problems: Simson~\cite{dS11a,dS13a,dS13b,dS13c,dS13,dS16a,dS16b,dS18,dS19,dS20,dS21a,dS21b}, Kosakowska~\cite{jK12}, and Simson and collaborators: Bocian and Felisiak~\cite{BFS13,BFS14}, G\k{a}siorek and Zaj\k{a}c~\cite{GSZ14,GSZ16}, Kasjan~\cite{KS15a,KS15b}, Makuracki and Zyglarski~\cite{MS19,MSZ17}, Zaj\k{a}c~\cite{SZ12,SZ17}.

 \item Within the context of quasi-Cartan matrices, defined by Barot, Geiss and Zelevinsky in~\cite{BGZ06} for the study of cluster algebras: Simson~\cite{dS20}, Makuracki and Mr\'oz~\cite{MM19,MM21} and Perez, Abarca and Rivera~\cite{PAR18}.
\end{itemize}

Let us fix some of the notation and terminology used in the paper. We denote by $\mathbb{M}_n(\Z)$ the set of $n \times n$ matrices with integer coefficients. The identity matrix of size $n$ is denoted by $\Id_n$, or simply by $\Id$ for adequate size. Recall that $M \in \mathbb{M}_n(\Z)$ is $\Z$-invertible if and only if $\det(M)=\pm 1$. The transpose of a matrix $M$ is denoted by $M^{\tr}$, and if $M$ is $\Z$-invertible then $M^{-\tr}:=(M^{-1})^\tr$. Here all matrices have integer coefficients, and as usual, we identify a $m \times n$ matrix $M$ with the linear transformation $M:\Z^n \to \Z^m$ given by $x \mapsto Mx$. We denote by $\Img(M)$ and $\Ker(M)$ the column space of $M$ and the null right space of $M$, respectively. We say that the matrix $K$ is a \textbf{kernel matrix} of $M$ if its columns consists of a basis of $\Ker(M)$. The column vector with $n$ entries, all of them equal to $1$, is denoted by $\mathbbm{1}_n$, or simply by $\mathbbm{1}$ for appropriate size. For matrices $M_t$ of size $m \times n_t$ for $t=1,\ldots,r$ the $m \times n$ matrix with columns those of $M_1,\ldots,M_r$, in that order, is denoted by $[M_1,M_2,\ldots ,M_{r-1},M_r]$, where $n=\sum_{t=1}^rn_t$. For arbitrary matrices $N_1,\ldots,N_r$, take
\[
N_1 \oplus N_2 \oplus \ldots \oplus N_r:=\begin{pmatrix}N_1&0& \cdots &0\\0&N_2&\cdots&0\\ \vdots & \vdots & \ddots & \vdots \\ 0&0&\cdots&N_r \end{pmatrix}.
\]
The canonical basis of $\Z^n$ is denoted by $\bas_1,\ldots,\bas_n$. For a permutation $\rho$ of the set $\{1,\ldots,n\}$, the matrix $P(\rho)$ satisfying $P(\rho)\bas_t=\bas_{\rho(t)}$ for $t=1,\ldots,n$ is called \textbf{permutation matrix} of $\rho$.

\medskip
The matrix with integer coefficients $G_{\mathbbm{b}}=[\mathbbm{b}(\bas_i,\bas_j)]_{i,j=1}^n$ is called \textbf{Gram matrix} of an integral bilinear form $\mathbbm{b}:\Z^n\times \Z^n \to \Z$, with respect to the canonical basis of $\Z^n$. By \textbf{symmetric Gram matrix} $G_q$ of an integral quadratic form $q$ we mean the Gram matrix $G_q=G_{\mathbbm{b}_q}$ of the polarization $\mathbbm{b}_q$ of $q$ (notice that $G_q$ is symmetric and has integer coefficients). We also consider the (unique) bilinear form $\wc{\mathbbm{b}}_q:\Z^n \times \Z^n \to \Z$ such that $q(x)=\wc{\mathbbm{b}}_q(x,x)$ for all $x \in \Z^n$, and such that its Gram matrix with respect to the canonical basis of $\Z^n$, denoted by $\wc{G}_q$, is upper triangular. Note that $G_q=\wc{G}_q+\wc{G}_q^{\tr}$. We say that $q$ is a \textbf{unit form} (or a unitary integral quadratic form) if $q(\bas_i)=1$ for $i=1,\ldots,n$. In that case, $\wc{G}_q$ is a $\Z$-invertible matrix (since it is upper triangular with all diagonal coefficients equal to $1$), and is called the \textbf{standard morsification} of $q$ in Simson's terminology~\cite{dS11a,dS13}. Two unit forms $q$ and $q'$ are called \textbf{weakly Gram congruent} if their polarizations are equivalent, that is, if there is a $\Z$-invertible matrix $B$ such that $G_{q'}=B^{\tr}G_qB$ (or equivalently, $q'=qB$). Similarly, $q$ and $q'$ are called \textbf{strongly Gram congruent} if their standard morsifications are equivalent, that is,  if there is a $\Z$-invertible matrix $B$ such that $\wc{G}_{q'}=B^{\tr}\wc{G}_qB$. Then we write $q \sim^B q'$ and $q \approx^B q'$ for the weak and strong cases respectively (or simply $q \sim q'$ and $q \approx q'$).  The weak Gram classification of connected non-negative unit forms is due to Barot and de la Pe\~na~\cite{BP99} and Simson~\cite{dS16a} (see also~\cite{SZ17}), in terms of a unique pair $(\Delta,c)$ where $\Delta$ is a Dynkin diagram $\A_r$, $\D_s$, $\E_t$ (for $r\geq 1$, $s \geq 4$ or $t \in \{6,7,8\}$) and $c \geq 0$ is the \textbf{corank} of the quadratic form $q$, that is, the rank of the kernel of the symmetric Gram matrix $G_q$. Here we deal with the strong Gram classification problem of connected non-negative unit forms of Dynkin type $\A_{r}$ for $r \geq 1$.

\medskip
For a unit form $q$, consider the matrix with integer coefficients $\Phi_q=-\wc{G}_q^{\tr}\wc{G}_q^{-1}$, called \textbf{Coxeter matrix} of $q$. The characteristic polynomial of $\Phi_{q}$, denoted by $\varphi_q(\va)$, is called \textbf{Coxeter polynomial} of $q$.  It is well known, and can be easily shown, that if $q \approx q'$, then $q \sim q'$ and $\varphi_q=\varphi_{q'}$ (cf.~\cite[Lemma~4.6]{jaJ2020a}). The validity of the converse of this claim in this, or in partial or equivalent forms, is a question raised by Simson for at least a decade (see~\cite{dS13}). Here we give a formulation in terms of non-negative \emph{unit} forms (see also~\cite[Problem~A]{jaJ2020b}), which correspond to non-negative \emph{loop-less} bigraphs as in~\cite{dS13}, or to non-negative \emph{symmetric} quasi-Cartan matrices as in~\cite{BGZ06}. Generalizations of these problems may be found, for instance, in terms of Cox-regular bigraphs in~\cite[Problem~1.3]{aM16b}, or of symmetrizable quasi-Cartan matrices in~\cite{dS20} (see also~\cite{dS13,dS16a,dS16b}).

\medskip
\noindent \textbf{Problem~1 (Simson's Coxeter Spectral Characterization Question).}\\
If two connected non-negative unit forms are weakly Gram congruent and have the same Coxeter polynomial, are they strongly Gram congruent?

\vspace{2mm}

\noindent \textbf{Problem~2 (Simson's Strong Gram Classification Problems).}\
\begin{itemize}
\itemsep=0.8pt
 \item[i)] Classify all non-negative unit forms up to the strong Gram congruence. This includes (and up to Problem~1, is exhausted by) the determination of all Coxeter polynomials per weak Gram congruence class.
 \item[ii)] Given two non-negative unit forms $q$ and $q'$ that are strongly Gram congruent, find a $\Z$-invertible matrix $B$ such that $q \approx^B q'$.
\end{itemize}

Solutions to these problems for special classes of quadratic forms are known. For instance, the positive case was completed recently by Simson~\cite{dS18,dS21a,dS21b} (see further examples and related problems in~\cite{aM16b}). An alternative proof for positive unit forms of Dynkin type $\A_{r}$ was given by the author in~\cite{jaJ2020a}.  Here we present affirmative solutions to Problems~1 and~2 for connected non-negative unit forms of Dynkin type $\A_{r}$ (for $r \geq 1$) and arbitrary corank, with the combinatorial methods presented in~\cite{jaJ2018}, and developed to this end in~\cite{jaJ2020a,jaJ2020b}.

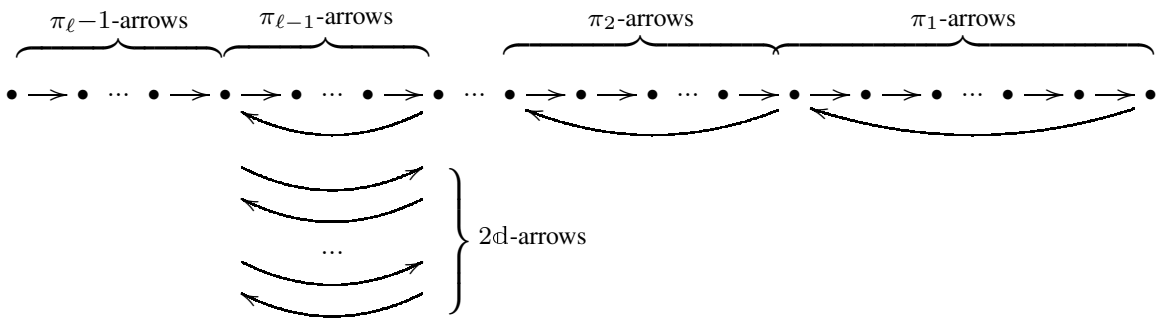
\begin{figure}[!b]
\scalebox{1.2}{
\xy 0;/r.20pc/:
(-68, 10)="Fr1" *{};
( 68, 10)="Fr2" *{};
( 68,-30)="Fr3" *{};
(-68,-30)="Fr4" *{};
%-----
(-37, 0)="T1" *{\xymatrix@C=1pc{\bulito \ar[r] & \bulito \ar@{}[r]|-{\cdots} & \bulito \ar[r] & \bulito \ar[r] & \bulito \ar@{}[r]|-{\cdots} & \bulito \ar[r] & \bulito \ar@{}[r]|-{\cdots} \ar@<.5ex>@/^10pt/[lll]
\ar@{<-}@<4ex>@/^10pt/[lll]
\ar@<6ex>@/^10pt/[lll]
\ar@{}@<10ex>[lll]|-{\cdots}
\ar@{<-}@<10ex>@/^10pt/[lll]
\ar@<12ex>@/^10pt/[lll]
 & \bulito \ar[r]  & \bulito \ar[r] & \bulito \ar@{}[r]|-{\cdots} & \bulito \ar[r] & \bulito \ar[r] \ar@<.5ex>@/^10pt/[llll] & \bulito  \ar[r] & \bulito \ar@{}[r]|-{\cdots} & \bulito \ar[r] & \bulito \ar[r] & \bulito \ar@<.5ex>@/^10pt/[lllll] }};
%-----
( -16, -19)="B" *{\left. \begin{matrix}\\ \\ \\ \end{matrix} \right\} };
( -6, -19)="Bet" *{{}^{2\dd\text{-arrows}}};
(-60, 8)="F1" *{\overbrace{\qquad \qquad \qquad}^{\pi_{\ell}-1\text{-arrows}}};
(-33, 8)="F2" *{\overbrace{\qquad \qquad \qquad}^{\pi_{\ell-1}\text{-arrows}}};
( 8,   8)="F3" *{\overbrace{\qquad \qquad \qquad \qquad}^{\pi_2\text{-arrows}}};
( 50, 8)="F4" *{\overbrace{\qquad \qquad \qquad \qquad \qquad \quad}^{\pi_1\text{-arrows}}};
\endxy
}
    \caption{For a partition $\pi=(\pi_1,\pi_2,\ldots,\pi_{\ell})$ of $m \geq 2$, and a non-negative integer $\dd$, depiction of the standard $(\pi,\dd)$-extension quiver $\vec{\A}^{\dd}[\pi]$ with $m$ vertices, cycle type $\pi$ and degree of degeneracy $\dd$ (see~Definition~\ref{DStan}). It has $n=m+\ell+2(\dd-1)$ arrows: $m-1$ arrows in the upper row (going from left to right, numbered from $1$ to $m-1$), $\ell-1$ arrows in the second row (going from right to left, numbered from $m$ to $m+\ell-2$), and $2\dd$ alternating parallel arrows, numbered from $m+\ell-1$ to $m+\ell+2(\dd-1)$. The associated quadratic form $q_{\vec{\A}^{\dd}[\pi]}$ is called standard $(\pi,\dd)$-extension of $q_{\A_{m-1}}$, see~(\ref{EQQ0}).}
    \label{Fig}
\end{figure}

Recall that two connected non-negative unit forms are weakly Gram congruent if and only if they have the same Dynkin type and the same corank (see~\cite{BP99} or~\cite{SZ17}), or equivalently, the same Dynkin type and the same number of variables. Simson determined in~\cite{dS16a} representatives of the weak Gram congruence classes of connected non-negative unit forms, the so-called \emph{canonical extensions}, showing that any such form having Dynkin type $\Delta$ is weakly Gram congruent to a unique canonical extension of the unit form $q_{\Delta}$, see~\cite[Theorems~1.12 and~2.12]{dS16a} and~\cite[Theorem~1.8]{SZ17} (recall that the quadratic form $q_{\Delta}$ associated to a graph $\Delta$ is determined by $G_{q_{\Delta}}:=2\Id - \Adj(\Delta)$, where $\Adj(\Delta)$ denotes the symmetric adjacency matrix of $\Delta$). A family of representatives for the corresponding strong Gram classes of Dynkin type $\A_r$ was proposed in~\cite[Definition~5.2]{jaJ2020b} (see Figure~\ref{Fig} and~\ref{SS:stan} below). There they are called \emph{standard extensions} of the unit form $q_{\A_r}$, and here we confirm that they are representatives of strong Gram congruence in the following classification theorem.

\begin{theoremMain}\label{MT0}
Every connected non-negative unit form of Dynkin type $\A_r$ for $r \geq 1$ is strongly Gram congruent to a unique standard extension of the unit form $q_{\A_r}$.
\end{theoremMain}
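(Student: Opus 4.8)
The plan is to prove the theorem in two complementary halves: a \emph{uniqueness} part (distinct standard extensions are not strongly Gram congruent) and an \emph{existence} part (every connected non-negative unit form of Dynkin type $\A_r$ is strongly Gram congruent to one of them). For uniqueness, I would rely on the fact, recalled in the excerpt, that strong Gram congruence preserves the Coxeter polynomial, the Dynkin type, and the corank. So it suffices to compute the Coxeter polynomial $\varphi_{q_{\vec{\A}^{\dd}[\pi]}}(\va)$ of each standard $(\pi,\dd)$-extension explicitly and check that the triple (Dynkin type $\A_r$, corank $c$, Coxeter polynomial) determines the pair $(\pi,\dd)$ uniquely. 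Concretely, $r$ and $c$ fix $m$ and $\dd$ via $m-1=r$ and the corank formula for the extension quiver, and then the Coxeter polynomial — a product of cyclotomic-type factors governed by the parts $\pi_1,\ldots,\pi_\ell$ — must be shown to recover the partition $\pi$ of $m$. This bookkeeping should follow from the combinatorial machinery developed in~\cite{jaJ2020a,jaJ2020b}, in particular the description of $\varphi_q$ for quivers whose underlying graph is obtained from $\A_m$ by adjoining chords and parallel arrows.

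For existence, I would pass through the graphical / quiver-theoretic incarnation of the problem. A connected non-negative unit form $q$ of Dynkin type $\A_r$ corresponds to a connected loop-less bigraph, and by the structure theory of such forms (incidence/signed-line-graph presentations from~\cite{jaJ2018,jaJ2020a,jaJ2020b}) one can represent $q$ as $q_{\vec{\A}}$ for a suitable quiver $\vec{\A}$ whose underlying graph is a "cycle with chords" type configuration carrying $r$ independent cycles in its homology, plus possibly multiple edges accounting for the corank. The heart of the argument is then a normal-form reduction: using strong Gram congruences realized by explicit $\Z$-invertible matrices — the "flations" and "inflations" of the combinatorial calculus, together with vertex permutations and the admissible quiver operations (reversing arrows, sliding chords, iterated point/arrow insertions) — I would transform $\vec{\A}$ step by step into a standard extension quiver $\vec{\A}^{\dd}[\pi]$. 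Each such move must be checked to be a strong (not merely weak) Gram congruence, i.e. to preserve the upper-triangular Gram matrix up to $B^{\tr}\wc{G}_qB$; this is where the earlier papers' lemmas on how these operations act on $\wc{G}_q$ are invoked.

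The main obstacle I expect is the existence direction's normal-form reduction — specifically, controlling the interaction between the chords (which produce the partition $\pi$ of $m$) and the parallel-arrow blocks (which produce the degeneracy $\dd$), and ensuring that the reduction can always be steered to the \emph{standard} shape of Figure~\ref{Fig} rather than merely to \emph{some} extension. One delicate point is that two a priori different chord/parallel-edge configurations can be strongly congruent, so the reduction must be canonical enough to land on the unique representative; handling this likely requires an invariant (the Coxeter polynomial together with finer combinatorial data such as the multiset of "cyclic lengths" of chords) to pin down $\pi$ along the way, and then a matching argument showing the reduction respects it. A secondary technical burden is the base and small-corank cases ($r$ small, $\dd \in \{0,1\}$, or $\pi$ a single part), where the general quiver operations degenerate and must be treated by hand, as well as verifying that the proposed standard extensions are themselves non-negative of the claimed Dynkin type and corank, so that the target of the reduction is legitimate.
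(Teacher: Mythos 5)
Your uniqueness half is sound and agrees in substance with the paper: strong Gram congruence preserves the weak class and the Coxeter polynomial, the standard $(\pi,\dd)$-extension has $\varphi(\va)=(\va-1)^{c-1}\prod_{t}(\va^{\pi_t}-1)$, and from $(r,c,\varphi)$ one recovers first $\pi$ and then $\dd$ via $c=\ell(\pi)+2\dd-1$; this is exactly the content of Corollary~\ref{STDunique}.

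The genuine gap is in the existence half. A step-by-step reduction to the standard shape by flations/inflations, each verified to be a \emph{strong} congruence, is precisely the strategy of~\cite{jaJ2020a}, and it is only carried out there for coranks $0$ and $1$; the present paper explicitly abandons that route for arbitrary corank. The difficulty you defer (``steering the reduction canonically'', guided by the Coxeter polynomial and some finer chord data) is not a secondary technicality: the assertion that any connected non-negative unit form of type $\A_r$ can be moved by strong congruences onto the \emph{standard} representative with the same invariants is the theorem itself, and your plan supplies no mechanism for it. In particular, nothing in your sketch addresses the phenomenon that first appears in corank $\geq 2$: the skew-symmetric form $\wc{\mathbbm{r}}_q$ that $\wc{\mathbbm{b}}_q$ induces on the radical, whose rank $2\dd_q$ is what separates the parallel-arrow blocks from the chords, and whose purity for standard quivers (Lemma~\ref{(L):stan}) is what the paper's corrections hinge on. The paper's actual argument is quite different: it fixes the target $\vec{Q}=\vec{\A}^{\dd_q}[\ct(q)]$ via the cycle type and Coxeter-Laplacian (Remark~\ref{RStan}), builds from the structural walks a possibly non-invertible matrix $B$ with $I(Q)B=I(\vec{Q})$ and $I(\vec{Q}^{\dagger})B^{\tr}=I(Q^{\dagger})$ (Proposition~\ref{Ls}), corrects it to a $\Z$-invertible matrix by a kernel-matrix adjustment resting on that purity (Proposition~\ref{(P):inv}), and finally composes with a pseudo-endomorphism obtained from the decomposition $Z=Y-Y^{\tr}+Y^{\tr}WY$ of skew-symmetric matrices (Lemma~\ref{(I):skewsym}, Proposition~\ref{(P):strong}) to reach $B^{\tr}\wc{G}_QB=\wc{G}_{\vec{Q}}$ (Theorem~\ref{MTT}), after which Theorem~\ref{MT0} follows. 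Without an analogue of these correction steps, or a new proof that the flation calculus terminates at the standard quiver in every corank, your existence argument does not go through.
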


Alternatively, the following formulation of Theorem~\ref{MT0} answers directly Problem~1.

\begin{theoremMain}\label{MT1}
Let $q$ and $\wt{q}$ be weakly Gram congruent connected non-negative unit forms of Dynkin type $\A_r$ for $r \geq 1$. Then $q$ and $\wt{q}$ are strongly Gram congruent if and only if they have the same Coxeter polynomial.
\end{theoremMain}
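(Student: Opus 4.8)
\medskip
The ``only if'' implication is already recorded in the excerpt: $q\approx\wt q$ forces $q\sim\wt q$ and $\varphi_q=\varphi_{\wt q}$. So the content is the converse, which I would deduce from Theorem~\ref{MT0}; the plan for the latter has three parts, the first two carrying all the work.

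\medskip
\emph{Step 1: reduction to quivers and a combinatorial normal form.} Given a connected non-negative unit form $q$ of Dynkin type $\A_r$ and corank $c$, I would first realize $q$, up to strong Gram congruence, as the quadratic form $q_Q$ attached via its incidence matrix (in the signed line graph formalism of~\cite{jaJ2018,jaJ2020a,jaJ2020b}) to a connected loop-less quiver $Q$ whose underlying graph is a Dynkin tree of type $\A_r$ together with $c$ extra edges. The point of passing to quivers is that strong Gram congruences of $q_Q$ are realized by combinatorial operations on $Q$ (arrow reversals along cuts, inflations, vertex permutations). Using these, I would bring $Q$ to the shape of a standard $(\pi,\dd)$-extension quiver $\vec{\A}^{\dd}[\pi]$ as in Figure~\ref{Fig}: relative to a chosen spanning $\A_r$-subtree, the $c$ extra edges organize into $\ell-1$ chords and $\dd$ pairs of parallel arrows; sorting the chord lengths into a partition $\pi=(\pi_1,\dots,\pi_\ell)$ of $r+1$ and collecting the doubled arrows produces the normal form, and one checks the reduction terminates. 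This gives the existence half of Theorem~\ref{MT0}: $q\approx q_{\vec{\A}^{\dd}[\pi]}$ for a partition $\pi$ of $r+1$ and an integer $\dd\geq 0$ with $\ell(\pi)+2\dd-1=c$.

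\medskip
\emph{Step 2: the Coxeter polynomial separates standard extensions.} This is the crux. I would compute $\varphi_{q_{\vec{\A}^{\dd}[\pi]}}(\lambda)$ explicitly, by a block analysis of the Coxeter matrix $\Phi_{q_{\vec{\A}^{\dd}[\pi]}}$ along the three groups of arrows of Figure~\ref{Fig} (the two rows contributing companion-type blocks and the $2\dd$ alternating arrows contributing rank-one corrections concentrated on the eigenvalues $\pm 1$), expecting a cyclotomic factorization of the form
\[
\varphi_{q_{\vec{\A}^{\dd}[\pi]}}(\lambda)\;=\;(\lambda-1)^{2\ell(\pi)-2}\,(\lambda+1)^{2\dd}\prod_{i=1}^{\ell(\pi)}\frac{\lambda^{\pi_i}-1}{\lambda-1},
\]
which must in particular return $1+\lambda+\dots+\lambda^{r}$ when $\pi$ is the one-part partition and $\dd=0$, since then $q_{\vec{\A}^{\dd}[\pi]}=q_{\A_r}$. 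From such a formula I would deduce injectivity of $(\pi,\dd)\mapsto\varphi_{q_{\vec{\A}^{\dd}[\pi]}}$ among pairs with the same $r$ and the same corank $c=\ell(\pi)+2\dd-1$ by reading off cyclotomic multiplicities: the multiplicity of the linear factor $\lambda-1$ equals $2\ell(\pi)-2$, which recovers $\ell(\pi)$ and then $\dd=(c+1-\ell(\pi))/2$; dividing out $(\lambda-1)^{2\ell(\pi)-2}(\lambda+1)^{2\dd}$ leaves $\prod_i\frac{\lambda^{\pi_i}-1}{\lambda-1}$, from whose factorization into cyclotomic polynomials M\"obius inversion over the divisibility poset recovers the part-multiplicities $a_d=\#\{\,i:\pi_i=d\,\}$, i.e.\ the multiset $\{\pi_1,\dots,\pi_{\ell(\pi)}\}$.

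\medskip
\emph{Step 3: conclusion.} Uniqueness in Theorem~\ref{MT0} is then immediate: if $q_{\vec{\A}^{\dd}[\pi]}\approx q_{\vec{\A}^{\dd'}[\pi']}$, then $\varphi_{q_{\vec{\A}^{\dd}[\pi]}}=\varphi_{q_{\vec{\A}^{\dd'}[\pi']}}$ by the ``only if'' implication, so $(\pi,\dd)=(\pi',\dd')$ by Step~2. For Theorem~\ref{MT1}: if $q$ and $\wt q$ are weakly Gram congruent of type $\A_r$ with $\varphi_q=\varphi_{\wt q}$, write $q\approx q_{\vec{\A}^{\dd}[\pi]}$ and $\wt q\approx q_{\vec{\A}^{\dd'}[\pi']}$ by Step~1; since strong congruence preserves the Coxeter polynomial, $\varphi_{q_{\vec{\A}^{\dd}[\pi]}}=\varphi_q=\varphi_{\wt q}=\varphi_{q_{\vec{\A}^{\dd'}[\pi']}}$, whence $(\pi,\dd)=(\pi',\dd')$ by Step~2 and therefore $q\approx q_{\vec{\A}^{\dd}[\pi]}=q_{\vec{\A}^{\dd'}[\pi']}\approx\wt q$.

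\medskip
\emph{Main obstacle.} The decisive and most delicate point is Step~2: producing the closed factorization of $\varphi_{q_{\vec{\A}^{\dd}[\pi]}}$ and making sure the ``cycle type'' contribution $\prod_i\frac{\lambda^{\pi_i}-1}{\lambda-1}$ and the ``degeneracy'' contribution on $\{\pm1\}$ cannot conspire so that two distinct admissible pairs $(\pi,\dd)$ share a Coxeter polynomial. A secondary difficulty is Step~1: checking that the moves used in the normalization are genuine strong (not merely weak) Gram congruences and that the reduction always terminates.
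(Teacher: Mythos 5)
The ``only if'' half and your overall architecture (reduce both forms to standard $(\pi,\dd)$-extensions by strong congruences, then separate the standard extensions by their Coxeter polynomials) agree in outline with the paper, but your Step~1 is a genuine gap, and it is where essentially all of the paper's work lies --- it is not the ``secondary difficulty'' you call it. You propose to bring $Q$ to $\vec{\A}^{\dd}[\pi]$ by flation-type moves (arrow reversals, inflations, vertex permutations) asserted to realize \emph{strong} Gram congruences, with termination ``to be checked''. That is precisely the approach of~\cite{jaJ2020a}, which the paper states succeeded only for coranks $0$ and $1$; for arbitrary corank no such reduction is available, and the paper explicitly abandons it. Instead, Theorem~\ref{MT0} (equivalently the implication $(i)\Rightarrow(iii)$ of Theorem~\ref{MTT}) is proved by a three-step matrix construction: from the structural walks of $Q$ one builds a possibly singular matrix $B$ with $I(Q)B=I(\vec{Q})$ and $I(\vec{Q}^{\dagger})B^{\tr}=I(Q^{\dagger})$ (Proposition~\ref{Ls}); one corrects $B$ by a summand $M=KY\vec{K}^{\dagger\tr}$ supported on the radical so that $B+M$ becomes $\Z$-invertible (Proposition~\ref{(P):inv}, using that $B$ already restricts to an isomorphism of reduced radicals); and one multiplies by a pseudo-endomorphism $C$ obtained from a decomposition $Z=Y-Y^{\tr}+Y^{\tr}WY$ of skew-symmetric matrices (Lemma~\ref{(I):skewsym}, Proposition~\ref{(P):strong}), the decisive input being that the standard morsification of a standard extension has pure restriction to its radical (Lemma~\ref{(L):stan}). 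Nothing in your proposal substitutes for this construction, so the converse implication of Theorem~\ref{MT1} remains unproved.

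A secondary but concrete error: your guessed factorization of $\varphi_{q_{\vec{\A}^{\dd}[\pi]}}$ is wrong. The correct formula (Theorem~\ref{MT2}$(i)$, already available from~\cite{jaJ2020b}) is $\varphi(\va)=(\va-1)^{c-1}\prod_{t=1}^{\ell}(\va^{\pi_t}-1)$ with $c=\ell(\pi)+2\dd-1$; the degeneracy contributes $(\va-1)^{2\dd}$, not $(\va+1)^{2\dd}$, and the multiplicity of the factor $\va-1$ is $2\ell(\pi)+2\dd-2$, not $2\ell(\pi)-2$. Your recovery procedure as stated would therefore mis-read $\ell(\pi)$; it is repairable since $c$ is constant on the weak class (so $\ell(\pi)$ equals the multiplicity of $\va-1$ minus $c-1$, and then $\dd$ and the part-multiplicities follow as you indicate), but as written Step~2 does not separate the standard extensions. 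Note finally that the paper's own proof of Theorem~\ref{MT1} does not compare Coxeter polynomials of standard forms at all: it observes that $\varphi_q=\varphi_{\wt{q}}$ forces the Coxeter--Laplacians $\Lambda_Q$ and $\Lambda_{\wt{Q}}$, which are permutation matrices, to be co-spectral, hence conjugate (Lemma~\ref{permutation}), and after a vertex permutation applies Theorem~\ref{MTT} directly.
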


Using Theorem~\ref{MT1}, and the results on Coxeter polynomials of~\cite{jaJ2020b} (based on~\cite[Theorem~A]{jaJ2020a}), we complete the following descriptive theorem of non-negative unit forms of Dynkin type $\A_{r}$ up to the strong Gram congruence (cf.~\cite[Problem~B]{jaJ2020b}). We need the following general notions. Let $q:\Z^n \to \Z$ be a unit form, with symmetric Gram matrix $G_q$ and Coxeter matrix $\Phi_q$.
\begin{itemize}
 \item A \textbf{partition} $\pi$ of an integer $m \geq 1$, denoted by $\pi \vdash m$, is a non-increasing sequence of positive integers $\pi=(\pi_1,\ldots,\pi_{\ell(\pi)})$ such that $m=\sum_{t=1}^{\ell(\pi)}\pi_t$. The integer $\ell(\pi)$ is called the \textbf{length} of $\pi$, and the set of partitions of $m$ is denoted by $\Part(m)$.

 \item The kernel of $G_q$ in $\Z^n$ is called \textbf{radical} of $q$, denoted by $\rad(q)$, and its elements are called \textbf{radical vectors} of $q$. The rank of $\rad(q)$ is called \textbf{corank} of $q$, and is denoted by $\CRnk(q)$.  The \textbf{reduced corank} $\CRnk_{re}(q)$ of $q$ is the rank of the kernel of the restriction $\wc{\mathbbm{r}}_q$ of $\wc{\mathbbm{b}}_q$ to its radical (see details in~\ref{reRad} below).

 \item The \textbf{Coxeter number} $\coxN(q)$ of $q$ is the minimal $t >0$ such that $\Phi_q^t=\Id$, if such $t$ exists, and $\coxN(q):=\infty$ otherwise. The \textbf{reduced Coxeter number} $\coxN_{re}(q)$ is the minimal $t>0$ such that $\Phi_q^t-\Id$ is a nilpotent matrix (such $t$ always exists if $q$ is non-negative, cf.~\cite{mS05}).  The \textbf{degree of degeneracy} of $q$ is the integer $\dd_q \geq 0$ such that $2\dd_q=\CRnk(q)-\CRnk_{re}(q)$, which is a non-negative even number since it is the rank of a skew-symmetric form (namely, the restriction $\wc{\mathbbm{r}}_q$ of $\wc{\mathbbm{b}}_q$ to the radical $\rad(q)$ of $q$, see~(\ref{EQQdd}) and further details in~\ref{reRad} below).
\end{itemize}

\begin{theoremMain}\label{MT2}
Let $\Quad_{\A}^c(n)$ denote the set of connected non-negative unit forms in $n \geq 1$ variables having corank $c \geq 0$ and Dynkin type $\A_{n-c}$. Taking $m:=n-c+1\geq 2$, there is a function $\ct:\Quad_{\A}^c(n) \to \Part(m)$, called the \textbf{cycle type} of a unit form in $\Quad_{\A}^c(n)$, which induces a bijection,
\[
\xymatrix{[\Quad_{\A}^c(n)/\approx] \ar[rr]^-{\ct} && \Part^c_1(m)},
\]
where $\Part^c_1(m)$ denotes the set of partitions of $m$ whose length is conditioned by $c$ as follows,
\[
\Part^c_1(m):=\{\pi \vdash m \mid 0 \leq c-(\ell(\pi)-1) \equiv 0 \mod 2\}.
\]
Moreover, if $\ct(q)=(\pi_1,\ldots,\pi_{\ell})$ for $q \in \Quad_{\A}^c(n)$, then the following hold.
\begin{itemize}
 \item[i)] The Coxeter polynomial of $q$ is $\varphi_q(\va)=(\va-1)^{c-1}\prod_{t=1}^{\ell}(\va^{\pi_t}-1)$.
 \item[ii)] The Coxeter number of $q$ is $\coxN(q)=\pi_1$ if $\ell=1$, and $\coxN(q)=\infty$ otherwise.
 \item[iii)] The reduced Coxeter number of $q$ is $\coxN_{re}(q)=\lcm(\pi)$.
 \item[iv)] The geometric multiplicity of $1$ as eigenvalue of $\Phi_q$ is $\CRnk(q)=c$.
 \item[v)] The reduced corank $\CRnk_{re}(q)$ of $q$ is $\ell-1$, and the algebraic multiplicity of $1$ as eigenvalue of $\Phi_q$ is $\CRnk(q)+\CRnk_{re}(q)$.
\end{itemize}
\end{theoremMain}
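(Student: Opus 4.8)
The plan is to deduce Theorem~\ref{MT2} from Theorem~\ref{MT0} together with the Coxeter polynomial computations for standard extensions carried out in~\cite{jaJ2020b} (which rest on~\cite[Theorem~A]{jaJ2020a}) and a little linear algebra. First I would fix $c\ge 0$ and $n\ge 1$, set $m=n-c+1\ge 2$, and determine which standard extensions lie in $\Quad_{\A}^c(n)$. A standard $(\pi,\dd)$-extension $q_{\vec{\A}^{\dd}[\pi]}$ with $\pi\vdash m'$, $\ell=\ell(\pi)$ and $\dd\ge 0$ is connected and non-negative by construction, has Dynkin type $\A_{m'-1}$, and by the arrow count in Figure~\ref{Fig} depends on $n'=m'+\ell-2+2\dd$ variables, hence has corank $n'-(m'-1)=\ell-1+2\dd$. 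Thus $q_{\vec{\A}^{\dd}[\pi]}\in\Quad_{\A}^c(n)$ exactly when $m'=m$ and $\ell-1+2\dd=c$ (the equality $n'=n$ being then automatic), and for a fixed $\pi\vdash m$ such a $\dd$ exists and is unique precisely when $0\le c-(\ell(\pi)-1)\equiv 0\pmod 2$, i.e. when $\pi\in\Part^c_1(m)$. Hence $q_{\vec{\A}^{\dd}[\pi]}\mapsto\pi$ is a bijection from the set of standard extensions in $\Quad_{\A}^c(n)$ onto $\Part^c_1(m)$.

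Next I would invoke Theorem~\ref{MT0}: every class in $[\Quad_{\A}^c(n)/\approx]$ contains a unique standard extension, so ``pass to the standard representative'' is a bijection from $[\Quad_{\A}^c(n)/\approx]$ onto the set of standard extensions in $\Quad_{\A}^c(n)$; composing it with the previous bijection gives the asserted bijection. The function $\ct$ is then the map sending $q$ to the partition $\pi$ of its standard representative $q_{\vec{\A}^{\dd}[\pi]}$, with $\dd=\dd_q=(c-\ell(\pi)+1)/2$; one checks from the construction in Figure~\ref{Fig} that this agrees with the combinatorial cycle type of~\cite{jaJ2020b}, and that $\ct$ and $\dd_q$ are strong Gram invariants, since if $q\approx^B q'$ then $B$ carries $\rad(q)$ onto $\rad(q')$ and transports $\wc{\mathbbm{b}}_q$ to $\wc{\mathbbm{b}}_{q'}$, so that $\CRnk$, $\CRnk_{re}$ and hence $\dd_q$ are preserved.

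For items (i)--(v) I would observe that $\varphi_q$, $\coxN(q)$, $\coxN_{re}(q)$, $\CRnk(q)$ and $\CRnk_{re}(q)$ are all invariant under strong Gram congruence (for the first three because $\Phi_{q'}=B^{-1}\Phi_q B$ when $q\approx^B q'$), so it suffices to compute them for $q=q_{\vec{\A}^{\dd}[\pi]}$. Item~(i), $\varphi_q(\va)=(\va-1)^{c-1}\prod_{t=1}^{\ell}(\va^{\pi_t}-1)$, is exactly the Coxeter polynomial of the standard extension computed in~\cite{jaJ2020b}. For (iv), the identity $\Phi_q-\Id=-(\wc{G}_q^{\tr}+\wc{G}_q)\wc{G}_q^{-1}=-G_q\wc{G}_q^{-1}$ gives $\Ker(\Phi_q-\Id)=\wc{G}_q\,\rad(q)$, so the geometric multiplicity of $1$ equals $\dim\rad(q)=\CRnk(q)=c$. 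For (v), reading off the multiplicity of $\va=1$ in the polynomial of (i) yields algebraic multiplicity $(c-1)+\ell=\CRnk(q)+(\ell-1)$, while $\CRnk_{re}(q)=\ell-1$ follows from the radical of $q_{\vec{\A}^{\dd}[\pi]}$: the restriction $\wc{\mathbbm{r}}_q$ of $\wc{\mathbbm{b}}_q$ to $\rad(q)$ is skew-symmetric (as $\wc{\mathbbm{b}}_q(u,v)+\wc{\mathbbm{b}}_q(v,u)=\mathbbm{b}_q(u,v)=0$ there) of rank $2\dd$ coming from the $2\dd$ alternating parallel arrows, so its kernel has rank $\CRnk(q)-2\dd=\ell-1$. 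Finally (ii) and (iii) are elementary consequences of (i): since every root of $\varphi_q$ is a root of unity, $\Phi_q^t-\Id$ is nilpotent iff $\zeta^t=1$ for every eigenvalue $\zeta$, i.e. iff each $\pi_t$ divides $t$, whence $\coxN_{re}(q)=\lcm(\pi)$; and $\Phi_q^t=\Id$ for some $t>0$ iff $\Phi_q$ is diagonalizable, which by (i) and (iv) (the only possibly non-semisimple eigenvalue is $1$, with algebraic multiplicity $(c-1)+\ell$ and geometric multiplicity $c$) happens precisely when $\ell=1$, in which case all eigenvalues are $\pi_1$-th roots of unity and a primitive one occurs, so $\coxN(q)=\pi_1$, whereas for $\ell>1$ the eigenvalue $1$ has a nontrivial Jordan block and $\coxN(q)=\infty$.

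I expect the only genuinely delicate ingredient --- beyond Theorem~\ref{MT0} itself, which is assumed --- to be the computation of $\CRnk_{re}$ of the standard extension, equivalently the fact that the restriction of $\wc{\mathbbm{b}}_q$ to $\rad(q_{\vec{\A}^{\dd}[\pi]})$ has rank exactly $2\dd_q$. This requires an explicit radical basis adapted to the quiver $\vec{\A}^{\dd}[\pi]$ and a verification that precisely the $2\dd$ alternating parallel arrows contribute to the skew-symmetric part; everything else is parameter bookkeeping against the arrow count of Figure~\ref{Fig} and reading off root multiplicities from the Coxeter polynomial.
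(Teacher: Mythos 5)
Your proposal is correct and, in substance, rests on the same pillars as the paper's proof (the strong classification theorem plus the Coxeter-polynomial formula for standard extensions from~\cite{jaJ2020b}), but it is organized along a genuinely different route. The paper defines $\ct$ combinatorially for an arbitrary form via a quiver realization and the permutation $\xi^-_Q$, quotes well-definedness, surjectivity and items $(i)$--$(iii)$ from~\cite[Definition~4.2, Theorem~6.3, Corollary~6.4]{jaJ2020b}, obtains injectivity from Theorem~\ref{MT1}, gets $(iv)$ from Lemma~\ref{(P):radre}, and gets $(v)$ from Corollary~\ref{CreC}, whose proof (via the explicit radical basis of Lemma~\ref{(P):basis}) computes $\CRnk_{re}(q)=\ell-1$ for \emph{every} $q$ directly. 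You instead invoke Theorem~\ref{MT0}, enumerate which standard extensions lie in $\Quad^c_{\A}(n)$ by the arrow count (this parameter bookkeeping is correct, and uniqueness of the representative is Corollary~\ref{STDunique}), and then transport $(i)$--$(v)$ by strong Gram invariance, deriving $(ii)$, $(iii)$ and the algebraic multiplicity in $(v)$ from $(i)$ by elementary Jordan-form arguments rather than citing them. What your route buys is self-containedness of $(ii)$--$(iii)$ and the fact that the radical analysis is only needed for the standard quivers; what the paper's route buys is the cycle type as an honest combinatorial invariant of an arbitrary quiver (needed anyway for Theorem~\ref{MTT} and the algorithms) and the identification of your $\ct$ with it, which in your write-up is the step ``one checks from Figure~\ref{Fig}'' and is exactly Remark~\ref{Rxi}.

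Three small repairs, none fatal. First, the ingredient you flag as delicate --- that the restriction $\wc{\mathbbm{r}}_q$ of $\wc{\mathbbm{b}}_q$ to $\rad(q_{\vec{\A}^{\dd}[\pi]})$ has rank exactly $2\dd$ --- is precisely what Lemma~\ref{(P):basis}, Remark~\ref{RKro}$(b)$ and Lemma~\ref{(L):stan} establish (a basis of the reduced radical together with $2\dd$ kernel vectors supported on the alternating parallel arrows whose skew Gram block is $\Z$-invertible), cf. Corollary~\ref{(C):dd}; so your plan closes once that computation is written out. Second, when $q\approx^B \wt{q}$ the correct transformation is $\Phi_{\wt{q}}=B^{\tr}\Phi_q B^{-\tr}$ rather than $B^{-1}\Phi_q B$; only similarity is used, so the argument is unaffected. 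Third, in $(ii)$ the parenthetical claim that $1$ is the only possibly non-semisimple eigenvalue is not needed in general: for $\ell>1$ you only use that the eigenvalue $1$ has algebraic multiplicity $c-1+\ell>c$ equal to its geometric multiplicity bound from $(iv)$, and for $\ell=1$ the eigenvalues different from $1$ are simple roots of $\varphi_q$, hence automatically semisimple --- stated this way the deduction of $\coxN(q)$ is airtight.
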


A solution for the positive and principal cases (coranks zero and one) of Theorem~\ref{MT1}, within our combinatorial framework, was shown in~\cite[Theorems~A and~B]{jaJ2020a} by means of certain (admissible) flations at the level of loop-less quivers. Here we pursue an alternative strategy which we sketch as follows (see details in Section~\ref{S5}).  Let $q$ be a unit form in $\Quad_{\A}^c(n)$. We fix a unique standard extension $\vec{q}$ such that $q \sim \vec{q}$ and $\varphi_{q}=\varphi_{\vec{q}}$ (see Definition~\ref{DEF} and Remark~\ref{RStan} below), and proceed in three main steps, for which, for a $n \times n$ matrix $B$, we consider the matrix $B^*:=\wc{G}_{\vec{q}}^{-1}B^{\tr}\wc{G}_q$.
\begin{itemize}
 \item[] \textit{Step~1.}\label{StepOne} Find a matrix $B$ such that, among other technical conditions (see Definition~\ref{DEF}$(a)$), satisfies
 \[
 \vec{q}=qB, \quad \text{and} \quad q=\vec{q}B^*.
 \]
 \end{itemize}

It can be shown that for corank zero or one, the matrix $B$ of Step~1 determines itself a strong Gram congruence $q \approx^B \vec{q}$ (cf. Lemma~\ref{Lgeneral}). In general, analyzing how far $B$ is from being a strong Gram congruence, we arrive to the following correction steps.

\begin{itemize}
 \item[] \textit{Step~2.} Find a matrix $M$ such that the matrix $B+M$ is $\Z$-invertible, and satisfies the same conditions of Step~1:
 \[
 \vec{q}=q(B+M), \quad \text{and} \quad q=\vec{q}(B+M)^*.
 \]
 \item[] \textit{Step~3.} Find a matrix $C$ such that $[(B+M)C]^*[(B+M)C]=\Id$.
\end{itemize}
Clearly, the condition $N^*N=\Id$ for a square matrix $N$ implies that $N$ is $\Z$-invertible and $q \approx^N \vec{q}$. The goal of this paper is to constructively exhibit the existence of matrices $B$, $M$ and $C$. A solution to Step~1 is given in Section~\ref{S2}, using the specific structure of standard extensions, see Proposition\ref{Ls}. Steps~2 and~3 are simple correction algorithms that work in a much general context (see Remark~\ref{Rgeneral}). However, their justification is long and technical at some points, and requires a special condition that can be easily verified for standard extension (see Lemma~\ref{(L):stan} and Corollary~\ref{CinDet}). Steps~2 and~3 are shown in Sections~\ref{S3} and~\ref{S4}, see Propositions~\ref{(P):inv} and~\ref{(P):strong} and their implementable formulations as Algorithms~\ref{A2} and~\ref{A3}, respectively. The proofs to our main theorems, comments on generalizations and suggestions for an implementation are collected in Section~\ref{S5}.

%------------------------------------------------------------------
%------------------------------------------------------------------
%------------------------------------------------------------------
\section{A combinatorial realization} \label{S2}

In this section we summarize the needed combinatorial notions and results introduced in~\cite{jaJ2018} and developed in~\cite{jaJ2020a,jaJ2020b} for the Coxeter analysis of non-negative unit forms of Dynkin type $\A_{r}$, namely, structural walks, incidence vectors, the inverse of a quiver and standard quivers.

%------------------
%------------------
\subsection{Basic notions}

Let $Q=(Q_0,Q_1,\sou,\tar)$ be a \textbf{quiver}, that is, $Q_0$ and $Q_1$ are finite sets (whose elements are called \textbf{vertices} and \textbf{arrows} of $Q$, respectively), and $\sou,\tar:Q_1\to Q_0$ are functions assigning to each arrow $i$ of $Q$ a \textbf{source} vertex $\sou(i) \in Q_0$ and a \textbf{target} vertex $\tar(i) \in Q_0$. Throughout the paper we assume that both sets $Q_0$ and $Q_1$ are totally ordered (see Remark~\ref{RPerm} below). Moreover, taking $m=|Q_0|$ and $n=|Q_1|$ we usually identify the set of vertices $Q_0$ with the set $\{1,\ldots,m\}$, and the set of arrows $Q_1$ with the set $\{1,\ldots,n\}$. The $m \times n$ \textbf{incidence matrix} $I(Q)$ of $Q$ has as $i$-th column the difference $\bas_{\sou(i)}-\bas_{\tar(i)} \in \Z^m$. The \textbf{symmetric Gram matrix} of $Q$ is defined by $G_Q=I(Q)^{\tr}I(Q)$, and its \textbf{upper triangular Gram matrix} $\wc{G}_Q$ is the unique upper triangular matrix satisfying $\wc{G}_Q+\wc{G}_Q^{\tr}=G_Q$. Notice that if $Q$ has no loop, then $\wc{G}_Q$ is $\Z$-invertible, and that if $Q$ is connected then $\Ker I(Q)^{\tr}$ is generated by the single vector $\mathbbm{1}_m$ (see~\cite[Theorem~3.3$(ii)$]{jaJ2020b}).

\medskip
The \textbf{integral quadratic form $q_Q$ associated to a quiver} $Q$ with $m$ vertices and $n$ arrows, as defined in~\cite{jaJ2018}, is given by
\begin{equation}\label{EQQ0}
q_Q(x)=\frac{1}{2}||I(Q)x||^2, \qquad \text{for $x \in \Z^n$,}
\end{equation}
where $||y||^2:=y^{\tr}y$ denotes the squared Euclidean norm of a vector $y \in \Z^m$. By definition, we have $G_{q_Q}=G_Q$, which implies that $\wc{G}_{q_Q}=\wc{G}_Q$.

\begin{remark}\label{RPerm}
Let $Q$ be a connected loop-less quiver. For any permutation $\rho$ of the set of vertices $Q_0$, denote by $\rho \cdot Q=(Q_0,Q_1,\wt{\sou},\wt{\tar})$ the quiver obtained from $Q$ by permuting its vertices via $\rho$ (that is, $\wt{\sou}(i)=\rho(\sou(i))$ and $\wt{\tar}(i)=\rho(\tar(i))$ for any arrow $i \in Q_1$). Denote by $Q^{op}=(Q_0,Q_1,\sou^{op},\tar^{op})$ the quiver obtained from $Q$ by reversing the orientation of all of its arrows (that is, $\sou^{op}=\tar$ and $\tar^{op}=\sou$).
\begin{itemize}
 \item[i)] Then $I(\rho \cdot Q)=P(\rho)I(Q)$ and $I(Q^{op})=(-1)I(Q)$.
 \item[ii)] We have $q_Q=q_{\rho \cdot Q}=q_{\rho \cdot Q^{op}}$, and consequently $\wc{G}_Q=\wc{G}_{\rho \cdot Q}=\wc{Q}_{\rho \cdot Q^{op}}$.
\end{itemize}
\end{remark}
\begin{proof}
Observe that if $i \in Q_1$, then
\begin{eqnarray*}
I(\rho \cdot Q)\bas_i &=& \bas_{\wt{\sou}(i)}-\bas_{\wt{\tar}(i)}=\bas_{\rho \sou(i)}-\bas_{\rho \tar(i)} \\
&=& P(\rho)(\bas_{\sou(i)}-\bas_{\tar((i))})=P(\rho)I(Q)\bas_i,
\end{eqnarray*}
and
\[
I(Q^{op})\bas_i=\bas_{\sou^{op}(i)}-\bas_{\tar^{op}(i)}=(-1)(\bas_{\sou(i)}-\bas_{\tar(i)})=(-1)I(Q).
\]
Then $(i)$ holds. The claim on quadratic forms in $(ii)$ follows from $(i)$ and~(\ref{EQQ0}), since
\[
q_Q(x)=\frac{1}{2}||I(Q)x||^2=\frac{1}{2}||P(\rho)I(Q)x||^2=\frac{1}{2}||(-1)P(\rho)I(Q)x||^2.
\]
The claim on standard morsifications is clear from the first part of $(ii)$.
\end{proof}

In the following result we interpret the weak and strong Gram congruences within this combinatorial realization (compare with~\cite[Lemma~6.1]{jaJ2020b}).

\eject

\begin{theorem}\label{MAINlem}
If $Q$ is a connected loop-less quiver with $m$ vertices and $n$ arrows, then $q_Q$ is a connected non-negative unit form of Dynkin type $\A_{m-1}$ and corank $c=n-m+1$. Moreover, if $q$ is a connected non-negative unit form in $n \geq 1$ variables, with Dynkin type $\A_{m-1}$ for $m \geq 2$ and corank $c \geq 0$, then there is a connected loop-less quiver $Q$ with $n$ arrows and $m=n-c+1$ vertices such that $q=q_Q$. Assuming that $\wt{Q}$ is also a connected loop-less quiver:
\begin{itemize}
 \item[i)] We have $q_Q=q_{\wt{Q}}$ if and only if there is a permutation matrix $P$ and a sign $\epsilon \in \{\pm 1\}$ such that $I(\wt{Q})=\epsilon PI(Q)$.
 \item[ii)] We have $q_Q \sim q_{\wt{Q}}$ if and only if $(|Q_0|,|Q_1|)=(|\wt{Q}_0|,|\wt{Q}_1|)$, which holds if and only if there is a permutation matrix $P$, a sign $\epsilon \in \{\pm 1\}$ and a $\Z$-invertible matrix $B$ satisfying $I(\wt{Q})=\epsilon PI(Q)B$.
 \item[iii)] We have $q_Q \approx q_{\wt{Q}}$ if and only if there is a permutation matrix $P$, a sign $\epsilon \in \{\pm 1\}$ and a ($\Z$-invertible) matrix $B$ satisfying $I(\wt{Q})=\epsilon PI(Q)B$ and $\wc{G}_{\wt{Q}}=B^{\tr}\wc{G}_QB$.
\end{itemize}
\end{theorem}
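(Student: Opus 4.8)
The statement collects three facts: first, that $q_Q$ is always a connected non-negative unit form of Dynkin type $\A_{m-1}$ with the stated corank, and conversely that every such form arises this way; and then the three equivalences $(i)$–$(iii)$ translating the equalities $q_Q=q_{\wt Q}$, $q_Q\sim q_{\wt Q}$, $q_Q\approx q_{\wt Q}$ into incidence-matrix statements. I would organize the proof around the single algebraic observation that $G_{q_Q}=I(Q)^{\tr}I(Q)$, so that $q_Q$ is essentially the quadratic form attached to the lattice spanned by the columns of $I(Q)$ inside the Euclidean lattice $\Z^m$; everything then reduces to linear algebra over $\Z$ together with the combinatorics of connected graphs.

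\medskip
\noindent\emph{The first paragraph of the statement.} That $q_Q$ is a unit form is immediate: the $i$-th column of $I(Q)$ is $\bas_{\sou(i)}-\bas_{\tar(i)}$, which has squared norm $2$ when $Q$ has no loop, so $q_Q(\bas_i)=1$. Non-negativity is clear from $q_Q(x)=\tfrac12\|I(Q)x\|^2\geq 0$. For the corank, $\rad(q_Q)=\Ker G_{q_Q}=\Ker\big(I(Q)^{\tr}I(Q)\big)=\Ker I(Q)$ (the last equality because $I(Q)^{\tr}I(Q)x=0$ forces $\|I(Q)x\|^2=0$), and since $Q$ is connected the rank of $I(Q)$ is $m-1$ (its column space is the root lattice of $\A_{m-1}$, equivalently $\Ker I(Q)^{\tr}=\langle\mathbbm 1_m\rangle$ as recalled just before the theorem), so $\CRnk(q_Q)=n-(m-1)=c$. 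Connectedness of $q_Q$ as a unit form (nonzero off-diagonal entries cannot disconnect the index set) follows since $Q$ is connected, and the Dynkin type being $\A_{m-1}$ follows because a non-negative connected unit form of corank $c$ in $n$ variables has Dynkin type $\Delta$ with $n-c$ vertices (here $m-1$), and the only Dynkin graph that can occur from a quiver realization built on $\A$-type root data is $\A_{m-1}$ — here I would simply cite the Barot–de la Pe\~na / Simson weak classification recalled in the introduction together with~\cite{jaJ2020b}. For the converse direction, one invokes exactly the construction from~\cite{jaJ2018,jaJ2020b}: a connected non-negative unit form of Dynkin type $\A_{m-1}$ and corank $c$ is realized by a connected loop-less quiver on $m=n-c+1$ vertices and $n$ arrows; I would cite this rather than reconstruct it.

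\medskip
\noindent\emph{Parts $(i)$–$(iii)$.} These are the heart of the matter, and the common mechanism is: two unit forms $q_Q,q_{\wt Q}$ are weakly (resp. strongly) Gram congruent iff there is a $\Z$-invertible $B$ with $G_{q_{\wt Q}}=B^{\tr}G_{q_Q}B$ (resp. $\wc G_{\wt Q}=B^{\tr}\wc G_Q B$), by definition. For $(i)$: if $I(\wt Q)=\epsilon P I(Q)$ then $G_{\wt Q}=I(\wt Q)^{\tr}I(\wt Q)=I(Q)^{\tr}P^{\tr}PI(Q)=I(Q)^{\tr}I(Q)=G_Q$, hence $q_Q=q_{\wt Q}$; this is exactly Remark~\ref{RPerm}$(ii)$ read backwards. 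Conversely, if $q_Q=q_{\wt Q}$ then $I(Q)^{\tr}I(Q)=I(\wt Q)^{\tr}I(\wt Q)$, i.e. the two column systems are two ordered spanning sets of isometric root lattices of type $\A_{m-1}$ with the same Gram matrix; a standard rigidity fact for $\A_{m-1}$ (the automorphism group of the ambient arrangement of the $\bas_i-\bas_j$ inside $\Z^m$ is generated by permutations of coordinates and global sign) yields $I(\wt Q)=\epsilon P I(Q)$ — I would phrase this by noting that each column of $I(\wt Q)$ must be a $\pm(\bas_a-\bas_b)$ vector, compatibility of all the pairwise inner products forces a single permutation and a single sign. For $(ii)$: the equivalence of $q_Q\sim q_{\wt Q}$ with $(|Q_0|,|Q_1|)=(|\wt Q_0|,|\wt Q_1|)$ is precisely the weak Gram classification (same Dynkin type $\A$ and same corank, equivalently same $m$ and $n$) applied to the realizations from part one; and given that, the matrix statement $I(\wt Q)=\epsilon P I(Q)B$ is obtained by composing a weak congruence $B_0$ (so $G_{\wt Q}=B_0^{\tr}G_Q B_0$) with part $(i)$ applied to $q_{\wt Q}$ and $q_{Q\cdot B_0}$: since $I(Q)B_0$ and $I(\wt Q)$ give the same Gram matrix one gets $I(\wt Q)=\epsilon P I(Q)B_0$, and conversely any such identity gives $G_{\wt Q}=B^{\tr}G_Q B$ directly. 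For $(iii)$: one direction is immediate — if $q_Q\approx^B q_{\wt Q}$ then in particular $q_Q\sim q_{\wt Q}$, so $(ii)$ furnishes a (possibly different!) $P,\epsilon,B'$ with $I(\wt Q)=\epsilon P I(Q)B'$; the subtlety is that we must produce one matrix that simultaneously realizes the incidence identity \emph{and} the strong congruence $\wc G_{\wt Q}=B^{\tr}\wc G_Q B$. The clean way is to prove that whenever $\wc G_{\wt Q}=B^{\tr}\wc G_Q B$ with $B$ $\Z$-invertible, then automatically $I(\wt Q)=\epsilon P I(Q)B$ for suitable $\epsilon,P$: indeed $\wc G+\wc G^{\tr}=G$ gives $G_{\wt Q}=B^{\tr}G_Q B$, so $I(\wt Q)$ and $I(Q)B$ have equal Gram matrices, and the $\A_{m-1}$-rigidity used in $(i)$ upgrades this to $I(\wt Q)=\epsilon P I(Q)B$. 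Conversely, the stated conjunction is by definition a strong Gram congruence together with a compatible incidence relation, so it gives $q_Q\approx q_{\wt Q}$.

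\medskip
\noindent\emph{Main obstacle.} The only genuinely non-formal ingredient is the rigidity step: from $I(\wt Q)^{\tr}I(\wt Q)=I(Q)^{\tr}I(Q)$ (or the same with a $B$ inserted) one must recover an honest \emph{signed permutation} $\epsilon P$ intertwining the two incidence matrices, i.e. one must know that the only isometries of $\Z^m$ carrying the column set of $I(Q)$ to that of $I(\wt Q)$ are signed coordinate permutations. I expect to handle this by a short graph-combinatorial argument: each column of an incidence matrix of a loop-less quiver is characterized among integer vectors by having squared norm $2$ and lying in the hyperplane $\mathbbm 1_m^{\tr}x=0$; having the same Gram matrix forces a bijection of columns preserving all inner products $\bas_{\sou(i)}-\bas_{\tar(i)}$ against each other, and connectedness then propagates a single consistent choice of vertex-relabelling and global orientation — this is essentially the content already isolated in Remark~\ref{RPerm} and in~\cite{jaJ2020b}, which I would cite for the precise statement rather than re-derive in full.
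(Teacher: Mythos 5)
Your proposal is correct, and its skeleton matches the paper's: the easy directions are direct computations with $G_{Q}=I(Q)^{\tr}I(Q)$ (exactly Remark~\ref{RPerm} and the displayed computation in the paper's proof of $(ii)$), while the two genuinely hard ingredients — the realization of every connected non-negative unit form of type $\A_{m-1}$ as some $q_Q$, and the rigidity statement that $I(Q)^{\tr}I(Q)=I(\wt{Q})^{\tr}I(\wt{Q})$ forces $I(\wt{Q})=\epsilon P I(Q)$ — are, as in the paper, delegated to the earlier papers (the paper cites~\cite[Theorem~5.5, Corollary~7.3]{jaJ2018} and~\cite[Lemma~6.1]{jaJ2020b}; your sketch of the rigidity via ``compatible inner products plus connectedness'' is not a proof, but you explicitly defer to the same sources, so this is not a gap relative to the paper's own standard). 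The one place you genuinely diverge is in deriving the necessity of $(ii)$ and $(iii)$ from $(i)$ rather than quoting~\cite[Lemma~6.1$(ii,iii)$]{jaJ2020b} wholesale: you observe that if $G_{\wt{Q}}=B^{\tr}G_QB$ then every column of $I(Q)B$ is an integer vector of squared norm $2$ with $\mathbbm{1}_m^{\tr}I(Q)B=0$, hence of the form $\bas_a-\bas_b$, so $I(Q)B$ is itself the incidence matrix of a loop-less quiver to which $(i)$ applies with the \emph{same} $B$ — this is a nice, more self-contained reduction, and it correctly handles the subtlety in $(iii)$ that one matrix must realize both identities simultaneously. Two small points to make it airtight: you should note that this intermediate quiver is connected (its incidence matrix has rank $m-1$ because $B$ is $\Z$-invertible, and rank equals $m$ minus the number of components), since the rigidity lemma is stated for connected quivers; and your justification that the Dynkin type of $q_Q$ is $\A_{m-1}$ (rather than $\D$ or $\E$) is only a pointer to the weak classification, so, like the paper, you are ultimately leaning on~\cite{jaJ2018} for that part of the main claim rather than proving it.
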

\begin{proof}
The main claim follows from~\cite[Theorem~5.5]{jaJ2018}, see also~\cite[Proposition~3.15 and Corollary~3.6]{jaJ2020a}. The existence of $P$ and $\epsilon$ in claim $(i)$ follows from~\cite[Corollary~7.3]{jaJ2018}, see~\cite[Lemma~6.1$(i)$]{jaJ2020b} and Remark~\ref{RPerm}$(i)$. The converse follows from Remark~\ref{RPerm}$(ii)$ above.

\medskip
To show $(ii)$, recall first that two non-negative connected unit forms are weakly Gram congruent if and only if they have the same Dynkin type and same corank (see the main corollary in~\cite{BP99}), or equivalently, the same number of variables and the same Dynkin type. By the main part of the theorem, $q_Q$ is a connected unit form on $|Q_1|$ variables and Dynkin type $\A_{|Q_0|-1}$. This shows that $q_Q \sim q_{\wt{Q}}$ if and only if $(|Q_0|,|Q_1|)=(|\wt{Q}_0|,|\wt{Q}_1|)$. For the second equivalence in $(ii)$, it was shown in~\cite[Lemma~6.1$(ii)$]{jaJ2020b} that if $q_Q \sim q_{\wt{Q}}$, then there is a permutation matrix $P$, a sign $\epsilon \in \{\pm 1\}$ and a $\Z$-invertible matrix $B$ satisfying $I(\wt{Q})=\epsilon PI(Q)B$ (note that the sign $\epsilon$ might be ``included'' in matrix $B$, as in~\cite[Lemma~6.1$(ii)$]{jaJ2020b}). The converse is clear, since $\epsilon^2=1$ and $P^{\tr}P=\Id$, and therefore
\[
G_{q_{\wt{Q}}}=G_{\wt{Q}}=I(\wt{Q})^{\tr}I(\wt{Q})=[\epsilon PI(Q)B]^{\tr}[\epsilon PI(Q)B]=B^{\tr}I(Q)^{\tr}I(Q)B=B^{\tr}G_{q_Q}B.
\]
Similarly, the necessity in claim $(iii)$ was shown in~\cite[Lemma~6.1$(iii)$]{jaJ2020b}, and the sufficiency is clear from definition.
\end{proof}

Based on Theorem~\ref{MAINlem}, we propose two characterizations of strong Gram congruence between quadratic forms associated to connected loop-less quivers in Theorem~\ref{MTT} below. The proof and hints for its implementation, which will take the rest of the paper to complete, depend on the following matrices (cf.~\cite[Theorem~3.3]{jaJ2020b}). If $Q$ has $m$ vertices and $n$ arrows, take
\begin{equation}\label{EQQCL}
\Phi_{Q}:=\Id_n-I(Q)^{\tr}I(Q)\wc{G}^{-1}_Q \quad \text{and} \quad \Lambda_Q:=\Id_m-I(Q)\wc{G}_Q^{-1}I(Q)^{\tr}.
\end{equation}
These are called the \textbf{Coxeter-Gram} matrix of $Q$ and the \textbf{Coxeter-Laplacian} of $Q$, respectively. Basic properties of $\Phi_Q$ and $\Lambda_Q$, and their relation with the Coxeter matrix $\Phi_{q_Q}$ of $q_Q$, are collected in the following observation.

\begin{remark}\label{LAMBDA}
Let $Q$ be a connected loop-less quiver with $n$ arrows and $m$ vertices, and with incidence matrix $I(Q)$. Then,
\begin{itemize}
 \item[a)] $\Phi_Q=\Phi_{q_Q}$.
 \item[b)] $\Lambda_Q$ is a permutation matrix.
 \item[c)] $\varphi_{q_Q}(\va)=(\va-1)^{n-m}p_{\Lambda_Q}(\va)$, where $p_{\Lambda_Q}(\va)$ denotes the characteristic polynomial of $\Lambda_Q$.
 \item[d)] $\Lambda_Q^{\tr}I(Q)=I(Q)\Phi_{Q}^{\tr}$.
 \item[e)] $\Lambda_QI(Q)=I(Q)\Phi_{Q}^{-\tr}$.
\end{itemize}
\end{remark}
\begin{proof}
Claims $(a,b,c)$ were shown in~\cite[Theorem~3.3 and Corollary~4.3]{jaJ2020b}. Claim $(d)$ is clear, since
\[
\Lambda_Q^{\tr}I(Q)=I(Q)-I(Q)[I(Q)\wc{G}_Q^{-1}]^{\tr}I(Q)=I(Q)\Phi_{Q}^{\tr}.
\]
Claim $(e)$ follows from $(b)$ and $(d)$, since
\[
\Lambda_QI(Q)=\Lambda_Q[I(Q)\Phi_{Q}^{\tr}]\Phi_{Q}^{-\tr}=\Lambda_Q\Lambda_Q^{\tr}I(Q)\Phi_Q^{-\tr}=I(Q)\Phi_Q^{-\tr}.
\]

\vspace*{-6mm}
\end{proof}

Loosely speaking, due to Remark~\ref{LAMBDA}$(b)$ and $(d,e)$, the (inverse transpose) Coxeter-Gram matrix acts on the columns of the incidence matrix $I(Q)$, and such action is recorded in the Coxeter-Laplacian $\Lambda_Q$ of $Q$.

%------------------
%------------------
\subsection{Walks and incidence vectors}

By \textbf{walk} of a quiver $Q$ we mean an alternating sequence of vertices and arrows of $Q$,
\[
w=(v_0,i_1,v_1,i_2,v_2,\ldots,v_{\ell-1},i_{\ell},v_{\ell}),
\]
starting with a vertex $\sou(w):=v_0$ called the \textbf{source} of $w$, and ending with a vertex $\tar(w):=v_{\ell}$ called the \textbf{target} of $w$, and satisfying $\{\sou(i_t),\tar(i_t)\}=\{v_{t-1},v_t\}$ for $t=1,\ldots,\ell$. The integer $\ell(w):=\ell \geq 0$ is the \textbf{length} of $w$, and if $\ell(w)=0$ (that is, if $w=(v_0)$), then $w$ is called a \textbf{trivial walk}. A walk $w$ in $Q$ of length $\ell(w)=1$ has either the shape $w=(\sou(i),i,\tar(i))$ or $w=(\tar(i),i,\sou(i))$ for an arrow $i \in Q_1$. In the first case we use the notation $w=i^{+1}$, and in the second $w=i^{-1}$. Viewing an arbitrary walk $w$ of positive length as concatenation of walks of length one, we use the notation
\[
w=i_1^{\epsilon_1}i_2^{\epsilon_2}\cdots i_{\ell}^{\epsilon_{\ell}}, \quad \text{with $\epsilon_t \in \{\pm 1\}$, for $t=1,\ldots,\ell$,}
\]
where $\tar(i_t^{\epsilon_t})=\sou(i_{t+1}^{\epsilon_{t+1}})$ for $t=1\ldots,\ell-1$. A walk $w$ as above is called \textbf{minimally descending}, if for $t=1,\ldots,\ell-1$ the difference $i_t-i_{t+1}$ is positive and it is the minimal positive difference possible, that is,
\[
i_t-i_{t+1}=\min \limits_j \{ i_t-j \mid \text{$(i_t-j)>0$ and $i_t^{\epsilon_t}j$ is a walk in $Q$} \}.
\]
A minimally descending walk $w$ is called a \textbf{(descending) structural walk}, if whenever a concatenation of the form $w'ww''$ is minimally descending, then both $w'$ and $w''$ are trivial walks. Such walks are determined by their sources (or targets), and we will use the notation $\alpha^-_Q(v)$ for the structural walk having vertex $v$ as source. Take $\xi^-_Q:Q_0 \to Q_0$ given by
\begin{equation}\label{DEFperm}
\xi^-_Q(v):=\tar(\alpha^-_Q(v)).
\end{equation}
The definitions of \textbf{minimally ascending walk} and \textbf{(ascending) structural walk} are analogous, and so are the notions of $\alpha^+_Q(v)$ and $\xi^+_Q$. It can be easily shown that $\xi_Q^-$ and $\xi^+_Q$ are inverse to each other, since
\begin{equation}\label{EQQ15}
\alpha^+_Q(\xi^-_Q(v))=\alpha^-_Q(v)^{-1},
\end{equation}
where $w^{-1}$ denotes the reverse of walk $w$ (see~\cite[Lemma~3.1]{jaJ2020b}). The mapping $\xi^-_Q$ is referred to as \textbf{permutation of vertices} of the quiver $Q$ determined by the ordering of its arrows.  The \textbf{cycle type} of $Q$ is given by the sequence
\begin{equation}\label{EQQct}
\ct(Q)=(\pi_1,\ldots,\pi_{\ell}),
\end{equation}
where $\pi_1 \geq \pi_2 \geq \ldots,\pi_{\ell}>0$ are the cardinalities of the $\xi^-_Q$-orbits on $Q_0$. Then $\ct(Q)$ is a partition of $|Q_0|=m$, and we take $\ct(q_Q):=\ct(Q)$, which is well-defined by Theorem~\ref{MAINlem}$(i)$. We stress that the cycle type $\ct(Q)$ depends on the numbering of the arrows in $Q$, see Example~\ref{EXAfour} below. For a walk $w=i_1^{\epsilon_1}i_2^{\epsilon_2}\cdots i_{\ell}^{\epsilon_{\ell}}$ of $Q$, define the \textbf{incidence vector} $\inc(w) \in \Z^n$ of $w$ as
\begin{equation}
\inc(w)=\sum_{t=1}^{\ell}\epsilon_{t} \bas_{i_t}.
\end{equation}
The following simple identity is fundamental for our analysis (cf.~\cite[Remark~3.2]{jaJ2020b}),
\begin{equation}\label{EQQ2}
I(Q)\inc(w)=\bas_{\sou(w)}-\bas_{\tar(w)}.
\end{equation}
By~(\ref{EQQ0}), it implies that $q(\inc(w)) \in \{0,1\}$ (that is, $\inc(w)$ is a \textbf{$\{0,1\}$-root} of $q_Q$) for any walk $w$ of $Q$, and the converse also holds (cf.~\cite[Lemma~6.1]{jaJ2018}).  Consequently, $\{0,1\}$-roots of $q_Q$ can be treated combinatorially via the walks of quiver $Q$. This also implies that $I(Q)\wc{G}_Q^{-1}$ is also the incidence matrix of a quiver, called the \textbf{(standard) inverse} of $Q$ and denoted by $Q^{\dagger}$. In~\cite{jaJ2020b} we take a constructive route, and derive $Q^{\dagger}$ (denoted by $Q^{-1}$ in~\cite{jaJ2020a} and~\cite{jaJ2020b}) directly from the structural walks of $Q$, as follows.

\medskip
For every arrow $i$ in $Q$, there are exactly two descending structural walks containing arrow $i$, one in the positive direction $i^{+1}$, and the other one in the opposite direction $i^{-1}$. Denote them respectively by $\alpha^-_Q(v)$ and $\alpha^-_Q(w)$ for some vertices $v$ and $w$, and define $\tar^{\dagger}(i):=\tar(\alpha^-_Q(v))$ and $\sou^{\dagger}(i):=\tar(\alpha^-_Q(w))$. In~\cite[Proposition~4.4 and Corollary~4.5]{jaJ2020a} we show that $Q^{\dagger}=(Q_0,Q_1,\sou^{\dagger},\tar^{\dagger})$ is also a connected loop-less quiver, satisfying
\begin{equation}\label{EQQ3}
I(Q^{\dagger})=I(Q)\wc{G}_Q^{-1}.
\end{equation}
This approach is useful for several reasons. Among others, the results of~\cite{jaJ2020b} depend on the following facts.

\begin{lemma}\label{La}
Let $Q$ be a connected loop-less quiver with $m \geq 2$ vertices and $n \geq 1$ arrows. Consider the structural walks $\alpha^{\pm}_Q(v)$ of $Q$ (for $v \in Q_0$), and take $a_v^{\pm}=\inc(\alpha^{\pm}_Q(v))$.
\begin{itemize}
 \item[i)] For any $v \in Q_0$ we have $-a_v^-=a_{\xi^-_Q(v)}^+$.
 \item[ii)] If $Q^{\dagger}$ is the inverse quiver of $Q$, then $I(Q^{\dagger})^{\tr}=[a_1^+,a_2^+,\cdots,a_{m-1}^+,a_m^+]$.
 \item[iii)] We have $\sum_{v=1}^ma_v^-=0=\sum_{v=1}^ma_v^+$.
\end{itemize}
\end{lemma}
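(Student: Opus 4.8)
The three claims are about the structural walks $\alpha^{\pm}_Q(v)$ and their incidence vectors, so the natural strategy is to translate each statement about incidence vectors into a statement about incidence matrices using the identity~(\ref{EQQ2}), namely $I(Q)\inc(w)=\bas_{\sou(w)}-\bas_{\tar(w)}$, and to use the fact (just recalled before~(\ref{EQQ3})) that $Q^{\dagger}$ is a genuine loop-less quiver whose incidence matrix is $I(Q)\wc{G}_Q^{-1}$. For $(i)$, I would argue purely combinatorially: by~(\ref{EQQ15}) we have $\alpha^+_Q(\xi^-_Q(v))=\alpha^-_Q(v)^{-1}$, i.e.\ the ascending structural walk at $\xi^-_Q(v)$ is literally the reverse of the descending structural walk at $v$. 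Reversing a walk negates every exponent $\epsilon_t$ while keeping the same arrows $i_t$, so $\inc$ of the reversed walk is $-\inc$ of the original; hence $a^+_{\xi^-_Q(v)}=\inc(\alpha^+_Q(\xi^-_Q(v)))=\inc(\alpha^-_Q(v)^{-1})=-\inc(\alpha^-_Q(v))=-a^-_v$, which is exactly $(i)$.

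For $(ii)$, the plan is to identify the columns of $I(Q^{\dagger})^{\tr}$ with the $a^+_v$. By construction of $Q^{\dagger}$, its $i$-th column is $\bas_{\sou^{\dagger}(i)}-\bas_{\tar^{\dagger}(i)}$ where $\sou^{\dagger}(i)$ and $\tar^{\dagger}(i)$ are the targets of the two descending structural walks through $i$ (in directions $i^{-1}$ and $i^{+1}$ respectively). I would compute the $(v,i)$-entry of $I(Q^{\dagger})^{\tr}$, which is the $i$-th entry of $\bas_{\sou^{\dagger}(i)}-\bas_{\tar^{\dagger}(i)}$ read in row $v$; this equals $+1$ if $v=\sou^{\dagger}(i)$, $-1$ if $v=\tar^{\dagger}(i)$, and $0$ otherwise. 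On the other hand, the $i$-th entry of $a^+_v=\inc(\alpha^+_Q(v))$ is $\epsilon_t$ if $i=i_t$ appears in $\alpha^+_Q(v)$ and $0$ otherwise. The content to check is that arrow $i$ lies in the ascending structural walk $\alpha^+_Q(v)$ with exponent $+1$ precisely when $v=\sou^{\dagger}(i)$, and with exponent $-1$ precisely when $v=\tar^{\dagger}(i)$; this is the dual, via~(\ref{EQQ15}) (ascending structural walks are reverses of descending ones, and reversing swaps the roles of source/target and flips the exponent sign), of the defining relations $\tar^{\dagger}(i)=\tar(\alpha^-_Q(v))$, $\sou^{\dagger}(i)=\tar(\alpha^-_Q(w))$. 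Once this bookkeeping is in place, column $v$ of $[a^+_1,\dots,a^+_m]$ coincides with column $v$ of $I(Q^{\dagger})^{\tr}$, giving $(ii)$. Alternatively — and this is the shortcut I would actually prefer — one can deduce $(ii)$ from~(\ref{EQQ3}) together with the already-cited fact~\cite[Proposition~4.4 and Corollary~4.5]{jaJ2020a} that $\sou^{\dagger},\tar^{\dagger}$ are exactly the source/target functions of the quiver with incidence matrix $I(Q)\wc{G}_Q^{-1}$, so that $(ii)$ is essentially a restatement of that result; but it is cleaner to spell out the column identification directly.

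For $(iii)$, I would use $(i)$ and $(ii)$. From $(ii)$, $\sum_{v=1}^m a^+_v$ is the sum of the columns of $I(Q^{\dagger})^{\tr}$, i.e.\ $I(Q^{\dagger})^{\tr}\mathbbm{1}_m$. Since $Q^{\dagger}$ is a connected loop-less quiver (stated above), every row of $I(Q^{\dagger})^{\tr}$ is of the form $\bas_{\sou^{\dagger}(i)}-\bas_{\tar^{\dagger}(i)}$ and has entries summing to zero with no loop contribution, so $I(Q^{\dagger})^{\tr}\mathbbm{1}_m=0$; hence $\sum_v a^+_v=0$. Then by $(i)$, $\sum_v a^-_v = -\sum_v a^+_{\xi^-_Q(v)} = -\sum_v a^+_v = 0$, where the middle equality holds because $\xi^-_Q$ is a permutation of $Q_0$ (it is invertible by~(\ref{EQQ15})), so summing $a^+_{\xi^-_Q(v)}$ over all $v$ just reindexes the sum $\sum_v a^+_v$. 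This gives both equalities in $(iii)$.

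\textbf{Main obstacle.} The only genuinely delicate point is the sign/endpoint bookkeeping in $(ii)$: verifying that arrow $i$ occurs in $\alpha^+_Q(v)$ with exponent $+1$ iff $v=\sou^{\dagger}(i)$ and with exponent $-1$ iff $v=\tar^{\dagger}(i)$. This requires being careful about how reversing a walk (via~(\ref{EQQ15})) interchanges the positive and negative directions through an arrow and simultaneously swaps source with target, and about the definitions of $\sou^{\dagger},\tar^{\dagger}$ in terms of the two descending structural walks through $i$. Everything else — claim $(i)$, and the reduction of $(iii)$ to $(i)$ and $(ii)$ — is a short formal manipulation. If the direct verification in $(ii)$ turns out to be cumbersome, I would fall back on citing~\cite[Proposition~4.4 and Corollary~4.5]{jaJ2020a} together with~(\ref{EQQ3}) to obtain $(ii)$ with no further computation.
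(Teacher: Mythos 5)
Your proposal is correct and takes essentially the same route as the paper: part (iii) is obtained exactly as in the text, from (ii), the identity $\mathbbm{1}_m^{\tr}I(Q^{\dagger})=0$, and reindexing the sum along the permutation $\xi^-_Q$ via (i). The only difference is that the paper disposes of (i) and (ii) by citing Remark~2.1 and Lemma~2.3 of~\cite{jaJ2020b}, whereas you unfold the underlying arguments from~(\ref{EQQ15}) and the definition of $\sou^{\dagger},\tar^{\dagger}$, which is precisely the content of those cited results.
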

\begin{proof}
Claim $(i)$ was shown in~\cite[Remark~2.1]{jaJ2020b}, since $\xi^-_Q(v)=\tar(\alpha^-_Q(v))$. Claim $(ii)$ was shown in~\cite[Lemma~2.3]{jaJ2020b}. Since $I(Q)\bas_i=\bas_{\sou(i)}-\bas_{\tar(i)}$, then $\mathbbm{1}_m^{\tr}I(Q)=0$ for any quiver $Q$, that is, $I(Q)^{\tr}\mathbbm{1}_m=0$. By $(ii)$, this implies that $\sum_{v=1}^ma_v^+=0$, and using $(i)$ we get $\sum_{v=1}^ma_v^-=0$. This shows $(iii)$.
\end{proof}

Combining equations~(\ref{EQQ2}), (\ref{EQQ3}) and Lemma~\ref{La}$(ii)$, with a straightforward calculation we get the following combinatorial expression for the Coxeter-Laplacian $\Lambda_Q$ of Q,
\begin{equation}\label{EQQ5}
\Lambda_Q=P(\xi^-_Q),
\end{equation}
see details in~\cite[Theorem~3.3]{jaJ2020b}. Some basic notions and result will be illustrated with a couple of running examples.
\begin{example}\label{EXAzero}
Consider the following integral quadratic forms $q_1,q_0$ on four variables:
\[
q_1(x_1,x_2,x_3,x_4)=x_1(x_1-x_2-x_3+2x_4)+x_2(x_2-x_3-x_4)+x_3(x_3-x_4)+x_4(x_4),
\]
\[
q_0(x_1,x_2,x_3,x_4)=x_1(x_1-x_2+2x_3-x_4)+x_2(x_2-x_3-x_4)+x_3(x_3-x_4)+x_4(x_4),
\]
with corresponding standard morsifications given by the upper triangular matrices
\[
\wc{G}_{q_1}=\begin{pmatrix}1&\widehat{1}&\widehat{1}&2\\0&1&\widehat{1}&\widehat{1}\\0&0&1&\widehat{1}\\0&0&0&1\end{pmatrix} \qquad \text{and} \qquad
\wc{G}_{q_0}=\begin{pmatrix}1&\widehat{1}&2&\widehat{1}\\0&1&\widehat{1}&\widehat{1}\\0&0&1&\widehat{1}\\0&0&0&1\end{pmatrix}
\]
Consider also the following (connected, loop-less) quivers with corresponding incidence matrices,
\[
Q^1=\xymatrix{\bulito_1 \ar[d]_-{3} & \bulito_3 \ar@<.5ex>[l]^-{1} \ar@<-.5ex>[l]_-{4} \\ \bulito_2 \ar[ru]_-{2} } \quad I(Q^1)=\raisebox{-.5em}{$\left(\begin{smallmatrix} \widehat{1}&0&1&\widehat{1}\\0&1&\widehat{1}&0\\1&\widehat{1}&0&1 \end{smallmatrix}\right)$} \quad
Q^0=\xymatrix{\bulito_1 \ar[d]_-{4} & \bulito_3 \ar@<.5ex>[l]^-{1} \ar@<-.5ex>[l]_-{3} \\ \bulito_2 \ar[ru]_-{2} } \quad I(Q^0)=\raisebox{-.5em}{$\left(\begin{smallmatrix} \widehat{1}&0&\widehat{1}&1\\0&1&0&\widehat{1}\\1&\widehat{1}&1&0 \end{smallmatrix}\right)$}
\]
A direct calculation shows that $q_i=q_{Q^i}$ for $i=1,0$. By Theorem~\ref{MAINlem}, the quadratic forms $q_i$ are connected non-negative unit forms of Dynkin type $\A_2$ and corank $2$, satisfying $q_1 \sim q_0$ (indeed, if $\sigma_{3,4}$ is the transposition of $3$ and $4$ in $\{1,2,3,4\}$ then $q_1 \sim^{P(0\sigma_{3,4})} q_0$). The corresponding inverse quivers~(\ref{EQQ3}) are given as follows,
\[
(Q^1)^{\dagger}=\xymatrix{\bulito_1 & \bulito_3 \ar@<-.5ex>[l]_-{1} \ar@<.5ex>[l]^-{3}  \\ \bulito_2 \ar@<.5ex>[u]^-{4} \ar@<-.5ex>[u]_-{2}  } \quad  I((Q^1)^{\dagger})=\raisebox{-.5em}{$\left(\begin{smallmatrix} \widehat{1}&\widehat{1}&\widehat{1}&\widehat{1}\\0&1&0&1\\1&0&1&0 \end{smallmatrix}\right)$} \quad
(Q^0)^{\dagger}=\xymatrix{\bulito_1 & \bulito_3 \ar[l]_-{1}  \\ \bulito_2 \ar@<.5ex>[u]^-{4} \ar@<-.5ex>[u]_-{2} \ar[ru]_-{3} } \quad  I((Q^0)^{\dagger})=\raisebox{-.5em}{$\left(\begin{smallmatrix} \widehat{1}&\widehat{1}&0&\widehat{1}\\0&1&1&1\\1&0&\widehat{1}&0 \end{smallmatrix}\right).$}
\]
\end{example}

%------------------
%------------------
\subsection{Standard quivers}\label{SS:stan}

For any partition $\pi=(\pi_1,\ldots,\pi_{\ell})$ of an integer $m \geq 2$, and any non-negative integer $\dd$, consider the connected loop-less quivers $\vec{\A}^{\dd}[\pi]$ and $\vec{\Star}^{\dd}[\pi]$ with $m$ vertices and $n=m+\ell+2(\dd-1)$ arrows, given as follows (see~\cite[Definition~5.2]{jaJ2020b} and Figure~\ref{Fig} above).

\begin{definition}\label{DStan}
Fix $m \geq 2$, a partition $\pi=(\pi_1,\ldots,\pi_{\ell})$ of $m$ and an integer $\dd \geq 0$.  Take $v_t:=m-(\pi_1+\ldots+\pi_t)$ for $t=0,\ldots,\ell$.
\begin{itemize}
 \item[i)] Let $\vec{\A}_m$ be the quiver with $m$ vertices $(\vec{\A}_m)_0=\{1,\ldots,m\}$ and $m-1$ arrows $(\vec{\A}_m)_1=\{i_1,\ldots,i_{m-1}\}$ such that $\sou(i_t)=t$ and $\tar(i_t)={t+1}$ for $t=1,\ldots,m-1$. Then the quiver $\vec{\A}_m^{\dd}[\pi]=\vec{\A}^{\dd}[\pi]$ is obtained from $\vec{\A}_m$ by adding $\ell-1+2\dd$ arrows $j_1,\ldots,j_{\ell-1}, k_1,\ldots, k_{2\dd}$ in the following way. Define $\sou(j_t):=v_{t-1}$ and $\tar(j_t):=v_t$ for $t=1,\ldots,\ell-1$. Moreover, taking $\alpha:=i_{m-1}$ if $\ell=1$, and $\alpha:=j_{\ell-1}$ if $\ell>1$, define $\sou(k_t):=\sou(\alpha^{\epsilon_t})$ and $\tar(k_t):=\tar(\alpha^{\epsilon_t})$ for $t=1,\ldots,2\dd$, where $\epsilon_t=(-1)^t$.  The set of vertices $(\vec{\A}_m)_0$ has the natural order $1<2<\ldots <m$, and the set of arrows $(\vec{\A}_m)_1$ is ordered as follows: $i_1<i_2<\ldots < i_{m-1}<j_1<\ldots <j_{\ell-1}<k_1<\ldots < k_{2\dd}$.

 \item[ii)] Let $\vec{\Star}_m$ be the quiver with $m$ vertices $(\vec{\Star}_m)_0=\{1,\ldots,m\}$ and $m-1$ arrows $(\vec{\Star}_m)_1=\{i_1,\ldots,i_{m-1}\}$ such that $\sou(i_t)=1$ and $\tar(i_t)={t+1}$ for $t=1,\ldots,m-1$. Then the quiver $\vec{\Star}_m^{\dd}[\pi]=\vec{\Star}^{\dd}[\pi]$ is obtained from $\vec{\Star}_m$ by adding $\ell-1+2\dd$ arrows $j_1,\ldots,j_{\ell-1}, k_1,\ldots, k_{2\dd}$ in the following way. Define $\sou(j_t):=1$ and $\tar(j_t):=v_t+1$ for $t=1,\ldots,\ell-1$. Moreover, taking $\alpha:=i_{m-1}$ if $\ell=1$ and $\alpha:=j_{\ell-1}$ if $\ell>1$, define $\sou(k_t):=\sou(\alpha)=1$ and $\tar(k_t):=\tar(\alpha)$ for $t=1,\ldots,2\dd$. As before, the set of vertices $(\vec{\Star}_m)_0$ has the natural order $1<2<\ldots <m$, and the set of arrows $(\vec{\Star}_m)_1$ is ordered as follows: $i_1<i_2<\ldots < i_{m-1}<j_1<\ldots <j_{\ell-1}<k_1<\ldots < k_{2\dd}$.
\end{itemize}

The quiver $\vec{Q}=\vec{\A}_m^{\dd}[\pi]=\vec{\A}^{\dd}[\pi]$ constructed in $(i)$ is called \textbf{standard $(\pi,\dd)$-extension quiver} of cycle type $\pi$ and degeneracy degree $\dd$ (see Corollary~\ref{(C):dd}), or simply \textbf{standard quiver}. The corresponding quadratic form $q_{\vec{Q}}$ is referred to as \textbf{standard $(\pi,\dd)$-extension} of the unit form $q_{\A_{m-1}}$, or simply as \textbf{standard extension} of $q_{\A_{m-1}}$. The quiver $\vec{\Star}_m^{\dd}[\pi]=\vec{\Star}^{\dd}[\pi]$ constructed in $(ii)$ is the inverse of $\vec{Q}$ (see~\cite[Remark~5.3]{jaJ2020b}).
\end{definition}

Since we fixed linear orders on the sets of vertices and the set of arrows of $\vec{\A}^{\dd}[\pi]$ and $\vec{\Star}^{\dd}[\pi]$, these quivers fix incidence matrices, given explicitly in Remark~\ref{RInc} below.

\begin{corollary}\label{coro:STDquad}
For any partition $\pi$ of $m \geq 2$ and any non-negative integer $\dd$, the quadratic forms $q_{\vec{Q}}$ for $\vec{Q}\in \{\vec{\A}^{\dd}[\pi],\vec{\Star}^{\dd}[\pi] \}$ are non-negative unit forms of Dynkin type $\A_{m-1}$ and corank $\ell(\pi)+\dd-1$.
\end{corollary}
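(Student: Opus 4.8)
The plan is to deduce this directly from Theorem~\ref{MAINlem}, which tells us that for \emph{any} connected loop-less quiver $Q$ with $m$ vertices and $n$ arrows, the form $q_Q$ is a connected non-negative unit form of Dynkin type $\A_{m-1}$ and corank $c = n - m + 1$. So the only thing to check is that the two quivers $\vec{\A}^{\dd}[\pi]$ and $\vec{\Star}^{\dd}[\pi]$ are connected and loop-less, that they have $m$ vertices, and that the arithmetic of the arrow count gives corank $\ell(\pi)+\dd-1$. First I would observe from Definition~\ref{DStan} that both quivers are obtained by adjoining arrows to $\vec{\A}_m$ (resp.\ $\vec{\Star}_m$), each of which is visibly connected on its $m$ vertices $\{1,\ldots,m\}$; adding further arrows preserves connectedness. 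Next, no arrow is a loop: in $\vec{\A}_m$ we have $\sou(i_t)=t\neq t+1=\tar(i_t)$; the arrows $j_t$ join $v_{t-1}$ to $v_t$, and since $\pi_t\geq 1$ these are distinct; and each $k_t$ is a parallel copy (possibly reversed) of the arrow $\alpha$, which is itself a non-loop. The same check applies to $\vec{\Star}^{\dd}[\pi]$, where every added arrow $j_t$ and $k_t$ has source $1$ and target $v_t+1$ (resp.\ $\tar(\alpha)$), and one must only note $v_t+1 > 1$, i.e.\ $v_t\geq 1$, which holds because $v_t = m-(\pi_1+\cdots+\pi_t) = \pi_{t+1}+\cdots+\pi_\ell \geq 1$ for $t<\ell$.

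The arrow count is the substantive arithmetic step. By construction, $\vec{\A}_m$ has $m-1$ arrows, to which we adjoin $\ell-1$ arrows $j_1,\ldots,j_{\ell-1}$ and $2\dd$ arrows $k_1,\ldots,k_{2\dd}$, for a total of
\[
n = (m-1) + (\ell-1) + 2\dd = m + \ell + 2(\dd-1).
\]
Hence by Theorem~\ref{MAINlem} the corank is $c = n - m + 1 = \ell + 2(\dd-1) + 1 = \ell + \dd - 1$ when $\dd\geq 1$; for $\dd = 0$ one has $n = m + \ell - 2$ and $c = \ell - 1 = \ell + \dd - 1$ as well, so the formula $c = \ell(\pi)+\dd-1$ holds uniformly. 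The same count applies verbatim to $\vec{\Star}^{\dd}[\pi]$, which has the same numbers of $i$-, $j$- and $k$-arrows on the same $m$ vertices. Since both quivers are connected and loop-less, Theorem~\ref{MAINlem} also gives Dynkin type $\A_{m-1}$ directly, and the non-negativity and unit-form properties come along in the same package.

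I do not anticipate a serious obstacle here: the statement is essentially a bookkeeping consequence of Theorem~\ref{MAINlem} once connectedness and loop-freeness are confirmed. The one point meriting a line of care is the non-negativity of the indices $v_t$ (so that the added arrows in $\vec{\Star}^{\dd}[\pi]$ land on genuine vertices and no loop is accidentally created at the boundary cases $t=0$ or $t=\ell-1$), and the verification that the two edge cases $\ell=1$ and $\dd=0$ are covered by the same formula $n = m+\ell+2(\dd-1)$ — which they are, as the computation above shows. Everything else is immediate from the already-established Theorem~\ref{MAINlem}.
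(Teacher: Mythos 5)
Your route is exactly the paper's: its entire proof of this corollary is the one line ``Apply Theorem~\ref{MAINlem}'', and your checks of connectedness, loop-lessness, the vertex count $m$, and the arrow count $n=(m-1)+(\ell-1)+2\dd=m+\ell+2(\dd-1)$ are precisely the bookkeeping that line leaves implicit. So the strategy is fine and the verification of the hypotheses of Theorem~\ref{MAINlem} is correct.

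The genuine problem is the arithmetic at the decisive step. From $n=m+\ell+2(\dd-1)$ and Theorem~\ref{MAINlem} the corank is $c=n-m+1=\ell+2\dd-1$, but you assert $\ell+2(\dd-1)+1=\ell+\dd-1$, which is false for every $\dd\geq 1$. A concrete check: $\vec{\A}^{1}[(2)]=\K_3^{\dagger}$ has $m=2$ vertices and $n=3$ arrows, hence corank $n-m+1=2$, whereas $\ell(\pi)+\dd-1=1$. The value $\ell(\pi)+2\dd-1$ is also what the rest of the paper uses (e.g.\ the proof of Lemma~\ref{(L):stan} cites this corollary for $\CRnk(q)=2\dd+\ell-1$, consistent with Corollaries~\ref{CreC} and~\ref{(C):dd}); the ``$\ell(\pi)+\dd-1$'' in the printed statement is evidently a misprint for $\ell(\pi)+2\dd-1$. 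Your proposal should have derived $\ell+2\dd-1$ and flagged the discrepancy (or corrected the statement); instead the false simplification forces agreement with the misprint, and your boundary check at $\dd=0$ — the only case where the two formulas coincide — masks the error rather than catching it.
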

\begin{proof}
Apply Theorem~\ref{MAINlem}.
\end{proof}

In the following technical observation we show that the cycle type of $q_{\vec{Q}}$ for a standard quiver $\vec{Q}=\vec{\A}^{\dd}[\pi]$ is precisely $\pi$. One of its consequences, Corollary~\ref{STDunique} below, is used implicitly in the proof of~\cite[Theorem~6.3]{jaJ2020b}.

\begin{remark}\label{Rxi}
Let $\vec{Q}=\vec{\A}^{\dd}[\pi]$ be the standard $(\pi,\dd)$-extension quiver for a partition $\pi$ of $m\geq 2$ and $\dd \geq 0$. Recall from Definition~\ref{DStan} that the number of vertices (resp. arrows) of $\vec{Q}$ is $m$ (resp. $n=m+\ell(\pi)+2(\dd-1)$).
\begin{itemize}
 \item[a)] Consider the permutation of vertices $\xi:=\xi^-_{\vec{Q}}$ determined by $\vec{Q}$, take $\pi=(\pi_1,\ldots,\pi_{\ell})$ and $v_t:=m-(\pi_1+\ldots+\pi_t)$ for $t=0,\ldots,\ell$. Then
  \begin{equation*}
\xi(v)= \left\{ \begin{array}{l l}
v+1, & \text{if $v \notin \{v_0,v_1,\ldots,v_{\ell-2},v_{\ell-1}\}$},\\
v_{t+1}+1, & \text{if $v =v_t$ for some $t \in \{0,\ldots,\ell-1\}$}.
\end{array} \right.
\end{equation*}
 \item[b)] The cycle type of $q_{\vec{Q}}$ is $\pi$.%, and its degeneracy degree is $\dd_{q_{\vec{Q}}}=\dd$.
\end{itemize}
\end{remark}
\begin{proof}
By definition~(\ref{DEFperm}), we have $\xi(v)=\tar(\alpha^-_{\vec{Q}}(v))$.

Assume first that $\dd=0$. Denote by $i_v$ the unique arrow in $\vec{Q}$ with $\sou(i_v)=v$ and $\tar(i_v)=v+1$, and by $j_t$ the unique arrow in $\vec{Q}$ with $\sou(j_t)=v_{t-1}$ and $\tar(j_t)=v_t$ for $t=1,\ldots,\ell-1$ (these are all the arrows of $\vec{Q}$, since $\dd=0$). Then a direct calculation shows that if $v \notin \{v_0,v_1,\ldots,v_{\ell-1}\}$, then $\alpha_{\vec{Q}}^-(v)=i_v^{+1}$, and therefore $\xi(v)=\tar(i_v)=v+1$. Now, if $v=v_t$ for some $t\in \{0,1,\ldots,\ell-2\}$, then $\alpha_{\vec{Q}}^-(v)=j_{t+1}^{+1}i_{v_{t+1}}^{+1}$, and therefore $\xi(v_t)=\tar(i_{v_{t+1}})=v_{t+1}+1$. Moreover, if $v=v_{\ell-1}$, then
\[
\alpha_{\vec{Q}}^-(v)=j_{\ell-1}^{-1}j_{\ell-2}^{-1}\cdots j_{1}^{-1}i_{m-1}^{-1}i_{m-2}^{-1}\cdots i_2^{-1}i_1^{-1}.
\]
In particular, $\xi(v_{\ell})=\tar(i_1^{-1})=\sou(i_1)=1=v_{\ell}+1$. This shows $(a)$ for $\dd \geq 0$, since $\xi$ is unchanged by adding pairs of (anti-) parallel arrows (see~\cite[Remark~5.1]{jaJ2020b}).

\medskip
To show $(b)$, recall that $\ct(Q)$ is defined as the sequence of cardinalities of the $\xi$-orbits on $\vec{Q}_0$, ordered non-increasingly~(\ref{EQQct}), which equals $\pi$ by $(a)$.
\end{proof}

\begin{corollary}\label{STDunique}
Let $\pi$ and $\pi'$ be partitions of $m\geq 2$ and $m' \geq 2$ respectively, and take $\dd,\dd' \geq 0$.
\begin{itemize}
 \item[a)] We have $q_{\vec{\A}_m^{\dd}[\pi]} \sim q_{\vec{\A}_{m'}^{\dd'}[\pi']}$ if and only if $(m,\ell(\pi)+2\dd)=(m',\ell(\pi')+2\dd')$.
 \item[b)] The following conditions are equivalent:
  \begin{itemize}
   \item[b1)] $(\pi,\dd)=(\pi',\dd')$.
   \item[b2)] $\vec{\A}_m^{\dd}[\pi]=\vec{\A}_{m'}^{\dd'}[\pi']$.
   \item[b3)] $q_{\vec{\A}_m^{\dd}[\pi]}=q_{\vec{\A}_{m'}^{\dd'}[\pi']}$.
   \item[b4)] $q_{\vec{\A}_m^{\dd}[\pi]} \approx q_{\vec{\A}_{m'}^{\dd'}[\pi']}$
  \end{itemize}
\end{itemize}
\end{corollary}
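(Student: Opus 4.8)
The plan is to prove Corollary~\ref{STDunique} by reducing everything to the already-established structural facts about standard quivers, their incidence matrices, cycle types, and coranks. For part~$(a)$, the equivalence is essentially a restatement of Theorem~\ref{MAINlem}$(ii)$: two forms $q_Q$ and $q_{\wt Q}$ are weakly Gram congruent if and only if $(|Q_0|,|Q_1|)=(|\wt Q_0|,|\wt Q_1|)$. For $\vec Q=\vec{\A}_m^{\dd}[\pi]$ one has $|\vec Q_0|=m$ and $|\vec Q_1|=n=m+\ell(\pi)+2(\dd-1)$ by Definition~\ref{DStan}, so $(|\vec Q_0|,|\vec Q_1|)=(m,\,m+\ell(\pi)+2\dd-2)$, and matching the number of vertices and arrows for the two standard quivers is exactly the condition $(m,\ell(\pi)+2\dd)=(m',\ell(\pi')+2\dd')$. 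So $(a)$ follows immediately.

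For part~$(b)$, I would prove the cycle of implications $(b1)\Rightarrow(b2)\Rightarrow(b3)\Rightarrow(b4)\Rightarrow(b1)$. The implication $(b1)\Rightarrow(b2)$ is trivial, since $\vec{\A}_m^{\dd}[\pi]$ is defined purely in terms of $m$, $\pi$ and $\dd$ (and once $\pi$ is fixed, $m=\sum\pi_t$ is determined, so in fact $(\pi,\dd)=(\pi',\dd')$ already forces $m=m'$). The implication $(b2)\Rightarrow(b3)$ is immediate from the definition $q_Q(x)=\frac12\|I(Q)x\|^2$, i.e.\ equation~(\ref{EQQ0}): equal quivers have equal incidence matrices, hence equal associated forms. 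The implication $(b3)\Rightarrow(b4)$ is trivial, since any form is strongly Gram congruent to itself (via $B=\Id$). The only implication with content is $(b4)\Rightarrow(b1)$.

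To prove $(b4)\Rightarrow(b1)$, suppose $q_{\vec{\A}_m^{\dd}[\pi]}\approx q_{\vec{\A}_{m'}^{\dd'}[\pi']}$. Strong Gram congruence implies weak Gram congruence, so by part~$(a)$ we already get $m=m'$ and $\ell(\pi)+2\dd=\ell(\pi')+2\dd'$. Strong Gram congruence also implies equality of Coxeter polynomials (this is the ``easy'' direction recalled in the introduction, cf.~\cite[Lemma~4.6]{jaJ2020a}, and also follows from Remark~\ref{LAMBDA}$(c)$ together with $\Lambda_Q=P(\xi_Q^-)$ from~(\ref{EQQ5})). Now the key input is Remark~\ref{Rxi}$(b)$: the cycle type $\ct(q_{\vec{\A}_m^{\dd}[\pi]})$ is exactly $\pi$, i.e.\ $\pi$ is recovered as the (non-increasingly ordered) multiset of cardinalities of the $\xi^-_{\vec Q}$-orbits. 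By Remark~\ref{LAMBDA}$(c)$ and~(\ref{EQQ5}), the Coxeter polynomial $\varphi_{q_{\vec Q}}(\va)=(\va-1)^{n-m}p_{P(\xi_{\vec Q}^-)}(\va)$, and the characteristic polynomial of the permutation matrix $P(\xi_{\vec Q}^-)$ is $\prod_{t}(\va^{\pi_t}-1)$, whose multiset of cyclotomic-type factors $\{\va^{\pi_t}-1\}$ determines the partition $\pi$ uniquely. Hence equality of Coxeter polynomials for the two standard forms forces $\pi=\pi'$ (here one also uses $m=m'$ so that the extra factor $(\va-1)^{n-m}$ is the same and can be cancelled). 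Finally, from $\pi=\pi'$ and $\ell(\pi)+2\dd=\ell(\pi')+2\dd'$ we get $\ell(\pi)=\ell(\pi')$, hence $2\dd=2\dd'$, hence $\dd=\dd'$, which is $(b1)$.

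The only point requiring a little care — and the closest thing to an obstacle — is the claim that a Coxeter polynomial of the form $(\va-1)^{c-1}\prod_{t=1}^{\ell}(\va^{\pi_t}-1)$ determines the partition $\pi$ (given $m=\sum\pi_t$ fixed); this is because the product $\prod_t(\va^{\pi_t}-1)$ is, up to the ambiguity already accounted for by the separate power of $(\va-1)$, essentially the characteristic polynomial of a permutation, and unique factorization of permutations into cycles recovers the cycle lengths. Alternatively, and more cleanly within the paper's framework, one can bypass polynomials entirely and argue combinatorially: strong congruence implies weak congruence, and by Remark~\ref{Rxi}$(b)$ applied on both sides the respective cycle types are $\pi$ and $\pi'$; but the cycle type is an invariant — more precisely, one should check (or cite) that strongly Gram congruent standard quivers have the same cycle type, which again reduces to the permutation-invariant content of Remark~\ref{LAMBDA}. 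I would present whichever of these two routes is shortest given the lemmas already available, likely the Coxeter-polynomial route since it is explicitly flagged in the introduction and in Theorem~\ref{MT2}$(i)$.
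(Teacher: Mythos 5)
Your proof is correct, and its skeleton matches the paper's: part $(a)$ via Theorem~\ref{MAINlem}$(ii)$ and the vertex/arrow count from Definition~\ref{DStan}, and part $(b)$ via the trivial chain $(b1)\Rightarrow(b2)\Rightarrow(b3)\Rightarrow(b4)$ plus the observation that $(b4)$ forces the cycle types to agree, which by Remark~\ref{Rxi}$(b)$ gives $\pi=\pi'$ and then $\dd=\dd'$. The one place you diverge is the key invariance step: the paper simply cites the external result \cite[Theorem~6.3]{jaJ2020b}, which says the cycle type is a strong Gram invariant, whereas you re-derive that invariance inside the paper's framework, using that strong congruence preserves the Coxeter polynomial, Remark~\ref{LAMBDA}$(c)$ together with~(\ref{EQQ5}) to write $\varphi_{q_{\vec Q}}(\va)=(\va-1)^{n-m}p_{P(\xi^-_{\vec Q})}(\va)$, and the fact that the characteristic polynomial of a permutation matrix determines its cycle type. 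Your worry about whether $\prod_t(\va^{\pi_t}-1)$ determines $\pi$ is settled exactly by Lemma~\ref{permutation} (co-spectral permutation matrices have the same cycle type), which the paper proves independently and itself uses in the same way in the proof of Theorem~\ref{MT1}; alternatively your cyclotomic-factor/M\"obius argument works. So both routes are sound: the paper's is shorter but leans on the companion paper (and, as the paper itself remarks, Theorem~6.3 of~\cite{jaJ2020b} implicitly uses this corollary, so your internal derivation arguably makes the logical dependence cleaner), while yours stays within the lemmas stated in this paper at the cost of a slightly longer argument.
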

\begin{proof}
Take $\vec{Q}=\vec{\A}_m^{\dd}[\pi]$ and $\vec{Q}'=\vec{\A}_{m'}^{\dd'}[\pi']$, and recall that $|\vec{Q}_0|=m$ and $|\vec{Q}_1|=m+\ell(\pi)+2(\dd-1)$ (and similarly, $|\vec{Q}'_0|=m'$ and $|\vec{Q}'_1|=m'+\ell(\pi')+2(\dd'-1)$, see Definition~\ref{DStan}). Then $(a)$ holds, since $q_{\vec{Q}} \sim q_{\vec{Q}'}$ if and only if $(|\vec{Q}_0|,|\vec{Q}_1|)=(|\vec{Q}'_0|,|\vec{Q}'_1|)$ by Theorem~\ref{MAINlem}$(ii)$, since both $\vec{Q}$ and $\vec{Q}'$ are loop-less and connected quivers.

\medskip
To show $(b)$, observe that $(b1)$ implies $(b2)$ by Defnition~\ref{DStan}, and that evidently $(b4)$ follows from $(b3)$, which follows from $(b2)$.  To complete the proof assume that $(b4)$ holds. In particular, $q_{\vec{Q}} \sim q_{\vec{Q}'}$, and by $(a)$ we have $m=m'$ and $\ell(\pi)+2\dd=\ell(\pi')+2\dd'$. Moreover, $\ct(q_{\vec{Q}})=\ct(q_{\vec{Q}'})$ by~\cite[Theorem~6.3]{jaJ2020b}, and by Remark~\ref{Rxi} we have $\pi=\ct(\vec{Q})=\ct(\vec{Q}')=\pi'$. Then $\ell(\pi)=\ell(\pi')$, which shows that $(\pi,\dd)=(\pi',\dd')$, that is, $(b1)$ holds.
\end{proof}

Similar claims, not needed for our discussion, hold for the inverse quivers $\vec{\Star}^{\dd}[\pi]$. In the following result, the mentioned non-negative integer $\dd$ is the so-called \emph{degree of degeneracy} of $q_Q$, cf.~\ref{reRad}  and Corollaries~\ref{CreC},~\ref{(C):dd} below.

\begin{remark}\label{RStan}
Let $Q$ be a connected loop-less quiver. Then there is a unique standard quiver $\vec{Q}$ with the same number of vertices and arrows as $Q$ such that $\ct(\vec{Q})=\ct(Q)$, namely, $\vec{Q}=\vec{\A}^{\dd}[\ct(Q)]$ for some integer $\dd>0$. Moreover, in this case there is a permutation $\rho$ of $Q_0$ such that $\Lambda_{\rho\cdot Q}=\Lambda_{\vec{Q}}$.
\end{remark}
\begin{proof}
Take $\pi=\ct(Q)$ and $\ell=\ell(\pi)$. Recall that if $n=|Q_1|$ and $m=|Q_0|$ then the corank of $q_Q$ is given by $c=n-(m-1)$ (see Theorem~\ref{MAINlem}).  Since $\pi \in \Part_1^c(m)$ by~\cite[Proposition~4.5]{jaJ2020b}, then there is a non-negative integer $\dd$ such that $2\dd=c-(\ell-1)=n-(m-1)-(\ell-1)$. Take $\vec{Q}=\vec{\A}^{\dd}[\pi]$. Since $\pi$ is a partition of $m$, the standard quiver $\vec{Q}$ has $m$ vertices, and by Definition~\ref{DStan} it also has $m+\ell+2(\dd-1)=n$ arrows. By Remark~\ref{Rxi}, $\ct(\vec{Q})=\pi=\ct(Q)$. The uniqueness of $\vec{Q}$ follows from Corollary~\ref{STDunique}$(b)$.

\medskip
To show the existence of such permutation $\rho$, recall from Remark~\ref{LAMBDA}$(b)$ that there are permutations $\xi$ and $\vec{\xi}$ of the sets $Q_0$ and $\vec{Q}_0$ such that $\Lambda_Q=P(\xi)$ and $\Lambda_{\vec{Q}}=P(\vec{\xi})$, and therefore $\ct(\xi)=\ct(Q)=\ct(\vec{Q})=\ct(\vec{\xi})$, cf.~(\ref{EQQct}). By Lemma~\ref{permutation}, $\xi$ and $\vec{\xi}$ are conjugate permutations, that is, there is a permutation $\rho$ such that $\vec{\xi}=\rho\xi \rho^{-1}$, or matricially,
\[
\Lambda_{\vec{Q}}=P(\vec{\xi})=P(\rho)P(\xi)P(\rho^{-1})=P(\rho)\Lambda_{Q}P(\rho^{-1}).
\]
Using Remark~\ref{RPerm}$(i,ii)$ and the definition of Coxeter-Laplacian~(\ref{EQQCL}), we conclude that
\begin{eqnarray*}
\Lambda_{\rho \cdot Q} &=& \Id_m-I(\rho \cdot Q)\wc{G}_{\rho \cdot Q}I(\rho \cdot Q)^{\tr}=\Id_m-P(\rho) I(Q)\wc{G}_{Q}I(Q)^{\tr}P(\rho)^{\tr} \\
&=& P(\rho)[\Id_m-I(Q)\wc{G}_{Q}I(Q)^{\tr}]P(\rho^{-1})=P(\rho)\Lambda_QP(\rho^{-1})=\Lambda_{\vec{Q}}.
\end{eqnarray*}

\vspace*{-6mm}
\end{proof}

\begin{example}\label{EXAone}
The Coxeter-Laplacians $\Lambda_{Q^i}$ of the quivers $Q^1$ and $Q^0$ of Example~\ref{EXAzero} are given by
\[
\Lambda_{Q^1}=\begin{pmatrix} 0&0&1\\1&0&0\\0&1&0\end{pmatrix} \qquad \text{and} \qquad
\Lambda_{Q^0}=\begin{pmatrix} 1&0&0\\0&1&0\\0&0&1\end{pmatrix},
\]
see~(\ref{EQQCL}). In particular, the cycle types of the quadratic forms $q_i$ are $\ct(q_1)=(3)$ and $\ct(q_0)=(1,1,1)$, cf.~(\ref{EQQct}) and~(\ref{EQQ5}). The standard quivers associated to $Q^1$ and $Q^0$, as in Remark~\ref{RStan}, are given as follows,
\[
\vec{Q}^1=\xymatrix{\bulito_1 \ar[r]^-{1} & \bulito_2 \ar[r]^-{2} \ar@<-1ex>@/_7pt/@{<-}[r]^-3 \ar@<-2.5ex>@/_7pt/[r]_-4 & \bulito_3 } \quad I(\vec{Q}^1)=\raisebox{-.5em}{$\left(\begin{smallmatrix} 1&0&0&0\\\widehat{1}&1&\widehat{1}&1\\0&\widehat{1}&1&\widehat{1} \end{smallmatrix}\right)$} \quad
\vec{Q}^0=\xymatrix{\bulito_1 \ar[r]^-{1} & \bulito_2 \ar[r]^-{2} \ar@<-.5ex>@/_7pt/@{<-}[r]_-3 \ar@<.7ex>@/^7pt/[l]^-4 & \bulito_3 } \quad I(\vec{Q}^0)=\raisebox{-.5em}{$\left(\begin{smallmatrix} 1&0&0&\widehat{1}\\\widehat{1}&1&\widehat{1}&1\\0&\widehat{1}&1&0 \end{smallmatrix}\right)$}
\]
Note that $\Lambda_{Q^i}=\Lambda_{\vec{Q}^i}$ for $i=1,0$. The corresponding inverse quivers~(\ref{EQQ3}) are given by
\[
(\vec{Q}^1)^{\dagger}=\xymatrix@R=1pc{& \bulito_2 \\ \bulito_1 \ar[ru]^-1 \ar[r]^-2 \ar@<-1ex>@/_5pt/[r]^-{3} \ar@<-2ex>@/_5pt/[r]_-{4} & \bulito_3 } \quad I((\vec{Q}^1)^{\dagger})=\raisebox{-.5em}{$\left(\begin{smallmatrix} 1&1&1&1\\\widehat{1}&0&0&0\\0&\widehat{1}&\widehat{1}&\widehat{1} \end{smallmatrix}\right)$} \quad
(\vec{Q}^0)^{\dagger}=\xymatrix{& \bulito_2 \\ \bulito_1 \ar[ru]^-1 \ar[r]^-2 \ar@<-1ex>@/_5pt/[r]^-{3} \ar@<1ex>@/^7pt/[ru]^-{4} & \bulito_3 } \quad I((\vec{Q}^0)^{\dagger})=\raisebox{-.5em}{$\left(\begin{smallmatrix}1&1&1&1\\\widehat{1}&0&0&\widehat{1}\\0&\widehat{1}&\widehat{1}&0 \end{smallmatrix}\right)$}
\]
\end{example}

It will be convenient to have explicit formulas for the incidence matrices of the standard quivers and their inverses. The proof of the following remark is clear from Definition~\ref{DStan}.

\begin{remark}\label{RInc}
 Take a partition $\pi=(\pi_1,\ldots, \pi_{\ell})$ of $m \geq 2$, an integer $\dd \geq 0$, and denote by $\vec{Q}$ the standard quiver $\vec{\A}^{\dd}[\pi]$. Take $n=m-1+2\dd +(\ell-1)$ and $v_t:=m-(\pi_1+\ldots+\pi_t)$ for $t=0,\ldots,\ell$.
\begin{itemize}
 \item[a)] If $\ell=1$, then $n=m-1+2\dd$. Taking $x:=\bas_{m-1}-\bas_{m}$ we have
 \[
 I(\vec{Q})=[I(\vec{\A}_m),\underbrace{-x,x,\cdots ,-x,x}_\text{$2\dd$ columns}].
 \]
 If $\ell >1$, taking $x_t:=\bas_{v_{t-1}}-\bas_{v_t}$ for $t=1,\ldots,\ell-1$ we have
 \[
 I(\vec{Q})=[I(\vec{\A}_m),x_1,x_2,\cdots,x_{\ell-1},\underbrace{-x_{\ell-1},x_{\ell-1},\cdots ,-x_{\ell-1},x_{\ell-1}}_\text{$2\dd$ columns}].
 \]

 \item[b)] If $\ell=1$, for $v \in \{1,\ldots,m\}$ we have
  \begin{equation*}
I(\vec{Q}^{\dagger})^{\tr}\bas_v= \left\{ \begin{array}{l l}
\mathbbm{1}_n, & \text{if $v=1$},\\
-\bas_{v-1}, & \text{if $1<v<m$},\\
-\bas_{m-1}-\sum_{j=m}^n\bas_j, & \text{if $v=m$}.
\end{array} \right.
\end{equation*}
  If $\ell>1$, take $z_t:=\bas_{v_t}+\bas_{m-1+t}$ for $t=1,\ldots,\ell-1$. For $v \in \{1,\ldots,m \}$ we have
 \begin{equation*}
I(\vec{Q}^{\dagger})^{\tr}\bas_v= \left\{ \begin{array}{l l}
\mathbbm{1}_n, & \text{if $v=1$},\\
-\bas_{v-1}, & \text{if $v \notin \{v_t+1\}_{t=1,\ldots,\ell}$},\\
-z_t, & \text{if $v=v_t+1$ for some $t=1,\ldots,\ell-2$}, \\
-z_{\ell-1}- \sum_{j=m+\ell-1}^{n}\bas_{j}, & \text{if $v=v_{\ell-1}+1$}.
\end{array} \right.
\end{equation*}
\end{itemize}
\end{remark}

For instance, the standard quivers include all (inverse) generalized Kronecker quivers, as indicated in the following useful observation.

\begin{remark}\label{RKro}
For $n \geq 1$, consider the (generalized) Kronecker quiver $\K_n$ with two vertices and $n$ arrows in the same direction,
\[
\K_n=\xymatrix{\bulito \ar@<5ex>[r]^-{1} \ar@<2.5ex>[r]^-{2} \ar@{}[r]|-{\cdots} \ar@<-2.5ex>[r]^-{n-1} \ar@<-5ex>[r]^-{n} & \bulito}; \qquad \K_{2n'}^{\dagger}=\xymatrix{\bulito \ar@<5ex>[r]^-{1} \ar@<2.5ex>@{<-}[r]^-{2} \ar@{}[r]|-{\cdots} \ar@<-2.5ex>[r]^-{2n'-1} \ar@<-5ex>@{<-}[r]^-{2n'} & \bulito}; \qquad \K_{2n'+1}^{\dagger}=\xymatrix{\bulito \ar@<5ex>[r]^-{1} \ar@<2.5ex>@{<-}[r]^-{2} \ar@{}[r]|-{\cdots} \ar@<-2.5ex>@{<-}[r]^-{2n'} \ar@<-5ex>[r]^-{2n'+1} & \bulito}.
\]
Observe that if $n$ is even ($n=2n'$ for $n' >0$) then $\K_n=\vec{\Star}^{n'-1}[(1,1)]$ and its inverse is given by $\K_n^{\dagger}=\vec{\A}^{n'-1}[(1,1)]$. On the other hand, if $n$ is odd ($n=2n'+1$ for $n' \geq 0$) then $\K_n=\vec{\Star}^{n'}[(2)]$ and its inverse is given by $\K_n^{\dagger}=\vec{\A}^{n'}[(2)]$. Moreover, assume that $n=2\dd+1$ for some integer $\dd\geq 1$.
\begin{itemize}
 \item[a)] Take $b_t=\bas_t+\bas_{t+1}\in \Z^n$ and $b'_t=-\bas_t+\bas_{t+1}\in \Z^n$ for $t=1,\dots,2\dd$. Then the set $\{b_1,\ldots b_{2\dd}\}$ is a basis of the kernel of $I(\K_n^{\dagger})$, and for $1\leq t,t'\leq n$ we have
 \begin{equation*}
b^{\tr}_tb'_{t'}= \left\{ \begin{array}{l l}
1, & \text{if $t'=t-1$},\\
-1, & \text{if $t'=t+1$},\\
0, & \text{otherwise}.
\end{array} \right.
\end{equation*}

 \item[b)] Take $c_t=(-1)b_t \in \Z^n$ if $t \in \{1,\ldots,2\dd\}$ is even, and $c_{2u+1}=\sum_{r=0}^ub_{2r+1} \in \Z^n$ if $t=2u+1 \in \{1,\ldots,2\dd\}$ is odd, and let $K=[c_1,\dots,c_{2\dd}]$ be the $n \times 2\dd$ matrix with columns the vectors $c_1,\ldots,c_{2\dd}$. Then $K$ is a kernel matrix for $I(\K_n^{\dagger})$, and
 \[
 K^{\tr}\wc{G}_{\K_n^{\dagger}}K=\underbrace{W_1 \oplus \ldots \oplus W_1}_{\text{$\dd$ times}}, \qquad \text{where $W_1=\left( \begin{matrix} 0&1\\-1&0\end{matrix}\right)$.}
 \]
\end{itemize}
\end{remark}
\begin{proof}
The shape of the corresponding standard quiver $\vec{\A}^{n'-1}[(1,1)]$ and $\vec{\A}^{n'}[(2)]$ is clear from definition. That they are the inverse of the Kronecker quiver $\K_n$ (for $n=2n'$ and $n=2n'+1$ respectively), follows from~\cite[Remark~5.3]{jaJ2020b}.

\medskip
Claim $(a)$ is straightforward. In particular, the matrix $K$ of the point $(b)$ is a kernel matrix of $I(\K_n^{\dagger})$.  Observe that the $(2\dd+1) \times (2\dd+1)$ matrix $\wc{G}_{\K_n^{\dagger}}$ is given by
\[
\wc{G}_{\K_n^{\dagger}}=\left( \begin{matrix} 1&\widehat{2}&2&\widehat{2}&2&\cdots & 2 \\ 0&1&\widehat{2}&2&\widehat{2}&\cdots & \widehat{2} \\ 0&0&1&\widehat{2}&2&\cdots &2 \\ 0&0&0&1&\widehat{2}&\cdots &\widehat{2} \\ 0&0&0&0&1&\cdots &2 \\ \vdots & \vdots & \vdots & \vdots & \vdots & \ddots & \vdots \\ 0 & 0 & 0 & 0 & 0 & \cdots & 1 \end{matrix}\right),
\]
where $\widehat{2}:=-2$, and that $b'_t=\wc{G}_{\K_n^{\dagger}}b_t$ for $t=1,\ldots,2\dd$. Then, for $1\leq t,t'\leq n$ we have
 \begin{equation}\label{Kone}
c^{\tr}_t\wc{G}_{\K_n^{\dagger}}c_{t'}= \left\{ \begin{array}{l l}
1, & \text{if $t$ is odd and $t'=t+1$},\\
-1, & \text{if $t$ is even and $t'=t-1$},\\
0, & \text{otherwise}.
\end{array} \right.
\end{equation}
Indeed, if both $t$ and $t'$ are even, then by the definition $c_t=(-1)b_t$ and using $(a)$, we get $c^{\tr}_t\wc{G}_{\K_n^{\dagger}}c_{t'}=0$. A similarly claim holds if both $t$ and $t'$ are odd. Assume that $t$ is even and $t'$ is odd. Using $(a)$ we get
 \begin{equation*}
c^{\tr}_t\wc{G}_{\K_n^{\dagger}}c_{t'}=(-1)b_t^{\tr}(b'_1+b'_3+\ldots +b'_{t'})= \left\{ \begin{array}{l l}
(-1)b_t^{\tr}b'_{t-1}+(-1)b_t^{\tr}b'_{t+1}=0, & \text{if $t<t'$},\\
(-1)b_t^{\tr}b'_{t-1}=-1, & \text{if $t'=t-1$},\\
0, & \text{otherwise}.
\end{array} \right.
\end{equation*}
Similarly, if $t$ is odd and $t'$ is even, then
 \begin{equation*}
c^{\tr}_t\wc{G}_{\K_n^{\dagger}}c_{t'}=(b_1+b_3+\ldots +b_{t})^{\tr}(-1)b'_{t'}= \left\{ \begin{array}{l l}
b_{t'-1}^{\tr}(-1)b'_{t'}+b_{t'+1}^{\tr}(-1)b'_{t'}=0, & \text{if $t>t'$},\\
b_t^{\tr}(-1)b'_{t+1}=1, & \text{if $t'=t+1$},\\
0, & \text{otherwise}.
\end{array} \right.
\end{equation*}
These identities show equation~(\ref{Kone}), which is a coefficientwise expression of the direct sum of $\dd$ copies of $W_1$. This completes the proof.
\end{proof}

%------------------
%------------------
\subsection{The Coxeter-Laplacian}\label{CoxLap}

Our proposed solution to Problem~2$(ii)$ starts with an explicit combinatorial construction, that uses the structural walks of $Q$ given above, cf. Step~1 on page~\pageref{StepOne}.

\begin{proposition}\label{Ls}
Let $Q$ be a connected loop-less quiver with $m \geq 2$ vertices and $n \geq 1$ arrows, and such that $\Lambda_Q=\Lambda_{\vec{Q}}$ for a standard quiver $\vec{Q}$ with the same number of vertices and arrows as $Q$. Then there is a (not necessarily $\Z$-invertible) $n \times n$ matrix $B$ such that
\[
I(Q)B=I(\vec{Q}) \qquad \text{and} \qquad I(\vec{Q}^{\dagger})B^{\tr}=I(Q^{\dagger}).
\]
\end{proposition}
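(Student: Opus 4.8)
The plan is to construct $B$ by prescribing it on a carefully chosen $\Z$-basis of $\Z^n$, one part of which handles the inverse-quiver equation $I(\vec Q^\dagger)B^\tr=I(Q^\dagger)$ and the rest of which handles the incidence equation $I(Q)B=I(\vec Q)$.

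First I would rewrite the two required identities. Using $I(Q^\dagger)=I(Q)\wc G_Q^{-1}$ and $I(\vec Q^\dagger)=I(\vec Q)\wc G_{\vec Q}^{-1}$ (see~\eqref{EQQ3}), the identity $I(\vec Q^\dagger)B^\tr=I(Q^\dagger)$ is, after transposing, the same as $B\,I(\vec Q^\dagger)^\tr=I(Q^\dagger)^\tr$; and by Lemma~\ref{La}(ii) this reads $B\vec a_v^+=a_v^+$ for every vertex $v$, where $a_v^+=\inc(\alpha^+_Q(v))$ and $\vec a_v^+=\inc(\alpha^+_{\vec Q}(v))$ (recall $Q$ and $\vec Q$ share their vertex set). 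The identity $I(Q)B=I(\vec Q)$ amounts to asking, for each arrow $k$, that $B\bas_k$ be a preimage under $I(Q)$ of the $k$-th column $\bas_{\sou(k)}-\bas_{\tar(k)}$ of $I(\vec Q)$; such preimages exist because $Q$ is connected, so the only real issue is choosing them compatibly with the constraints $B\vec a_v^+=a_v^+$.

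Next I would check that the two families of constraints are mutually consistent on the sublattice $L:=\Img I(\vec Q^\dagger)^\tr=\langle\vec a_1^+,\dots,\vec a_m^+\rangle$. By~\eqref{EQQ2} we have $I(Q)a_v^+=\bas_v-\bas_{\xi^+_Q(v)}$ and $I(\vec Q)\vec a_v^+=\bas_v-\bas_{\xi^+_{\vec Q}(v)}$, and the hypothesis $\Lambda_Q=\Lambda_{\vec Q}$ together with~\eqref{EQQ5} forces $\xi^-_Q=\xi^-_{\vec Q}$, hence $\xi^+_Q=\xi^+_{\vec Q}$ since $\xi^+$ is the inverse of $\xi^-$ (see~\eqref{EQQ15}); thus any $B$ with $B\vec a_v^+=a_v^+$ already satisfies $I(Q)(B\vec a_v^+)=I(\vec Q)\vec a_v^+$, i.e. $I(Q)B=I(\vec Q)$ holds automatically on $L$. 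Moreover, since $\vec Q$ is connected, $\Ker I(\vec Q^\dagger)^\tr=\Ker I(\vec Q)^\tr=\langle\mathbbm 1_m\rangle$, so $\vec a_2^+,\dots,\vec a_m^+$ is a $\Z$-basis of $L$, and because $\sum_v a_v^+=0=\sum_v\vec a_v^+$ by Lemma~\ref{La}(iii), the prescription $\vec a_v^+\mapsto a_v^+$ for $v=2,\dots,m$ forces $\vec a_1^+\mapsto a_1^+$ as well.

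Finally I would invoke — and this is the one point requiring care — that $L$ is a direct summand of $\Z^n$: the incidence matrix $I(\vec Q)$ of the loop-less quiver $\vec Q$ is totally unimodular of rank $m-1$, so its Smith normal form is $\Id_{m-1}\oplus 0$, whence $\Img I(\vec Q)^\tr$, and therefore also $L=\wc G_{\vec Q}^{-\tr}\,\Img I(\vec Q)^\tr$, is a direct summand; alternatively, for the standard quiver $\vec Q$ a completion of $\vec a_2^+,\dots,\vec a_m^+$ to a $\Z$-basis of $\Z^n$ can be read off directly from the explicit formulas in Remark~\ref{RInc}(b). Choosing such a completion $\vec a_2^+,\dots,\vec a_m^+,e'_1,\dots,e'_c$ with $c=n-m+1$, I would set $B\vec a_v^+:=a_v^+$ and $Be'_j:=b'_j$ where $b'_j$ is any vector with $I(Q)b'_j=I(\vec Q)e'_j$ (a preimage exists since $I(\vec Q)e'_j$ is a $\Z$-combination of vectors $\bas_{\sou(k)}-\bas_{\tar(k)}\in\Img I(Q)$), and extend $\Z$-linearly. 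Then $B\vec a_v^+=a_v^+$ for all $v$ is the inverse-quiver identity, and $I(Q)B=I(\vec Q)$ holds on each basis vector — on the $\vec a_v^+$ by the consistency observed above, on the $e'_j$ by construction — hence on all of $\Z^n$; note that the $B$ so produced need not be $\Z$-invertible, matching the statement. I expect the obstacles to be precisely the direct-summand claim and the bookkeeping needed to see that the two vertex permutations $\xi^\pm_Q$ and $\xi^\pm_{\vec Q}$ coincide; the remainder is routine assembly of linear maps on a basis.
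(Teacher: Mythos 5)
Your proof is correct, but it takes a genuinely different route from the paper. The paper's proof of Proposition~\ref{Ls} is fully explicit: it builds the columns of $B$ (with respect to the canonical basis indexed by the arrows of $\vec{Q}$) as incidence vectors of walks of $Q$ --- the descending structural walks $a_v^-$, arbitrary connecting walks $\delta(t)$, and alternating copies for the degenerate part, see~(\ref{EQQa}) and~(\ref{EQQb}) --- so that $I(Q)B=I(\vec{Q})$ holds column by column via~(\ref{EQQ2}), and then verifies $BI(\vec{Q}^{\dagger})^{\tr}=I(Q^{\dagger})^{\tr}$ by a case-by-case computation against the explicit formulas of Remark~\ref{RInc}$(b)$, split into the cases $\ell=1$ and $\ell>1$. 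You instead impose the second identity as the \emph{definition} of $B$ on the sublattice $L=\Img I(\vec{Q}^{\dagger})^{\tr}$ (using Lemma~\ref{La}$(ii,iii)$ and the fact that $\vec a_2^+,\ldots,\vec a_m^+$ is a $\Z$-basis of $L$ because $\Ker I(\vec{Q})^{\tr}=\langle\mathbbm{1}_m\rangle$), check compatibility with the first identity on $L$ from $\xi^{\pm}_Q=\xi^{\pm}_{\vec{Q}}$, which is exactly what $\Lambda_Q=\Lambda_{\vec{Q}}$ gives via~(\ref{EQQ5}) and~(\ref{EQQ15}), and then extend over a complement of $L$ using purity of $L$ (total unimodularity of incidence matrices, or the explicit triangular completion visible in Remark~\ref{RInc}$(b)$) together with surjectivity of $I(Q)$ onto $\{x\mid \mathbbm{1}^{\tr}x=0\}$. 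All of these steps check out, including the purity claim, which the paper never needs in this form (it only proves purity of $\Img I(Q)\subseteq\Z^m$ in Lemma~\ref{(P):tech1}). What your approach buys is brevity, no case distinction on $\ell$, and in fact a more general statement: nothing in your argument uses that $\vec{Q}$ is standard, so you obtain directly that $\Lambda_Q=\Lambda_{\wt{Q}}$ implies $Q\pseudo\wt{Q}$ for any two connected loop-less quivers with the same numbers of vertices and arrows, i.e.\ the implication $(i)\Rightarrow(ii)$ of Theorem~\ref{MTT} without passing through the standard representative. What the paper's construction buys is effectiveness: its columns are incidence vectors of concretely described walks, which is what feeds Step~5 of Algorithm~\ref{AB}; your $B$ depends on choices of a complement and of preimages, which can be made algorithmic (e.g.\ via the triangular completion and depth-first search) but only with extra bookkeeping.
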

\begin{proof}
If such standard quiver $\vec{Q}$ exists then $\ct(Q)=\ct(\vec{Q})$, see~(\ref{EQQct}) and~(\ref{EQQ5}). Therefore, $\vec{Q}$ is the unique standard quiver $\vec{Q}=\vec{\A}^{\dd}[\pi]$ with $\pi=\ct(\vec{Q})$, see Remark~\ref{RStan} and Corollary~\ref{STDunique}. Consider the vectors $a_v^{\pm}:=\inc(\alpha_Q^{\pm}(v))$ in $\Z^n$ as in Lemma~\ref{La}, with $n=m-1+2\dd+(\ell-1)$. By hypothesis we have $\Lambda_Q=\Lambda_{\vec{Q}}$, which implies that $\xi:=\xi^-_Q=\xi^-_{\vec{Q}}$ by~(\ref{EQQ5}). Consider the description of $\xi$ given in Remark~\ref{Rxi}.

%%%%%%%%%%%%%%%
%%%%%%%%%%%%%%%
\noindent \textbf{Case $\ell=1$.} In this case, we have necessarily $\pi=(m)$. Take the matrices
\begin{eqnarray}
B_1^1&=&[a_1^-,a_2^-,\cdots,a_{m-1}^-], \nonumber \\
B_3&=&[\underbrace{-a_{m-1}^-,a_{m-1}^-,\cdots,-a_{m-1}^-,a_{m-1}^-}_\text{$2\dd$ columns}], \quad\text{and} \nonumber \\
B&=&[B_1^1,B_3]. \label{EQQa}
\end{eqnarray}
Clearly, $B$ is a $n \times n$ matrix since $n=m-1+2\dd$. In the following steps we show that $B$ satisfies the wanted conditions.
\begin{itemize}
 \item[] \textbf{Step~1.} We have $I(Q)B=I(\vec{Q})$. Indeed, recall from Remark~\ref{RInc}$(a)$ that $I(\vec{Q})$ has the following shape
\[
I(\vec{Q})=[I(\vec{\A}_m),\underbrace{-x,x,\cdots,-x,x}_\text{$2\dd$ columns}],
\]
where $x=\bas_{m-1}-\bas_m$.

 By definition of $\alpha_Q^-(v)$ we have $\sou(\alpha_Q^-(v))=v$, and by~(\ref{DEFperm}), $\tar(\alpha_Q^-(v))=\xi(v)$ for any $v \in \{1,\ldots,m\}$. By Remark~\ref{Rxi}, $\xi(v)=v+1$ if $v<m$, and $\xi(m)=1$. Thus, using~(\ref{EQQ2}), for $1 \leq i=v < m$ we get
 \[
I(Q)B\bas_i= I(Q)a^-_v=I(Q)\inc(\alpha^-_Q(v))=\bas_v-\bas_{v+1}=I(\vec{\A}_m)\bas_i.
 \]
Moreover, $I(Q)B\bas_{m-1}=I(Q)a^-_{m-1}=\bas_{m-1}-\bas_{m}=x$, and for $m \leq i \leq n=m-1+2\dd$ we have
\[
I(Q)B\bas_i= \pm I(Q)a^-_{m-1}=\pm x,
\]
with signs corresponding to the parity of $i$. Altogether, for $1 \leq i \leq n$ we have $I(Q)B\bas_i=I(\vec{Q})\bas_i$, hence our claim.

 \item[] \textbf{Step~2.} We have $BI(\vec{Q}^{\dagger})^{\tr}=I(Q^{\dagger})^{\tr}$. Indeed, using Remark~\ref{RInc}$(b)$ and Lemma~\ref{La}, we get
 \[
 BI(\vec{Q}^{\dagger})^{\tr}\bas_1=B\mathbbm{1}_n=\sum_{i=1}^{m-1}a^-_i+\sum_{j=1}^{\dd}(a^-_{m-1}-a^-_{m-1})=-a_m^-=a_{\xi(m)}^+=a_1^+.
 \]
 Moreover, for $1<i=v<m$ we have
 \[
 BI(\vec{Q}^{\dagger})^{\tr}\bas_i=-B\bas_{v-1}=-a_{v-1}^-=a_{\xi(v-1)}^+=a_v^+.
 \]
Using again Lemma~\ref{La}$(i)$ we get
 \[
 BI(\vec{Q}^{\dagger})^{\tr}\bas_m=-B\bas_{m-1}-\sum_{j=m}^{n}B\bas_j=-a_{m-1}^-=a_{\xi(m-1)}^+=a_m^+.
 \]
By Lemma~\ref{La}$(ii)$ we conclude that $BI(\vec{Q}^{\dagger})^{\tr}\bas_i=a^+_i=I(Q^{\dagger})^{\tr}\bas_i$ for $1 \leq i \leq n$. Hence, the case $\ell=1$ holds.
\end{itemize}

%%%%%%%%%%%%%%%%%%
%%%%%%%%%%%%%%%%%%
\noindent \textbf{Case $\ell>1$.} For $t=1,\ldots,\ell-1$ fix arbitrary walks $\delta(t)$ from $v_{t}$ to $v_{t}+1$ (recall that $v_t:=m-(\pi_1+\ldots+\pi_t)$ for $t=0,\ldots,m$), and take $d_t=\inc(\delta(t))$. Consider the matrices
\begin{eqnarray}
B_1^t&=&[\underbrace{a_{v_{\ell+1-t}+1}^-,a_{v_{\ell+1-t}+2}^-,\cdots,a_{v_{\ell-t}-1}^-}_\text{$\pi_{\ell+1-t}-1$ columns}], \qquad \text{for $t=1,\ldots,\ell$,} \nonumber \\
B_1&=&[\underbrace{B_1^1,d_{\ell-1},B_1^2,d_{\ell-2},B_1^3,\cdots ,B_1^{\ell-1},d_{1},B_1^{\ell}}_\text{$m-1$ columns}], \nonumber \\
B_2&=&[\underbrace{y_1,y_2,\cdots,y_{\ell-1}}_\text{$\ell-1$ columns}], \nonumber \\
B_3&=&[\underbrace{-y_{\ell-1},y_{\ell-1},\cdots,-y_{\ell-1},y_{\ell-1}}_\text{$2\dd$ columns}], \quad\text{and} \nonumber \\
B&=&[B_1,B_2,B_3], \label{EQQb}
\end{eqnarray}
where $y_t:=a_{v_{t-1}}^--d_t=\inc(\alpha_Q^-(v_{t-1})\delta(t)^{-1})$ for $t=1,\ldots,\ell-1$. Clearly, $B$ is a $n \times n$ matrix since $n=m-1+2\dd+(\ell-1)$. Again, we show that $B$ satisfies the wanted conditions in two steps.
\begin{itemize}
 \item[] \textbf{Step~1.} We have $I(Q)B=I(\vec{Q})$. By construction, the $v$-th column of $B_1$ (for $1\leq v<m$) is given by $\inc(\gamma)$ for a walk $\gamma$ in $Q$ with $\sou(\gamma)=v$ and $\tar(\gamma)=v+1$ (indeed, if $v=v_t$ for some $t=1,\ldots,\ell-1$, then $\gamma=\delta(t)$, otherwise $\gamma=\alpha_Q^-(v)$). In particular, using~(\ref{EQQ2}),
 \[
 I(Q)B_1=I(\vec{\A}_m).
 \]
 Again by~(\ref{EQQ2}), for $t=1,\ldots,\ell-1$ we have $I(Q)y_t=I(Q)\inc(\alpha_Q^-(v_{t-1})\delta(t)^{-1})=\bas_{v_{t-1}}-\bas_{v_t}=x_t$, which implies that
 \[
 I(Q)B=[I(\vec{\A}_m),x_1,\cdots,x_{\ell-1},\underbrace{-x_{\ell-1},x_{\ell-1},\cdots,-x_{\ell-1},x_{\ell-1}}_\text{$2\dd$ columns}]=I(\vec{Q}),
 \]
 by Remark~\ref{RInc}$(a)$.

 \item[] \textbf{Step~2.} We have $BI(\vec{Q}^{\dagger})^{\tr}=I(Q^{\dagger})^{\tr}$. Indeed, using Remark~\ref{RInc}$(b)$ and Lemma~\ref{La} we get
 \begin{eqnarray*}
 BI(\vec{Q}^{\dagger})^{\tr}\bas_1&=&B\mathbbm{1}_n=\sum_{\substack{v=1,\ldots,m\\ v \neq v_1,\ldots,v_{\ell}}}a^-_v+\sum_{t=1}^{\ell-1}d_t+\sum_{t=1}^{\ell-1}(a_{v_{t-1}}^--d_t)+\sum_{j=1}^{\dd}(y_{\ell-1}-y_{\ell-1}) \\
 &=& \sum_{\substack{v=1,\ldots,m\\v \neq v_{\ell-1}}}a_v^-=-a_{v_{\ell-1}}^-=a_{\xi(v_{\ell-1})}^+=a_{v_{\ell}+1}^+=a_1^+,
 \end{eqnarray*}
since $v_{\ell}=0$. Moreover, for $v \in \{1,\ldots,m\}- \{v_t+1\}_{t=1,\ldots,\ell}$ we have
 \[
 BI(\vec{Q}^{\dagger})^{\tr}\bas_v=-B\bas_{v-1}=-a_{v-1}^-=a_{\xi(v-1)}^+=a_v^+.
 \]
If $v=v_{t}+1$ for some $t \in \{1,\ldots,{\ell-2}\}$, we have (recall that $z_t:=\bas_{v_t}+\bas_{m-1+t}$ for $t=1,\ldots,\ell-1$),
 \begin{eqnarray*}
 BI(\vec{Q}^{\dagger})^{\tr}\bas_v &=& -Bz_t=-B(\bas_{v_{t-1}}+\bas_{m-1+t})=-d_{t}-(a_{v_{t-1}}^--d_t)\\
 &=&-a_{v_{t-1}}^-=a_{\xi(v_{t-1})}^+=a_{v_t+1}^+=a_v^+.
 \end{eqnarray*}
Finally, \vspace*{-2mm}
 \begin{eqnarray*}
 BI(\vec{Q}^{\dagger})^{\tr}\bas_{v_{\ell-1}+1}&=&-Bz_{\ell-1}-\sum_{j=m+\ell-1}^{n}B\bas_j\\[-5pt]
 &=&-B(\bas_{v_{\ell-1}}+\bas_{m+\ell-2})-\sum_{j=1}^{\dd}(y_{\ell-1}-y_{\ell-1}) \\[-3pt]
 &=&-d_{\ell-1}-(a_{v_{\ell-2}}^--d_{\ell-1})=a_{v_{\ell-2}}^-=a_{\xi(v_{\ell-2})}^+=a_{v_{\ell-1}+1}^+.
 \end{eqnarray*}
We conclude that $BI(\vec{Q}^{\dagger})^{\tr}\bas_v=a^+_v=I(Q^{\dagger})^{\tr}\bas_v$ for all $1 \leq v \leq m$, by Lemma~\ref{La}$(ii)$.
\end{itemize}

\vspace*{-6mm}
\end{proof}

\begin{example}\label{EXAtwo}
Let us apply Proposition~\ref{Ls} (keeping the notation of its proof) to the running Example~\ref{EXAzero}.  First we take the descending structural walks of quiver $Q^1$:
\[
\xy 0;/r.20pc/:
(-68, 10)="Fr1" *{};
( 68, 10)="Fr2" *{};
( 68,-10)="Fr3" *{};
(-68,-10)="Fr4" *{};
%-----
(-33, 8)="F1" *{\alpha^-_{Q^1}(1)=4^{-1}2^{-1},};
(-33, 0)="F2" *{\alpha^-_{Q^1}(2)=3^{-1}1^{-1},};
(-26, -8)="F3" *{\alpha^-_{Q^1}(3)=4^{+1}3^{+1}2^{+1}1^{+1},};
(40, 0)="T1" *{B'_1=\begin{pmatrix} 0&\widehat{1}&1&\widehat{1}\\\widehat{1}&0&0&0\\ 0&\widehat{1}&1&\widehat{1}\\\widehat{1}&0&0&0 \end{pmatrix}.};
\endxy
\]
The cycle type of $Q^1$ has length $\ell(\ct(Q^1))=1$ (cf.~Example~\ref{EXAone}). Then the matrix $B_1'$ obtained in Proposition~\ref{Ls}, given by $B_1'=[a^-_1,a_2^-,-a_2^-,a_2^-]$, is shown explicitly above. On the other hand, the cycle type of $Q^0$ has length $\ell(\ct(Q^0))=3$, and its descending structural walks are given by
\[
\xy 0;/r.20pc/:
(-68, 10)="Fr1" *{};
( 68, 10)="Fr2" *{};
( 68,-10)="Fr3" *{};
(-68,-10)="Fr4" *{};
%-----
(-30, 8)="F1" *{\alpha^-_{Q^0}(1)=4^{+1}2^{+1}1^{+1},};
(-30, 0)="F2" *{\alpha^-_{Q^0}(2)=4^{-1}3^{-1}2^{-1},};
(-33, -8)="F3" *{\alpha^-_{Q^0}(3)=3^{+1}1^{-1},};
(40, 0)="T1" *{B'_0=\begin{pmatrix} 0&0&\widehat{1}&0\\ 0&1&\widehat{1}&\widehat{1}\\ 0&0&1&\widehat{1}\\1&0&0&\widehat{2} \end{pmatrix}.};
\endxy
\]
We need arbitrary walks $\delta(t):v_t \to v_t+1$ in $Q^0$ for $t=1,\ldots,\ell-1=2$, which we choose to be $\delta(1)=2^{+1}$ and $\delta(2)=4^{+1}$ (note that $v_0=3$, $v_1=2$, $v_2=1$ and $v_3=0$, since $\ct(Q^0)=(1,1,1)$). The matrix $B_0'$ obtained in Proposition~\ref{Ls}, given by $B_0'=[d_2,d_1,a^-_3-d_1,a^-_2-d_2]$, is shown above. A direct computation shows that $I(Q^i)B'_i=I(\vec{Q}^i)$ and $I((\vec{Q}^i)^{\dagger})(B'_i)^{\tr}=I((Q^i)^{\dagger})$ for $i=1,0$, see Example~\ref{EXAone} for the description of quivers $\vec{Q}^i$ and their inverses.
\end{example}

%------------------------------------------------------------------
%------------------------------------------------------------------
%------------------------------------------------------------------
\section{Radicals and invertibility assumption} \label{S3}

In this section we analyze some strong Gram invariants within the radical of a non-negative unit form (Lemmas~\ref{(P):radre} and~\ref{(P):basis}) in order to prove our invertibility-correction algorithm Proposition~\ref{(P):inv} (see Algorithm~\ref{A2}).

%------------------
\subsection{The reduced radical}\label{reRad}

Recall that a subgroup $X$ of $\Z^m$ is called \textbf{pure} if whenever $ax \in X$ for some $x \in \Z^m$ and some non-zero $a \in \Z$, then $x \in X$.  For any unit form $q$ consider the following subgroups of $\Z^n$,
\begin{eqnarray}
\rad(q) & = & \{ x \in \Z^n \mid  \text{$y^{\tr}G_qx= 0$ for all $y$ in $\Z^n$} \}=\{x \in \Z^n \mid  \text{$G_qx= 0$}\}, \nonumber \\
\rad_{re}(q) & = & \{ x \in \rad(q) \mid \text{$y^{\tr}\wc{G}_qx= 0$ for all $y \in \rad(q)$} \}. \nonumber
\end{eqnarray}

These are pure subgroups of $\Z^n$. The group $\rad(q)$ is called the \textbf{radical} of $q$, and we will refer to $\rad_{re}(q)$ as the \textbf{reduced radical} of $q$. The rank $\CRnk_{re}(q)$ of the reduced radical $\rad_{re}(q)$ of $q$ will be called \textbf{reduced corank} of $q$. The restriction of the standard morsification $\wc{\mathbbm{b}}_q$ to the radical of $q$ is denoted by $\wc{\mathbbm{r}}_q$. To be precise, let $k:\rad(q) \to \Z^n$ be the inclusion of the radical of $q$ in $\Z^n$, and take
\[
\wc{\mathbbm{r}}_q(x,y):=\wc{\mathbbm{b}}_q(k(x),k(y)) \quad \text{for $x,y \in \rad(q)$.}
\]
Clearly, $\wc{\mathbbm{r}}_q$ is a skew-symmetric bilinear form. In particular, its rank is an even non-negative number, $\Rnk(\wc{\mathbbm{r}}_q):=2\dd$ for some $\dd  \geq 0$ (cf.~\cite[XI, \S 4]{frG}), and we call $\dd_q:=\dd$ the \textbf{degree of degeneracy of $q$}.\label{LABdd} Note that
\begin{equation}\label{EQQdd}
\CRnk(q)=2\dd_q+\CRnk_{re}(q).
\end{equation}
Observe also that the reduced corank $\CRnk_{re}(q)$ and the degree of degeneracy $\dd_q$ are strong Gram invariants of $q$ (see Lemma~\ref{LinDet} below). To the best of the author's knowledge, these notions are new in the literature on integral bilinear and quadratic forms. Fixing a $\Z$-basis of $\rad(q)$ we get matricial forms $K$ of $k$ and $W_q$ of $\wc{\mathbbm{r}}_q$ given by
\begin{equation}\label{EQQW}
W_q=K^{\tr}\wc{G}_qK.
\end{equation}

Let us illustrate these notions.

\begin{example}\label{EXAthree}
Consider the quadratic forms $q^1$ and $q^0$ of Example~\ref{EXAzero}. A basis of the radical $\rad(q_i)$ is given in the columns of the matrix $K_i$ below, and the restriction $\wc{\mathbbm{r}}_{q_i}$ of the standard morsification of $q_i$ to its radical, under such basis, is given by the matrix $W_{q_i}$ as in~(\ref{EQQW}), for $i=0,1$,
\[
K_1=\begin{pmatrix}1&1\\1&0\\1&0\\0&\widehat{1}\end{pmatrix}, \qquad W_{q_1}=\begin{pmatrix}0&1\\\widehat{1}&0\end{pmatrix}, \qquad \begin{matrix} \CRnk_{re}(q_1)=0, \\ \dd_{q_1}=1. \end{matrix}
\]
\[
K_0=\begin{pmatrix}1&\widehat{1}\\1&0\\0&1\\1&0\end{pmatrix}, \qquad W_{q_0}=\begin{pmatrix}0&0\\0&0\end{pmatrix}, \qquad \begin{matrix} \CRnk_{re}(q_0)=2, \\ \dd_{q_0}=0. \end{matrix}
\]
Being the nullity of $W_{q_i}$, the respective reduced coranks are given by $\CRnk_{re}(q_1)=0$ and $\CRnk_{re}(q_0)=2$, and by~(\ref{EQQdd}) the corresponding degrees of degeneracy are $\dd_{q_1}=1$ and $\dd_{q_0}=0$.
\end{example}

Recall that $\mathbbm{1}_m$ denotes the vector in $\Z^m$ with all entries equal to $1$.

\begin{lemma}\label{(P):tech1}
Let $Q$ be a connected loop-less quiver with $m$ vertices. Then the image of the incidence matrix $I(Q)$, as a linear transformation $I(Q):\Z^n \to \Z^m$, is the set
\[
\Img I(Q) =\{x \in \Z^m \mid \mathbbm{1}^{\tr}_mx=0\},
\]
which is a pure subgroup of $\Z^m$.
\end{lemma}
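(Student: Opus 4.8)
The plan is to prove the two assertions separately: first the explicit description of $\Img I(Q)$, and then purity.

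For the description of the image, I would proceed by double inclusion. The inclusion $\Img I(Q) \subseteq \{x \in \Z^m \mid \mathbbm{1}_m^{\tr}x = 0\}$ is immediate: each column of $I(Q)$ has the form $\bas_{\sou(i)} - \bas_{\tar(i)}$, so $\mathbbm{1}_m^{\tr}(\bas_{\sou(i)} - \bas_{\tar(i)}) = 1 - 1 = 0$ (this is just the observation $\mathbbm{1}_m^{\tr}I(Q) = 0$ already noted in the proof of Lemma~\ref{La}), and hence every $\Z$-linear combination of columns lies in that hyperplane-lattice. For the reverse inclusion, I would use connectedness of $Q$: given $x \in \Z^m$ with $\sum_{v} x_v = 0$, I want to write $x$ as an integer combination of the vectors $\bas_{\sou(i)} - \bas_{\tar(i)}$. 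The key point is that since $Q$ is connected, for any two vertices $u,v$ there is a walk $w$ from $u$ to $v$, and by identity~(\ref{EQQ2}) we have $I(Q)\inc(w) = \bas_u - \bas_v$; thus every ``difference of basis vectors'' $\bas_u - \bas_v$ lies in $\Img I(Q)$. Now fix a vertex, say $m$, and write $x = \sum_{v=1}^{m} x_v \bas_v = \sum_{v=1}^{m-1} x_v(\bas_v - \bas_m) + \left(\sum_{v=1}^{m}x_v\right)\bas_m = \sum_{v=1}^{m-1} x_v(\bas_v - \bas_m)$, using the hypothesis that the coefficient of $\bas_m$ collapses to zero. Each summand $\bas_v - \bas_m$ is in $\Img I(Q)$ by the walk argument, so $x \in \Img I(Q)$.

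For purity of $N := \{x \in \Z^m \mid \mathbbm{1}_m^{\tr}x = 0\}$, suppose $ax \in N$ for some $x \in \Z^m$ and nonzero $a \in \Z$. Then $a(\mathbbm{1}_m^{\tr}x) = \mathbbm{1}_m^{\tr}(ax) = 0$, and since $\Z$ has no zero divisors and $a \neq 0$, we conclude $\mathbbm{1}_m^{\tr}x = 0$, i.e. $x \in N$. Hence $N$ is pure, and since $\Img I(Q) = N$ by the first part, $\Img I(Q)$ is pure as well.

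I do not anticipate a serious obstacle here; the only place requiring care is the reverse inclusion, where one must genuinely invoke connectedness — without it the statement is false, and the clean way to package the argument is through the existence of walks between any pair of vertices together with identity~(\ref{EQQ2}), which reduces everything to the elementary lattice computation $x = \sum_{v<m} x_v(\bas_v - \bas_m)$.
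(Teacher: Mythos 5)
Your proof is correct, and it takes a somewhat different route from the paper's. The paper first treats the case of a tree by citing earlier results (\cite[Propositions~3.13 and~3.8]{jaJ2020a}) to write $I(Q)=I(S)B$ with $B$ a $\Z$-invertible matrix and $S$ a maximal star, whose incidence columns $\pm(\bas_v-\bas_{v'})$ form a basis of $C:=\{x\in\Z^m \mid \mathbbm{1}_m^{\tr}x=0\}$; it then handles a general connected loop-less quiver by passing to a spanning tree, whose incidence columns are among those of $I(Q)$. You instead argue directly on $Q$: connectedness provides a walk $w$ from $u$ to $v$ for any pair of vertices, identity~(\ref{EQQ2}) gives $I(Q)\inc(w)=\bas_u-\bas_v\in \Img I(Q)$, and the decomposition $x=\sum_{v=1}^{m-1}x_v(\bas_v-\bas_m)$ for $x$ with zero coordinate sum finishes the reverse inclusion. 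Your version is self-contained (no appeal to the star-reduction results of~\cite{jaJ2020a} and no spanning-tree extraction), and it reuses a combinatorial identity the paper already relies on throughout, so it is arguably more economical here; the paper's route yields the slightly stronger observation that for a tree the incidence columns form a $\Z$-basis of $C$, a fact not needed for the statement itself. The inclusion $\Img I(Q)\subseteq C$ and the purity argument are identical in both proofs.
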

\begin{proof}
Assume first that $Q$ is a tree, that is, that $Q$ has $m-1$ arrows. By~\cite[Propositions~3.13 and 3.8]{jaJ2020a}, there is a $\Z$-invertible $(m-1) \times (m-1)$ matrix $B$ such that $I(Q)=I(S)B$, where $S$ is a maximal star with center a vertex $v$, that is, the columns of $I(S)$ are given by $\pm(\bas_v-\bas_{v'})$ for $v' \in \{1,\ldots,m\}-\{v\}$. These columns are a basis of the group $C:=\{x \in \Z^m \mid \mathbbm{1}^{\tr}_mx=0 \}$, which shows that $\Img I(Q)=C$.

\medskip
Now, take an arbitrary loop-less quiver $Q$ with $m$ vertices. Since $\mathbbm{1}_m^{\tr}I(Q)=0$, we have $\Img I(Q) \subseteq C$. Choose a spanning tree $\wt{Q}$ of $Q$. Clearly, the image of $I(\wt{Q})$ is a subset of the image of $I(Q)$, since $I(\wt{Q})$ is obtained from $I(Q)$ by deleting those columns indexed by the elements of the set $Q_1-\wt{Q}_1$. Thus, by the first part of the proof, we have $C \subseteq \Img I(Q)$, hence $\Img I(Q)=C$. Clearly, if $ax \in C$ for some non-zero $a \in \Z$, then $\mathbbm{1}^{\tr}_m(ax)=a(\mathbbm{1}_m^{\tr}x)=0$. Since $a \neq 0$, we have $x \in C$, that is, $C$ is a pure subgroup of $\Z^m$.
\end{proof}

The following is a useful characterization of the reduced radical in case of non-negative unit forms of Dynkin type $\A_r$.

\begin{lemma}\label{(P):radre}
Let $Q$ be a connected loop-less quiver with inverse quiver $Q^{\dagger}$, Coxeter-Gram matrix $\Phi_Q$ and Coxeter-Laplacian $\Lambda_Q$. Then
\begin{eqnarray*}
\rad(q_Q) &=& \{x \in \Z^n \mid \Phi^{\tr}_Qx=x \}, \\
\rad_{re}(q_Q) &=& \{ I(Q^{\dagger})^{\tr}y \in \Z^n \mid \text{$y \in \Z^m$ and $\Lambda^{\tr}_Qy=y$} \}.
\end{eqnarray*}
\end{lemma}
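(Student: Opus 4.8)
The plan is to characterize the two radicals through the Coxeter operators, using the fundamental identities relating $\Phi_Q$, $\Lambda_Q$ and the incidence matrix $I(Q)$ recorded in Remark~\ref{LAMBDA}.

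\textbf{The radical.} First I would compute $\rad(q_Q)$. Since $G_{q_Q}=G_Q=I(Q)^{\tr}I(Q)$, we have $x\in\rad(q_Q)$ iff $I(Q)^{\tr}I(Q)x=0$, which (as $I(Q)^{\tr}I(Q)$ is positive semidefinite, or directly since $\|I(Q)x\|^2=x^{\tr}I(Q)^{\tr}I(Q)x$) is equivalent to $I(Q)x=0$. Now from the definition $\Phi_Q=\Id_n-I(Q)^{\tr}I(Q)\wc{G}_Q^{-1}$, so $\Phi_Q^{\tr}=\Id_n-\wc{G}_Q^{-\tr}I(Q)^{\tr}I(Q)$. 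If $I(Q)x=0$ then clearly $\Phi_Q^{\tr}x=x$. Conversely, if $\Phi_Q^{\tr}x=x$ then $\wc{G}_Q^{-\tr}I(Q)^{\tr}I(Q)x=0$, and since $\wc{G}_Q$ is $\Z$-invertible this gives $I(Q)^{\tr}I(Q)x=0$, hence $x\in\rad(q_Q)$. This settles the first equality.

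\textbf{The reduced radical.} For the second equality I would argue as follows. First show the inclusion $\rad_{re}(q_Q)\supseteq\{I(Q^{\dagger})^{\tr}y\mid \Lambda_Q^{\tr}y=y\}$. Take $y$ with $\Lambda_Q^{\tr}y=y$ and set $x=I(Q^{\dagger})^{\tr}y=\wc{G}_Q^{-\tr}I(Q)^{\tr}y$ using~(\ref{EQQ3}). One checks $I(Q)x=I(Q)\wc{G}_Q^{-\tr}I(Q)^{\tr}y$; using $\Lambda_Q=P(\xi^-_Q)$ is a permutation matrix so $\Lambda_Q^{-1}=\Lambda_Q^{\tr}$, and the identity $\Lambda_Q^{\tr}=\Id_m-I(Q)\wc{G}_Q^{-\tr}I(Q)^{\tr}$ (the transpose of~(\ref{EQQCL}), noting $\wc{G}_Q^{-1}$ and $\wc{G}_Q^{-\tr}$ differ only by transposition and $I(Q)\wc{G}_Q^{-1}I(Q)^{\tr}$ need not be symmetric — here I must be a little careful and instead use $\Id_m-\Lambda_Q^{\tr}=I(Q)\wc{G}_Q^{-\tr}I(Q)^{\tr}$), we get $I(Q)x=(\Id_m-\Lambda_Q^{\tr})y=y-\Lambda_Q^{\tr}y=0$, so $x\in\rad(q_Q)$. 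Then for any $z\in\rad(q_Q)$, i.e.\ $I(Q)z=0$, we have $z^{\tr}\wc{G}_Qx=z^{\tr}\wc{G}_Q\wc{G}_Q^{-\tr}I(Q)^{\tr}y$; this requires relating $\wc{G}_Q\wc{G}_Q^{-\tr}$ to $-\Phi_Q^{\tr}$ or similar, and then using $I(Q)z=0$ together with Remark~\ref{LAMBDA}$(d,e)$ to kill the expression, showing $x\in\rad_{re}(q_Q)$.

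\textbf{The reverse inclusion and the main obstacle.} For $\rad_{re}(q_Q)\subseteq\{I(Q^{\dagger})^{\tr}y\mid\Lambda_Q^{\tr}y=y\}$: given $x\in\rad_{re}(q_Q)\subseteq\rad(q_Q)$, we have $I(Q)x=0$, so $x\in\Ker I(Q)$. Here I would use that, since $q_Q$ has Dynkin type $\A_{m-1}$, the inverse quiver $Q^{\dagger}$ is again connected loop-less with the same vertex set, and by Lemma~\ref{(P):tech1} applied to $Q^{\dagger}$, combined with $I(Q^{\dagger})=I(Q)\wc{G}_Q^{-1}$, one identifies $\Ker I(Q)$ with $\wc{G}_Q\Ker I(Q^{\dagger})$ — actually more useful: write $x=\wc{G}_Q^{-\tr}w$ for suitable $w$, or directly observe that the condition $z^{\tr}\wc{G}_Qx=0$ for all $z\in\rad(q_Q)$ forces $\wc{G}_Qx$ to be orthogonal to $\Ker I(Q)$, hence $\wc{G}_Qx\in(\Ker I(Q))^{\perp}=\Img I(Q)^{\tr}$ (using purity, Lemma~\ref{(P):tech1}), so $\wc{G}_Qx=I(Q)^{\tr}y$ for some $y\in\Z^m$, giving $x=\wc{G}_Q^{-\tr}I(Q)^{\tr}y=I(Q^{\dagger})^{\tr}y$ after transposing appropriately. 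Finally $I(Q)x=0$ translates via the Laplacian identity into $(\Id_m-\Lambda_Q^{\tr})y=0$, i.e.\ $\Lambda_Q^{\tr}y=y$, completing the argument. I expect the main obstacle to be the bookkeeping with transposes: the paper writes $I(Q^{\dagger})=I(Q)\wc{G}_Q^{-1}$ but the reduced radical is defined via $\wc{\mathbbm{b}}_q$ (upper triangular), so I must keep straight which of $\wc{G}_Q$, $\wc{G}_Q^{\tr}$, $\wc{G}_Q^{-1}$, $\wc{G}_Q^{-\tr}$ appears at each step, and in particular verify the purity-based step that $(\Ker I(Q))^{\perp}\cap\Z^m=\Img I(Q)^{\tr}$ over $\Z$ (not just over $\Q$), for which Lemma~\ref{(P):tech1} and the fact that $\Img I(Q)$ is pure are exactly what is needed.
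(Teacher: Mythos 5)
Your outline follows the paper's own route: the first identity via $G_Q=\wc{G}_Q+\wc{G}_Q^{\tr}$ and $\Z$-invertibility of $\wc{G}_Q$, and for the reduced radical the two key ingredients the paper uses, namely $I(Q)I(Q^{\dagger})^{\tr}=\Id_m-\Lambda_Q^{\tr}$ for the inclusion $\supseteq$, and purity of $\Img I(Q)$ (Lemma~\ref{(P):tech1}, exploited through the factorization property of Remark~\ref{Rpure}$(e)$, which is exactly your statement that integral functionals vanishing on $\Ker I(Q)$ come from $\Img I(Q)^{\tr}$) for the inclusion $\subseteq$.

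One repair is needed in the reverse inclusion, precisely at the transpose point you flagged: from $\wc{G}_Qx=I(Q)^{\tr}y$ you get $x=\wc{G}_Q^{-1}I(Q)^{\tr}y$, which is \emph{not} $I(Q^{\dagger})^{\tr}y=\wc{G}_Q^{-\tr}I(Q)^{\tr}y$, so running the orthogonality argument on the column vector $\wc{G}_Qx$ lands on the wrong matrix. The fix is immediate and is how the paper sets it up: since $x\in\rad(q_Q)$ one has $z^{\tr}\wc{G}_Qx=-z^{\tr}\wc{G}_Q^{\tr}x$ for all $z$ (as $z^{\tr}G_Qx$ need only use $x\in\rad$), so the reduced-radical condition equally says that $\wc{G}_Q^{\tr}x$ is orthogonal to $\Ker I(Q)$; equivalently, factor the row functional $x^{\tr}\wc{G}_Q$ through $I(Q)$ to get $x^{\tr}\wc{G}_Q=y^{\tr}I(Q)$, whose transpose is $\wc{G}_Q^{\tr}x=I(Q)^{\tr}y$, i.e.\ $x=I(Q^{\dagger})^{\tr}y$; then $I(Q)x=0$ gives $\Lambda_Q^{\tr}y=y$ as you say. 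The same one-line trick also closes the step you left vague in the forward inclusion: for $z\in\rad(q_Q)$, $z^{\tr}\wc{G}_QI(Q^{\dagger})^{\tr}y=-z^{\tr}\wc{G}_Q^{\tr}I(Q^{\dagger})^{\tr}y=-[I(Q^{\dagger})\wc{G}_Qz]^{\tr}y=-[I(Q)z]^{\tr}y=0$, with no need to invoke $\Phi_Q$ or Remark~\ref{LAMBDA}$(d,e)$ (your route through $\wc{G}_Q\wc{G}_Q^{-\tr}=-\Phi_Q^{-1}$ and $\Phi_Q^{-1}I(Q)^{\tr}y=I(Q)^{\tr}\Lambda_Q^{\tr}y$ also works, just more heavily).
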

\begin{proof}
The first identity is easy to verify, since $G_Qx=(\wc{G}_Q+\wc{G}_Q^{\tr})x=0$ if and only if $\wc{G}_Qx=-\wc{G}_Q^{\tr}$, that is, if and only if $x=\Phi_Q^{\tr}x$.

\medskip
For the second identity, recall from~(\ref{EQQ3}) that $I(Q^{\dagger})=I(Q)\wc{G}_Q^{-1}$, and observe that if $\Lambda_Q^{\tr}y=y$ then
\[
I(Q)I(Q^{\dagger})^{\tr}y=(\Id-\Lambda^{\tr}_Q)y=0,
\]
that is, $I(Q^{\dagger})^{\tr}y \in \Ker I(Q)$. Moreover, for any $x \in \rad(q_Q)=\Ker I(Q)$ we have
\begin{eqnarray*}
\langle x, I(Q^{\dagger})^{\tr}y\rangle_Q &=&x^{\tr}\wc{G}_QI(Q^{\dagger})^{\tr}y=-x^{\tr}\wc{G}^{\tr}_QI(Q^{\dagger})^{\tr}y \\ &=& -[I(Q^{\dagger})\wc{G}_Qx]^{\tr}y=-[I(Q)x]^{\tr}y=0,
\end{eqnarray*}
which means that $I(Q^{\dagger})^{\tr}y \in \rad_{re}(q_Q)$.

\medskip
On the other hand, take an arbitrary $x$ in $\rad_{re}(q_Q)$ and choose a kernel matrix $K$ of $I(Q)$. Then $x^{\tr}\wc{G}_QK=0$, and since $\Img I(Q)$ is pure (Lemma~\ref{(P):tech1}), by Remark~\ref{Rpure}(e) below there is a vector $y \in \Z^m$ such that
\[
x^{\tr}\wc{G}_Q=y^{\tr}I(Q), \quad \text{that is,} \quad x=I(Q^{\dagger})^{\tr}y.
\]
Moreover, we have $I(Q)x=0$, which implies that $(\Id-\Lambda_Q^{\tr})y=I(Q)I(Q^{\dagger})^{\tr}y=0$. We conclude that $x=I(Q^{\dagger})^{\tr}y$ for some $y \in \Z^m$ with $\Lambda_Q^{\tr}y=y$.
\end{proof}

%------------------
%------------------
\subsection{Bases for the reduced radical}

Let $v^1_1,\ldots,v^1_{\ell}$ be representative vertices of the $\xi^-_Q$-orbits in $Q_0$, with orbit sizes $\pi_1 \geq \pi_2 \geq \ldots \geq \pi_{\ell}$. The cycle type of $Q$, as given in~(\ref{EQQct}), is the partition $\ct(Q)=(\pi_1,\ldots,\pi_{\ell})$ of the integer $m=|Q_0|$ (cf.~\cite[Definition~4.2]{jaJ2020b}). For $t=1,\ldots,\ell$ consider the concatenated walks
\begin{equation}\label{EQQwalks}
\beta_t=\alpha^-_Q(v^1_t)\alpha^-_Q(v^2_t) \cdots \alpha^-_Q(v^{\pi_t}_t),
\end{equation}
where $v^{r+1}_t=\xi^-_Q(v^r_t)$ for $r=1,\ldots,\pi_t-1$, and $v^1_t=\xi^-_Q(v^{\pi_t}_t)$. This is indeed a walk, since $\tar(\alpha^-_Q(v_t^r))=\xi^-_Q(v_t^r)=v_t^{r+1}=\sou(\alpha^-_Q(v_t^{r+1}))$ if $1 \leq r <\pi_t$. Alternatively, taking $\mathbbm{1}_{[t]}$ as the vector in $\Z^m$ with entry in position $v$ given by $1$ if $v$ is in the $\xi^-_Q$-orbit of $v_t^1$, and $0$ otherwise (for $t=1,\ldots,\ell$), then using Lemma~\ref{La}$(i,ii)$ we have
\begin{equation}\label{EQQbasis}
\inc(\beta_t)=-I(Q^{\dagger})^{\tr}\mathbbm{1}_{[t]}, \quad \text{for $t=1,\ldots,\ell$}.
\end{equation}

\begin{lemma}\label{(P):basis}
The set $\beta\{t_0\}:=\{\inc(\beta_1),\ldots,\inc(\beta_{t_0-1}),\inc(\beta_{t_0+1}),\ldots,\inc(\beta_{\ell})\}$ is a basis of $\rad_{re}(q_Q)$ for any $t_0\in \{1,\ldots,\ell\}$.
\end{lemma}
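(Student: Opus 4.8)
The plan is to show that the $\ell$ vectors $\inc(\beta_1),\ldots,\inc(\beta_\ell)$ span $\rad_{re}(q_Q)$, that they satisfy a single linear relation (their sum is zero), and that any $\ell-1$ of them are linearly independent. First I would invoke the characterization of the reduced radical from Lemma~\ref{(P):radre}, namely $\rad_{re}(q_Q)=\{I(Q^{\dagger})^{\tr}y \mid y \in \Z^m,\ \Lambda_Q^{\tr}y=y\}$. By Remark~\ref{LAMBDA}$(b)$ and~(\ref{EQQ5}) we have $\Lambda_Q=P(\xi_Q^-)$, so the fixed space $\{y \mid \Lambda_Q^{\tr}y=y\}$ in $\Z^m$ is exactly the set of vectors constant on each $\xi_Q^-$-orbit; hence it is freely generated by the indicator vectors $\mathbbm{1}_{[1]},\ldots,\mathbbm{1}_{[\ell]}$ of the $\ell$ orbits. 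Applying $I(Q^{\dagger})^{\tr}$ and using~(\ref{EQQbasis}), which gives $\inc(\beta_t)=-I(Q^{\dagger})^{\tr}\mathbbm{1}_{[t]}$, we conclude that $\{\inc(\beta_1),\ldots,\inc(\beta_\ell)\}$ is a generating set of $\rad_{re}(q_Q)$.

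Next I would pin down the kernel of the surjection $\Z^\ell \twoheadrightarrow \rad_{re}(q_Q)$, $\bas_t \mapsto \inc(\beta_t)$. Since $\sum_{t=1}^\ell \mathbbm{1}_{[t]}=\mathbbm{1}_m$, we get $\sum_{t=1}^\ell \inc(\beta_t)=-I(Q^{\dagger})^{\tr}\mathbbm{1}_m$; but $\mathbbm{1}_m \in \Ker I(Q^{\dagger})^{\tr}$ because $Q^{\dagger}$ is a connected quiver (so $\mathbbm{1}_m$ generates $\Ker I(Q^{\dagger})^{\tr}$, as recalled after the definition of the incidence matrix, or directly from Lemma~\ref{La}$(iii)$ since the rows of $I(Q^{\dagger})^{\tr}$ are the $a_v^+$). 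Hence $\sum_{t=1}^\ell \inc(\beta_t)=0$. To see this is the \emph{only} relation, suppose $\sum_{t} c_t \inc(\beta_t)=0$ with $c_t \in \Z$; then $\sum_t c_t \mathbbm{1}_{[t]} \in \Ker I(Q^{\dagger})^{\tr} = \Z\mathbbm{1}_m$, so $\sum_t c_t \mathbbm{1}_{[t]} = c\,\mathbbm{1}_m$ for some $c \in \Z$, and comparing coordinates on each orbit forces $c_1 = c_2 = \cdots = c_\ell = c$. Thus the kernel of $\bas_t \mapsto \inc(\beta_t)$ is exactly $\Z\cdot(1,1,\ldots,1)$, of rank $1$.

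It follows that for any fixed $t_0$, the classes of $\bas_t$ for $t \neq t_0$ form a basis of the free group $\Z^\ell/\Z\mathbbm{1}_\ell$ (indeed $\{\bas_t - \bas_{t_0}\}_{t \neq t_0}$ together with $\mathbbm{1}_\ell$ is a $\Z$-basis of $\Z^\ell$, equivalently the images of $\bas_t$, $t\neq t_0$, map isomorphically onto the quotient), and therefore their images $\inc(\beta_t)$, $t \neq t_0$, form a $\Z$-basis of $\rad_{re}(q_Q)$. This is precisely the asserted basis $\beta\{t_0\}$; in particular it has the right size $\ell - 1$, consistent with $\CRnk_{re}(q_Q) = \ell - 1$.

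The only genuinely delicate point is justifying that the indicator vectors $\mathbbm{1}_{[1]},\ldots,\mathbbm{1}_{[\ell]}$ really generate the full fixed lattice $\{y \in \Z^m \mid \Lambda_Q^{\tr}y = y\}$ \emph{over} $\Z$ (not merely over $\Q$) — but this is immediate since a vector in $\Z^m$ that is constant on each orbit is manifestly an integer combination of the $\mathbbm{1}_{[t]}$'s — and, relatedly, confirming that the quotient map $\Z^\ell \to \rad_{re}(q_Q)$ with kernel $\Z\mathbbm{1}_\ell$ yields a \emph{pure} (hence free, with a complemented basis) image, which follows because $\rad_{re}(q_Q)$ is pure in $\Z^n$ (stated in~\ref{reRad}) and $\Z^\ell/\Z\mathbbm{1}_\ell$ is free. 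Everything else is bookkeeping with the identity~(\ref{EQQbasis}) and the connectedness facts about $Q^{\dagger}$ already established in the excerpt.
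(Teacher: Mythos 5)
Your proposal is correct and follows essentially the same route as the paper: spanning via Lemma~\ref{(P):radre}, the identity~(\ref{EQQbasis}) and the fixed lattice of $\Lambda_Q^{\tr}$ (spanned by the orbit indicators $\mathbbm{1}_{[t]}$), the relation $\sum_{t=1}^{\ell}\inc(\beta_t)=0$, and the fact that the left null space of $I(Q^{\dagger})$ is $\Z\mathbbm{1}_m^{\tr}$, which forces all coefficients of any relation among the $\inc(\beta_t)$ to be equal. One small slip: the parenthetical claim that $\{\bas_t-\bas_{t_0}\}_{t\neq t_0}$ together with $\mathbbm{1}_{\ell}$ is a $\Z$-basis of $\Z^{\ell}$ is false for $\ell\geq 2$ (that family spans a sublattice of index $\ell$); the correct lift is $\{\bas_t\}_{t\neq t_0}\cup\{\mathbbm{1}_{\ell}\}$, or one can simply argue as the paper does, setting the coefficient at $t_0$ equal to $0$ in a relation among the remaining $\inc(\beta_t)$ and using the equal-coefficients fact to conclude they all vanish. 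This slip is harmless and does not affect the validity of the argument.
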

\begin{proof}
First observe that the vectors $\mathbbm{1}_{[1]},\ldots,\mathbbm{1}_{[\ell]}$ of $\Z^m$ are a basis of the eigenspace of $\Lambda_Q^{\tr}$ corresponding to the eigenvalue $1$, since $\Lambda_Q$ is the permutation matrix of $\xi^-_Q$, see~(\ref{EQQ5}). Hence, by~(\ref{EQQbasis}), Lemma~\ref{(P):radre}, and since
\[
\sum_{t=1}^{\ell}\inc(\beta_t)=-I(Q^{\dagger})^{\tr}\mathbbm{1}_m=0,
\]
the set $\beta\{t_0\}$ generates $\rad_{re}(q_Q)$, for any $t_0 \in \{1,\ldots,\ell\}$.

\medskip
Take now integers $\lambda_1,\ldots,\lambda_{\ell}$ such that $\sum_{t=1}^{\ell}\lambda_t\inc(\beta_t)=0$. That is, if $y:=\sum_{t=1}^{\ell}\lambda_t\mathbbm{1}_{[t]}$ then $0=y^{\tr}I(Q^{\dagger})$. In particular, $\sum_{t=1}^{\ell}\lambda_t\inc(\beta_t)=0$ if and only if all $\lambda_t$ are equal, since the left null space of $I(Q^{\dagger})$ is generated by $\mathbbm{1}_m^{\tr}$ (see~\cite[Theorem~3.3$(ii)$]{jaJ2020b}). This shows that if $\lambda'_{t_0}=0$, and
\[
0 = \sum_{\substack{i=1,\ldots,\ell\\t \neq t_0}}\lambda_t' \inc(\beta_t)=\sum_{t=1}^{\ell}\lambda'_t \inc(\beta_t),
\]
then $\lambda_t'=0$ for $t=1,\ldots,\ell$, which completes the proof.
\end{proof}

\begin{example}\label{EXAfour}
Recall the description of the Coxeter-Laplacians $\Lambda_{Q^1}$ and $\Lambda_{Q^0}$ of the quivers $Q^1$ and $Q^0$ of Example~\ref{EXAzero}, given in Example~\ref{EXAone}. In the first case we have $\Lambda_{Q^1}^{\tr}y=y$ iff $y=a\mathbbm{1}_3$ for some $a\in \Z$, and in this case
\[
I((Q^1)^{\dagger})^{\tr}y=a\wc{G}_{Q^1}^{-\tr}I(Q^1)^{\tr}\mathbbm{1}_3=0.
\]
By Lemma~\ref{(P):radre}, we have $\rad_{re}(q_1)=0$ (see Example~\ref{EXAthree}). In the second case $\Lambda_{Q^0}^{\tr}y=y$ for all $y \in \Z^3$. Following the procedure~(\ref{EQQwalks}), we find the walks
\[
\beta_1=\alpha_{Q^0}^-(1)=4^{+1}2^{+1}1^{+1}, \qquad \beta_2=\alpha_{Q^0}^-(2)=4^{-1}3^{-1}2^{-1} \quad \text{and} \quad \beta_3=\alpha_{Q^0}^-(3)=3^{+1}1^{-1},
\]
(cf. Example~\ref{EXAtwo}). Alternatively, consider the inverse quiver $(Q^0)^{\dagger}$ of $(Q^0)$ given in Example~\ref{EXAone}, and observe that the columns of the matrix $-I((Q^0)^{\dagger})^{\tr}$ are precisely the incidence vectors $\inc(\beta_t)$ of the walks $\beta_1,\beta_2,\beta_3$, see~(\ref{EQQbasis}). Note that the basis of $\rad(q_0)=\rad_{re}(q_0)$ chosen in Example~\ref{EXAthree} (as columns of $K_0$) corresponds to the basis $\beta\{2\}=\{\inc(\beta_1),\inc(\beta_3)\}$ of Lemma~\ref{(P):basis}.
\end{example}

As direct consequence of Lemma~\ref{(P):basis} we have the following results.

\begin{corollary}\label{CreC}
Let $q$ be a connected non-negative unit form of Dynkin type $\A_r$, corank $c$ and cycle type $\ct(q)$. Then the reduced corank of $q$ is $\CRnk_{re}(q)=\ell(\ct(q))-1$, and the degree of degeneracy of $q$ is $\dd_q=\frac{1}{2}[c-\ell(\ct(q))+1]$.
\end{corollary}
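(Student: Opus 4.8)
The plan is to deduce Corollary~\ref{CreC} directly from Lemma~\ref{(P):basis} together with the combinatorial description of the cycle type and equation~(\ref{EQQdd}). First I would observe that $q$ is of the form $q_Q$ for some connected loop-less quiver $Q$ by Theorem~\ref{MAINlem}, and that the cycle type $\ct(q)=\ct(Q)=(\pi_1,\ldots,\pi_\ell)$ where $\ell=\ell(\ct(q))$ is the number of $\xi^-_Q$-orbits on $Q_0$, cf.~(\ref{EQQct}).

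The computation of the reduced corank is then immediate: by definition $\CRnk_{re}(q)$ is the rank of $\rad_{re}(q)=\rad_{re}(q_Q)$, and Lemma~\ref{(P):basis} exhibits an explicit basis $\beta\{t_0\}=\{\inc(\beta_t) \mid t \neq t_0\}$ of this group consisting of $\ell-1$ vectors. Hence $\CRnk_{re}(q)=\ell-1=\ell(\ct(q))-1$. For the degree of degeneracy, recall from~(\ref{EQQdd}) that $\CRnk(q)=2\dd_q+\CRnk_{re}(q)$; since $\CRnk(q)=c$ is the corank of $q$, substituting $\CRnk_{re}(q)=\ell(\ct(q))-1$ and solving for $\dd_q$ gives $\dd_q=\tfrac12[c-\ell(\ct(q))+1]$, as claimed. (One should note in passing that this quantity is indeed a non-negative integer: it is non-negative because $\dd_q\geq 0$ by construction of the degree of degeneracy, and it is an integer because $2\dd_q=\CRnk(q)-\CRnk_{re}(q)$ is even, being the rank of the skew-symmetric form $\wc{\mathbbm{r}}_q$.)

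There is essentially no obstacle here: the corollary is a bookkeeping consequence of the preceding lemma. The only point requiring a word of care is that Lemma~\ref{(P):basis} is stated for $\rad_{re}(q_Q)$ attached to a quiver, so one must invoke Theorem~\ref{MAINlem} to pass from the abstract unit form $q$ of Dynkin type $\A_r$ and corank $c$ to such a quiver realization $q=q_Q$, and then note that $\CRnk(q)=c$ and $\ct(q)=\ct(Q)$ are unchanged under this identification. After that, the two displayed formulas fall out of Lemma~\ref{(P):basis} and equation~(\ref{EQQdd}) by elementary arithmetic.
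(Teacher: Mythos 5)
Your proposal is correct and follows essentially the same route as the paper: the paper's proof also reads off $\CRnk_{re}(q)=\ell(\ct(q))-1$ from the basis constructed in Lemma~\ref{(P):basis} and then obtains $\dd_q=\frac{1}{2}[c-\ell(\ct(q))+1]$ from equation~(\ref{EQQdd}). Your extra remarks (passing to a quiver realization via Theorem~\ref{MAINlem} and noting the parity of $c-\ell(\ct(q))+1$) are harmless elaborations of steps the paper leaves implicit.
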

\begin{proof}
By Lemma~\ref{(P):basis}, the rank of $\rad_{re}(q)$, the so-called reduced corank of $q$, is $\CRnk_{re}(q)=\ell(\ct(q))-1$. Then the claim on the degree of degeneracy follows from~(\ref{EQQdd}).
\end{proof}

\begin{corollary}\label{(C):dd}
The degree of degeneracy of the standard $(\pi,\dd)$-extension of $q_{\A_{m-1}}$ is $\dd$.
\end{corollary}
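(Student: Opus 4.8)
The plan is to combine the explicit description of the standard extension quiver with the two corollaries just proved. Recall from Corollary~\ref{coro:STDquad} that $q_{\vec{\A}^{\dd}[\pi]}$ is a connected non-negative unit form of Dynkin type $\A_{m-1}$ and corank $c=\ell(\pi)+\dd-1$. By definition of the standard $(\pi,\dd)$-extension of $q_{\A_{m-1}}$, this quadratic form is precisely $q_{\vec{\A}^{\dd}[\pi]}$, so the degree of degeneracy we must compute is $\dd_{q_{\vec{\A}^{\dd}[\pi]}}$.

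First I would invoke Remark~\ref{Rxi}$(b)$, which states that the cycle type of $q_{\vec{Q}}$ for the standard quiver $\vec{Q}=\vec{\A}^{\dd}[\pi]$ is exactly $\pi$; in particular $\ell(\ct(q_{\vec{Q}}))=\ell(\pi)$. Then I would apply Corollary~\ref{CreC} to $q=q_{\vec{Q}}$: with corank $c=\ell(\pi)+\dd-1$ and $\ell(\ct(q))=\ell(\pi)$, the formula gives
\[
\dd_{q_{\vec{Q}}}=\tfrac{1}{2}\bigl[c-\ell(\ct(q))+1\bigr]=\tfrac{1}{2}\bigl[(\ell(\pi)+\dd-1)-\ell(\pi)+1\bigr]=\tfrac{1}{2}\cdot 2\dd=\dd,
\]
which is the claim.

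There is essentially no obstacle here: the statement is an immediate arithmetic consequence of Corollary~\ref{CreC} once the cycle type of the standard quiver has been identified as $\pi$ (Remark~\ref{Rxi}) and its corank as $\ell(\pi)+\dd-1$ (Corollary~\ref{coro:STDquad}, via Theorem~\ref{MAINlem}). The only point worth a sentence of care is making sure the ``degree of degeneracy of the standard $(\pi,\dd)$-extension'' in the statement refers to $\dd_{q}$ in the sense of~(\ref{EQQdd}) for the associated unit form $q=q_{\vec{\A}^{\dd}[\pi]}$, rather than to the combinatorial parameter $\dd$ appearing in the notation; the corollary asserts that these two coincide, which is the content being verified. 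One could alternatively give a direct proof using Remark~\ref{RKro}$(b)$ and~(\ref{EQQW}) to exhibit a kernel matrix $K$ with $K^{\tr}\wc{G}_{\vec{Q}}K$ having rank $2\dd$, but routing through Corollary~\ref{CreC} is shorter and avoids recomputing the restriction $\wc{\mathbbm{r}}_q$ of the standard morsification to the radical.
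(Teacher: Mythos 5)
Your route is the same as the paper's (identify the cycle type of $\vec{\A}^{\dd}[\pi]$ as $\pi$ via Remark~\ref{Rxi}, then feed the corank into the formula of Corollary~\ref{CreC}), but there is a concrete numerical slip in the middle that you should fix. The corank of $q_{\vec{\A}^{\dd}[\pi]}$ is $\ell(\pi)+2\dd-1$, not $\ell(\pi)+\dd-1$: the standard quiver has $m$ vertices and $n=m+\ell(\pi)+2(\dd-1)$ arrows, so by Theorem~\ref{MAINlem} its corank is $n-m+1=\ell(\pi)+2\dd-1$ (this is also the value used in Remark~\ref{RStan}, in the proof of Lemma~\ref{(L):stan}, and in the paper's own proof of this corollary; the value $\ell(\pi)+\dd-1$ printed in Corollary~\ref{coro:STDquad} is a misprint, and you should not rely on it).

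As written, your displayed computation is internally inconsistent: with $c=\ell(\pi)+\dd-1$ one gets $c-\ell(\pi)+1=\dd$, not $2\dd$, so Corollary~\ref{CreC} would yield $\dd_q=\dd/2$ (which is not even an integer for odd $\dd$ --- a sign that the input corank is wrong). Once you replace the corank by the correct value $\ell(\pi)+2\dd-1$, computed directly from the vertex and arrow counts of Definition~\ref{DStan} via Theorem~\ref{MAINlem}, the bracket becomes $2\dd$ and the argument is exactly the paper's: $\dd_q=\tfrac{1}{2}\bigl[(\ell(\pi)+2\dd-1)-\ell(\pi)+1\bigr]=\dd$. Your alternative suggestion (exhibiting a kernel matrix via Remark~\ref{RKro}$(b)$ so that $K^{\tr}\wc{G}_{\vec{Q}}K$ has rank $2\dd$) is also viable and is essentially what Lemma~\ref{(L):stan} does, but it is not needed here.
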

\begin{proof}
Take $\vec{Q}:=\vec{\A}_m^{\dd}[\pi]$ for a partition $\pi$ of $m \geq 2$, and $\dd \geq 0$. By Remark~\ref{Rxi}, the form $q:=q_{\vec{Q}}$ has cycle type $\ct(q)=\pi$ and corank $\CRnk(q)=\ell(\pi)+2\dd-1$. By Corollary~\ref{CreC} we have $\dd_q=\frac{1}{2}[\CRnk(q)-\ell(\pi)+1]=\dd$, as claimed.
\end{proof}

Recall that in the text, matrices, linear transformations and their images and kernels are taken over $\Z$. A matrix or linear transformation $M$ will be called \textbf{pure} if so is the group $\Img(M)$. Similarly, a bilinear form $\mathbbm{b}:\Z^n \times \Z^n \to \Z$ will be called \textbf{pure} if so is the adjoint transformation $x \mapsto [\mathbbm{b}(x,-):\Z^n \to \Z]$, or equivalently, if its Gram matrix $G_{\mathbbm{b}}$ is pure (under any choice of basis). The following result is a simple and important observation: \emph{the upper triangular bilinear form of any standard extension of $q_{\A_r}$ has pure restriction to its radical}. After completing the proof of our main results we will be able to prove that this observation holds for any connected non-negative unit form of Dynkin type $\A_r$ (Corollary~\ref{CinDet}).

\begin{lemma}\label{(L):stan}
The standard morsification of every standard extension of a unit form of Dynkin type $\A_r$ ($r \geq 1$) has pure restriction to its radical.
\end{lemma}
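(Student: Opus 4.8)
The plan is to reduce purity of the skew-symmetric form $\wc{\mathbbm{r}}_q$ of a standard extension $q=q_{\vec{Q}}$, with $\vec{Q}=\vec{\A}^{\dd}[\pi]$ and $\pi=(\pi_1,\ldots,\pi_{\ell})\vdash m$, to a determinant computation already carried out for generalized Kronecker quivers in Remark~\ref{RKro}. Write $n=|\vec{Q}_1|=m-1+(\ell-1)+2\dd$. By Corollary~\ref{(C):dd} the degree of degeneracy of $q$ is $\dd$, so $\Rnk(\wc{\mathbbm{r}}_q)=2\dd$; if $\dd=0$ then (in any basis of $\rad(q)$) the matrix $W_q$ of $\wc{\mathbbm{r}}_q$ is zero, which is pure, and there is nothing to prove, so I assume $\dd\ge 1$. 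Recall that a bilinear form is pure exactly when its Gram matrix is pure, and that an integral matrix of rank $r$ is pure exactly when all its nonzero invariant factors equal $1$, which (as $r$ is the largest size of a nonzero minor) holds as soon as it admits an $r\times r$ minor equal to $\pm 1$. Hence it suffices to produce a $\Z$-basis of $\rad(q)$ in which $W_q$ exhibits a $2\dd\times 2\dd$ minor equal to $\pm 1$.

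First I would locate a generalized Kronecker sub-quiver of $\vec{Q}$. By Definition~\ref{DStan}, the last $2\dd$ arrows $k_1,\ldots,k_{2\dd}$ of $\vec{Q}$ are alternately parallel and anti-parallel to a single arrow $\alpha$ (namely $\alpha=i_{m-1}$ when $\ell=1$ and $\alpha=j_{\ell-1}$ when $\ell>1$), and in the total order on arrows this $\alpha$ is precisely the arrow immediately preceding $k_1$. Thus the last $2\dd+1$ arrows of $\vec{Q}$, listed in order, are $\alpha,k_1,\ldots,k_{2\dd}$; they all join the pair of vertices $\sou(\alpha),\tar(\alpha)$, and their orientations alternate exactly as in $\K_{2\dd+1}^{\dagger}$. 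Since an entry of $\wc{G}_{\vec{Q}}$ indexed by two arrows joining the same two vertices depends only on whether those arrows are parallel or anti-parallel, the principal submatrix of $\wc{G}_{\vec{Q}}$ on these last $2\dd+1$ indices equals $\wc{G}_{\K_{2\dd+1}^{\dagger}}$. Let $\hat{c}_1,\ldots,\hat{c}_{2\dd}\in\Z^n$ be the vectors $c_1,\ldots,c_{2\dd}$ of Remark~\ref{RKro}$(b)$, placed in these $2\dd+1$ coordinates and extended by $0$. Consecutive columns of $I(\vec{Q})$ among $\alpha,k_1,\ldots,k_{2\dd}$ are negatives of one another, whence $I(\vec{Q})\hat{c}_s=0$ for every $s$ because $c_s\in\Ker I(\K_{2\dd+1}^{\dagger})$ (Remark~\ref{RKro}$(a)$); so $\hat{c}_1,\ldots,\hat{c}_{2\dd}\in\rad(q)$, and by Remark~\ref{RKro}$(b)$ the Gram matrix $[\hat{c}_s^{\tr}\wc{G}_{\vec{Q}}\hat{c}_{s'}]_{s,s'}$ equals $W_1\oplus\cdots\oplus W_1$ ($\dd$ summands).

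Next I would promote $\hat{c}_1,\ldots,\hat{c}_{2\dd}$ to a $\Z$-basis of $\rad(q)$. The key point is that the sublattice $L=\langle\hat{c}_1,\ldots,\hat{c}_{2\dd}\rangle$ is pure in $\rad(q)$: if $d\,v=\sum_s\mu_s\hat{c}_s$ with $v\in\rad(q)$, $d\ge 1$ and $\mu_s\in\Z$, then $d\,(v^{\tr}\wc{G}_{\vec{Q}}\hat{c}_{s'})=\sum_s\mu_s(\hat{c}_s^{\tr}\wc{G}_{\vec{Q}}\hat{c}_{s'})$ for every $s'$, so $d$ divides every entry of the vector $(W_1\oplus\cdots\oplus W_1)\mu$, and since $W_1\oplus\cdots\oplus W_1$ is $\Z$-invertible this forces $d\mid\mu_s$ for all $s$, i.e.\ $v\in L$. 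A pure subgroup of the finitely generated free abelian group $\rad(q)$ is a direct summand, so $\hat{c}_1,\ldots,\hat{c}_{2\dd}$ complete to a $\Z$-basis of $\rad(q)$; with respect to it, $W_q$ has $W_1\oplus\cdots\oplus W_1$ as its lower-right $2\dd\times 2\dd$ block, a minor of value $\pm 1$. As $\Rnk(W_q)=2\dd$, this is a nonzero minor of maximal size, so every nonzero invariant factor of $W_q$ equals $1$, and $W_q$ — hence $\wc{\mathbbm{r}}_q$ — is pure, as claimed.

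The individual steps are routine. The one point needing care is the combinatorial identification of the last $2\dd+1$ arrows of $\vec{Q}$ with $\K_{2\dd+1}^{\dagger}$: one must check that it respects both the vertex incidences and the total order on arrows, so that the corresponding truncation of $\wc{G}_{\vec{Q}}$ is literally $\wc{G}_{\K_{2\dd+1}^{\dagger}}$ (not merely congruent to it) and Remark~\ref{RKro}$(b)$ applies verbatim. The split into $\ell=1$ and $\ell>1$ only changes the name of the arrow $\alpha$, not the argument, and the fact that a sublattice with unimodular Gram matrix is a direct summand is elementary; I do not expect a genuine obstacle.
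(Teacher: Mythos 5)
Your proposal is correct and follows essentially the same route as the paper: both locate the inverse Kronecker quiver $\K_{2\dd+1}^{\dagger}$ in the last $2\dd+1$ arrows of $\vec{\A}^{\dd}[\pi]$ and use the vectors $c_1,\ldots,c_{2\dd}$ of Remark~\ref{RKro}$(b)$, whose Gram matrix under $\wc{G}_{\vec{Q}}$ is the unimodular skew block $\bigoplus_{t=1}^{\dd}W_1$, exactly as in the key identity~(\ref{EQQlocal}). The only difference is the final bookkeeping: the paper completes these columns by a basis of the reduced radical to a kernel matrix and notes that the induced form modulo its kernel is $\Z$-invertible, whereas you show the sublattice $\langle\hat c_1,\ldots,\hat c_{2\dd}\rangle$ is pure (hence a direct summand) in $\rad(q)$, extend it to a $\Z$-basis, and invoke the invariant-factor/minor criterion --- the same argument in slightly different, if anything more explicit, packaging.
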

\begin{proof}
Consider a standard quiver $\vec{Q}=\vec{A}_m^{\dd}[\pi]$ with $\pi$ a partition of $m \geq 2$ with length $\ell=\ell(\pi)$, and take $q=q_{\vec{Q}}$. Then $q$ is a connected non-negative unit form of Dynkin type $\A_{m-1}$ in $n=m+\ell+2(\dd-1)$ variables with corank $\CRnk(q)=2\dd+\ell-1$ (Corollary~\ref{coro:STDquad}) and reduced corank $\CRnk_{re}(q)=\ell-1$ (Lemma~\ref{(P):basis}). We fix a kernel matrix $K=[K',K'']$ of $I(\vec{Q})$ in the following way. Take the $n \times (\ell-1)$ matrix $K''$ whose columns are a basis of $\rad_{re}(q)$, see for instance Lemma~\ref{(P):basis}. If $\dd=0$, then $K=K''$. If $\dd>0$, observe that the last $2\dd+1$ arrows of the standard quiver $\vec{Q}$ determine the inverse Kronecker quiver $\K^{\dagger}_{2\dd+1}$. Consider the inclusion $\iota: \Z^{2\dd+1} \to \Z^n$ of $\Z^{2\dd+1}$ into the last $2\dd+1$ entries of $\Z^n$, and take the $n \times 2\dd$ matrix $K'=[\iota(c_1),\ldots,\iota(c_{2\dd})]$ with $c_1,\ldots,c_{\dd}$ the vectors constructed in Remark~\ref{RKro}$(b)$, where we showed the second equality identity in~(\ref{EQQlocal}) below (the first identity can be easily shown, since $\K^{\dagger}_{2\dd+1}$ is a subquiver of $\vec{Q}$):
\begin{equation}\label{EQQlocal}
(K')^{\tr}\wc{G}_{\vec{Q}}(K')=[c_1,\ldots,c_{\dd}]^{\tr}\wc{G}_{\K^{\dagger}_{2\dd+1}}[c_1,\ldots,c_{\dd}]=\bigoplus_{t=1}^{\dd}\begin{pmatrix} 0&1\\-1&0 \end{pmatrix}.
\end{equation}
Then the subspace of $\Z^n$ generated by the columns of $K'$ has zero intersection with the reduced radical of $q$, which implies that the $n \times \CRnk(q)$ matrix $K$ is indeed a kernel matrix of $I(\vec{Q})$.

\medskip
Now, relative to the fixed basis $K$, the restriction $\wc{\mathbbm{r}}_q$ of $\wc{\mathbbm{b}}_q$ to the radical $\rad(q)$ has Gram matrix $W_q=K^{\tr}\wc{G}_{\vec{Q}}K$, see~(\ref{EQQW}). Since the kernel of $\wc{\mathbbm{r}}_q$ is the reduced radical of $q$, see~(\ref{EQQdd}), the quotient of $\wc{\mathbbm{r}}_q$ by its kernel has Gram matrix $(K')^{\tr}\wc{G}_{\vec{Q}}(K')$, which is $\Z$-invertible by~(\ref{EQQlocal}). That is, $\wc{\mathbbm{r}}_q$ is a pure bilinear form, as claimed.
\end{proof}

%------------------
%------------------
\subsection{Adding for invertibility}

Here we show how to correct the non-invertibility of the matrices obtained in Proposition~\ref{Ls}. We need the following preliminary observation.

\begin{lemma}\label{(P):isore}
Let $Q$ and $\wt{Q}$ be connected loop-less quivers with $m$ vertices and $n$ arrows. Assume that $\Lambda_Q=\Lambda_{\wt{Q}}$, and that there is a matrix $B$ such that $I(\wt{Q}^{\dagger})B^{\tr}=I(Q^{\dagger})$. Then $B$ restricts to an isomorphism $B|_{\rad_{re}}:\rad_{re}(q_{\wt{Q}}) \to \rad_{re}(q_Q)$.
\end{lemma}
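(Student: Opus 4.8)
The plan is to use the characterization of the reduced radical from Lemma~\ref{(P):radre}, together with the hypothesis $\Lambda_Q=\Lambda_{\wt{Q}}$, to show that the map $y\mapsto I(Q^{\dagger})^{\tr}y$ sends the fixed space of $\Lambda_Q^{\tr}$ bijectively onto $\rad_{re}(q_Q)$, and similarly for $\wt{Q}$, and then transport the relation $I(\wt{Q}^{\dagger})B^{\tr}=I(Q^{\dagger})$ through these parametrizations.

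\medskip
First I would recall from Lemma~\ref{(P):radre} that, since $Q$ is connected and loop-less, $\rad_{re}(q_Q)=\{I(Q^{\dagger})^{\tr}y\mid y\in\Z^m,\ \Lambda_Q^{\tr}y=y\}$, and likewise $\rad_{re}(q_{\wt{Q}})=\{I(\wt{Q}^{\dagger})^{\tr}y\mid y\in\Z^m,\ \Lambda_{\wt{Q}}^{\tr}y=y\}$; by hypothesis $\Lambda_Q=\Lambda_{\wt{Q}}$, so both are parametrized by the \emph{same} subgroup $F:=\{y\in\Z^m\mid \Lambda_Q^{\tr}y=y\}$ of $\Z^m$. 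The key step is then purely formal: for any $y\in F$, set $x:=I(\wt{Q}^{\dagger})^{\tr}y\in\rad_{re}(q_{\wt{Q}})$; transposing the hypothesis $I(\wt{Q}^{\dagger})B^{\tr}=I(Q^{\dagger})$ gives $B\,I(\wt{Q}^{\dagger})^{\tr}=I(Q^{\dagger})^{\tr}$, hence $Bx=B\,I(\wt{Q}^{\dagger})^{\tr}y=I(Q^{\dagger})^{\tr}y$, which lies in $\rad_{re}(q_Q)$. This shows $B$ maps $\rad_{re}(q_{\wt{Q}})$ into $\rad_{re}(q_Q)$, and moreover that under the parametrizations by $F$ the map $B$ acts as the identity on $F$.

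\medskip
For surjectivity and injectivity I would argue as follows. Given any $x'\in\rad_{re}(q_Q)$, write $x'=I(Q^{\dagger})^{\tr}y$ for some $y\in F$ (possible by Lemma~\ref{(P):radre}); then $x:=I(\wt{Q}^{\dagger})^{\tr}y\in\rad_{re}(q_{\wt{Q}})$ satisfies $Bx=x'$ by the computation above, so $B|_{\rad_{re}}$ is onto. For injectivity, suppose $x\in\rad_{re}(q_{\wt{Q}})$ with $Bx=0$; writing $x=I(\wt{Q}^{\dagger})^{\tr}y$ with $y\in F$, we get $I(Q^{\dagger})^{\tr}y=Bx=0$. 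Since $\rad_{re}$ is a pure subgroup and both radicals have the same rank $\CRnk_{re}=\ell(\ct(Q))-1=\ell(\ct(\wt Q))-1$ (equality of cycle types following from $\Lambda_Q=\Lambda_{\wt Q}$ via~(\ref{EQQ5}) and~(\ref{EQQct}), and the rank computation from Lemma~\ref{(P):basis}), it suffices to check that $y\mapsto I(Q^{\dagger})^{\tr}y$ is injective on $F$; but by~(\ref{EQQbasis}) the images of the basis $\{\mathbbm{1}_{[t]}\}$ of $F$ are the vectors $-\inc(\beta_t)$, which by Lemma~\ref{(P):basis} are linearly independent modulo their single relation $\sum_t\inc(\beta_t)=0$, so the only $y\in F$ with $I(Q^{\dagger})^{\tr}y=0$ is $y=0$ once one restricts to a complement — more cleanly, since both parametrizing maps $F\to\rad_{re}$ have the same kernel (namely $\Z\cdot\mathbbm{1}_m$, as the left null space of any $I(Q^{\dagger})$ is $\Z\mathbbm{1}_m^{\tr}$ by~\cite[Theorem~3.3$(ii)$]{jaJ2020b}), $I(Q^{\dagger})^{\tr}y=0$ forces $y\in\Z\mathbbm{1}_m$, whence $x=I(\wt Q^{\dagger})^{\tr}y=0$ as well. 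Thus $B|_{\rad_{re}}$ is an isomorphism.

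\medskip
The only mildly delicate point is the injectivity argument: one must be careful that the parametrization $F\to\rad_{re}(q_Q)$ is not itself injective (its kernel is $\Z\mathbbm{1}_m$), so one cannot conclude $y=0$ but only $y\in\Z\mathbbm{1}_m$ — which is exactly enough because $\mathbbm{1}_m$ lies in the kernel of \emph{both} parametrizations. I expect this bookkeeping about the common kernel $\Z\mathbbm{1}_m$ to be the main (though minor) obstacle; everything else is a direct transport of the defining relation $I(\wt{Q}^{\dagger})B^{\tr}=I(Q^{\dagger})$ through Lemma~\ref{(P):radre}.
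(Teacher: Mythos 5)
Your proof is correct and follows essentially the same route as the paper: both transport the transposed relation $BI(\wt{Q}^{\dagger})^{\tr}=I(Q^{\dagger})^{\tr}$ through the description of the reduced radical in Lemma~\ref{(P):radre}, using $\Lambda_Q=\Lambda_{\wt{Q}}$. The paper simply phrases the bijectivity by observing that $B$ sends the explicit basis $\{\inc(\beta_t)\}$ of Lemma~\ref{(P):basis} for $\rad_{re}(q_{\wt{Q}})$ to the corresponding basis of $\rad_{re}(q_Q)$ (the vectors $\mathbbm{1}_{[t]}$ being the same for both quivers), whereas you check injectivity and surjectivity directly through the common parametrizing fixed space and its shared kernel $\Z\mathbbm{1}_m$ — an equivalent piece of bookkeeping.
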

\begin{proof}
Since $\Lambda_Q=\Lambda_{\wt{Q}}$, and since the matrix $I(Q^{\dagger})^{\tr}$ has as columns the vectors $\inc(\alpha^+_Q(v))$ for $v \in Q_0$ (see Lemma~\ref{La}$(ii)$), by Lemma~\ref{(P):radre} and~(\ref{EQQbasis}), the equation $BI(\wt{Q}^{\dagger})^{\tr}=I(Q^{\dagger})^{\tr}$ implies that the transformation $B$ sends the bases for $\rad_{re}(q_{\wt{Q}})$ constructed in Lemma~\ref{(P):basis}, to the corresponding bases of $\rad_{re}(q_Q)$. Hence the claim.
\end{proof}

For a slightly more general version of the following proposition, see Remark~\ref{Rgeneral} below.

\begin{proposition}\label{(P):inv}
Let $Q$ be a connected loop-less quiver. If there is a square matrix $B$ such that
\[
I(Q)B=I(\vec{Q}) \qquad \text{and} \qquad I(\vec{Q}^{\dagger})B^{\tr}=I(Q^{\dagger}),
\]
where $\vec{Q}$ is a standard quiver with same number of vertices and arrows as $Q$, then there is a matrix $M$ such that $B+M$ is $\Z$-invertible and satisfies
\[
I(Q)[B+M]=I(\vec{Q}) \qquad \text{and} \qquad I(\vec{Q}^{\dagger})[B+M]^{\tr}=I(Q^{\dagger}).
\]
\end{proposition}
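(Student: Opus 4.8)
The plan is to start from the matrix $B$ supplied by the hypothesis (in practice the one built in Proposition~\ref{Ls}), understand exactly how its failure to be $\Z$-invertible is localized, and then repair it by adding a matrix $M$ whose columns lie entirely in a subspace that does not disturb the two equations $I(Q)B=I(\vec{Q})$ and $I(\vec{Q}^{\dagger})B^{\tr}=I(Q^{\dagger})$. The key observation is that the column space of $B$ is constrained: from $I(Q)B=I(\vec{Q})$ we get that $B$ maps $\Z^n$ into a subset related to $\Img I(\vec Q^{\dagger})^{\tr}$ modulo $\rad(q_Q)=\Ker I(Q)$, and the second equation forces $B$ to send $\rad(q_{\vec Q})$ into $\rad(q_Q)$. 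Moreover, by Lemma~\ref{(P):isore} applied with $\wt Q=\vec Q$ (noting $\Lambda_Q=\Lambda_{\vec Q}$), the restriction of $B$ to $\rad_{re}(q_{\vec Q})$ is already an \emph{isomorphism} onto $\rad_{re}(q_Q)$. Hence whatever rank deficiency $B$ has must be invisible on the reduced radical, and therefore must occur only in the complementary ``degeneracy'' directions — the subspace spanned by the kernel matrix $K'$ coming from the Kronecker part, as in Lemma~\ref{(L):stan}.

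\textbf{Key steps.} First I would fix a kernel matrix $K=[K',K'']$ of $I(Q)$ (equivalently a basis of $\rad(q_Q)$) split into the reduced-radical part $K''$ and the degeneracy part $K'$, exactly as in Lemma~\ref{(L):stan}, and similarly a kernel matrix $\vec K=[\vec K',\vec K'']$ of $I(\vec Q)$. Second, I would complete these to full bases of $\Z^n$ and write $B$ in block form with respect to these adapted bases; the two hypotheses translate into: $B$ is block upper-triangular in the sense that it sends $\rad(q_{\vec Q})$ into $\rad(q_Q)$, its induced map on $\Z^n/\rad$ is forced to be an isomorphism (since $I(Q)B=I(\vec Q)$ and both incidence matrices have the same image, namely $\{x:\mathbbm 1_m^\tr x=0\}$ by Lemma~\ref{(P):tech1}), and — by Lemma~\ref{(P):isore} — the induced map $\rad_{re}(q_{\vec Q})\to\rad_{re}(q_Q)$ is an isomorphism. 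So in the block decomposition the only block that can be singular is the one recording the action of $B$ on the degeneracy directions, i.e. the $2\dd\times 2\dd$ block $B_{K'}$ defined by the requirement that $B\vec K' \equiv K' B_{K'}$ modulo $\rad_{re}(q_Q)$ (plus possibly a piece landing in $K''$, which is harmless). Third — the actual correction — I would set $M$ to be supported on the degeneracy block: choose a $2\dd\times 2\dd$ integer matrix $N$ such that $B_{K'}+N$ is $\Z$-invertible (possible because $\det$ is a non-constant polynomial in the free entries; e.g. adjust so the determinant becomes $\pm1$), and put $M:= K'\,N\,(\vec K')^{\ell}$, where $(\vec K')^{\ell}$ denotes a left inverse / coordinate functional for the $\vec K'$ directions. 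Then $I(Q)M = I(Q)K'N(\vec K')^\ell = 0$ since $K'$ is in $\Ker I(Q)$, so the first equation is preserved; and $I(\vec Q^{\dagger})M^{\tr}=0$ because $M^{\tr}$ has columns in $\Img$ of the transpose of $(\vec K')^\ell$, which is the span of vectors annihilated by $I(\vec Q^{\dagger})$ — here is where I would use $I(\vec Q^{\dagger})=I(\vec Q)\wc G_{\vec Q}^{-1}$ together with $\wc G_{\vec Q}$-orthogonality relations for $K'$ from~(\ref{EQQlocal}), i.e. Lemma~\ref{(L):stan}. Finally, $B+M$ acts on the degeneracy block as $B_{K'}+N$, now invertible, while on all other blocks it agrees with $B$ and is already invertible there; a block-triangular determinant computation gives $\det(B+M)=\pm1$.

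\textbf{Main obstacle.} The delicate point is not the algebra of ``add something in the kernel'' but making the bookkeeping precise: one must verify simultaneously that $M$ lies in $\Ker I(Q)$ \emph{as a left factor} (so $I(Q)M=0$) and that $M^{\tr}$ lies in $\Ker I(\vec Q^{\dagger})$ \emph{as a left factor} (so $I(\vec Q^{\dagger})M^{\tr}=0$), and these two requirements pull in opposite directions unless $M$ is built from the \emph{same} degeneracy subspace on both sides — which is exactly why the purity statement of Lemma~\ref{(L):stan} (the restriction of $\wc{\mathbbm r}_{q_{\vec Q}}$ to the radical is pure, with $\wc G_{\vec Q}$ restricting to the nondegenerate form $\bigoplus W_1$ on $\Img K'$) is needed: it guarantees that $\Img K'$ is an honest $\wc G_{\vec Q}$-orthogonal complement inside $\rad(q_{\vec Q})$, so that the coordinate functional $(\vec K')^\ell$ can be realized by $\wc G_{\vec Q}$-pairing against $K'$ and hence automatically kills the image of $I(\vec Q^{\dagger})^\tr$. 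I expect the bulk of the written proof to be the careful choice of $N$ (correcting an integer matrix to one of determinant $\pm1$ by an elementary/unimodular adjustment, which is always possible once one entry is coprime to the rest, and one can arrange this) and the verification that adding $M$ leaves the induced isomorphism on $\rad_{re}$ and on $\Z^n/\rad$ untouched — both routine once the block picture is set up, but requiring attention to integrality.
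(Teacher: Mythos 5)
Your construction is correct and is essentially the paper's: you split a kernel matrix of $I(Q)$ as $K=[K',K'']$ with $K''$ a basis of the reduced radical, use Lemma~\ref{(P):isore} to see that the $\rad_{re}$-block of $B$ is already an isomorphism, and repair only the degeneracy block by a matrix $M=K'N(\vec{W}')^{-1}(\vec{K}')^{\tr}\wc{G}_{\vec{Q}}$, which (up to sign) is a special case of the paper's $M=K\,Y\,\vec{K}^{\dagger\tr}$ with $Y$ supported in its upper-left block; the $\Z$-invertibility of $\vec{W}'$ from Lemma~\ref{(L):stan}, equation~(\ref{EQQlocal}), plays the same role in both arguments, and for $N$ you can simply take $N=\Id-B_{K'}$, so no coprimality adjustment is needed. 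The one genuine divergence is the final step: the paper deduces $\Z$-invertibility of $B+M$ from the isomorphism on radicals via Corollary~\ref{Cinv}$(ii)$ (which rests on the Dynkin-type determinant comparison in Lemma~\ref{Linv}), whereas you argue directly that the induced map on $\Z^n/\rad$ is an isomorphism because $I(Q)$ and $I(\vec{Q})$ have the same image (Lemma~\ref{(P):tech1}), and then conclude by a block-triangular determinant; this finish is a bit more elementary and is valid, since $M(\Z^n)\subseteq\rad(q_Q)$ implies $B+M$ and $B$ induce the same quotient map. One point you should make explicit: the coordinate functional $(\vec{K}')^{\ell}$ must annihilate $\vec{K}''$, which the $\wc{G}_{\vec{Q}}$-pairing does automatically because $(\vec{K}')^{\tr}\wc{G}_{\vec{Q}}\vec{K}''=0$ by the definition of the reduced radical; without this, the correction would contaminate the $\rad_{re}$ column block of the restricted matrix and the triangular shape used for invertibility would be lost.
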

\begin{proof}
Let $K$ be a kernel matrix for $I(Q)$, and assume that $K=[K' , K'']$ where the last $\ell-1$ columns $K''$ of $K$ are a basis of the reduced radical $\rad_{re}(q_Q)$. Take similarly a kernel matrix $\vec{K}$ of $I(\vec{Q})$ written as $\vec{K}=[\vec{K}' , \vec{K}'']$, and define $K^{\dagger}:=\wc{G}_QK$ and $\vec{K}^{\dagger}:=\wc{G}_{\vec{Q}}\vec{K}$, which are kernel matrices of $I(Q^{\dagger})$ and $I(\vec{Q}^{\dagger})$ respectively. Since $I(Q)(B\vec{K})=I(\vec{Q})\vec{K}=0$, there is a unique matrix $L$ such that $B\vec{K}=KL$. The matrices $L$ and $\vec{W}=\vec{K}^{\tr}\wc{G}_{\vec{Q}}\vec{K}$ have the following shapes
\[
L=\begin{pmatrix} L_1&0\\L_2&L_3 \end{pmatrix} \quad \text{and} \quad \vec{W}=\begin{pmatrix} \vec{W}'&0\\0&0 \end{pmatrix},
\]
where $L_3$ is $\Z$-invertible (Lemma~\ref{(P):isore}), and by~(\ref{EQQlocal}) in the proof of Lemma~\ref{(L):stan}, $\vec{W}'$ is a $\Z$-invertible skew-symmetric matrix (since it corresponds to the restriction of $\wc{\mathbbm{b}}_{q_{\vec{Q}}}$ to its radical, modulo its kernel: the reduced radical). Define $M=KY\vec{K}^{\dagger\tr}$ for some matrix $Y=\left(\begin{smallmatrix} Y_1&Y_3\\Y_2&Y_4 \end{smallmatrix}\right)$, and notice that
\[
I(Q)(B+M)=I(Q)B+I(Q)KY\vec{K}^{\dagger\tr}=I(\vec{Q}),
\]
and
\[
I(\vec{Q}^{\dagger})(B+M)^{\tr}=I(\vec{Q}^{\dagger})B^{\tr}+I(\vec{Q}^{\dagger})K^{\dagger}Y^{\tr}K^{\tr}=I(Q^{\dagger}).
\]
Since $\vec{K}^{\dagger\tr}=-\vec{K}^{\tr}\wc{G}_{\vec{Q}}$ and $\vec{K}^{\dagger\tr}\vec{K}=-\vec{K}^{\tr}\wc{G}_{\vec{Q}}\vec{K}=-\vec{W}$, we have
\[
(B+M)\vec{K}=B\vec{K}+KY\vec{K}^{\dagger\tr}\vec{K}=K(L-Y\vec{W})=K\begin{pmatrix} L_1-Y_1\vec{W}'&0\\L_2-Y_2\vec{W}'&L_3 \end{pmatrix}.
\]
Taking $Y_1=(L_1-\Id)(\vec{W}')^{-1}$ and $Y_2=L_2(\vec{W}')^{-1}$ (with $Y_3$ and $Y_4$ arbitrary, say equal to zero), we get
\[
(B+M)\vec{K}=K(L-Y\vec{W}),
\]
with $(L-Y\vec{W})=\left( \begin{smallmatrix} \Id&0\\0&L_3\end{smallmatrix} \right)$ a $\Z$-invertible matrix. Then the restriction
\[
(B+M)|_{\rad}: \rad(q_{\vec{Q}}) \to \rad(q_Q),
\]
is an isomorphism, which implies that $(B+M)$ is $\Z$-invertible by Corollary~\ref{Cinv}$(ii)$.
\end{proof}

\begin{example}\label{EXAfive}
Consider the quivers $Q^i$ of Example~\ref{EXAzero}, and the matrices $B'_i$ satisfying the assumptions of Proposition~\ref{(P):inv}, as given in Example~\ref{EXAtwo} (for $i=1,0$). Observe that $\det(B_1')=0$ and $\det(B_0')=1$, so we apply Proposition~\ref{(P):inv} only for the case $i=1$. Besides the kernel matrix $K_1$ of $I(Q_1)$ given in Example~\ref{EXAthree}, we fix the following kernel matrix for the corresponding standard quiver $\vec{Q}^1$ (cf. Example~\ref{EXAone}),
\[
\vec{K}_1=\begin{pmatrix} 0&0\\1&0\\1&\widehat{1}\\0&\widehat{1} \end{pmatrix}, \qquad B_1'\vec{K}_1=K_1\begin{pmatrix} 0&0\\0&0 \end{pmatrix}=K_1L.
\]
Observe that $B'_1\vec{K}_1=0$, and therefore the matrix $L$ (using the notation of the proof of Proposition~\ref{(P):inv}) is the $2 \times 2$ zero matrix. Note also that $\vec{W}:=\vec{K}_1^{\tr}\wc{G}_{\vec{Q}^1}\vec{K}_1=\left(\begin{smallmatrix} 0&1\\\widehat{1}&0 \end{smallmatrix}\right)$. Then, as defined in the proof of Proposition~\ref{(P):inv}, the matrix $Y$ is given by $Y=(L-\Id_2)\vec{W}^{-1}=-\vec{W}^{\tr}=\vec{W}$. Taking $M_1:=K_1Y(\vec{K}_1^{\dagger})^{\tr}$ where $\vec{K}_1^{\dagger}:=\wc{G}_{\vec{Q^1}}\vec{K}_1$, we get
\[
M_1=\begin{pmatrix} 0&1&0&\widehat{1}\\ 0&0&1&\widehat{1}\\0&0&1&\widehat{1}\\0&\widehat{1}&1&0 \end{pmatrix} \qquad \text{and} \qquad B_1:=B_1'+M_1=\begin{pmatrix} 0&0&1&\widehat{2}\\ \widehat{1}&0&1&\widehat{1}\\0&\widehat{1}&2&\widehat{2}\\\widehat{1}&\widehat{1}&1&0 \end{pmatrix}.
\]
Note that $\det(B_1)=\pm 1$, as claimed in Proposition~\ref{(P):inv}. In fact, a direct computation shows that $q_1 \approx^{B_1} \vec{q}_1$.
\end{example}

%------------------------------------------------------------------
%------------------------------------------------------------------
%------------------------------------------------------------------
\section{A model for pseudo-endomorphisms} \label{S4}

In this section we provide a method to modify a $\Z$-invertible matrix satisfying the equations of Proposition~\ref{Ls} (called \emph{pseudo-morphisms}) into a strong Gram congruence. It depends on a special decomposition of skew-symmetric matrices, presented first in Lemma~\ref{(I):skewsym}$(b)$.

%------------------
%------------------
\subsection{Decompositions of skew-symmetric matrices}

Recall that if $Z$ is a skew-symmetric $c \times c$ matrix, then there exists a $\Z$-invertible matrix $P$ such that $P^{\tr}ZP$ is in canonical form, that is, there are positive integers $d_1,d_2,\ldots,d_r$ such that
\[
P^{\tr}ZP=d_1\left(\begin{smallmatrix}0&1\\-1&0 \end{smallmatrix}\right) \oplus d_2\left(\begin{smallmatrix}0&1\\-1&0 \end{smallmatrix}\right) \oplus \ldots \oplus d_r\left(\begin{smallmatrix}0&1\\-1&0 \end{smallmatrix}\right) \oplus \mathbf{0},
\]
for a square zero matrix $\mathbf{0}$ of size $c-2r$, and where $d_t$ divides $d_{t+1}$ for $t=1,\ldots, r-1$ (see for instance~\cite[Theorem~IV.1]{Newman:1972}). Such expression is usually called the \textbf{skew normal form} of $Z$. Observe that $Z$ is pure if and only if $d_t=1$ for $t=1,\ldots,r$.

\begin{lemma}\label{(I):skewsym}
Let $Z$ and $W$ be skew-symmetric $c \times c$ matrices.
\begin{itemize}
 \item[a)] If $W$ is $\Z$-invertible, then there is a matrix $Y$ such that $Z=Y^{\tr}WY$.
 \item[b)] If $W$ is pure, then there is a matrix $Y$ such that $Z=Y-Y^{\tr} + Y^{\tr}WY$.
\end{itemize}
\end{lemma}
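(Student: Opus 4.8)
The plan is to treat parts $(a)$ and $(b)$ separately, reducing each to the skew normal form of $W$. For $(a)$, since $W$ is $\Z$-invertible there is a $\Z$-invertible $P$ with $P^{\tr}WP = W_0 := \left(\begin{smallmatrix}0&1\\-1&0\end{smallmatrix}\right)^{\oplus c/2}$ (in fact all the divisors $d_t$ equal $1$ precisely when $W$ is pure, but for invertibility alone it suffices that $\det(P^\tr W P)$ is a unit, forcing each $d_t = 1$, so $W$ is automatically pure here). Given an arbitrary skew-symmetric $Z$, first solve the problem for the standard block $W_0$: I claim that for any skew-symmetric $Z$ there is $Y_0$ with $Z = Y_0^{\tr} W_0 Y_0$. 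This is the ``square root'' statement for the standard symplectic form and can be obtained by writing $Z$ itself in skew normal form $Z = R^{\tr}(d_1 W_0^{(1)} \oplus \cdots) R$ and then noting each elementary block $d_t\left(\begin{smallmatrix}0&1\\-1&0\end{smallmatrix}\right)$ equals $\left(\begin{smallmatrix}d_t&0\\0&1\end{smallmatrix}\right)^{\tr}\left(\begin{smallmatrix}0&1\\-1&0\end{smallmatrix}\right)\left(\begin{smallmatrix}d_t&0\\0&1\end{smallmatrix}\right)$, so assembling these gives a block-diagonal $S$ with $Z = R^{\tr} S^{\tr} W_0 S R$, i.e. $Y_0 = S R$ works. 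Then for general invertible $W$, set $Y := P Y_0$ where $P^{\tr} W P = W_0$; indeed $Y^{\tr} W Y = Y_0^{\tr}(P^{\tr} W P) Y_0 = Y_0^{\tr} W_0 Y_0 = Z$.

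For $(b)$ the target expression $Z = Y - Y^{\tr} + Y^{\tr} W Y$ is the ``unipotent'' analogue: note that if one writes $N = \Id + Y$, then $N^{\tr} W N - W = Y^{\tr} W + W Y + Y^{\tr} W Y$, which is not quite the RHS; the correct bookkeeping is to observe $Y - Y^{\tr} + Y^{\tr} W Y = (\Id+Y)^{\tr}\wc{W}(\Id+Y) - \wc{W}$ where $\wc{W}$ is an upper-triangular ``half'' of a suitable form — more directly, the identity to exploit is that the map $Y \mapsto Y - Y^{\tr} + Y^{\tr} W Y$ already takes values in skew-symmetric matrices (the symmetric part $Y - Y^\tr$ is manifestly skew, and $Y^\tr W Y$ is skew since $W$ is), so the statement is that this quadratic-affine map is \emph{surjective} onto skew-symmetric matrices when $W$ is pure. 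I would prove surjectivity again by reduction to the standard block: since $W$ is pure, $P^{\tr} W P = W_0$ for a $\Z$-invertible $P$, and one checks that if $Z_0 := P^{\tr} Z P$ admits a solution $Y_0$ of $Z_0 = Y_0 - Y_0^{\tr} + Y_0^{\tr} W_0 Y_0$, then $Y := P Y_0 P^{-1}$ solves the original (expanding $Y - Y^\tr + Y^\tr W Y$ and using $P^{-\tr} W_0 P^{-1}$... here one must be slightly careful because $Y - Y^\tr$ does not transform covariantly, so the cleaner route is to absorb $P$ differently — see below). For the standard block $W_0$ one solves blockwise: writing $Z_0$ in skew normal form with all $d_t = 1$ (again forced by purity), one reduces to $Z_0 = W_0$ itself and to the zero matrix, and for $W_0$ one can take $Y$ with the pattern $Y = \left(\begin{smallmatrix}0 & \Id\\0 & 0\end{smallmatrix}\right)$-type blocks so that $Y - Y^\tr$ contributes the off-diagonal $W_0$-entries and $Y^\tr W_0 Y$ vanishes.

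The main obstacle I anticipate is the conjugation bookkeeping in part $(b)$: unlike $(a)$, where $Z = Y^\tr W Y$ transforms cleanly under $W \mapsto P^\tr W P$, $Y \mapsto P^{-1} Y$, the expression $Y - Y^\tr + Y^\tr W Y$ mixes a congruence action on the quadratic term with a plain linear term $Y - Y^\tr$, so a single substitution $Y = P Y_0$ will not restore the shape. The fix is to work with the bilinear form directly: if $\wc{W}$ denotes \emph{any} matrix (not necessarily skew) with $\wc{W} - \wc{W}^{\tr} = $ (something), one has $(\Id + Y)^{\tr}\wc{W}(\Id+Y) = \wc{W} + \wc{W}Y + Y^\tr \wc{W} + Y^\tr \wc{W} Y$; choosing $\wc{W}$ to be strictly-upper-triangular-plus-something so that $\wc{W} + \wc{W}^\tr$ relates to $W$ and $\wc W Y + Y^\tr \wc W$ telescopes to $Y - Y^\tr$ plus $(\tfrac12$-ish$)$ corrections is exactly the standard-morsification trick used throughout the paper. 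Concretely I expect the authors pick $\wc W$ with $\wc W + \wc W^\tr = $ identity-like and $\wc W - \wc W^\tr = W$ (possible since $W$ is skew and integer-valued — this is where purity of $W$, equivalently $W$ having a half over $\Z$, is essential), then $(\Id+Y)^\tr \wc W (\Id + Y) - \wc W$ expands to exactly $Y - Y^\tr + Y^\tr W Y$ up to symmetric garbage that one argues away, and surjectivity of $Y \mapsto (\Id+Y)^\tr \wc W (\Id+Y)$ on the appropriate coset follows from $(a)$ applied to the invertible skew part. Once that identity is pinned down the rest is routine block algebra, so the real work is isolating the correct $\wc W$ and verifying the expansion.
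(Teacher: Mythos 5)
Your part (a) is fine and is essentially the paper's argument (normalize $W$ to $\bigoplus W_1$, put $Z$ in the form $\bigoplus d_tW_1$ by a congruence, and absorb the $d_t$ into a diagonal matrix). The problems are all in part (b), and they are genuine. First, the ``conjugation bookkeeping'' obstacle you build your plan around is illusory: the right transport is not $Y=PY_0$ nor $Y=PY_0P^{-1}$ but $Y:=R\wt{Y}R^{\tr}$, under which \emph{both} terms transform covariantly, since $Y-Y^{\tr}=R(\wt{Y}-\wt{Y}^{\tr})R^{\tr}$ and $Y^{\tr}WY=R(\wt{Y}^{\tr}\wt{W}\wt{Y})R^{\tr}$ with $\wt{W}=R^{\tr}WR$ and $\wt{Z}:=R^{-1}ZR^{-\tr}$. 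So one may assume $W$ is in skew normal form while $Z$ stays arbitrary; no ``morsification half'' $\wc{W}$ is needed. Worse, the fix you propose cannot exist: a matrix with $\wc{W}+\wc{W}^{\tr}$ the identity and $\wc{W}-\wc{W}^{\tr}=W$ would satisfy $2\wc{W}=\Id+W$, which has odd diagonal, so $\wc{W}\notin\mathbb{M}_c(\Z)$; and purity of $W$ is \emph{not} equivalent to $W/2$ being integral --- it means all invariant factors $d_t$ in the skew normal form equal $1$.

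Second, the core idea that makes (b) work is missing from your sketch. For $W$ $\Z$-invertible (in skew normal form, so $W^{\tr}W=\Id$ and $WW=-\Id$) the paper sets $\wt{Z}:=Z+W$, applies part (a) to get $\wt{Z}=\wt{Y}^{\tr}W\wt{Y}$, and takes $Y:=\wt{Y}-W$; expanding $(Y+W)^{\tr}W(Y+W)-W$ gives exactly $Y-Y^{\tr}+Y^{\tr}WY=Z$. Your alternative route, ``reduce $Z_0$ to $W_0$ itself and to the zero matrix, with all $d_t=1$ forced by purity,'' conflates hypotheses: purity is assumed of $W$, not of $Z$, and $Z$ is an arbitrary skew-symmetric matrix whose invariant factors can be anything; moreover you cannot normalize $Z$ and $W$ simultaneously by one congruence, so this reduction does not exist. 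Finally, the singular pure case $W=\left(\begin{smallmatrix}W'&0\\0&0\end{smallmatrix}\right)$ is not addressed: with $Z=\left(\begin{smallmatrix}Z_1&-Z_2^{\tr}\\Z_2&Z_3\end{smallmatrix}\right)$ one needs the ansatz $Y=\left(\begin{smallmatrix}Y_1&0\\Y_2&Y_3\end{smallmatrix}\right)$, solving the invertible case for $Y_1$, taking $Y_2=Z_2$, and letting $Y_3$ be a ``half'' of $Z_3$ with $Z_3=Y_3-Y_3^{\tr}$, i.e.\ using the linear term to absorb the cross blocks --- your blockwise sketch never accounts for $Z_2$ and $Z_3$.
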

\begin{proof}
Take $\wt{W}=R^{\tr}WR$ for some $\Z$-invertible matrix $R$. If claims $(a)$ and $(b)$ hold for $\wt{W}$ and arbitrary $Z$, then they also hold for $W$ and arbitrary $Z$. Indeed, if $\wt{Z}:=R^{-1}ZR^{-\tr}$, then for each case $(a)$ or $(b)$ there is a matrix $\wt{Y}$ such that either
\[
\wt{Z}=\wt{Y}^{\tr}\wt{W}\wt{Y}, \quad \text{or} \quad \wt{Z}=\wt{Y}-\wt{Y}^{\tr}+\wt{Y}^{\tr}\wt{W}\wt{Y},
\]
respectively. Take $Y:=R\wt{Y}R^{\tr}$. Then $\wt{Y}=R^{-1}YR^{-\tr}$, and
\begin{eqnarray*}
(a) &Z&=R\wt{Z}R^{\tr}=R[\wt{Y}^{\tr}\wt{W}\wt{Y}]R^{\tr}=Y^{\tr}WY, \\
(b) &Z&=R\wt{Z}R^{\tr}=R[\wt{Y}-\wt{Y}^{\tr}+\wt{Y}^{\tr}\wt{W}\wt{Y}]R^{\tr}=Y-Y^{\tr}+Y^{\tr}WY.
\end{eqnarray*}
Therefore, we may assume that $W$ is in skew normal form. In particular, if $W$ is $\Z$-invertible then $WW^{\tr}=W^{\tr}W=\Id_c=-WW$.

\medskip
To show $(a)$, observe that if $W$ is $\Z$-invertible then $c=2r$ for some $r\geq 1$ and $W=\bigoplus_{i=1}^rW_1$, where $W_1=\left( \begin{smallmatrix}0&1\\-1&0 \end{smallmatrix} \right)$. Take a $\Z$-invertible matrix $P$ such that $P^{\tr}ZP=\bigoplus_{i=1}^r d_iW_1$ (not necessarily in skew normal form), and consider the diagonal $c \times c$ matrix $S=\diag(1,d_1,1,d_2,\ldots,1,d_r)$. Then
\[
P^{\tr}ZP=\bigoplus_{i=1}^r d_iW_1=SWS, \quad \text{that is,} \quad Z=Y^{\tr}WY \quad \text{where $Y=SP^{-1}$}.
\]

To show $(b)$, assume first that $W$ is $\Z$-invertible and take $\wt{Z}:=Z+W$, which is also skew-symmetric. By part $(a)$ there is a matrix $\wt{Y}$ such that $\wt{Z}=\wt{Y}^{\tr}W\wt{Y}$. Taking $Y:=\wt{Y}-W$, we get\vspace*{-2mm}
\begin{eqnarray*}
Z&=&\wt{Z}-W=(Y+W)^{\tr}W(Y+W)-W\\
&=&Y^{\tr}WY+Y^{\tr}WW+W^{\tr}WY+W^{\tr}WW-W\\
&=&Y-Y^{\tr}+Y^{\tr}WY,
\end{eqnarray*}
as wanted. In the pure case, $W=\begin{pmatrix} W'&0\\0&0 \end{pmatrix}$ where $W'$ is $\Z$-invertible (since $W$ is in its skew normal form). Taking
\[
Z:=\begin{pmatrix} Z_1&-Z_2^{\tr}\\Z_2&Z_3 \end{pmatrix} \quad \text{and} \quad Y:=\begin{pmatrix} Y_1&0\\Y_2&Y_3 \end{pmatrix},
\]
then $Y-Y^{\tr}+Y^{\tr}WY=\begin{pmatrix}Y_1-Y_1^{\tr}+ Y_1^{\tr}W'Y_1&-Y_2^{\tr}\\Y_2&Y_3-Y_3^{\tr} \end{pmatrix}$. Using the first part of the proof of $(b)$, since $Z_1$ is skew-symmetric and $W'$ is $\Z$-invertible, we may find $Y_1$ such that $Z_1=Y_1-Y_1^{\tr}+ Y_1^{\tr}W'Y_1$. Take $Y_2=Z_2$, and take any matrix $Y_3$ such that $Z_3=Y_3-Y_3^{\tr}$. Then $Z=Y-Y^{\tr}+Y^{\tr}WY$, which completes the proof.
\end{proof}

%------------------
%------------------
\subsection{Pseudo-morphisms}

Let us formalize some of the notions already used in the paper.

\eject

\begin{definition}\label{DEF}
Consider connected loop-less quivers $Q$ and $\wt{Q}$ with $m$ vertices and $n$ arrows.
\begin{itemize}
 \item[a)] A square matrix $B$ satisfying
 \[
 I(Q)B=I(\wt{Q}) \qquad \text{and} \qquad I(\wt{Q}^{\dagger})B^{\tr}=I(Q^{\dagger}),
 \]
 is referred to as \textbf{pseudo-morphism} from $Q$ to $\wt{Q}$. These relations are expressed with the notation $Q \pseudo^{B} \wt{Q}$, or simply by $Q \pseudo \wt{Q}$ if such matrix $B$ exists.
 \item[b)] If $Q \pseudo^B Q$ we call $B$ a \textbf{pseudo-endomorphism} of $Q$. The set of pseudo-endomorphisms of $Q$ will be denoted by $\End_{\pseudo}(Q)$, that is,
\begin{equation*}
\End_{\pseudo}(Q)=\{B \in \mathbb{M}_n(\Z) \mid I(Q)B=I(Q)=I(Q)B^* \}, \quad \text{where $B^*:=\wc{G}_Q^{-1}B^{\tr}\wc{G}_Q$, see~(\ref{EQQ3})}.
\end{equation*}
 \item[c)] For matrices $Z$, $Z'$ and $W$ in $\mathbb{M}_c(\Z)$ we will use the notation
\begin{equation*}
Z \oper{W} Z' := Z+Z'-ZWZ'.
\end{equation*}
\end{itemize}
\end{definition}

\begin{remark}\label{Rcat}
Let $\quiv$ denote the set of connected loop-less quivers with at least two vertices. For $Q$ and $\wt{Q}$ in $\quiv$, denote by $\Hom_{\pseudo}(Q,\wt{Q})$ the set of pseudo-morphisms from $Q$ to $\wt{Q}$. Then $(\quiv,\Hom_{\pseudo})$, together with the product of matrices, is a category. Moreover, the following hold if $Q$ and $\wt{Q}$ have the same number of vertices and arrows.
\begin{itemize}
\itemsep=0.9pt
 \item[a)] If $Q \pseudo \wt{Q}$ then $\Lambda_Q=\Lambda_{\wt{Q}}$.
 \item[b)] If $Q \pseudo^B \wt{Q}$ then $\wt{Q} \pseudo^{B^*} Q$, where $B^*=\wc{G}_{\wt{Q}}^{-1}B^{\tr}\wc{G}_Q$.
\end{itemize}
In particular, $\pseudo$ is an equivalence relation on the set $\quiv$.
\end{remark}
\begin{proof}
Let $Q \pseudo^B \wt{Q}$ and $\wt{Q} \pseudo^C \wt{\wt{Q}}$ be pseudo-morphisms. Then
\[
I(Q)(BC)=(I(Q)B)C=I(\wt{Q})C=I(\wt{\wt{Q}}),
\]
and
\[
I(\wt{\wt{Q}}^{\dagger})(BC)^{\tr}=(I(\wt{\wt{Q}}^{\dagger})C^{\tr})B^{\tr}=I(\wt{Q}^{\dagger})B^{\tr}=I(Q^{\dagger}),
\]
that is, $Q \pseudo^{BC} \wt{\wt{Q}}$. Note that the identity matrix serves as neutral element of the monoid $\Hom_{\pseudo}(Q,Q)$, and the associativity of the matrix product shows that $(\quiv,\Hom_{\pseudo})$ is a category.

\medskip
To show $(a)$, assume that $Q \pseudo^B \wt{Q}$ for a matrix $B$. By definition of $\Lambda_{\wt{Q}}$, see~(\ref{EQQCL}), and since $I(\wt{Q}^{\dagger})=I(\wt{Q})\wc{G}_{\wt{Q}}^{-1}$, see~(\ref{EQQ3}), we have
\begin{eqnarray*}
\Lambda_{\wt{Q}}&=&\Id-I(\wt{Q}^{\dagger})I(\wt{Q})^{\tr}=\Id-I(\wt{Q}^{\dagger})[I(Q)B]^{\tr}\\
&=& \Id-[I(\wt{Q}^{\dagger})B^{\tr}]I(Q)^{\tr}=\Id-I(Q^{\dagger})I(Q)^{\tr}=\Lambda_Q.
\end{eqnarray*}

To prove $(b)$, using~(\ref{EQQ3}) we get
\[
I(\wt{Q})B^*=I(\wt{Q}^{\dagger})\wc{G}_{\wt{Q}}(\wc{G}_{\wt{Q}}^{-1}B^{\tr}\wc{G}_Q)=I(\wt{Q}^{\dagger})B^{\tr}\wc{G}_Q=I(Q^{\dagger})\wc{G}_Q=I(Q).
\]

\eject

Moreover, using $(a)$, (\ref{EQQ3}) and Remark~\ref{LAMBDA}$(e)$, we have
\begin{eqnarray*}
I(Q^{\dagger})(B^*)^{\tr} &=& I(Q)\wc{G}_Q^{-1}(\wc{G}_Q^{\tr}B\wc{G}_{\wt{Q}}^{-\tr})=-[I(Q)\Phi_Q^{-\tr}]B\wc{G}_{\wt{Q}}^{-\tr}= -\Lambda_QI(Q)B\wc{G}_{\wt{Q}}^{-\tr} \\
&=& -\Lambda_{\wt{Q}}I(\wt{Q})\wc{G}_{\wt{Q}}^{-\tr}=-I(\wt{Q})\Phi_{\wt{Q}}^{-\tr}\wc{G}_{\wt{Q}}^{-\tr}=I(\wt{Q})\wc{G}_{\wt{Q}}^{-1}\wc{G}_{\wt{Q}}^{\tr}\wc{G}_{\wt{Q}}^{-\tr}= I(\wt{Q}^{\dagger}).
\end{eqnarray*}
That is, $\wt{Q} \pseudo^{B^*} Q$.

\medskip
Note that $\pseudo$ is a reflective relation since $Q \pseudo^{\Id} Q$, and it is transitive by the first part of the proof. Since the symmetry of $\pseudo$ was shown in $(b)$, we conclude that $\pseudo$ is an equivalence relation.
\end{proof}

With the notation introduced in Definition~\ref{DEF}, Propositions~\ref{Ls} and~\ref{(P):inv} may be summarized as follows:

\begin{corollary}\label{C:PseudoStandard}
If $Q$ is a connected loop-less quiver with $\Lambda_Q=\Lambda_{\vec{Q}}$ for a standard quiver $\vec{Q}$ with the same number of arrows as $Q$, then there is a $\Z$-invertible matrix $B$ such that $Q \pseudo^B \vec{Q}$.
\end{corollary}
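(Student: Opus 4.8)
The plan is to concatenate Propositions~\ref{Ls} and~\ref{(P):inv}, after a short bookkeeping remark about matrix sizes. First I would observe that the hypothesis $\Lambda_Q=\Lambda_{\vec{Q}}$ already forces $Q$ and $\vec{Q}$ to have the same number of vertices: by Remark~\ref{LAMBDA}$(b)$ the Coxeter-Laplacian $\Lambda_Q$ is a permutation matrix of size $|Q_0|$, and likewise $\Lambda_{\vec{Q}}$ has size $|\vec{Q}_0|$, so the identity $\Lambda_Q=\Lambda_{\vec{Q}}$ only makes sense when $|Q_0|=|\vec{Q}_0|$. Together with the standing assumption that $Q$ and $\vec{Q}$ have the same number of arrows, this places us exactly in the setting of Proposition~\ref{Ls}.

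Next I would invoke Proposition~\ref{Ls} to produce a (not necessarily $\Z$-invertible) square matrix $B_0$ with $I(Q)B_0=I(\vec{Q})$ and $I(\vec{Q}^{\dagger})B_0^{\tr}=I(Q^{\dagger})$; in the terminology of Definition~\ref{DEF}$(a)$ this reads $Q\pseudo^{B_0}\vec{Q}$. Then I would feed $B_0$ into Proposition~\ref{(P):inv}, whose hypotheses are precisely the two displayed equations just obtained, to get a matrix $M$ such that $B:=B_0+M$ is $\Z$-invertible and still satisfies $I(Q)B=I(\vec{Q})$ and $I(\vec{Q}^{\dagger})B^{\tr}=I(Q^{\dagger})$, that is $Q\pseudo^{B}\vec{Q}$ with $B$ $\Z$-invertible. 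This $B$ is the asserted matrix.

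Since both Propositions~\ref{Ls} and~\ref{(P):inv} have already been established, there is no genuine obstacle here; the only point worth making explicit is the size-matching remark above, together with the observation (implicit in the proof of Proposition~\ref{(P):inv}, through Lemma~\ref{(L):stan} and equation~(\ref{EQQlocal})) that the invertibility correction really uses that $\vec{Q}$ is a standard quiver: the purity of the restriction of $\wc{\mathbbm{b}}_{q_{\vec{Q}}}$ to its radical is what makes the skew-symmetric block $\vec{W}'$ $\Z$-invertible, and hence makes the correction matrix $M$ available. This is exactly why the statement is phrased for standard quivers $\vec{Q}$ rather than for arbitrary ones.
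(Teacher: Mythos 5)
Your proposal is correct and matches the paper's intent exactly: the paper offers no separate argument, presenting this corollary as a direct summary of Proposition~\ref{Ls} (existence of a pseudo-morphism $Q \pseudo^{B_0} \vec{Q}$) followed by Proposition~\ref{(P):inv} (correction to a $\Z$-invertible $B_0+M$). Your extra remarks on size bookkeeping and on the role of Lemma~\ref{(L):stan} are accurate but not needed beyond the two cited propositions.
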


\begin{lemma}\label{(M):endo}
Let $Q$ be a connected loop-less quiver with $m$ vertices and $n$ arrows, fix a kernel matrix $K$ for $I(Q)$, and take $K^{\dagger}:=\wc{G}_QK$. If $c:=n-m+1$, then the function
\[
\xymatrix@R=.5pc{ \mathbb{M}_c(\Z) \ar[rr]^-{\Upsilon} && \End_{\pseudo}(Q) \\ Z \ar@{|->}[rr] && \Upsilon(Z):=\Id+KZK^{\dagger\tr},}
\]
is a bijection, with inverse denoted by $\Xi$. Moreover, if $B$ and $B'$ are pseudo-endomorphisms of $Q$, and $W:=K^{\tr}\wc{G}_QK$, then
\begin{itemize}
\item[a)] $\Xi(BB')=\Xi(B) \oper{W} \Xi(B')$, where $\oper{W}$ is as in Definition~\ref{DEF}$(c)$.
\item[b)] We have $\Xi(\Id_n)=0$, and $B$ is $\Z$-invertible if and only if there is a matrix $Z$ with $\Xi(B)\oper{W}Z=0$ (or equivalently, $Z\oper{W}\Xi(B)=0$).
\end{itemize}
\end{lemma}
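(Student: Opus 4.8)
The plan is to verify first that $\Upsilon$ lands in $\End_{\pseudo}(Q)$, then that it is a bijection with an explicit inverse $\Xi$, and finally to translate the multiplicative structure of $\End_{\pseudo}(Q)$ through $\Xi$. For the first point, given $Z \in \mathbb{M}_c(\Z)$ and $B:=\Upsilon(Z)=\Id+KZK^{\dagger\tr}$, one computes $I(Q)B = I(Q) + I(Q)K Z K^{\dagger\tr} = I(Q)$, since $I(Q)K=0$ because $K$ is a kernel matrix of $I(Q)$. For the condition $I(Q)B^*=I(Q)$, recall $B^*=\wc{G}_Q^{-1}B^{\tr}\wc{G}_Q$; since $K^{\dagger}=\wc{G}_QK$ one has $B^{\tr}=\Id + K^{\dagger}Z^{\tr}K^{\tr}$, so $B^*=\Id + \wc{G}_Q^{-1}K^{\dagger}Z^{\tr}K^{\tr}\wc{G}_Q = \Id + KZ^{\tr}K^{\dagger\tr}$, and again $I(Q)(KZ^{\tr}K^{\dagger\tr})=0$. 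Hence $\Upsilon(Z)\in\End_{\pseudo}(Q)$. (Note this computation also shows $\Upsilon(Z)^* = \Upsilon(Z^{\tr})$, which will be convenient later.)

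For bijectivity, the key structural fact is that $[K, K^{\dagger}=\wc{G}_QK]$ together with a complement spans enough of $\Z^n$ to recover $Z$ from $B$; more precisely, since $\Img I(Q) = \{x \mid \mathbbm{1}_m^{\tr}x = 0\}$ is pure (Lemma~\ref{(P):tech1}) and $\wc{G}_Q$ is $\Z$-invertible, the matrix $K^{\dagger\tr}K = K^{\tr}\wc{G}_Q^{\tr}K$ need not itself be invertible, so one cannot simply invert. Instead I would argue as follows: any $B\in\End_{\pseudo}(Q)$ satisfies $I(Q)(B-\Id)=0$, so the columns of $B-\Id$ lie in $\Ker I(Q) = \Img K$, giving $B-\Id = KX$ for a unique matrix $X$ (unique because $K$ has full column rank). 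Similarly $I(Q)(B^*-\Id)=0$ forces $B^{\tr}-\Id = \wc{G}_QK X' \wc{G}_Q^{-1}\cdot(\text{correction})$; carefully, from $B^*-\Id = KX'$ we get $B^{\tr}-\Id = \wc{G}_QKX'\wc{G}_Q^{-1} = K^{\dagger}X'\wc{G}_Q^{-1}$, hence $B-\Id = \wc{G}_Q^{-1}X'^{\tr}K^{\dagger\tr} = \wc{G}_Q^{-1}X'^{\tr}K^{\tr}\wc{G}_Q$. Comparing the two expressions $KX = \wc{G}_Q^{-1}X'^{\tr}K^{\tr}\wc{G}_Q$ and using that $K$ and $K^{\dagger} = \wc{G}_QK$ span complementary-enough pieces (the row space of $K^{\tr}$ being the annihilator picked out by $I(Q)^{\tr}$), one extracts a unique $Z$ with $X = ZK^{\dagger\tr}$, i.e. $B = \Id + KZK^{\dagger\tr}$. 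Defining $\Xi(B):=Z$ then gives a two-sided inverse; the main care is the uniqueness/existence of $Z$, which I expect to be the principal technical obstacle and where purity of $\Img I(Q)$ enters decisively (through a Smith-normal-form argument: $K^{\dagger\tr}$ has full row rank $c$ modulo a unimodular change, so $X \mapsto XK^{\dagger\tr}$ is injective on the relevant subspace).

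Once the bijection is established, parts $(a)$ and $(b)$ are direct computations. For $(a)$, with $B = \Upsilon(Z)$, $B' = \Upsilon(Z')$, compute
\[
BB' = (\Id + KZK^{\dagger\tr})(\Id + KZ'K^{\dagger\tr}) = \Id + KZK^{\dagger\tr} + KZ'K^{\dagger\tr} + KZ(K^{\dagger\tr}K)Z'K^{\dagger\tr},
\]
and since $K^{\dagger\tr}K = K^{\tr}\wc{G}_Q^{\tr}K = -K^{\tr}\wc{G}_QK = -W$ (using $\wc{G}_Q + \wc{G}_Q^{\tr} = G_Q$ and $G_QK = 0$ as $K$ is a radical basis), this equals $\Id + K(Z + Z' - ZWZ')K^{\dagger\tr} = \Upsilon(Z + Z' - ZWZ') = \Upsilon(Z \oper{W} Z')$. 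Applying $\Xi$ yields $\Xi(BB') = \Xi(B) \oper{W} \Xi(B')$. For $(b)$, $\Xi(\Id_n) = 0$ is immediate from $\Upsilon(0) = \Id_n$. If $B$ is $\Z$-invertible, it is invertible inside the monoid $\End_{\pseudo}(Q)$? — not obviously, so instead I would observe that $B^{-1}$ satisfies $I(Q)B^{-1} = I(Q)BB^{-1}\cdot(\text{using }I(Q)B=I(Q)) = \cdots$; cleanly, $I(Q) = I(Q)B^{-1}B$ implies $I(Q)B^{-1}\cdot B = I(Q)$, but we want $I(Q)B^{-1} = I(Q)$, which follows by right-multiplying $I(Q)B = I(Q)$ by $B^{-1}$, and the dual identity similarly, so $B^{-1}\in\End_{\pseudo}(Q)$. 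Then $Z := \Xi(B^{-1})$ satisfies, by part $(a)$, $\Xi(B)\oper{W}Z = \Xi(BB^{-1}) = \Xi(\Id_n) = 0$ (and symmetrically $Z\oper{W}\Xi(B)=0$). Conversely, if $\Xi(B)\oper{W}Z = 0$ for some $Z\in\mathbb{M}_c(\Z)$, set $B' := \Upsilon(Z)\in\End_{\pseudo}(Q)$; then $\Xi(BB') = \Xi(B)\oper{W}\Xi(B') = \Xi(B)\oper{W}Z = 0 = \Xi(\Id_n)$, so $BB' = \Id_n$ by injectivity of $\Xi$; since $B,B'$ are square this makes $B$ $\Z$-invertible. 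The parenthetical equivalence with $Z\oper{W}\Xi(B)=0$ follows by the same argument using $B'B = \Id_n$, or by applying the $*$-operation and noting $\Xi(B^*) = \Xi(B)^{\tr}$ together with the skew-symmetry relating $\oper{W}$ on transposes.
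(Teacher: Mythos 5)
Your verification that $\Upsilon(Z)$ lies in $\End_{\pseudo}(Q)$ and your computations for parts $(a)$ and $(b)$ follow the paper's line and are correct, with one sign slip: since $K^{\tr}G_Q=0$ gives $K^{\tr}\wc{G}_Q=-K^{\tr}\wc{G}_Q^{\tr}=-K^{\dagger\tr}$, one actually gets $\Upsilon(Z)^*=\Upsilon(-Z^{\tr})$, not $\Upsilon(Z^{\tr})$ (compare Lemma~\ref{Lstar}$(a)$, which records $\Xi(B^*)=-\Xi(B)^{\tr}$). This is harmless for $I(Q)\Upsilon(Z)^*=I(Q)$, and your alternative argument via $B'B=\Id_n$ covers the parenthetical equivalence in $(b)$, but the aside as stated is wrong.

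The genuine gap is exactly where you flag it: surjectivity of $\Upsilon$, i.e. that every $B\in\End_{\pseudo}(Q)$ equals $\Id+KZK^{\dagger\tr}$ for a (unique) \emph{integral} $Z$. Writing $B-\Id=KX$ is fine, but the extraction of $Z$ from your comparison $KX=\wc{G}_Q^{-\tr}X'^{\tr}K^{\dagger\tr}$ (note $\wc{G}_Q^{-\tr}$, not $\wc{G}_Q^{-1}$) is never carried out: purity of $\Img I(Q)$ and the full row rank of $K^{\dagger\tr}$ only give \emph{uniqueness} of $Z$, not its \emph{existence} over $\Z$, and a Smith-normal-form injectivity argument does not produce it. The missing step, which is how the paper argues, is to use the second defining condition in the form $BI(Q^{\dagger})^{\tr}=I(Q^{\dagger})^{\tr}$ (equivalent to $I(Q)B^*=I(Q)$ since $\wc{G}_Q$ is $\Z$-invertible): then $KXI(Q^{\dagger})^{\tr}=(B-\Id)I(Q^{\dagger})^{\tr}=0$, and since $K$ has linearly independent columns, $XI(Q^{\dagger})^{\tr}=0$; because $K^{\dagger}=\wc{G}_QK$ is a kernel matrix of $I(Q^{\dagger})=I(Q)\wc{G}_Q^{-1}$ (unimodularity of $\wc{G}_Q$), the columns of $X^{\tr}$ are unique integral combinations of the columns of $K^{\dagger}$, i.e. $X=ZK^{\dagger\tr}$ with $Z$ integral and unique. (Alternatively your comparison can be salvaged: $\Ker I(Q)$ is a pure subgroup, so by Lemma~\ref{Rpure} the matrix $K$ admits an integral left inverse $K^{+}$, and then $X=K^{+}\wc{G}_Q^{-\tr}X'^{\tr}K^{\dagger\tr}$ is already of the required form.) Without one of these arguments the well-definedness of $\Xi$ --- on which your proofs of $(a)$ and $(b)$ rely --- is not established.
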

\begin{proof}
Let $c=n-m+1$ be the corank of $q_Q$. Then $K$ and $K^{\dagger}$ are $n \times c$ matrices, and for any $Z \in \mathbb{M}_c(\Z)$, we have
\[
I(Q)\Upsilon(Z)=I(Q)[\Id+KZK^{\dagger\tr}]=I(Q)+I(Q)K(ZK^{\dagger\tr})=I(Q),
\]
and
\[
I(Q^{\dagger})\Upsilon(Z)^{\tr}=I(Q^{\dagger})[\Id+K^{\dagger}Z^{\tr}K^{\tr}]=I(Q^{\dagger})+I(Q^{\dagger})K^{\dagger}(Z^{\tr}K^{\tr})=I(Q^{\dagger}),
\]
since $I(Q^{\dagger})=I(Q)\wc{G}_Q^{-1}$ and $K^{\dagger}=\wc{G}_QK$. This shows, as claimed, that $\Upsilon(Z)$ belongs to $\End_{\pseudo}(Q)$.

\medskip
Now, if $B$ is a pseudo-endomorphism of $Q$,  then $B$ may be expressed uniquely as
\[
B=\Id+KZK^{\dagger\tr}=\Upsilon(Z),
\]
for a $c \times c$ matrix $Z$. Indeed, we have $I(Q)(B-\Id)=0$, which implies that there is a unique $c \times n$ matrix $L$ such that $B-\Id=KL$ (since the kernel matrix $K$ of $I(Q)$ has rank $c$). On the other hand, $(KL)I(Q^{\dagger})^{\tr}=(B-\Id)I(Q^{\dagger})^{\tr}=0$ implies that $LI(Q^{\dagger})^{\tr}=0$ since the columns of $K$ are linearly independent, thus there is a unique matrix $Z$ such that $L=ZK^{\dagger\tr}$. Then the mapping $\Xi(B):=Z$ is well defined, by the uniqueness of $Z$. That is, $\Xi(\Upsilon(Z))=Z$, and clearly
\[
\Upsilon(\Xi(B))=\Id+K\Xi(B)K^{\dagger\tr}=B,
\]
which proves that $\Upsilon$ is a bijection with inverse $\Xi$.

\medskip
To show $(a)$ take $\Xi(B):=Z$ and $\Xi(B'):=Z'$. Then
\begin{eqnarray*}
 BB' & = & \Upsilon(Z)\Upsilon(Z') = (\Id+KZK^{\dagger\tr})(\Id+KZ'K^{\dagger\tr}) \\
 & = & \Id+K[Z+Z'+ZK^{\dagger\tr}KZ']K^{\dagger\tr} =  \Id+K[Z+Z'-ZWZ']K^{\dagger\tr} \\
 & = & \Id+K[Z\oper{W} Z']K^{\dagger\tr},
\end{eqnarray*}
since $K^{\dagger\tr}K=(K^{\tr}\wc{G}_Q^{\tr})K=-W$ (recall that $(\wc{G}_Q+\wc{G}_Q^{\tr})K=0$). Claim $(b)$ follows from the uniqueness of $\Xi(-)$, since clearly $\Xi(\Id)=0$.
\end{proof}

\begin{remark}\label{assoc}
Let $Q$ be a connected loop-less quiver, and take a kernel matrix $K$ for $I(Q)$. If $W:=K^{\tr}\wc{G}_QK$, then the binary operation $\oper{W}:\mathbb{M}_c(\Z)\times \mathbb{M}_c(\Z) \to \mathbb{M}_c(\Z)$ given in Definition~\ref{DEF}$(c)$ makes $(\mathbb{M}_c(\Z),\oper{W})$ a monoid with identity element the zero matrix $0 \in \mathbb{M}_c(\Z)$.
\end{remark}
\begin{proof}
Use the associativity of $\End_{\pseudo}(Q)$ (Remark~\ref{Rcat}) and apply Lemma~\ref{(M):endo}$(a,b)$.
\end{proof}

%------------------
%------------------
\subsection{Multiplying for strong Gram congruence}

For a pseudo-morphism $Q \pseudo^B \wt{Q}$ we have often considered the matrix $B^*:=\wc{G}_{\wt{Q}^{-1}}B^{\tr}\wc{G}_Q$ (see for instance Theorem~\ref{MTT}$(ii)$ or Remark~\ref{Rcat}). Here we consider further properties of this star operation,
\[
\xymatrix{\Hom_{\pseudo}(Q,\wt{Q}) \ar[rr]^-{(-)^*} && \Hom_{\pseudo}(\wt{Q},Q)},
\]
see Remark~\ref{Rcat}$(b)$.

\begin{lemma}\label{Lstar}
Let $Q$ and $\wt{Q}$ be connected loop-less quivers with the same number of vertices and arrows. Assume that $Q \pseudo^B \wt{Q}$, and take $B^*:=\wc{G}_{\wt{Q}}^{-1}B^{\tr}\wc{G}_Q$.
\begin{itemize}
 \item[a)] If $Q=\wt{Q}$ then we have $\Xi(B^*)=(-1)\Xi(B)^{\tr}$.
 \item[b)] Both matrices $\Xi(B^*B)$ and $\Xi(BB^*)$ are skew-symmetric.
 \item[c)] If $B$ is $\Z$-invertible then $\wt{Q} \pseudo^{B^{-1}} Q$ and
 \[
 (B^*)^{-1}=\wc{G}_Q^{-1}B^{-\tr}\wc{G}_{\wt{Q}}=:(B^{-1})^*.
 \]
\end{itemize}
\end{lemma}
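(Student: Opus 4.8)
The plan is to reduce everything to the algebraic identities already packaged in Lemma~\ref{(M):endo} and in the definition of the star operation. For part $(a)$, I would fix a kernel matrix $K$ for $I(Q)$, write $K^{\dagger}=\wc{G}_QK$ and $W=K^{\tr}\wc{G}_QK$, and use that $B=\Upsilon(Z)=\Id+KZK^{\dagger\tr}$ with $Z=\Xi(B)$. Then I would compute $B^{\tr}=\Id+K^{\dagger}Z^{\tr}K^{\tr}$ and substitute into $B^*=\wc{G}_Q^{-1}B^{\tr}\wc{G}_Q$; since $\wc{G}_Q^{-1}K^{\dagger}=K$ and $K^{\tr}\wc{G}_Q=-K^{\dagger\tr}$ (because $(\wc{G}_Q+\wc{G}_Q^{\tr})K=0$), this collapses to $B^*=\Id+K(-Z^{\tr})K^{\dagger\tr}=\Upsilon(-Z^{\tr})$. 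By the uniqueness part of Lemma~\ref{(M):endo} this gives $\Xi(B^*)=-Z^{\tr}=(-1)\Xi(B)^{\tr}$, which is $(a)$.

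For part $(b)$, note that $B^*B$ is a pseudo-endomorphism of $Q$ (by Remark~\ref{Rcat}$(b)$, $\wt{Q}\pseudo^{B^*}Q$, and composing with $Q\pseudo^{B}\wt{Q}$ gives $Q\pseudo^{B^*B}Q$), and similarly $BB^*$ is a pseudo-endomorphism of $\wt{Q}$, so the statement makes sense. The cleanest route is: by Lemma~\ref{(M):endo}$(a)$, $\Xi(B^*B)=\Xi(B^*)\oper{W}\Xi(B)$ — but here I must be careful, since $B^*$ lives in $\Hom_{\pseudo}(\wt{Q},Q)$, not in $\End_{\pseudo}(Q)$, so the formula for $\Xi(B^*)$ from $(a)$ does not directly apply. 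The honest approach is to expand $B^*B$ directly as $(\Id+K^{\dagger}_{\wt{Q}}(\cdots)K^{\dagger\tr}_{\wt{Q}})(\cdots)$ using kernel matrices for both $I(Q)$ and $I(\wt{Q})$ compatibly; alternatively, and I think more economically, observe that $(B^*B)^* = B^*(B^*)^*$ and that $(B^*)^*=B$ (a one-line check: $(B^*)^*=\wc{G}_Q^{-1}(\wc{G}_{\wt{Q}}^{-1}B^{\tr}\wc{G}_Q)^{\tr}\wc{G}_{\wt{Q}}=\wc{G}_Q^{-1}\wc{G}_Q^{\tr}B\wc{G}_{\wt{Q}}^{-\tr}\wc{G}_{\wt{Q}}$, which equals $B$ once one uses $\wt{Q}\pseudo^{B^*}Q$ together with Remark~\ref{LAMBDA}$(d,e)$ and $\Lambda_Q=\Lambda_{\wt{Q}}$ from Remark~\ref{Rcat}$(a)$ — essentially the computation already carried out in the proof of Remark~\ref{Rcat}$(b)$ read backwards). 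Granting $(B^*)^*=B$, we get $(B^*B)^*=B^*B$, i.e. $B^*B$ is fixed by the star operation on $\End_{\pseudo}(Q)$; combining with part $(a)$, $\Xi((B^*B)^*)=-\Xi(B^*B)^{\tr}$, hence $\Xi(B^*B)=-\Xi(B^*B)^{\tr}$, so it is skew-symmetric. The argument for $BB^*$ is identical with the roles of $Q$ and $\wt{Q}$ exchanged.

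For part $(c)$, if $B$ is $\Z$-invertible then $I(Q)=I(Q)BB^{-1}=I(\wt{Q})B^{-1}$ and $I(Q^{\dagger})=I(\wt{Q}^{\dagger})B^{\tr}(B^{\tr})^{-1}=I(\wt{Q}^{\dagger})(B^{-1})^{\tr}$ — wait, I need to rearrange: from $I(\wt{Q}^{\dagger})B^{\tr}=I(Q^{\dagger})$ I get $I(Q^{\dagger})B^{-\tr}=I(\wt{Q}^{\dagger})$, which is exactly the pair of equations defining $\wt{Q}\pseudo^{B^{-1}}Q$. Then $(B^{-1})^*=\wc{G}_Q^{-1}B^{-\tr}\wc{G}_{\wt{Q}}$ by the very definition of the star operation applied to the pseudo-morphism $B^{-1}\colon\wt{Q}\to Q$; and since $B^*=\wc{G}_{\wt{Q}}^{-1}B^{\tr}\wc{G}_Q$ is invertible with $(B^*)^{-1}=\wc{G}_Q^{-1}B^{-\tr}\wc{G}_{\wt{Q}}$, the two expressions coincide, giving $(B^*)^{-1}=(B^{-1})^*$.

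The main obstacle is the bookkeeping in part $(b)$: $B^*$ and $B$ do not live in the same pseudo-endomorphism monoid, so one cannot blindly apply Lemma~\ref{(M):endo}$(a)$, and the slickest fix — proving $(B^*)^*=B$ and invoking $(a)$ — requires re-running (in reverse) the chain of identities from the proof of Remark~\ref{Rcat}$(b)$, using $\Lambda_Q=\Lambda_{\wt{Q}}$ and Remark~\ref{LAMBDA}$(d,e)$. Everything else is direct substitution.
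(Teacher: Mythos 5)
Parts $(a)$ and $(c)$ of your proposal follow the paper's proof essentially verbatim (the collapse $B^*=\Upsilon(-\Xi(B)^{\tr})$ via $\wc{G}_Q^{-1}K^{\dagger}=K$ and $K^{\tr}\wc{G}_Q=-K^{\dagger\tr}$, and the direct rearrangement of the defining relations for $B^{-1}$). For $(b)$ you take a genuinely different route. The paper argues directly: with $Z=\Xi(B^*B)$ taken with respect to a kernel matrix $\wt{K}$ of $I(\wt{Q})$, it computes $\wt{K}^{\dagger}Z\wt{K}^{\dagger\tr}=\wc{G}_{\wt{Q}}(B^*B-\Id)=B^{\tr}\wc{G}_QB-\wc{G}_{\wt{Q}}$, adds the transpose, and uses $B^{\tr}G_QB=(I(Q)B)^{\tr}(I(Q)B)=G_{\wt{Q}}$ to conclude $\wt{K}^{\dagger}(Z+Z^{\tr})\wt{K}^{\dagger\tr}=0$, hence $Z+Z^{\tr}=0$ by independence of the columns of $\wt{K}^{\dagger}$. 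You instead establish that $(-)^*$ is an anti-involution, $(B^*)^*=B$, deduce $(B^*B)^*=B^*B$ and $(BB^*)^*=BB^*$, and invoke part $(a)$; this is valid, and it buys a structural fact the paper never states explicitly, at the cost of verifying $(B^*)^*=B$. Your sketch of that verification (``read Remark~\ref{Rcat}$(b)$ backwards'') can be made into a genuine one-liner without any appeal to the Coxeter--Laplacian: $(B^*)^*=\wc{G}_Q^{-1}\wc{G}_Q^{\tr}B\wc{G}_{\wt{Q}}^{-\tr}\wc{G}_{\wt{Q}}=\Phi_Q^{-\tr}B\,\Phi_{\wt{Q}}^{\tr}$, and $\Phi_Q^{\tr}B=B\Phi_{\wt{Q}}^{\tr}$ follows at once from $\Phi^{\tr}=\Id-I((-)^{\dagger})^{\tr}I(-)$ together with the two pseudo-morphism relations $I(Q)B=I(\wt{Q})$ and $BI(\wt{Q}^{\dagger})^{\tr}=I(Q^{\dagger})^{\tr}$; you should spell this out rather than leave it as an indication. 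One bookkeeping slip to correct: with the paper's composition convention, $\wt{Q}\pseudo^{B^*}Q$ followed by $Q\pseudo^{B}\wt{Q}$ yields $\wt{Q}\pseudo^{B^*B}\wt{Q}$, so $B^*B$ is a pseudo-endomorphism of $\wt{Q}$ (its $\Xi$ is taken with respect to $\wt{K}$, exactly as in the paper) and $BB^*$ one of $Q$ -- you state the two memberships the other way around. The argument survives because part $(a)$ applies to either quiver, but the labels, and the corresponding kernel matrices used for $\Xi$, must be swapped.
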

\begin{proof}
To show $(a)$ assume that $Q=\wt{Q}$, and take $Z:=\Xi(B)$. Note that
\begin{eqnarray*}
 B^* & = & \wc{G}_Q^{-1}B^{\tr}\wc{G}_Q = \wc{G}_Q^{-1}(\Id+KZK^{\dagger\tr})^{\tr}\wc{G}_Q \\
 & = & \Id+\wc{G}_Q^{-1}K^{\dagger}Z^{\tr}K^{\tr}\wc{G}_Q =\Id +K(-Z^{\tr})K^{\dagger\tr} = \Upsilon(-Z^{\tr}),
\end{eqnarray*}
that is, $\Xi(B^*)=\Xi(\Upsilon(-\Xi(B)^{\tr}))=-\Xi(B)^{\tr}$.

\medskip
To show $(b)$ take $Z:=\Xi(B^*B)$. Since $B^*B=\Id+\wt{K}Z\wt{K}^{\dagger\tr}$ and $\wt{K}^{\dagger}=\wc{G}_{\wt{Q}}K$, we have
\[
\wt{K}^{\dagger}Z\wt{K}^{\dagger\tr}=\wc{G}_{\wt{Q}}\wt{K}Z\wt{K}^{\dagger\tr}=\wc{G}_{\wt{Q}}(B^*B-\Id)=B^{\tr}\wc{G}_QB-\wc{G}_{\wt{Q}},
\]
and therefore,
\[
\wt{K}^{\dagger}Z^{\tr}\wt{K}^{\dagger\tr}=(\wt{K}^{\dagger}Z\wt{K}^{\dagger\tr})^{\tr}=B^{\tr}\wc{G}_Q^{\tr}B-\wc{G}_{\wt{Q}}^{\tr}.
\]
Since $I(Q)B=I(\wt{Q})$ we get
\[
\wt{K}^{\dagger}[Z+Z^{\tr}]\wt{K}^{\dagger\tr}=B^{\tr}G_QB-G_{\wt{Q}}=(I(Q)B)^{\tr}(I(Q)B)-I(\wt{Q})^{\tr}I(\wt{Q})=0.
\]
In particular, $Z+Z^{\tr}=0$ since the columns of $\wt{K}^{\dagger}$ are linearly independent. The case $BB^*$ can be shown in a similar way.

\medskip
For $(c)$, using that $I(Q)B=I(\wt{Q})$ and $I(\wt{Q}^{\dagger})B^{\tr}=I(Q^{\dagger})$, if $B$ is $\Z$-invertible then
\[
I(\wt{Q})B^{-1}=I(Q) \qquad \text{and} \qquad I(Q^{\dagger})B^{-\tr}=I(\wt{Q}^{\dagger}),
\]
that is, $\wt{Q} \pseudo^{B^{-1}} Q$. The last claim of $(c)$ is immediate.
\end{proof}

For a more general version of the following proposition, see Remark~\ref{Rgeneral} below.

\begin{proposition}\label{(P):strong}
Let $Q$ be a connected loop-less quiver. If there is a $\Z$-invertible matrix $B$ such that
\[
I(Q)B=I(\vec{Q}) \qquad \text{and} \qquad I(\vec{Q}^{\dagger})B^{\tr}=I(Q^{\dagger}),
\]
where $\vec{Q}$ is a standard quiver with same number of vertices and arrows as $Q$, then there is a matrix $C$ such that $I(Q)[BC]=I(\vec{Q})$ and $[BC]^*[BC]=\Id$.
\end{proposition}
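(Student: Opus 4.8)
The plan is to work entirely inside the monoid $(\mathbb{M}_c(\Z),\oper{W})$ via the bijection $\Xi$ of Lemma~\ref{(M):endo}, reducing the statement to a purely matricial equation about skew-symmetric matrices, and then to invoke Lemma~\ref{(I):skewsym}$(b)$. First I would fix a kernel matrix $K$ for $I(Q)$ and $\vec{K}$ for $I(\vec{Q})$, with $K^{\dagger}=\wc{G}_QK$, $\vec{K}^{\dagger}=\wc{G}_{\vec{Q}}\vec{K}$, and set $W:=K^{\tr}\wc{G}_QK$, $\vec{W}:=\vec{K}^{\tr}\wc{G}_{\vec{Q}}\vec{K}$. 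By Lemma~\ref{(L):stan}, $\vec{W}$ is a pure skew-symmetric matrix (this is the special condition alluded to in the introduction and in Corollary~\ref{CinDet}). The goal is to produce $C$ with $I(Q)[BC]=I(\vec{Q})$ and $[BC]^*[BC]=\Id$. Since $Q\pseudo^B\vec{Q}$, by Remark~\ref{Rcat}$(b)$ we have $\vec{Q}\pseudo^{B^*}Q$, so $B^*B$ is a pseudo-endomorphism of $\vec{Q}$; and by Lemma~\ref{Lstar}$(b)$ the matrix $Z_0:=\Xi_{\vec{Q}}(B^*B)$ is skew-symmetric, where $\Xi_{\vec{Q}}$ is the bijection of Lemma~\ref{(M):endo} applied to the quiver $\vec{Q}$.

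The key observation is that any $C$ of the form $C=B^{-1}\Upsilon_{\vec{Q}}(Z)$ for $Z\in\mathbb{M}_c(\Z)$ satisfies $I(Q)[BC]=I(Q)B\Upsilon_{\vec{Q}}(Z)B^{-1}B\cdot B^{-1}$; more simply, $BC=\Upsilon_{\vec{Q}}(Z)$ is a pseudo-endomorphism of $\vec{Q}$, so $I(\vec{Q})[BC]=I(\vec{Q})$, hence $I(Q)[BC]=I(Q)B[B^{-1}(BC)]$ — I would instead just take $C$ so that $BC=\Upsilon_{\vec{Q}}(Z)\cdot B$ does not work directly; the clean route is: set $N:=BC$ and demand $Q\pseudo^N\vec{Q}$, which forces $N=B\cdot(B^{-1}N)$ with $B^{-1}N\in\End_{\pseudo}(\vec{Q})$ by Remark~\ref{Rcat}, so $B^{-1}N=\Upsilon_{\vec{Q}}(Z)$ for some $Z$, i.e.\ $C=B^{-1}\Upsilon_{\vec{Q}}(Z)$. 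Then $N^*N=[B\Upsilon_{\vec{Q}}(Z)]^*[B\Upsilon_{\vec{Q}}(Z)]=\Upsilon_{\vec{Q}}(Z)^*B^*B\,\Upsilon_{\vec{Q}}(Z)$. Applying $\Xi_{\vec{Q}}$ and using Lemma~\ref{(M):endo}$(a)$ together with Lemma~\ref{Lstar}$(a)$ (which gives $\Xi_{\vec{Q}}(\Upsilon_{\vec{Q}}(Z)^*)=-Z^{\tr}$), we compute
\[
\Xi_{\vec{Q}}(N^*N)=(-Z^{\tr})\oper{\vec{W}} Z_0 \oper{\vec{W}} Z = -Z^{\tr}+Z_0+Z+Z^{\tr}\vec{W}Z_0 - Z^{\tr}\vec{W}Z - Z_0\vec{W}Z + Z^{\tr}\vec{W}Z_0\vec{W}Z,
\]
after expanding the associative operation $\oper{\vec{W}}$ (Remark~\ref{assoc}). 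We want this to equal $0$, i.e.\ $\Xi_{\vec{Q}}(\Id)=0$ by Lemma~\ref{(M):endo}$(b)$.

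Now comes the reduction to Lemma~\ref{(I):skewsym}$(b)$. Replacing $Z$ by $(\Id-\vec{W}Z_0)^{-1}Z'$ is not available since $\vec{W}Z_0$ need not be nilpotent; instead I would substitute a change of variable adapted to the skew structure. Write $Z_0=\frac{1}{2}(Z_0 - Z_0^{\tr})$ and note $Z_0$ skew. The expression $-Z^{\tr}+Z+(\text{quadratic in }Z)+Z_0(1+\ldots)$ is, after setting $Y:=-Z$, precisely of the shape $Y - Y^{\tr} + Y^{\tr}\vec{W}Y$ plus correction terms involving $Z_0$; more carefully, I expect that completing the "square" with respect to $\oper{\vec{W}}$ — using that $Z_0$ has a $\oper{\vec{W}}$-inverse-like partner because $B^*B$ is $\Z$-invertible (Lemma~\ref{Lstar}$(c)$ gives $(B^*B)^{-1}$, so $\Xi_{\vec{Q}}(B^*B)$ has a $\oper{\vec{W}}$-inverse by Lemma~\ref{(M):endo}$(b)$) — reduces the equation $\Xi_{\vec{Q}}(N^*N)=0$ to the existence of $Y$ with $\tilde Z = Y - Y^{\tr}+Y^{\tr}\vec{W}Y$ for a suitable skew-symmetric $\tilde Z$ built from $Z_0$ and the $\oper{\vec{W}}$-inverse of $Z_0$, which is exactly what Lemma~\ref{(I):skewsym}$(b)$ delivers since $\vec{W}$ is pure. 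Concretely: let $\bar Z$ be the $\oper{\vec{W}}$-inverse of $Z_0$ (so $Z_0\oper{\vec{W}}\bar Z=0=\bar Z\oper{\vec{W}}Z_0$); the equation $(-Z^{\tr})\oper{\vec{W}}Z_0\oper{\vec{W}}Z=0$ is equivalent, by $\oper{\vec{W}}$-cancellation against $\bar Z$ from the appropriate side, to $Z_0\oper{\vec{W}}Z = Z \oper{\vec{W}}(\text{something})$ — I expect it to collapse to requiring $Z$ to satisfy $\bar{Z} \oper{\vec W}(-Z^\top) \oper{\vec W} Z_0 \oper{\vec W} Z \oper{\vec W}\bar Z = \bar Z$, i.e.\ a twisted skew-symmetry condition on $Z$ solvable by Lemma~\ref{(I):skewsym}$(b)$ after absorbing $\vec{W}$-factors.

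The main obstacle, and the step I would spend the most care on, is exactly this last algebraic manipulation: verifying that the cancellation in $(\mathbb{M}_c(\Z),\oper{\vec{W}})$ genuinely turns $\Xi_{\vec{Q}}(N^*N)=0$ into an instance of the normal form $Z=Y-Y^{\tr}+Y^{\tr}\vec{W}Y$ handled by Lemma~\ref{(I):skewsym}$(b)$ — this requires knowing that $Z_0$ is $\oper{\vec{W}}$-invertible (which follows from $\Z$-invertibility of $B^*B$ via Lemma~\ref{Lstar}$(c)$ and Lemma~\ref{(M):endo}$(b)$) and checking that the conjugation by $\bar{Z}$ respects skew-symmetry, i.e.\ that $\bar{Z}$ itself is skew-symmetric, which should follow because $(B^*B)^{-1}=(B^{-1})^*B^{-1}$ is again of the form ``$C^*C$'' (with $C=B^{-1}$) and Lemma~\ref{Lstar}$(b)$ applies. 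Once $Y$ is produced, set $Z:=-Y$ (up to the $\vec{W}$-absorbing substitution), $C:=B^{-1}\Upsilon_{\vec{Q}}(Z)$; then $I(Q)[BC]=I(\vec{Q})$ holds because $BC=\Upsilon_{\vec{Q}}(Z)\in\End_{\pseudo}(\vec{Q})$ composed after $B$, and $[BC]^*[BC]=\Id$ by construction, completing the proof.
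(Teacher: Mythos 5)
Your setup is sound and coincides with the paper's: with $C=\Upsilon_{\vec{Q}}(Z)$ the condition $[BC]^*[BC]=\Id$ is indeed equivalent, via Lemma~\ref{(M):endo} and Lemma~\ref{Lstar}$(a)$, to solving $(-Z^{\tr})\oper{\vec{W}}Z_0\oper{\vec{W}}Z=0$ in the monoid $(\mathbb{M}_c(\Z),\oper{\vec{W}})$, and $I(Q)[BC]=I(\vec{Q})$ is automatic for any such $C$. (Note the slip: since $N=BC$, you get $C=B^{-1}N=\Upsilon_{\vec{Q}}(Z)$, not $C=B^{-1}\Upsilon_{\vec{Q}}(Z)$; your later computation uses the correct reading. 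Two signs in your expansion of $(-Z^{\tr})\oper{\vec{W}}Z_0\oper{\vec{W}}Z$ are also wrong, and $(B^*B)^{-1}=B^{-1}(B^{-1})^*$ rather than $(B^{-1})^*B^{-1}$, though Lemma~\ref{Lstar}$(b)$ covers both orders, so skew-symmetry of $\bar{Z}:=\Xi((B^*B)^{-1})$ does hold.) The genuine gap is that the equation in $Z$ is never solved: the decisive reduction is left as ``I expect it to collapse to\ldots'', and the concrete ``twisted skew-symmetry condition'' you offer, $\bar{Z}\oper{\vec{W}}(-Z^{\tr})\oper{\vec{W}}Z_0\oper{\vec{W}}Z\oper{\vec{W}}\bar{Z}=\bar{Z}$, is not equivalent to it: if $(-Z^{\tr})\oper{\vec{W}}Z_0\oper{\vec{W}}Z=0$ its left-hand side equals $\bar{Z}\oper{\vec{W}}\bar{Z}$, which equals $\bar{Z}$ only when $\bar{Z}=0$ (cancel against $Z_0$), i.e.\ only in the trivial case $B^*B=\Id$. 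So the step that carries the whole proof is missing.

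The missing idea, which is how the paper closes this step, is to apply Lemma~\ref{(I):skewsym}$(b)$ not to a condition extracted from the unknown $Z$ but to the skew-symmetric matrix $\bar{Z}$ itself: since $\vec{W}$ is pure (Lemma~\ref{(L):stan}), write $\bar{Z}=Y-Y^{\tr}+Y^{\tr}\vec{W}Y=(-Y^{\tr})\oper{\vec{W}}Y$, and set $C:=\Upsilon_{\vec{Q}}(-Y^{\tr})$, i.e.\ $Z:=-Y^{\tr}$. Direct substitution into your equation gives $Y\oper{\vec{W}}Z_0\oper{\vec{W}}(-Y^{\tr})$, which is not visibly zero; the paper circumvents this by first checking that $C$ is $\Z$-invertible (Lemma~\ref{(M):endo}$(b)$, because $(-Y^{\tr})\oper{\vec{W}}[Y\oper{\vec{W}}Z_0]=\bar{Z}\oper{\vec{W}}Z_0=0$) and then replacing $C^*(B^*B)C=\Id$ by the conjugate identity $CC^*(B^*B)=\Id$, whose $\Xi$-image is $(-Y^{\tr})\oper{\vec{W}}Y\oper{\vec{W}}Z_0=\bar{Z}\oper{\vec{W}}Z_0=0$; equivalently one can argue that a one-sided inverse of a square integer matrix is two-sided. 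Without this rearrangement (or an equivalent argument) the reduction to Lemma~\ref{(I):skewsym}$(b)$ that you anticipate does not go through, so as written the proposal establishes the framework but not the result.
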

\begin{proof}
With the notation of Definition~\ref{DEF}, we have $Q \pseudo^B \vec{Q}$. Fix kernel matrices $K$ and $\vec{K}$ of $I(Q)$ and $I(\vec{Q})$, respectively.

\medskip
Take $\overrightarrow{B}:=B^*B$, $Z:=\Xi(\overrightarrow{B})$ and $\vec{Z}:=\Xi(\overrightarrow{B}^{-1})$, where $\Xi$ is the function constructed in Lemma~\ref{(M):endo} with respect to $\vec{K}$. By Lemma~\ref{(M):endo}$(a)$, we have $\vec{Z}\oper{\vec{W}} Z=0$ where $\vec{W}=\vec{K}^{\tr}\wc{G}_{\vec{Q}}\vec{K}$. By Lemma~\ref{Lstar}$(b,c)$, the matrices $Z$ and $\vec{Z}$ are skew-symmetric. By Lemma~\ref{(I):skewsym}$(b)$, there is a matrix $Y$ such that $\vec{Z}=(-Y^{\tr}) \oper{\vec{W}} Y$, since $\vec{W}$ is a pure skew-symmetric matrix by Lemma~\ref{(L):stan}, cf.~(\ref{EQQW}). Consider the pseudo-endomorphism $C=\Upsilon(-Y^{\tr})=\Id-\vec{K}Y^{\tr}\vec{K}^{\dagger\tr}$ of $\vec{Q}$, which is $\Z$-invertible by Lemma~\ref{(M):endo}$(b)$, since
\[
(-Y^{\tr}) \oper{\vec{W}} [Y \oper{\vec{W}}Z]= [(-Y^{\tr})\oper{\vec{W}}Y] \oper{\vec{W}}Z=\vec{Z}\oper{\vec{W}} Z=0,
\]
using the associativity of $\oper{\vec{W}}$ (Remark~\ref{assoc}). Observe that
\[
\overrightarrow{BC}=(BC)^*(BC)=C^*(B^*B)C=C^*\overrightarrow{B}C.
\]
In particular, $\overrightarrow{BC}=\Id$ if and only if $(CC^{*})\overrightarrow{B}=\Id$ since $C$ is $\Z$-invertible. Applying Lemma~\ref{(M):endo}$(a,b)$ we get
\[
\Xi[(CC^*)\overrightarrow{B}]=[(-Y^{\tr})\oper{\vec{W}}Y] \oper{\vec{W}}Z=\vec{Z}\oper{\vec{W}} Z=0,
\]
since $\Xi(C)=(-Y^{\tr})$, and $\Xi(C^*)=Y$ by Lemma~\ref{Lstar}$(a)$. Thus, we have $\overrightarrow{BC}=(BC)^*(BC)=\Id$, as wanted.
\end{proof}

\begin{example}\label{EXAsix}
We now apply Proposition~\ref{(P):strong} to the quiver $Q^0$ of Example~\ref{EXAzero} (recall from Example~\ref{EXAfive} that we have already found a matrix $B_1$ such that $q_1 \approx^{B_1} \vec{q}_1$). Consider the $\Z$-invertible matrix $B_0:=B_0'$ given in Example~\ref{EXAtwo} satisfying $Q^0 \pseudo^{B_0} \vec{Q}^0$. Following the notation of the proof of Proposition~\ref{(P):strong} we get
\[
B_0=\begin{pmatrix} 0&0&\widehat{1}&0\\0&1&\widehat{1}&\widehat{1}\\0&0&1&\widehat{1}\\1&0&0&\widehat{2} \end{pmatrix}, \qquad \vec{B}_0=\begin{pmatrix} 1&\widehat{1}&1&0\\1&0&1&\widehat{1}\\ 1&\widehat{1}&2&\widehat{1} \\ 0&\widehat{1}&1&1 \end{pmatrix} \qquad \text{and} \qquad
\vec{B}^{-1}_0=\begin{pmatrix} 1&1&\widehat{1}&0 \\ \widehat{1}&2&\widehat{1}&1 \\ \widehat{1}&1&0&1 \\ 0&1&\widehat{1}&1 \end{pmatrix}. \qquad
\]
Taking $\vec{Z}:=\left( \begin{smallmatrix}0&\widehat{1}\\1&0 \end{smallmatrix} \right)$ we have
\[
\vec{B}_0^{-1}=\Upsilon(\vec{Z})=\Id_4+\vec{K}_0\vec{Z}\vec{K}_0^{\dagger \tr}=
\begin{pmatrix}1&0\\0&1\\0&1\\1&0 \end{pmatrix}\begin{pmatrix}0&\widehat{1}\\1&0 \end{pmatrix}\begin{pmatrix}\widehat{1}&1&&1\\0&&1&0 \end{pmatrix}.
\]
By Lemma~\ref{(I):skewsym}$(b)$, there is a matrix $Y$ such that $\vec{W}=(-Y^{\tr}) \oper{\vec{W}} Y$. Indeed, note that $\vec{W}:=\vec{K}^{\tr}_0\wc{G}_{\vec{Q}^0}\vec{K}_0=0$, therefore $(-Y^{\tr}) \oper{\vec{W}} Y=Y-Y^{\tr}$, so we may simply take $Y:=\left( \begin{smallmatrix} 0&0\\1&0 \end{smallmatrix} \right)$. As in the proof above, we take
\[
C:=\Upsilon(-Y^{\tr})=\Id_4-\vec{K}_0Y^{\tr}\vec{K}_0^{\dagger \tr}=
\begin{pmatrix} 1&1&\widehat{1}&0 \\ 0&1&0&0 \\ 0&0&1&0 \\ 0&1&-1&1 \end{pmatrix} \quad \text{and} \quad B_0C=\begin{pmatrix} 0&0&\widehat{1}&0 \\ 0&0&0&\widehat{1} \\ 0&\widehat{1}&2&\widehat{1} \\ 1&\widehat{1}&1&\widehat{2} \end{pmatrix}.
\]
A direct calculation shows that $q_0 \approx^{B_0C} \vec{q}_0$, as claimed in Proposition~\ref{(P):strong}.
\end{example}

%------------------------------------------------------------------
%------------------------------------------------------------------
%------------------------------------------------------------------
\section{Main proofs, concluding remarks and algorithms} \label{S5}

This section collects all preliminary results to prove the main technical theorem of the paper (Theorem~\ref{MTT}), which connects the Coxeter-Laplacian with the existence of strong Gram congruences, and suggests implementable algorithms to solve Simson's Problem~2$(ii)$. The section ends with those results of general interest used along the paper, and with some comments on generalizations and future work.

%------------------
%------------------
\subsection{Main results}

The following is a combinatorial version of Theorem~\ref{MT1} in terms of the Coxeter-Laplacian of a quiver.

\begin{theorem}\label{MTT}
The following are equivalent for connected loop-less quivers $Q$ and $\wt{Q}$ with the same number of vertices and arrows:
\begin{itemize}
 \item[i)] The Coxeter-Laplacians of $Q$ and $\wt{Q}$ coincide, $\Lambda_Q=\Lambda_{\wt{Q}}$.
 \item[ii)] There is a (not necessarily $\Z$-invertible) matrix $B$ such that $Q \pseudo^B \wt{Q}$.
 \item[iii)] There is a ($\Z$-invertible) matrix $B$ such that $I(Q)B=I(\wt{Q})$ and $\wc{G}_{\wt{Q}}=B^{\tr}\wc{G}_QB$.
\end{itemize}
\end{theorem}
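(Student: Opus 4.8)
The plan is to prove the cycle of implications $(iii)\Rightarrow(ii)\Rightarrow(i)\Rightarrow(iii)$. The first two links are short formal computations; the content of the theorem is the implication $(i)\Rightarrow(iii)$, which is where the three-step construction developed in Sections~\ref{S2}--\ref{S4} is finally assembled.

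For $(iii)\Rightarrow(ii)$ I would argue directly. Given a $\Z$-invertible matrix $B$ with $I(Q)B=I(\wt{Q})$ and $\wc{G}_{\wt{Q}}=B^{\tr}\wc{G}_QB$, we have $\wc{G}_{\wt{Q}}^{-1}=B^{-1}\wc{G}_Q^{-1}B^{-\tr}$, and hence, using $I(\wt{Q}^{\dagger})=I(\wt{Q})\wc{G}_{\wt{Q}}^{-1}$ and $I(Q^{\dagger})=I(Q)\wc{G}_Q^{-1}$ from~(\ref{EQQ3}),
\[
I(\wt{Q}^{\dagger})B^{\tr}=I(Q)B\,\wc{G}_{\wt{Q}}^{-1}B^{\tr}=I(Q)BB^{-1}\wc{G}_Q^{-1}B^{-\tr}B^{\tr}=I(Q^{\dagger}),
\]
so $Q\pseudo^B\wt{Q}$. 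The implication $(ii)\Rightarrow(i)$ is Remark~\ref{Rcat}$(a)$.

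For $(i)\Rightarrow(iii)$ the idea is to factor both quivers through the standard quiver of their common cycle type. First I would normalize the pair. By Remark~\ref{RStan} there is a standard quiver $\vec{Q}$ with the same number of vertices and arrows as $Q$ (hence as $\wt{Q}$), together with a permutation $\rho$ of the vertex set such that $\Lambda_{\rho\cdot Q}=\Lambda_{\vec{Q}}$; since $\Lambda_{\rho\cdot Q}=P(\rho)\Lambda_QP(\rho^{-1})$ (as in the proof of Remark~\ref{RStan}) and $\Lambda_Q=\Lambda_{\wt{Q}}$, the same $\rho$ gives $\Lambda_{\rho\cdot\wt{Q}}=\Lambda_{\vec{Q}}$. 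Now Corollary~\ref{C:PseudoStandard} followed by Proposition~\ref{(P):strong}, applied to $\rho\cdot Q$, yields a matrix $A$ with $I(\rho\cdot Q)A=I(\vec{Q})$ and $A^*A=\Id$; the latter rewrites, via $\wc{G}_{\rho\cdot Q}=\wc{G}_Q$ (Remark~\ref{RPerm}), as $A^{\tr}\wc{G}_QA=\wc{G}_{\vec{Q}}$, and in particular forces $A$ to be $\Z$-invertible. The same construction applied to $\rho\cdot\wt{Q}$ gives a $\Z$-invertible $A'$ with $I(\rho\cdot\wt{Q})A'=I(\vec{Q})$ and $A'^{\tr}\wc{G}_{\wt{Q}}A'=\wc{G}_{\vec{Q}}$. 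Setting $B:=A(A')^{-1}$ and using $I(\rho\cdot Q)=P(\rho)I(Q)$ and $I(\rho\cdot\wt{Q})=P(\rho)I(\wt{Q})$, the factor $P(\rho)$ cancels and one obtains $I(Q)B=I(\wt{Q})$ together with $B^{\tr}\wc{G}_QB=(A')^{-\tr}\wc{G}_{\vec{Q}}(A')^{-1}=\wc{G}_{\wt{Q}}$, which is $(iii)$.

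I expect the main obstacle to lie entirely upstream of this assembly, in the constructions packaged by Corollary~\ref{C:PseudoStandard} and Proposition~\ref{(P):strong}---that is, in Steps~1--3 of the Introduction: producing a pseudo-morphism $Q\pseudo\vec{Q}$ (Proposition~\ref{Ls}), correcting it to be $\Z$-invertible (Proposition~\ref{(P):inv}), and multiplying it into a genuine strong Gram congruence (Proposition~\ref{(P):strong}, which itself rests on the purity of the restriction of $\wc{\mathbbm{b}}_{q_{\vec{Q}}}$ to its radical, Lemma~\ref{(L):stan}). At the level of Theorem~\ref{MTT} itself the only delicate point is the normalization by $\rho$: the Coxeter-Laplacian $\Lambda_Q$ need only be conjugate---not equal---to that of the literal standard quiver of cycle type $\ct(Q)$, so the standard-quiver machinery cannot be applied to $Q$ directly, but only after replacing $Q$ and $\wt{Q}$ by $\rho\cdot Q$ and $\rho\cdot\wt{Q}$.
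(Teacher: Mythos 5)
Your proposal is correct and follows essentially the same route as the paper: the formal computations for $(iii)\Rightarrow(ii)$ and $(ii)\Rightarrow(i)$ (via Remark~\ref{Rcat}$(a)$), and for $(i)\Rightarrow(iii)$ the passage through the standard quiver via Propositions~\ref{Ls}, \ref{(P):inv} and~\ref{(P):strong}, composing the two resulting congruences as $B=A(A')^{-1}$. Your explicit treatment of the permutation $\rho$ and the cancellation of $P(\rho)$ just spells out what the paper compresses into ``by permuting the vertices of $Q$ and $\wt{Q}$ if necessary.''
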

\begin{proof}
Assume first that $B$ satisfies $(iii)$ and recall that $I(Q^{\dagger})=I(Q)\wc{G}_Q^{-1}$ for any connected loop-less quiver $Q$, see~(\ref{EQQ3}). Since $1=\det(\wc{G}_{\wt{Q}})=\det(B^{\tr}\wc{G}_QB)=\det(B)^2$, then $B$ is $\Z$-invertible and
\[
I(\wt{Q}^{\dagger})B^{\tr}=I(\wt{Q})[\wc{G}_{\wt{Q}}^{-1}B^{\tr}\wc{G}_Q]\wc{G}_Q^{-1}=I(\wt{Q})B^{-1}\wc{G}_Q^{-1}=I(Q)\wc{G}_Q^{-1}=I(Q^{\dagger}).
\]
That is, $Q \pseudo^B \wt{Q}$, and $(ii)$ holds. That $(ii)$ implies $(i)$ was shown in Remark~\ref{Rcat}$(a)$.

\medskip
Assume that $\Lambda_Q=\Lambda_{\wt{Q}}$. By Remark~\ref{RStan}, there is a standard quiver $\vec{Q}$ with the same number of vertices and arrows as $Q$, and such that $\ct(Q)=\ct(\vec{Q})$. By permuting the vertices of $Q$ and $\wt{Q}$ if necessary, we may assume that $\Lambda_Q=\Lambda_{\vec{Q}}=\Lambda_{\wt{Q}}$ (see Remarks~\ref{RPerm} and~\ref{RStan}; recall that in Definition~\ref{DStan} we fixed linear orders on $\vec{Q}_0$ and $\vec{Q}_1$).

\medskip
By Proposition~\ref{Ls} there is a matrix $B'$ such that $Q \pseudo^{B'} \vec{Q}$, and by Proposition~\ref{(P):inv} there is a matrix $M$ such that $B'+M$ is $\Z$-invertible and $Q \pseudo^{B'+M} \vec{Q}$. Moreover, using Proposition~\ref{(P):strong} we find a matrix $C$ such that if $B:=(B'+M)C$, then $I(Q)B=I(\vec{Q})$ and $B^*B=\Id$. Similarly, since $\Lambda_{\wt{Q}}=\Lambda_{\vec{Q}}$, we may find a matrix $\wt{B}$ such that $I(\wt{Q})\wt{B}=I(\vec{Q})$ and $\wt{B}^*\wt{B}=\Id$ (in particular, $\wt{B}$ is $\Z$-invertible since it is a square matrix with $\det(\wt{B})=\pm 1$). This means that $\wc{G}_{\vec{Q}}=B^{\tr}\wc{G}_QB$ and $\wc{G}_{\vec{Q}}=\wt{B}^{\tr}\wc{G}_{\wt{Q}}\wt{B}$, and therefore, $I(Q)(B\wt{B}^{-1})=I(\wt{Q})$ and
\[
\wc{G}_{\wt{Q}}=(B\wt{B}^{-1})^{\tr}\wc{G}_Q(B\wt{B}^{-1}),
\]
which completes the proof.
\end{proof}

Let us now derive Theorems~\ref{MT0}, \ref{MT1} and~\ref{MT2} from Theorem~\ref{MTT}.

\medskip
\noindent \textbf{Proof of Theorem~\ref{MT0}:} $ $ \\
Let $q$ be a connected non-negative unit form in $n \geq 1$ variables and of Dynkin type $\A_r$ for $r \geq 1$. Using Theorem~\ref{MAINlem} we find a connected loop-less quiver $Q$ with $m=r+1$ vertices and $n$ arrows such that $q=q_Q$. Consider the cycle type $\ct(q)=\ct(Q)$ and degree of degeneracy $\dd_q$ of $q$. Take $\vec{Q}=\vec{A}^{\dd_q}[\ct(q)]$, which is also a connected loop-less quiver with $m=r+1$ vertices and $n$ arrows (see Definition~\ref{DStan}).  Since $\ct(Q)=\ct(\vec{Q})$ by Remark~\ref{RStan}, we may assume that $\Lambda_Q=\Lambda_{\vec{Q}}$ (by replacing $Q$ by the quiver $\rho \cdot Q$ for some permutation $\rho$ of the set of vertices $Q_0$, see Remark~\ref{RPerm}). By Theorem~\ref{MTT} there is a matrix $B$ such that $I(Q)B=I(\vec{Q})$ and $\wc{G}_{\vec{Q}}=B^{\tr}\wc{G}_QB$. Taking $\vec{q}:=q_{\vec{Q}}$, by definition we have $\wc{G}_Q=\wc{G}_q$ and $\wc{G}_{\vec{Q}}=\wc{G}_{\vec{q}}$, and therefore, $q \approx^B \vec{q}$. The uniqueness of $\vec{q}$ follows from the uniqueness of the standard quiver $\vec{Q}$ with cycle type $\ct(q)$, $m$ vertices and $n$ arrows, cf. Corollary~\ref{STDunique}$(b)$.
\QED

\medskip
\noindent \textbf{Proof of Theorem~\ref{MT1}:} $ $ \\
Let $q$ and $\wt{q}$ be weakly Gram congruent connected non-negative unit forms in $n$ variables and of Dynkin type $\A_r$. If $q$ and $\wt{q}$ are strongly Gram congruent, then $\varphi_q=\varphi_{\wt{q}}$, see for instance~\cite[Lemma~4.6]{jaJ2020a}.

\medskip
Assume conversely that $\varphi_q=\varphi_{\wt{q}}$. Since $q \sim \wt{q}$, then $q$ and $\wt{q}$ have the same corank $c=\CRnk(q)=\CRnk(\wt{q})$. Using Theorem~\ref{MAINlem} we find quivers $Q$ and $\wt{Q}$ such that $q=q_Q$ and $\wt{q}=q_{\wt{Q}}$, both of which have $m=r+1$ vertices and $n$ arrows. By Remark~\ref{LAMBDA}$(b)$ we have
\[
\varphi_{q_Q}(\va)=(\va-1)^{c-1}p_{\Lambda_Q}(\va)=(\va-1)^{c-1}p_{\Lambda_{\wt{Q}}}(\va)=\varphi_{q_{\wt{Q}}}(\va).
\]
Therefore, the Coxeter-Laplacians $\Lambda_{Q}$ and $\Lambda_{\wt{Q}}$ are co-spectral. By Lemma~\ref{permutation} below, the matrices $\Lambda_Q$ and $\Lambda_{\wt{Q}}$ are conjugate, that is, there is a permutation matrix $P(\rho)$ such that $\Lambda_{\wt{Q}}=P(\rho)^{\tr}\Lambda_{Q}P(\rho)$. Thus, by Theorem~\ref{MAINlem}$(i)$ and replacing $Q$ by $\rho\cdot Q$ if necessary, we may assume that $\Lambda_Q=\Lambda_{\wt{Q}}$, see Remark~\ref{RPerm}. We conclude that $q \approx \wt{q}$ by Theorem~\ref{MAINlem}$(iii)$, and using the equivalence of $(i)$ and $(iii)$ in Theorem~\ref{MTT}.\QED

\medskip
\noindent \textbf{Proof of Theorem~\ref{MT2}:} $ $ \\
The construction of the cycle type $\ct$ is given in~\cite[Definition~4.2]{jaJ2020b}, see also~(\ref{EQQct}) above, where it is shown that $\ct:[\Quad_{\A}^c(n)/\approx ] \to \Part_1^c(m)$ is well-defined and surjective. The injectivity of $\ct$ is direct consequence of Theorem~\ref{MT1}, and the properties $(i-iii)$ were shown in~\cite[Theorem~6.3 and Corollary~6.4]{jaJ2020b}. Claim $(iv)$ is clear from definition, see Lemma~\ref{(P):radre}.

\medskip
That the reduced corank of $q$ is $\CRnk_{re}(q)=\ell-1$, where $\ell$ is the length of $\ct(q)$, was shown in Corollary~\ref{CreC}. That the multiplicity of $1$ as a root of $\varphi_q$ is $c+(\ell-1)$ can be easily derived from the shape of $\varphi_q$, cf.~\cite[Remark~7.1$(b)$]{jaJ2020b}. That is, the algebraic multiplicity of $1$ as eigenvalue of $\Phi_q$ is $\CRnk(q)+\CRnk_{re}(q)$, which shows $(v)$.
\QED

%------------------------------------------------------------------
\subsection{Some general results} \label{AUX}

%-----------
\noindent \textbf{On pure subgroups and orthogonal matrices} \label{AUX1}
\smallskip

We collect some well-known facts about pure subgroups, giving a sketch of the proofs.

\begin{lemma}\label{Rpure}
The following are equivalent for a subgroup $X$ of $\Z^m$.
\begin{itemize}
\itemsep=0.85pt
 \item[a)] The group $X$ is pure.
 \item[b)] The quotient $\Z^m/X$ is free.
 \item[c)] There is a subgroup $Y$ of $\Z^m$ such that $\Z^m=X \oplus Y$.
 \item[d)] The canonical inclusion $X \hookrightarrow \Z^m$ has a left inverse.
 \item[e)] If $f:A \to \Z^m$ is any morphism of abelian groups with $\Img(f)=X$, then for any morphism $h:A \to B$ of abelian groups with $h(\Ker(f)) = 0$ there is a morphism $g:\Z^m \to B$ (not necessarily unique) such that $h=gf$.
\end{itemize}
\end{lemma}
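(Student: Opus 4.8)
The statement is the standard equivalence of various characterizations of a pure (i.e. saturated) subgroup of a finitely generated free abelian group, so the plan is to run a cycle of implications, using that $\Z$ is a PID and that subgroups of free abelian groups are free. First I would prove $(a)\Rightarrow(b)$: if $X$ is pure, then $\Z^m/X$ is a finitely generated abelian group with no torsion, because an element $\bar{x}\in\Z^m/X$ with $a\bar{x}=0$ for some nonzero $a\in\Z$ lifts to $x\in\Z^m$ with $ax\in X$, whence $x\in X$ by purity, so $\bar{x}=0$; a finitely generated torsion-free abelian group is free by the structure theorem. Then $(b)\Rightarrow(c)$ is immediate: since $\Z^m/X$ is free, the short exact sequence $0\to X\to\Z^m\to\Z^m/X\to 0$ splits, so $\Z^m=X\oplus Y$ with $Y$ a lift of $\Z^m/X$. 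The implication $(c)\Rightarrow(d)$ is trivial (the projection $\Z^m=X\oplus Y\to X$ is a left inverse to the inclusion), and $(d)\Rightarrow(a)$ is also easy: if $\rho:\Z^m\to X$ satisfies $\rho|_X=\mathrm{id}_X$ and $ax\in X$ for $x\in\Z^m$, $a\neq 0$, then $a\rho(x)=\rho(ax)=ax$, so $\rho(x)=x$ (cancelling $a$ in the torsion-free group $\Z^m$), hence $x=\rho(x)\in X$.

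This closes the loop $(a)\Leftrightarrow(b)\Leftrightarrow(c)\Leftrightarrow(d)$, and it remains to fold in $(e)$. For $(c)\Rightarrow(e)$: given $f:A\to\Z^m$ with $\Img(f)=X$ and $h:A\to B$ with $h(\Ker f)=0$, factor $h$ through $A/\Ker f\cong X$ to get $\bar{h}:X\to B$, then precompose with the projection $\Z^m=X\oplus Y\to X$ to obtain $g:\Z^m\to B$; one checks $gf=h$ on generators since $f$ lands in $X$ and the projection restricts to the identity there. Conversely $(e)\Rightarrow(d)$ is the special case $A=X$, $f=$ the canonical inclusion, $B=X$, $h=\mathrm{id}_X$: then $h(\Ker f)=h(0)=0$ is vacuous and the resulting $g:\Z^m\to X$ with $gf=\mathrm{id}_X$ is exactly a left inverse of the inclusion.

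I expect no serious obstacle here; the only point requiring a little care is the direction $(a)\Rightarrow(b)$, where one must invoke the structure theorem for finitely generated abelian groups to pass from ``torsion-free'' to ``free'' — this is where the hypothesis that the ambient group is $\Z^m$ (finitely generated) is genuinely used. A secondary subtlety is making the factorization in $(c)\Rightarrow(e)$ clean: one should note that $\bar h$ is well-defined precisely because $h$ kills $\Ker f$, and that the isomorphism $A/\Ker f\xrightarrow{\sim} X$ induced by $f$ is what lets us transport $\bar h$ to a map defined on $X$ itself. Everything else is formal diagram-chasing and cancellation in torsion-free groups, so the write-up should be short.
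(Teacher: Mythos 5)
Your proposal is correct and follows essentially the same route as the paper: the cycle $(a)\Rightarrow(b)\Rightarrow(c)\Rightarrow(d)\Rightarrow(a)$ with the torsion-free-implies-free step and the cancellation argument, then $(e)$ obtained by factoring $h$ through $A/\Ker(f)\cong X$ and composing with a retraction of $\Z^m$ onto $X$, and $(e)\Rightarrow(d)$ by specializing to $f$ the inclusion and $h=\mathrm{id}_X$. The only cosmetic difference is that you derive $(e)$ from $(c)$ via the direct-sum projection while the paper derives it from $(d)$ via the left inverse, which is the same map in substance.
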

\begin{proof}
Assume that $X$ is pure. If $\bar{y}:=y+X \in \Z^m/X$ satisfies $a\bar{y}=0$ for some non-zero $a \in \Z$, then $ay \in X$, hence $y \in X$, that is, $\bar{y}=0$. This shows that $\Z^m/X$ is a torsion free (finitely generated) abelian group, thus free. Assume that $\Z^m/X$ is free, and fix a basis $\bar{y}_1,\ldots,\bar{y}_d$ of $\Z^m/X$ with $\bar{y}_t=y_t+X$ and $y_t \in \Z^m$ for $t=1,\ldots,d$. Take $Y=\langle y_1,\ldots,y_d \rangle$ the subgroup of $\Z^m$ generated by $y_1,\ldots,y_d$, and verify that $\Z^m=X \oplus Y$.  Assume that $\Z^m=X \oplus Y$, and take the inclusion $\sigma_X:X \to \Z^m$. Then the projection $\pi_X:X \oplus Y \to X$ satisfies $\pi_X\sigma_X=\Id_X$, that is, $\sigma_X$ is left invertible. Assume that $\sigma_X$ has a left inverse $\pi:\Z^m \to X$, and that $ax \in X$ for a non-zero $a \in \Z$. Then $ax=\pi(ax)=a\pi(x)$, which implies that $x=\pi(x)$, that is, $x \in X$, and $X$ is a pure subgroup of $\Z^m$. These arguments show that the statements $(a-d)$ are equivalent.

\medskip
Assume now that $(d)$ holds, and take functions $f$ and $h$ as in claim $(e)$. Denote by $\wt{f}$ the restriction of $f$ to its image, $\wt{f}:A \to \Img(f)$. Then $f=\sigma_X \wt{f}$, and therefore $\wt{f}=\pi f$, where $\sigma_X:X \to \Z^m$ is the canonical inclusion, and $\pi$ is a left inverse of $\sigma_X$. Since $\wt{f}$ is a cokernel of the inclusion $\Ker(f) \hookrightarrow A$, there is $\wt{h}:\Img(f) \to B$ such that $h=\wt{h}\wt{f}$. Then the following diagram is commutative,
\[
\xymatrix@R=1.5pc@C=3pc{\Ker(f) \ar@{^(->}[r] & A \ar[r]^-{f} \ar[rd]_-{\wt{f}} \ar@/_10pt/[rdd]_-{h} & \Z^m \ar[d]^-{\pi} \\
& & \Img(f) \ar[d]^-{\wt{h}} \\ & & B }
\]
Take $g:=\wt{h}\pi$ and observe that $gf=\wt{h}\pi f=\wt{h}\wt{f}=h$, as wanted. Finally, assume that $(e)$ holds, and take $f=\sigma_X$ and $h=\Id_X$. Since $\Ker(f)=0$ then $h(\Ker(f))=0$, and by hypothesis there is $g:\Z^m \to X$ such that $\Id_X=gf=f\sigma_X$. This shows the equivalence of claims $(d)$ and $(e)$, completing the proof.
\end{proof}

Recall that two permutation matrices $P$ and $P'$ are called \textbf{conjugate} if there is a permutation matrix $R$ such that $P'=R^{\tr}PR$.

\begin{lemma}\label{permutation}
The following are equivalent for permutations $\xi$ and $\xi'$ of the set $\{1,\ldots,m\}$.
\begin{itemize}
\itemsep=0.85pt
 \item[i)] $P(\xi)$ and $P(\xi')$ are conjugate permutation matrices.
 \item[ii)] The matrices $P(\xi)$ and $P(\xi')$ are co-spectral.
 \eject
 \item[iii)] The cycle types $\ct(\xi)$ and $\ct(\xi')$ coincide.
 \item[iv)] $\xi$ and $\xi'$ are conjugate permutations.
\end{itemize}
\end{lemma}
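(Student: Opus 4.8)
The four conditions form a cycle of implications, and I would prove them in the order $(iv)\Rightarrow(iii)\Rightarrow(ii)\Rightarrow(iv)$ together with $(i)\Leftrightarrow(iv)$, since the equivalence of $(i)$ and $(iv)$ is essentially a restatement and the genuine content lies in recovering conjugacy from co-spectrality. The plan is to first record the standard fact that for a permutation $\xi$ written as a product of disjoint cycles, the characteristic polynomial of $P(\xi)$ factors as $p_{P(\xi)}(\va)=\prod_{t}(\va^{\pi_t}-1)$, where $\ct(\xi)=(\pi_1,\ldots,\pi_\ell)$ is the cycle type; this is seen by block-decomposing $P(\xi)$ into the cyclic permutation matrices on each orbit and computing the characteristic polynomial of a single $k$-cycle as $\va^k-1$. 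This immediately gives $(iii)\Rightarrow(ii)$. For $(iv)\Rightarrow(iii)$, conjugate permutations have the same cycle structure because conjugation by $\rho$ sends a cycle $(a_1\,a_2\,\cdots\,a_k)$ to $(\rho(a_1)\,\rho(a_2)\,\cdots\,\rho(a_k))$, preserving lengths.

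The heart of the argument is $(ii)\Rightarrow(iii)$: from the coincidence of characteristic polynomials one must reconstruct the multiset of cycle lengths. The factorization $p_{P(\xi)}(\va)=\prod_{t=1}^{\ell}(\va^{\pi_t}-1)$ does not by itself determine the $\pi_t$, since cyclotomic factors from different parts overlap; however, the multiplicity of the cyclotomic polynomial $\Phi_d(\va)$ in $p_{P(\xi)}$ equals $\#\{t : d \mid \pi_t\}$. Thus the data of all these multiplicities determines, for every $d$, the number of parts divisible by $d$, and by Möbius-type inversion over the divisibility lattice (or simply by a downward induction on $d$ starting from $d=m$) one recovers the number of parts equal to each value, hence the partition $\ct(\xi)$ up to reordering. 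Since the $\pi_t$ are arranged non-increasingly by convention, the partition itself is determined. I expect this inversion step to be the main (though still routine) obstacle: one needs to be careful that $m = \sum \pi_t$ is fixed, so the total count is controlled, and that the argument is phrased over $\Z$ or $\Q$ where cyclotomic polynomials are irreducible, so that "multiplicity of $\Phi_d$" is well-defined from the polynomial alone.

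Finally, $(iii)\Rightarrow(iv)$ is the classical statement that two permutations of $\{1,\ldots,m\}$ with the same cycle type are conjugate: one pairs up the cycles of $\xi$ with those of $\xi'$ of equal length and defines $\rho$ on each matched pair of orbits by sending the chosen cyclic ordering of one to that of the other; this $\rho$ is a well-defined permutation satisfying $\xi' = \rho\xi\rho^{-1}$. For the matrix reformulation $(i)\Leftrightarrow(iv)$, I would use the functoriality $P(\rho\xi\rho^{-1}) = P(\rho)P(\xi)P(\rho)^{-1}$ and $P(\rho)^{-1} = P(\rho)^{\tr}$ (since permutation matrices are orthogonal), which shows that $\xi,\xi'$ are conjugate permutations precisely when $P(\xi),P(\xi')$ are conjugate permutation matrices; and $(i)\Rightarrow(ii)$ is immediate since conjugate matrices are co-spectral, closing the loop. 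The whole proof is thus a chain $(iv)\Rightarrow(iii)\Rightarrow(ii)$, the inversion argument $(ii)\Rightarrow(iii)$, the classical $(iii)\Rightarrow(iv)$, and the two-line passage between the permutation-level and matrix-level formulations of conjugacy.
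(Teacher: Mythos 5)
Your proposal is correct and takes essentially the same route as the paper: the same cycle of implications built on the factorization $p_{P(\xi)}(\va)=\prod_{t}(\va^{\pi_t}-1)$, recovery of the cycle type from the spectrum, the classical fact that permutations of equal cycle type are conjugate, and the passage between permutation-level and matrix-level conjugacy. The only difference is that the paper delegates the step ``co-spectral $\Rightarrow$ equal cycle type'' to citations (its predecessor paper and Rotman), whereas you supply a correct self-contained argument via multiplicities of cyclotomic polynomials and inversion over the divisibility order.
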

\begin{proof}
If $P(\xi)$ and $P(\xi')$ are conjugate permutation matrices, then they are similar matrices and their spectra coincide. Recall that the characteristic polynomial of $P(\xi)$ is given by
\[
p_{P(\xi)}(\va)=\prod_{t=1}^{\ell}(\va^{\pi_t}-1),
\]
where $\pi=(\pi_1,\ldots,\pi_{\ell})$ is the cycle type $\ct(\xi)=\pi$ of $\xi$ (cf.~\cite[\S 2.2]{jR06} or~\cite{jaJ2020b}). If $P(\xi)$ and $P(\xi')$ are co-spectral, then the cycle types of $\xi$ and $\xi'$ coincide (cf.~\cite[Algorithm~2 and Remark~7.4]{jaJ2020b}). In this case, $\xi$ and $\xi'$ are conjugate permutations (see~\cite[Proposition~2.33]{jR06}), which in turn shows that $P(\xi)$ and $P(\xi')$ are conjugate permutation matrices.
\end{proof}

%-----------
\noindent \textbf{On non-negative integral quadratic forms} \label{AUX2}
\smallskip

Let $Z$ be a free finitely generated abelian group. By integral quadratic form $q$ on $Z$ we mean a function $q:Z \to \Z$ satisfying $q(ax)=a^2q(x)$ for any $a \in \Z$ and $x \in Z$, and such that its polarization $\mathbbm{b}_q:Z \times Z \to \Z$, given by $\mathbbm{b}_q(x,y):=q(x+y)-q(x)-q(y)$, is a (symmetric) bilinear form. The symmetric matrix of $\mathbbm{b}_q$ with respect to a fixed basis of $Z$ is denoted by $G_q$. Since the determinant of $G_q$ is independent of the chosen basis, it is referred to as \textbf{determinant} $\det(q)$ of $q$. The \textbf{radical} of $q$ is the kernel (or right null space) of $G_q$.  Since $q(x)=0$ for any $x \in \rad(q)$, the induced function $q/\rad(q):Z/\rad(q) \to \Z$ is well defined.

\begin{lemma}\label{AuxLem}
Let $q:Z \to \Z$ be an integral quadratic form.
\begin{itemize}
 \item[i)] The form $q$ has zero radical (that is, $\rad(q)=0$) if and only if $\det(q)\neq 0$.
 \item[ii)] If $\wt{Z}$ is a free finitely generated abelian group, and $B:\wt{Z} \to Z$ is a linear transformation, then $qB:\wt{Z} \to \Z$ is an integral quadratic form.
 \item[iii)] If $\wt{Z}$ is a free finitely generated abelian group, $\wt{q}:\wt{Z} \to \Z$ is a function and $\wt{B}:Z \to \wt{Z}$ is a surjective linear transformation such that $q=\wt{q}\wt{B}$, then $\wt{q}$ is an integral quadratic form.
 \item[iv)] The radical $\rad(q)$ is a pure subgroup of $Z$, and the function $q/\rad(q):Z/\rad(q) \to \Z$ is an integral quadratic form with radical zero.
 \item[v)] If $q$ is non-negative, then $x \in Z$ is a radical vector of $q$ if and only if $q(x)=0$.
\end{itemize}
\end{lemma}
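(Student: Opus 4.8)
\textbf{Proof plan for Lemma~\ref{AuxLem}.} The five items are essentially standard facts about integral quadratic forms on free abelian groups, and the plan is to handle them one at a time, exploiting the already-established equivalences in Lemma~\ref{Rpure} and the structure theory of finitely generated abelian groups. For $(i)$: the radical of $q$ is by definition the right null space of the symmetric matrix $G_q$; over $\Z$ (equivalently over $\Q$ after tensoring), $G_qx=0$ admits a nonzero solution precisely when $G_q$ is singular, i.e.\ $\det(G_q)=\det(q)=0$. So $\rad(q)=0 \iff \det(q)\neq 0$. One should remark that $\det(q)$ is well defined because changing basis multiplies $G_q$ by $R^{\tr}(-)R$ for $R$ $\Z$-invertible, which scales the determinant by $\det(R)^2=1$.

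For $(ii)$ and $(iii)$: these are formal checks that the defining properties of an integral quadratic form (homogeneity of degree two, and bilinearity of the polarization) are preserved under precomposition and, respectively, descent along a surjection. For $(ii)$, set $q'=qB$; then $q'(ax)=q(B(ax))=q(aBx)=a^2q(Bx)=a^2q'(x)$, and $\mathbbm{b}_{q'}(x,y)=q'(x+y)-q'(x)-q'(y)=\mathbbm{b}_q(Bx,By)$ is bilinear as a composite of $B\times B$ with the bilinear $\mathbbm{b}_q$; integrality is automatic since everything lands in $\Z$. For $(iii)$, given $q=\wt q\wt B$ with $\wt B$ surjective: for $\wt x\in\wt Z$ pick $x\in Z$ with $\wt B x=\wt x$, then $\wt q(a\wt x)=\wt q(\wt B(ax))=q(ax)=a^2q(x)=a^2\wt q(\wt x)$, independent of the lift; for bilinearity of $\mathbbm{b}_{\wt q}$, note $\mathbbm{b}_{\wt q}(\wt B x,\wt B y)=\mathbbm{b}_q(x,y)$, and surjectivity of $\wt B$ (hence of $\wt B\times\wt B$) lets us transport the bilinearity of $\mathbbm{b}_q$ down to $\wt Z$. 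A small point to make explicit in $(iii)$ is that $\wt q$ is well defined as a function only because $q=\wt q\wt B$ is given as a hypothesis, so the value $\wt q(\wt x)$ does not depend on the chosen lift $x$.

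For $(iv)$: purity of $\rad(q)$ follows since if $ax\in\rad(q)$ for some nonzero $a\in\Z$, then $G_q(ax)=a(G_qx)=0$ forces $G_qx=0$ (as $\Z^m$, equivalently $\Q^m$, is torsion-free), so $x\in\rad(q)$ — this is literally the purity condition. By Lemma~\ref{Rpure}, $Z/\rad(q)$ is then free and finitely generated. The quotient map $\wt B:Z\to Z/\rad(q)$ is surjective, and $q$ vanishes on $\rad(q)$, so $q$ factors through $\wt B$ as $q=(q/\rad(q))\circ\wt B$; indeed if $x-x'\in\rad(q)$ then $q(x)-q(x')=q(x-x')+\mathbbm{b}_q(x',x-x')=0$ since $x-x'\in\rad(q)$. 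Thus $q/\rad(q)$ is a well-defined function, and by $(iii)$ it is an integral quadratic form. Its Gram matrix (in a basis of $Z/\rad(q)$ lifted to a complement of $\rad(q)$, using Lemma~\ref{Rpure}$(c)$) is the restriction of $G_q$ to that complement, which is nonsingular because the kernel of $G_q$ is exactly $\rad(q)$; hence $\rad(q/\rad(q))=0$ by $(i)$.

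For $(v)$: if $x\in\rad(q)$ then $q(x)=\frac12\mathbbm{b}_q(x,x)=0$ (since $G_qx=0$), which is the easy direction and needs no non-negativity. Conversely, suppose $q$ is non-negative and $q(x)=0$; I want $G_qx=0$. For any $y\in Z$ and any $\va\in\Z$, non-negativity gives $0\le q(x+\va y)=q(x)+\va\,\mathbbm{b}_q(x,y)+\va^2q(y)=\va\,\mathbbm{b}_q(x,y)+\va^2q(y)$. If $\mathbbm{b}_q(x,y)\neq0$, choosing $\va$ of small absolute value with sign opposite to $\mathbbm{b}_q(x,y)$ makes the linear term dominate and the expression negative, a contradiction; hence $\mathbbm{b}_q(x,y)=0$ for all $y$, i.e.\ $x\in\rad(q)$. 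The only mild subtlety — the main (very minor) obstacle in the whole lemma — is that $\va$ ranges over $\Z$ rather than $\R$, so one must check the integer $\va=\mp1$ already suffices: with $\va=-\sgn(\mathbbm{b}_q(x,y))$ one gets $q(x+\va y)=-|\mathbbm{b}_q(x,y)|+q(y)$, which is not obviously negative if $q(y)$ is large; the clean fix is instead to use non-negativity of $q$ on the rank-one sublattice $\Z x+\Z y$ together with the observation that a non-negative integral binary quadratic form with a nonzero isotropic vector $x$ must have $x$ in its radical — or, more directly, pass to $\Q\otimes Z$ where $\va$ may be taken real and the classical argument applies verbatim, then note $\rad$ of the rational extension meets $Z$ in $\rad(q)$. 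I would present the rational-extension version, as it is cleanest.
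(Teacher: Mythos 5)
Your proposal is correct, and for items $(i)$--$(iv)$ it runs essentially parallel to the paper's proof: $(i)$ via singularity of $G_q$ over $\Q$ and clearing denominators, $(ii)$--$(iii)$ as the same formal transport of homogeneity and bilinearity along $B$ resp.\ a surjection, and $(iv)$ via purity of $\ker G_q$, well-definedness of the quotient, and item $(iii)$ (you in fact do a bit more than the paper here, since you verify the ``radical zero'' claim explicitly via a complement, which the paper leaves implicit). The one place where you genuinely diverge is $(v)$. You correctly spot the integrality subtlety in the naive argument $0\leq q(x+\va y)=\va\,\mathbbm{b}_q(x,y)+\va^2 q(y)$ (small $\va$ is not available in $\Z$), but the fix you choose -- extending scalars to $\Q$ (or $\R$) and intersecting the radical of the extension back with $Z$ -- is heavier than what the paper does, and it silently uses that non-negativity passes to $\Q\otimes Z$ (true, but it needs the one-line clearing-denominators remark). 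The paper's trick is to scale the isotropic vector $x$ rather than $y$: since $q(x)=0$, for any $m\in\Z$ and basis vector $\bas_i$ one has $0\leq q(mx+\bas_i)=m\,\mathbbm{b}_q(x,\bas_i)+q(\bas_i)$, and letting $|m|$ be large with the appropriate sign forces $\mathbbm{b}_q(x,\bas_i)=0$; this stays entirely over $\Z$ and needs no extension of scalars. Your alternative suggestion via the sublattice $\Z x+\Z y$ (which, incidentally, is rank at most two, not ``rank-one'') would be circular unless you prove the binary-form fact by the same kind of argument, so the rational-extension version you say you would present is the right one of your two options -- it is valid, just slightly less economical than the paper's purely integral argument.
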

\begin{proof}
Fix a basis $(\bas_1,\ldots,\bas_n)$ of $Z$, and let $G_q$ be the Gram matrix of $\mathbbm{b}_q$ with respect to such basis. Then $\det(G_q)=0$ if and only if there is a trivial integral linear combination of the rows of $G_q$, that is, if there is a vector $z=(z_1,\ldots,z_n) \in \Z^n$ such that $G_qz=0$ (indeed, use Gaussian elimination over the rational numbers, then multiply any solution by a common multiple of the denominators to get integer coefficients). Then claim $(i)$ is clear, since $x=\sum_{i=1}^nz_i\bas_i \in Z$ is a radical vector of $q$.

\medskip
For $(ii)$ take $\wt{q}:=qB$. The polarization of $\wt{q}$ is given by $\mathbbm{b}_{\wt{q}}(\wt{x},\wt{y})=\mathbbm{b}_q(B\wt{x},B\wt{y})$ for vectors $\wt{x},\wt{y} \in \wt{Z}$. Then the bilinearity of $\mathbbm{b}_{\wt{q}}$ follows from the linearity of $B$ and the bilinearity of $\mathbbm{b}_q$. Moreover, for any $a \in \Z$ and $\wt{x} \in \wt{Z}$ we have
\[
\wt{q}(a\wt{x})=q(Ba\wt{x})=q(aB\wt{x})=a^2q(B\wt{x})=a^2\wt{q}(\wt{x}),
\]
since $B$ is linear and $q$ is an integral quadratic form.

\medskip
To show $(iii)$ observe that the polarization of $\wt{q}$ has the following shape, for vectors $\wt{x},\wt{y} \in \wt{Z}$ and vectors $x,y \in Z$ such that $\wt{B}x=\wt{x}$ and $\wt{B}y=\wt{y}$,
\[
\mathbbm{b}_{\wt{q}}(\wt{x},\wt{y})=\mathbbm{b}_{\wt{q}}(\wt{B}x,\wt{B}y)=\mathbbm{b}_q(x,y).
\]
Then a standard calculation, using that $\wt{B}$ is linear and surjective and that $\mathbbm{b}_q$ is bilinear, shows that $\mathbbm{b}_{\wt{q}}$ is bilinear. Moreover, for arbitrary $a \in \Z$ and $\wt{x} \in \wt{Z}$, there is $x \in Z$ such that $\wt{B}x=\wt{x}$ (since $\wt{B}$ is surjective), and we have
\[
\wt{q}(a\wt{x})=\wt{q}(a\wt{B}x)=\wt{q}(\wt{B}(ax))=q(ax)=a^{2}q(x)=a^2\wt{q}(\wt{B}x)=a^2\wt{q}(\wt{x}),
\]
since $q$ is a quadratic form and $\wt{B}$ is linear.

\medskip
To show $(iv)$, since $\rad(q)$ is the kernel of $\mathbbm{b}_q$ then it is a pure subgroup of $Z$. Moreover, observe that $x \in \rad(q)$ if and only if $q(x+y)=q(y)$ for any $y \in Z$, which shows that the induced function $q/\rad(q)$ is well defined. If $\wt{B}:Z \to Z/\rad(q)$ is the canonical surjection, then $q=(q/\rad(q))(\wt{B})$ and $\wt{q}$ is an integral quadratic form by $(iii)$.

\medskip
The proof of $(v)$ is a simple generalization of~\cite[Lemma~2.1$(b)$]{jaJ2020a}. Indeed, if $x \in \rad(q)$ then $q(x)=\frac{1}{2}x^{\tr}G_qx=0$ (here we identify the elements $x \in Z$ with their coordinate vectors in $\Z^n$ in the fixed basis). Conversely, if $q$ is non-negative and $q(x)=0$, then for an arbitrary $m \in \Z$ and a basis $\bas_1,\ldots,\bas_n$ of $Z$ we have
\[
0 \leq q(mx+\bas_i)=m\mathbbm{b}_q(x,\bas_i)+q(\bas_i).
\]
Since $m$ is arbitrary, then $\mathbbm{b}_q(x,\bas_i)=0$, and this holds for any $i\in \{1,\ldots,n\}$. That is, $x \in \rad(q)$.
\end{proof}

\begin{lemma}\label{Linv}
Let $q:Z \to \Z$ be a non-negative integral quadratic form, $B:\wt{Z} \to Z$ a linear transformation, and take $\wt{q}:=qB$. Then $B$ is an isomorphism if and only if the following conditions hold,
\begin{itemize}
 \item[a)] the transformation $B$ restricts to an isomorphism $B|_{\rad}:\rad(\wt{q}) \to \rad(q)$;
 \item[b)] the determinants of $q/\rad(q)$ and $\wt{q}/\rad(\wt{q})$ coincide.
\end{itemize}
\end{lemma}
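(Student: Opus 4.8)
\textbf{Plan for the proof of Lemma~\ref{Linv}.}
The statement is a bookkeeping lemma about when a linear transformation $B:\wt Z \to Z$ with $\wt q = qB$ is an isomorphism. The ``only if'' direction is essentially formal: if $B$ is an isomorphism, then it carries $\rad(\wt q)$ onto $\rad(q)$ because radicals are defined by the polarizations and $\mathbbm{b}_{\wt q}(\wt x,\wt y)=\mathbbm{b}_q(B\wt x,B\wt y)$, and the induced map $\bar B:\wt Z/\rad(\wt q)\to Z/\rad(q)$ is again an isomorphism, which forces the Gram determinants (computed in any bases) to agree up to the square of an invertible integer, hence to be equal. So the whole content is the ``if'' direction.

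For the ``if'' direction the plan is to pass to a splitting. By Lemma~\ref{AuxLem}$(iv)$ both $\rad(q)$ and $\rad(\wt q)$ are pure subgroups, so by Lemma~\ref{Rpure} we may write $Z = \rad(q)\oplus Y$ and $\wt Z = \rad(\wt q)\oplus \wt Y$ for some free complements $Y,\wt Y$. Since $\mathbbm{b}_{\wt q}(\wt x,\wt y)=\mathbbm{b}_q(B\wt x,B\wt y)$, a vector $\wt x$ lies in $\rad(\wt q)$ as soon as $B\wt x\in\rad(q)$, so $B^{-1}(\rad(q))=\rad(\wt q)$ and $B$ descends to a well-defined linear map $\bar B:\wt Z/\rad(\wt q)\to Z/\rad(q)$ with $\wt q/\rad(\wt q) = (q/\rad(q))\bar B$. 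In matrix terms, choosing bases adapted to the two direct sum decompositions, $B$ becomes block upper (or lower) triangular with diagonal blocks $B|_{\rad}$ and (a matrix representing) $\bar B$; hence $\det(B)=\det(B|_{\rad})\cdot\det(\bar B)$. By hypothesis~(a), $\det(B|_{\rad})=\pm 1$. So it remains to show $\det(\bar B)=\pm1$, i.e.\ that $\bar B$ is an isomorphism.

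Now both $q/\rad(q)$ and $\wt q/\rad(\wt q)$ are non-negative integral quadratic forms with zero radical (Lemma~\ref{AuxLem}$(iv)$), so by Lemma~\ref{AuxLem}$(i)$ their Gram determinants are nonzero; these Gram matrices are $\bar G$ and $\wt{\bar G} = M^{\tr}\bar G M$ where $M$ is the matrix of $\bar B$ in the chosen bases. Taking determinants gives $\det(\wt{\bar G}) = \det(M)^2\det(\bar G)$, and hypothesis~(b) says $\det(\wt{\bar G})=\det(\bar G)\neq 0$, so $\det(M)^2=1$, i.e.\ $\det(\bar B)=\pm1$ and $\bar B$ is a $\Z$-isomorphism. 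Combining, $\det(B)=\pm1$, so $B$ is an isomorphism, and conversely we already noted the necessity of (a),(b). I expect the only mildly delicate point to be the triangular block decomposition of $B$: one must check that $B$ really does respect the splittings well enough to make $\det(B)$ factor as claimed — but since $B$ maps $\rad(\wt q)$ into $\rad(q)$, in bases whose first vectors span the radicals the matrix of $B$ is block upper triangular with the two diagonal blocks being exactly $B|_{\rad}$ and the matrix of $\bar B$, so the determinant multiplicativity is immediate and there is no real obstacle.
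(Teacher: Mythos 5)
Your proposal is correct, and its ``only if'' direction is the same as the paper's. In the ``if'' direction you and the paper both start from the same key step: since $q/\rad(q)$ and $\wt{q}/\rad(\wt{q})$ have nonzero determinant (Lemma~\ref{AuxLem}$(i),(iv)$) and $\wt{q}/\rad(\wt{q})=(q/\rad(q))[B]$, hypothesis $(b)$ forces $\det([B])=\pm 1$, i.e.\ $[B]$ is an isomorphism. Where you diverge is the finishing step. The paper concludes that $B$ itself is bijective by a basis-free element chase: injectivity from the injectivity of $[B]$ and of $B|_{\rad}$, surjectivity from the surjectivity of $[B]$ and of $B|_{\rad}$ (a five-lemma-type argument along $0\to\rad\to Z\to Z/\rad\to 0$). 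You instead split $Z=\rad(q)\oplus Y$ and $\wt{Z}=\rad(\wt{q})\oplus\wt{Y}$ using purity (Lemmas~\ref{AuxLem}$(iv)$ and~\ref{Rpure}), observe that in adapted bases $B$ is block triangular with diagonal blocks $B|_{\rad}$ and a matrix of $[B]$ (which is legitimate, since $(a)$ already gives $B(\rad(\wt{q}))\subseteq\rad(q)$), and conclude $\det(B)=\det(B|_{\rad})\det([B])=\pm 1$. Both routes are valid; yours buys a one-line conclusion at the cost of choosing adapted bases, while the paper's chase avoids determinants of $B$ altogether. One shared caveat: writing $\det([B])$ (and, in your version, $\det(B)$) tacitly assumes that $\wt{Z}/\rad(\wt{q})$ and $Z/\rad(q)$ have the same rank, an assumption the paper also makes implicitly and which holds in the intended application (unit forms in the same number of variables, Corollary~\ref{Cinv}); neither proof addresses it explicitly, so this is not a gap relative to the paper's own argument.
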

\begin{proof}
By Lemma~\ref{AuxLem}$(ii)$, $\wt{q}=qB$ is an integral quadratic form, which is clearly non-negative. By non-negativity, $B$ restricts to a transformation $B|_{\rad}:\rad(\wt{q}) \to \rad(q)$. Indeed, if $\wt{x} \in \rad(\wt{q})$ then $q(B(\wt{x}))=\wt{q}(\wt{x})=0$, and by Lemma~\ref{AuxLem}$(v)$ the vector $B(\wt{x})$ is in the radical of $q$. In particular, $B$ induces a linear transformation $[B]:\wt{Z}/\rad(\wt{q}) \to Z/\rad(q)$ such that the following diagram commutes,
\[
\xymatrix@R=1pc{\wt{Z}/\rad(\wt{q}) \ar[rrd]^-{\wt{q}/\rad(\wt{q})} \ar[dd]_-{[B]} \\ & & \Z. \\ Z/\rad(q)  \ar[rru]_-{q/\rad(q)} }
\]

Assume first that $B$ is an isomorphism. Then $q=\wt{q}B^{-1}$ and since $\wt{q}$ is also non-negative, the restriction $B^{-1}|_{\rad}:\rad(q) \to \rad(\wt{q})$ is inverse of $B|_{\rad}$. Since $\rad(q)$ is a direct summand of $Z$, and so is $\rad(\wt{q})$ of $\wt{Z}$ (Lemma~\ref{Rpure}), then $[B]$ is an isomorphism between free finitely generated abelian groups, which implies that $\det(\wt{q}/\rad(\wt{q}))=\det(q/\rad(q))\det([B])=\det(q/\rad(q))$. This shows that $(a)$ and $(b)$ hold.

\medskip
For the converse, by Lemma~\ref{AuxLem}$(i,v)$ we have $\det(q/\rad(q)) \neq 0$ and $\det(\wt{q}/\rad(\wt{q}))\neq 0$. By $(b)$, this implies that $\det([B])=\pm 1$, that is, that $[B]$ is an isomorphism. Assume now that $0=B\wt{x}$ for some $\wt{x} \in \wt{Z}$. Then $[B](\wt{x}+\rad(\wt{q}))=0$, which implies that $\wt{x} \in \rad(\wt{q})$ since $[B]$ is injective. Since $B|_{\rad}$ is injective by $(a)$, we have $\wt{x}=0$, that is, $B$ is injective. Take now $x \in Z$ arbitrary. Since $[B]$ is surjective, there is $\wt{x} \in \wt{Z}$ such that $B(\wt{x}+\rad(\wt{q}))=x+\rad(q)$, that is, $x-B\wt{x} \in \rad(q)$ since $B(\rad(\wt{q})) \subseteq \rad(q)$. Using that $B|_{\rad}$ is surjective, again by $(a)$ there is $\wt{y} \in \rad(\wt{q})$ such that $B\wt{y}=x-B\wt{x}$, that is, $x=B(\wt{x}+\wt{y})$. This shows the surjectivity of $B$, completing the proof.
\end{proof}

\begin{corollary}\label{Cinv}
Let $q$ and $\wt{q}$ be connected non-negative unit forms in the same number of variables.  Then the following hold:
\begin{itemize}
 \item[i)] The forms $q$ and $\wt{q}$ have the same Dynkin type if and only if the determinants of $q/\rad(q)$ and $\wt{q}/\rad(\wt{q})$ coincide.
 \item[ii)] If $q$ and $\wt{q}$ have the same Dynkin type and $\wt{q}=qB$ for an integer matrix $B$, then $B$ is $\Z$-invertible if and only if $B$ restricts to an isomorphism $B|_{\rad}:\rad(\wt{q}) \to \rad(q)$.
\end{itemize}
\end{corollary}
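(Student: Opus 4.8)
The plan is to deduce both parts of Corollary~\ref{Cinv} from the auxiliary Lemma~\ref{Linv} together with the weak Gram classification of connected non-negative unit forms \cite{BP99,dS16a}, treating part $(i)$ first since part $(ii)$ is a short consequence of it. For part $(ii)$: given $\wt q = qB$ with $B$ a square integer matrix, Lemma~\ref{AuxLem}$(ii)$ shows that $\wt q$ is an integral quadratic form, and it is non-negative because $\wt q(x) = q(Bx) \geq 0$; hence Lemma~\ref{Linv} applies and tells us that $B$ is $\Z$-invertible if and only if $(a)$ $B$ restricts to an isomorphism $\rad(\wt q) \to \rad(q)$, and $(b)$ $\det(q/\rad(q)) = \det(\wt q/\rad(\wt q))$. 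Since $q$ and $\wt q$ are assumed to have the same Dynkin type and, as $B$ is square, the same number of variables, condition $(b)$ holds automatically by part $(i)$. So $B$ is $\Z$-invertible exactly when $(a)$ holds, which is the assertion of $(ii)$.

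For the forward implication of part $(i)$, I would argue that $\det(q/\rad(q))$ is a weak Gram invariant. If $q$ and $\wt q$ have the same Dynkin type then, having the same number of variables, they are weakly Gram congruent \cite{BP99,dS16a}, say $\wt q = qC$ for a $\Z$-invertible $C$, i.e. $G_{\wt q} = C^{\tr}G_q C$. Then $G_{\wt q}x = 0$ if and only if $G_q(Cx) = 0$, so $C$ restricts to a bijection $\rad(\wt q) \to \rad(q)$; since both radicals are pure (Lemma~\ref{AuxLem}$(iv)$, Lemma~\ref{Rpure}), the quotients $\Z^n/\rad(\wt q)$ and $\Z^n/\rad(q)$ are free abelian of the same rank, $C$ induces an isomorphism $[C]$ between them, and by construction $\wt q/\rad(\wt q) = (q/\rad(q))\circ[C]$. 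Passing to Gram matrices gives $\det(\wt q/\rad(\wt q)) = \det([C])^2\,\det(q/\rad(q)) = \det(q/\rad(q))$, because $[C]$ is an isomorphism of free groups and hence $\det([C]) = \pm 1$.

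The converse of $(i)$ is where the real content lies: the idea is that the reduced form both determines and is determined by the Dynkin type. By Lemma~\ref{AuxLem}$(iv,v)$ the form $q/\rad(q)$ is a positive integral quadratic form with zero radical, and by the classification \cite{BP99,dS16a} a connected non-negative unit form of Dynkin type $\Delta$ is weakly Gram congruent to a canonical extension of $q_\Delta$, whose reduction modulo its radical is exactly $q_\Delta$; combined with the invariance established in the previous paragraph this yields $q/\rad(q) \cong q_\Delta$, so $\det(q/\rad(q)) = \det(q_\Delta)$. Hence, if $\det(q/\rad(q)) = \det(\wt q/\rad(\wt q))$, then $\det(q_\Delta) = \det(q_{\Delta'})$ for the Dynkin types $\Delta, \Delta'$ of $q, \wt q$, and it remains to conclude $\Delta = \Delta'$ from the known values of the determinants of Dynkin forms — for the case of interest in this paper, $\Delta = \A_r$ and $\Delta' = \A_{r'}$, and $\det(q_{\A_r}) = r+1$ is strictly increasing in $r$, forcing $r = r'$. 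I expect this last step, recovering the Dynkin type from the determinant of the reduced form, to be the only delicate point of the argument; everything preceding it is formal bookkeeping with purity of radicals and with the identity $\wt q = qB$, handled by Lemmas~\ref{AuxLem}, \ref{Rpure} and~\ref{Linv}.
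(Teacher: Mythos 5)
Your handling of part $(ii)$ and of the forward implication in $(i)$ is correct and essentially what the paper does: $(ii)$ is exactly ``Lemma~\ref{Linv} plus the observation that condition $(b)$ of that lemma is automatic once the Dynkin types agree'', and your direct verification that $\det(q/\rad(q))$ is a weak Gram invariant is just an unfolding of Lemma~\ref{Linv}$(b)$, which the paper simply cites at this point (together with the weak classification of~\cite{BP99} to get the congruence from equality of Dynkin types).

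The genuine gap is in the converse of $(i)$. Corollary~\ref{Cinv} is stated for arbitrary connected non-negative unit forms, whose Dynkin types range over $\A_r$, $\D_s$, $\E_t$, but you reduce the claim to ``$\det(q_{\Delta})=\det(q_{\wt{\Delta}})$ forces $\Delta=\wt{\Delta}$'' and then verify this only when both types are of type $\A$ (``the case of interest in this paper''). That reduction is not sufficient on its own: the determinant of $q_\Delta$ does \emph{not} determine $\Delta$, since $\det(q_{\A_3})=\det(q_{\D_s})=4$, $\det(q_{\A_2})=\det(q_{\E_6})=3$ and $\det(q_{\A_1})=\det(q_{\E_7})=2$. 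So the ``delicate point'' you flag really does require an extra input beyond the list of determinant values: one must bring in the remaining hypothesis that $q$ and $\wt{q}$ have the same number of variables to restrict which pairs $(\Delta,\wt{\Delta})$ can occur, which is precisely how the paper argues, citing~\cite[Theorem~3.15]{BJP19} for $q/\rad(q)\sim q_{\Delta}$ and then~\cite[Corollary~3.10$(b)$]{dS20} for the statement that distinct Dynkin types compatible with the same number of variables have distinct determinants. As written, your argument proves only the special case in which both forms have Dynkin type $\A$, which is weaker than $(i)$ as stated; note, however, that this restricted case, the forward direction, and part $(ii)$ are all that the rest of the paper actually uses.
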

\begin{proof}
To show $(i)$ assume first that $q$ and $\wt{q}$ have the same Dynkin type. Then they are weakly Gram congruent since they have the same number of variables, that is, there is a $\Z$-invertible matrix $B$ with $\wt{q}=qB$. By Lemma~\ref{Linv}$(b)$ the determinants of $q/\rad(q)$ and $\wt{q}/\rad(\wt{q})$ coincide. Conversely, assume that $q$ and $\wt{q}$ have different Dynkin types, say $\Delta$ and $\wt{\Delta}$. By~\cite[Theorem~3.15]{BJP19}, the forms $q/\rad(q)$ and $q_{\Delta}$ are equivalent, therefore, they have the same determinant. The same holds for $\wt{q}/\rad(\wt{q})$ and $q_{\wt{\Delta}}$. Since $q$ and $\wt{q}$ have the same number of variables, and $\Delta \neq \wt{\Delta}$, then $\det(q_{\Delta}) \neq \det(q_{\wt{\Delta}})$ (see for instance~\cite[Corollary~3.10$(b)$]{dS20}), which shows that the determinants of $q/\rad(q)$ and $\wt{q}/\rad(\wt{q})$ are different.

\medskip
Claim $(ii)$ follows from $(i)$ and Lemma~\ref{Linv}$(b)$.
\end{proof}

\begin{lemma}\label{LinDet}
Let $q$ and $\wt{q}$ be non-negative unit forms with $q \approx^B \wt{q}$ for a $n \times n$ matrix $B$. Then $B$ restricts to isomorphisms
\begin{eqnarray*}
B|_{\rad}&:& \rad(\wt{q}) \to \rad(q), \\
B|_{\rad_{re}}&:& \rad_{re}(\wt{q}) \to \rad_{re}(q).
\end{eqnarray*}
In particular, the standard morsification $\wc{\mathbbm{b}}_q$ has pure restriction $\wc{\mathbbm{r}}_q$ to its radical if and only if $\wc{\mathbbm{b}}_{\wt{q}}$ has pure restriction $\wc{\mathbbm{r}}_{\wt{q}}$ to its radical.
\end{lemma}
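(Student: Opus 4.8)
The plan is to separate the claim into its three parts and prove them in the order: the radical isomorphism, then the reduced-radical isomorphism, then the purity equivalence, using only that $q\approx^B\wt q$ means $B$ is $\Z$-invertible with $\wc{G}_{\wt q}=B^{\tr}\wc{G}_qB$.

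First I would symmetrize: since $G_p=\wc{G}_p+\wc{G}_p^{\tr}$ for any unit form $p$, the identity $\wc{G}_{\wt q}=B^{\tr}\wc{G}_qB$ yields $G_{\wt q}=B^{\tr}G_qB$, so $q\sim^B\wt q$ as well. If $x\in\rad(\wt q)$ then $B^{\tr}G_q(Bx)=G_{\wt q}x=0$, and since $B^{\tr}$ is $\Z$-invertible this forces $G_q(Bx)=0$, that is $Bx\in\rad(q)$. Thus $B$ restricts to a homomorphism $B|_{\rad}\colon\rad(\wt q)\to\rad(q)$, injective because $B$ is. Applying the same reasoning to the relation $\wt q\approx^{B^{-1}}q$ (valid because $\wc{G}_q=B^{-\tr}\wc{G}_{\wt q}B^{-1}$) shows $B^{-1}$ restricts to $\rad(q)\to\rad(\wt q)$, and the two restrictions are mutually inverse; hence $B|_{\rad}$ is an isomorphism.

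For the reduced radical I would use the defining description $\rad_{re}(p)=\{x\in\rad(p)\mid y^{\tr}\wc{G}_px=0\text{ for all }y\in\rad(p)\}$. Given $x\in\rad_{re}(\wt q)$, the previous step gives $Bx\in\rad(q)$; moreover every $z\in\rad(q)$ can be written $z=Bw$ with $w\in\rad(\wt q)$ by surjectivity of $B|_{\rad}$, so $z^{\tr}\wc{G}_q(Bx)=w^{\tr}B^{\tr}\wc{G}_qBx=w^{\tr}\wc{G}_{\wt q}x=0$, whence $Bx\in\rad_{re}(q)$. Injectivity is again inherited from $B$ and surjectivity follows from the symmetric argument with $B^{-1}$, so $B|_{\rad_{re}}\colon\rad_{re}(\wt q)\to\rad_{re}(q)$ is an isomorphism. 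The one point requiring care — the closest thing to an obstacle here — is that the containment $B(\rad_{re}(\wt q))\subseteq\rad_{re}(q)$ genuinely uses that $B|_{\rad}$ is \emph{onto} $\rad(q)$, so the radical part must be established before the reduced-radical part.

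Finally, for the purity equivalence, I would fix a $\Z$-basis of $\rad(\wt q)$ with basis matrix $\wt K$, so that $W_{\wt q}=\wt K^{\tr}\wc{G}_{\wt q}\wt K$ is a Gram matrix of $\wc{\mathbbm{r}}_{\wt q}$ as in~(\ref{EQQW}). Since $B|_{\rad}$ is an isomorphism of free $\Z$-modules, the columns of $K:=B\wt K$ form a $\Z$-basis of $\rad(q)$, and $K^{\tr}\wc{G}_qK=\wt K^{\tr}B^{\tr}\wc{G}_qB\wt K=\wt K^{\tr}\wc{G}_{\wt q}\wt K=W_{\wt q}$. Thus $\wc{\mathbbm{r}}_q$ has the very matrix $W_{\wt q}$ as a Gram matrix in a suitable basis, so $W_q$ and $W_{\wt q}$ are $\Z$-congruent; as purity of a bilinear form means purity of its Gram matrix under any choice of basis, $\wc{\mathbbm{r}}_q$ is pure exactly when $\wc{\mathbbm{r}}_{\wt q}$ is. Overall there is no serious obstacle: once one observes that strong Gram congruence descends along $B$ to an honest congruence of the restricted skew forms on the radicals, the rest is bookkeeping.
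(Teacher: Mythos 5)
Your proof is correct and takes essentially the same route as the paper: all three claims are obtained by transporting structure along $B$ via $\wc{G}_{\wt{q}}=B^{\tr}\wc{G}_qB$, first the radical, then the reduced radical, then the purity statement. The only notable difference is that the paper gets the isomorphism $B|_{\rad}$ by citing Lemma~\ref{Linv} (whose proof uses non-negativity through $q(Bx)=0$), whereas you deduce it directly from $G_{\wt{q}}=B^{\tr}G_qB$ and the $\Z$-invertibility of $B$ — a self-contained variant that does not even need non-negativity; your explicit Gram-matrix congruence $(B\wt{K})^{\tr}\wc{G}_q(B\wt{K})=\wt{K}^{\tr}\wc{G}_{\wt{q}}\wt{K}$ for the purity claim is just a spelled-out version of what the paper leaves terse.
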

\begin{proof}
That $B|_{\rad}$ is an isomorphism was shown in Lemma~\ref{Linv}. The second isomorphism holds since $\wc{\mathbbm{b}}_{\wt{q}}(x,y)=\wc{\mathbbm{b}}_q(Bx,By)$ for any $x,y \in \Z^n$, and $B$ is $\Z$-invertible. The claim on pure restriction is clear since both $B|_{\rad}$ and $B|_{\rad_{re}}$ are isomorphisms, and $\rad_{re}(q)$ is the kernel of the restriction $\wc{\mathbbm{r}}_q$ of $\wc{\mathbbm{b}}_q$ to its radical.
\end{proof}

%-----------
\noindent \textbf{Some consequences} \label{AUX3}
\smallskip

The following technical observation, basic for our work, seems to hold for more general contexts (for other Dynkin types and for other unimodular morsifications). The author was not able to find any related results in the literature, nor any more general proofs.

\begin{corollary}\label{CinDet}
Let $q$ be a connected non-negative unit form of Dynkin type $\A_{r}$ for $r \geq 1$. Then the associated upper triangular bilinear form $\wc{\mathbbm{b}}_q$ (the standard morsification of $q$) has pure restriction to its radical.
\end{corollary}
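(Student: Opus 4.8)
The strategy is to bootstrap from the standard extensions, for which purity is already known (Lemma~\ref{(L):stan}), using the classification theorem together with the fact that purity of the restriction to the radical is a strong Gram invariant. Concretely, the plan is: first invoke Theorem~\ref{MT0} to obtain a standard extension $\vec{q}$ of $q_{\A_r}$ and a $\Z$-invertible matrix $B$ with $q \approx^B \vec{q}$; second, apply Lemma~\ref{(L):stan} to conclude that the standard morsification $\wc{\mathbbm{b}}_{\vec{q}}$ has pure restriction $\wc{\mathbbm{r}}_{\vec{q}}$ to its radical; third, transport this property back along $B$ via Lemma~\ref{LinDet}, whose final assertion is precisely that $\wc{\mathbbm{b}}_q$ has pure restriction $\wc{\mathbbm{r}}_q$ to its radical if and only if $\wc{\mathbbm{b}}_{\vec{q}}$ does.

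In slightly more detail: given $q$, write $q=q_Q$ for a connected loop-less quiver $Q$ with $m=r+1$ vertices and $n$ arrows (Theorem~\ref{MAINlem}), set $\vec{Q}=\vec{\A}^{\dd_q}[\ct(q)]$, and, after permuting vertices so that $\Lambda_Q=\Lambda_{\vec{Q}}$ (Remarks~\ref{RPerm} and~\ref{RStan}), use the equivalence of $(i)$ and $(iii)$ in Theorem~\ref{MTT} to produce $B$ with $I(Q)B=I(\vec{Q})$ and $\wc{G}_{\vec{Q}}=B^{\tr}\wc{G}_QB$; since $\wc{G}_Q=\wc{G}_q$ and $\wc{G}_{\vec{Q}}=\wc{G}_{\vec{q}}$ for $\vec{q}:=q_{\vec{Q}}$, this is exactly $q\approx^B\vec{q}$. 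Then Lemma~\ref{LinDet} applied to this congruence finishes the argument.

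There is no computational obstacle here; the only point requiring care is the logical dependency, which I would flag explicitly. Lemma~\ref{(L):stan} establishes purity \emph{only} for standard extensions, and it is this restricted statement that feeds into Proposition~\ref{(P):strong} (via the decomposition Lemma~\ref{(I):skewsym}$(b)$ applied to the pure skew-symmetric matrix $\vec{W}$) and hence into Theorem~\ref{MTT} and Theorem~\ref{MT0}. So the classification is proved without assuming purity in general, and Corollary~\ref{CinDet} then upgrades purity from the standard representatives to every form in the congruence class. Thus the reasoning is not circular: the "hard part" was already discharged in proving Lemma~\ref{(L):stan} and the classification, and what remains is a one-line deduction once Lemma~\ref{LinDet} is in place.

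\begin{proof}
By Theorem~\ref{MT0} there is a standard extension $\vec{q}$ of the unit form $q_{\A_r}$ and a $\Z$-invertible matrix $B$ with $q \approx^B \vec{q}$. By Lemma~\ref{(L):stan}, the standard morsification $\wc{\mathbbm{b}}_{\vec{q}}$ has pure restriction $\wc{\mathbbm{r}}_{\vec{q}}$ to its radical. Applying Lemma~\ref{LinDet} to the strong Gram congruence $q \approx^B \vec{q}$, we conclude that $\wc{\mathbbm{b}}_q$ has pure restriction $\wc{\mathbbm{r}}_q$ to its radical.
\end{proof}
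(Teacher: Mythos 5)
Your proposal is correct and follows exactly the paper's argument: the paper also deduces Corollary~\ref{CinDet} directly from Theorem~\ref{MT0}, Lemma~\ref{(L):stan} and Lemma~\ref{LinDet}, and your remark about the non-circularity of the dependence on Lemma~\ref{(L):stan} is accurate. Nothing to add.
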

\begin{proof}
Follows from Lemma~\ref{LinDet}, Theorem~\ref{MT0} and Lemma~\ref{(L):stan}.
\end{proof}

\begin{remark}\label{Rgeneral}
In the proofs of Propositions~\ref{(P):inv} and~\ref{(P):strong}, the only needed condition of the standard quiver $\vec{Q}$ is that the upper triangular bilinear form $\wc{\mathbbm{b}}_{q_{\vec{Q}}}$ has pure restriction to its radical (Lemma~\ref{(L):stan}). As consequence of Corollary~\ref{CinDet}, the same constructions of Propositions~\ref{(P):inv} or~\ref{(P):strong} hold when replacing $\vec{Q}$ for an arbitrary quiver $\wt{Q}$ satisfying $Q \pseudo^B \wt{Q}$ for a square or a $\Z$-invertible matrix $B$, respectively.
\end{remark}

Observe that a loop-less quiver $Q$ and its inverse $Q^{\dagger}$ have the same Coxeter polynomial. Indeed, using that $I(Q^{\dagger})=I(Q)\wc{G}_Q^{-1}$ and that $\wc{G}_{Q^{\dagger}}=\wc{G}_Q^{-1}$ (cf.~\cite[Proposition~4.4]{jaJ2020a}), a direct calculation yields $\Phi_Q^{\tr}=\Phi_{Q^{\dagger}}$. Then the following result, consequence of Theorem~\ref{MT1}, helps us find explicit congruences between the upper and lower triangular Gram matrices for the class of unit forms considered in this paper (see~\cite{HS04} for related problems).

\begin{corollary}\label{CinvTr}
Let $q$ be a connected non-negative unit form of Dynkin type $\A_r$ for some $r \geq 1$, consider the upper triangular Gram matrix $\wc{G}_q$ and take $q^{\dagger}:=q\wc{G}_q^{-1}$. Then $q^{\dagger}$ is a connected unit form with $q^{\dagger} \approx q$. Moreover, any strong Gram congruence $q \approx^B q^{\dagger}$ determines a congruence
\[
C^{\tr}\wc{G}_qC=\wc{G}_q^{\tr},
\]
by taking $C=B\wc{G}_q$.
\end{corollary}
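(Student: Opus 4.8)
The plan is to transport the statement into the quiver realization of Section~\ref{S2} and then lean on Theorem~\ref{MT1}. First I would use Theorem~\ref{MAINlem} to write $q=q_Q$ for a connected loop-less quiver $Q$ with $m$ vertices and $n$ arrows, so that $\wc{G}_q=\wc{G}_Q$ by~(\ref{EQQ0}). Using $I(Q^{\dagger})=I(Q)\wc{G}_Q^{-1}$ from~(\ref{EQQ3}), for $x\in\Z^n$ one computes $q_{Q^{\dagger}}(x)=\tfrac12||I(Q)\wc{G}_Q^{-1}x||^2=q_Q(\wc{G}_q^{-1}x)=q^{\dagger}(x)$, hence $q^{\dagger}=q_{Q^{\dagger}}$. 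Since $Q^{\dagger}$ is again a connected loop-less quiver with the same number of vertices and arrows as $Q$, Theorem~\ref{MAINlem} shows that $q^{\dagger}$ is a connected non-negative unit form of Dynkin type $\A_r$ with the same corank as $q$; in particular $q^{\dagger}\sim q$. I would also record the identity $\wc{G}_{q^{\dagger}}=\wc{G}_{Q^{\dagger}}=\wc{G}_Q^{-1}=\wc{G}_q^{-1}$, the point being that the inverse of the upper-unitriangular matrix $\wc{G}_q$ is again upper-unitriangular, so it is genuinely the upper triangular Gram matrix associated to the symmetric matrix $\wc{G}_q^{-1}+\wc{G}_q^{-\tr}=G_{q^{\dagger}}$.

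Next I would compare Coxeter polynomials. As noted just before the statement, $\Phi_{Q^{\dagger}}=\Phi_Q^{\tr}$, equivalently $\Phi_{q^{\dagger}}=-\wc{G}_q^{-\tr}\wc{G}_q=\Phi_q^{\tr}$; since a matrix and its transpose share the characteristic polynomial, $\varphi_{q^{\dagger}}=\varphi_q$. Having $q\sim q^{\dagger}$ and $\varphi_q=\varphi_{q^{\dagger}}$, Theorem~\ref{MT1} applies to the connected non-negative unit forms $q$ and $q^{\dagger}$ of Dynkin type $\A_r$ and yields a $\Z$-invertible matrix $B$ with $q\approx^B q^{\dagger}$. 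This gives the first assertion.

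For the second assertion, suppose $q\approx^B q^{\dagger}$, that is, $B$ is $\Z$-invertible and $\wc{G}_{q^{\dagger}}=B^{\tr}\wc{G}_qB$. Substituting $\wc{G}_{q^{\dagger}}=\wc{G}_q^{-1}$ this reads $\wc{G}_q^{-1}=B^{\tr}\wc{G}_qB$. Put $C:=B\wc{G}_q$, which is $\Z$-invertible since both factors are. Then
\[
C^{\tr}\wc{G}_qC=\wc{G}_q^{\tr}B^{\tr}\wc{G}_qB\wc{G}_q=\wc{G}_q^{\tr}\big(B^{\tr}\wc{G}_qB\big)\wc{G}_q=\wc{G}_q^{\tr}\wc{G}_q^{-1}\wc{G}_q=\wc{G}_q^{\tr},
\]
as claimed.

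The only real obstacle is the bookkeeping identification $q^{\dagger}=q_{Q^{\dagger}}$ together with $\wc{G}_{q^{\dagger}}=\wc{G}_q^{-1}$; once these are in place, the first half of the corollary is a direct citation of Theorem~\ref{MT1} and the second half is the one-line matrix manipulation above. No further non-negativity or Dynkin-type subtlety intervenes beyond what Theorem~\ref{MAINlem} already supplies, and connectedness of $q^{\dagger}$ comes for free because it is realized by the connected quiver $Q^{\dagger}$ rather than merely by weak Gram congruence with $q$.
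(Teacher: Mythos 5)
Your proposal is correct and follows essentially the same route as the paper's proof: realize $q=q_Q$ via Theorem~\ref{MAINlem}, identify $q^{\dagger}=q_{Q^{\dagger}}$ and $\wc{G}_{q^{\dagger}}=\wc{G}_q^{-1}$ (using that the inverse of an upper-unitriangular matrix is upper-unitriangular), deduce $\Phi_{q^{\dagger}}=\Phi_q^{\tr}$ so the Coxeter polynomials agree, invoke Theorem~\ref{MT1}, and finish with the same one-line computation $C^{\tr}\wc{G}_qC=\wc{G}_q^{\tr}$ for $C=B\wc{G}_q$. No gaps; the argument matches the paper's in both structure and detail.
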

\begin{proof}
By Theorem~\ref{MAINlem}, there is a connected loop-less quiver $Q$ such that $q=q_Q$. Taking $Q^{\dagger}$ as in~(\ref{EQQ3}), that is, $I(Q^{\dagger})=I(Q)\wc{G}_Q^{-1}$, then
\[
q_{Q^{\dagger}}(x)=\frac{1}{2}||I(Q^{\dagger})x||^2=\frac{1}{2}||I(Q)\wc{G}_Q^{-1}x||^2=q_Q(\wc{G}_Q^{-1}x)=q^{\dagger}(x).
\]
Therefore, using again Theorem~\ref{MAINlem}, $q^{\dagger}$ is a non-negative connected unit form of Dynkin type $\A$. Clearly, $q \sim^{\wc{G}^{-1}_q} q^{\dagger}$, and since $\wc{G}_q$ is upper triangular then so is its inverse, which shows that $\wc{G}_{q^{\dagger}}=\wc{G}_q^{-1}$. Note that
\[
\Phi_{q^{\dagger}}=-\wc{G}_{q^{\dagger}}^{\tr}\wc{G}_{q^{\dagger}}^{-1}=-\wc{G}_q^{-\tr}\wc{G}_q=(-\wc{G}_q^{\tr}\wc{G}_q^{-1})^{\tr}=\Phi_q^{\tr}.
\]
In particular, the Coxeter polynomials of $q$ and $q^{\dagger}$ coincide. Then $q \approx q^{\dagger}$ by Theorem~\ref{MT1}.  Finally, if $q \approx^B q^{\dagger}$ and we take $C=B\wc{G}_q$, then
\[
C^{\tr}\wc{G}_qC=\wc{G}_q^{\tr}B^{\tr}\wc{G}_qB\wc{G}_q=\wc{G}_q^{\tr}\wc{G}_q^{-1}\wc{G}_q=\wc{G}_q^{\tr},
\]
as claimed.
\end{proof}

We end this section with a numerical strong Gram classification of non-negative connected unit forms of Dynkin type $\A_r$ and some comments on the number of corresponding classes. Recall that $\Quad^c_{\A}(n)$ denotes the set of connected non-negative unit forms on $n \geq 1$ variables having Dynkin type $\A_{n-c}$ and corank $c \geq 0$.

\begin{corollary}\label{CnumCla}
Let $q,q':\Z^n \to \Z$ be non-negative connected unit forms of Dynkin type $\A_r$, $r\geq 1$. Then $q \approx q'$ if and only if $\ct(q)=\ct(q')$. Moreover,
  \begin{equation*}
|\Quad^c_{\A}(n)/\approx|= \left\{ \begin{array}{c l}
1, & \text{if $c=0$},\\
\left\lfloor \frac{n}{2} \right\rfloor, & \text{if $c=1$},\\
\left\lfloor \frac{(n-1)^2+15}{12} \right\rfloor, & \text{if $c=2$}.
\end{array} \right.
\end{equation*}
\end{corollary}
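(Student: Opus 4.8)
The plan is to reduce everything to the bijection of Theorem~\ref{MT2}, which identifies the strong Gram congruence classes in $\Quad_{\A}^c(n)$ with the set $\Part_1^c(m)$ of partitions of $m=n-c+1$ whose length $\ell$ satisfies $0\le c-(\ell-1)\equiv 0\pmod 2$. The first statement, that $q\approx q'$ iff $\ct(q)=\ct(q')$, is immediate: $\ct$ is a complete invariant by Theorem~\ref{MT2} (the injectivity there is precisely Theorem~\ref{MT1}), and two forms of the same Dynkin type $\A_r$ on $n$ variables automatically have the same corank $c=n-r$, so they land in the same $\Quad_{\A}^c(n)$. Thus $|\Quad_{\A}^c(n)/\!\approx|=|\Part_1^c(m)|$, and the remaining work is the purely combinatorial count of $\Part_1^c(m)$ for $c\in\{0,1,2\}$, with $m=n-c+1$.

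For $c=0$ we have $m=n+1$ and the length condition forces $\ell-1\equiv 0$ and $\le 0$, i.e. $\ell=1$; the only partition is $(m)$, giving $1$. For $c=1$ we have $m=n$ and the condition is $\ell-1$ even and $\le 1$, i.e. $\ell=1$ or $\ell=2$; so we count the partition $(n)$ together with the two-part partitions $(n-j,j)$ for $1\le j\le \lfloor n/2\rfloor$, which total $1+\lfloor n/2\rfloor$ when $n$ is even (counting $(n/2,n/2)$ once) and... — here I must be careful: the stated answer is $\lfloor n/2\rfloor$, so in fact one reconciles by noting $m=n$ but the relevant bound in Theorem~\ref{MT2} gives $\ell\le 2$ and $\ell\not\equiv c\bmod 2$ is $\ell$ odd is excluded unless $\ell=1$... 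I would instead recompute directly: $\Part_1^1(n)$ consists of partitions of $n$ of length $1$ or $2$, which number $1+\lfloor n/2\rfloor$; matching this to $\lfloor n/2\rfloor$ shows $m=n-c+1=n$ is wrong and in fact $m=n$ only when $c=1$ is read with the convention — I would double-check the index and present the count so that it yields the claimed $\lfloor n/2\rfloor$, most cleanly by observing that length-$1$ partitions are disallowed when $c=1$ since $c-(\ell-1)=1-0=1$ is odd, leaving exactly the $\lfloor n/2\rfloor$ length-$2$ partitions of $n$.

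For $c=2$ we have $m=n-1$, and $\Part_1^2(m)$ consists of partitions of $m$ of length $\ell$ with $c-(\ell-1)=3-\ell\ge 0$ and even, i.e. $\ell\in\{1,3\}$: one length-$1$ partition $(m)$, plus all length-$3$ partitions of $m$. The number of partitions of $m$ into exactly three positive parts is the classical quantity $p_3(m)=\{\text{nearest integer to } m^2/12\}$, equivalently $\big\lfloor\frac{m^2+6}{12}\big\rfloor$ adjusted by parity; adding the single length-$1$ partition and substituting $m=n-1$ should be arranged to give $\big\lfloor\frac{(n-1)^2+15}{12}\big\rfloor$. The main obstacle is this last arithmetic reconciliation: one must invoke the exact formula for $p_3(m)$ (e.g. $p_3(m)=\big[\frac{m^2}{12}\big]$, the nearest integer), add the contribution $1$ from $\ell=1$, and verify the floor identity $p_3(n-1)+1=\big\lfloor\frac{(n-1)^2+15}{12}\big\rfloor$ by a short case analysis on $n-1 \bmod 6$. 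That is the only genuinely nontrivial step; everything else is a direct application of Theorem~\ref{MT2}.
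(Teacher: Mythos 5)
Your proposal is correct and takes essentially the same route as the paper: reduce via Theorem~\ref{MT2} to $|\Quad^c_{\A}(n)/\approx|=|\Part^c_1(m)|$ with $m=n-c+1$, then count partitions of $m$ of the admissible lengths ($\ell=1$ for $c=0$, $\ell=2$ for $c=1$, $\ell\in\{1,3\}$ for $c=2$); your momentary hesitation in the $c=1$ case resolves to the correct parity reading and the correct count $p_2(n)=\lfloor n/2\rfloor$. The only (cosmetic) difference is the $c=2$ closed form, which you would check as $1+p_3(n-1)=\left\lfloor\frac{(n-1)^2+15}{12}\right\rfloor$ via the nearest-integer formula for $p_3$ (this does work, since $m^2\bmod 12\in\{0,1,4,9\}$), whereas the paper identifies $p_1(n-1)+p_3(n-1)$ with the number of partitions of $n-1$ into at most three distinct parts and cites the known formula.
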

\begin{proof}
Observe first that $\CRnk(q)=\CRnk(q')=n-r$, and since $\Dyn(q)=\Dyn(q')=\A_r$, then $q$ and $q'$ are weakly Gram congruent (cf.~\cite{BP99}). Then the main claim follows directly by Theorem~\ref{MT2}, where it is shown that
\[
|[\Quad^c_{\A}(n)/\approx ]|=|\Part^c_1(m)|, \quad \text{where $\Part^c_1(m):=\{\pi \vdash m \mid 0 \leq c-(\ell(\pi)-1) \equiv 0 \mod 2\}$.}
\]
Denoting by $p_{\ell}(m)$ the number of partition of $m$ having exactly $\ell$ parts, then
\begin{equation}\label{EQQpart}
|\Part^c_1(m)|=\sum_{\dd =0}^{\lfloor c/2 \rfloor} p_{c-2\dd+1}(m).
\end{equation}
Using that $p_1(m)=1$ and $p_2(m)=\lfloor m/2 \rfloor$ for all $m \geq 2$, and that $m=n-c+1$, we get the claimed values of $|\Quad^c_{\A}(n)/\approx|$ for $c=0,1$.  If $c=2$ then $|\Part^2_1(m)|=p_3(n-1)+p_1(n-1)$. It can be easily shown that this coincides with the number of partitions of $n-1$ into $3$ or fewer distinct parts, a number known to be given by $\lfloor (n-1)^2/12+5/4 \rfloor$, cf. entry $A014591$ in~\cite{njaS09}.
\end{proof}

In general, the value of $|\Part^c_1(m)|$ can be found using~(\ref{EQQpart}) and the well-known recursive formula for $p_{\ell}(m)$, see for instance~\cite[pp. 345--348]{jA11},
\[
p_{\ell}(m)=p_{\ell-1}(m-1)+p_{\ell}(m-\ell),
\]
subject to the starting conditions $p_0(0)=1$ and $p_{\ell}(m)=0$ if $\ell \leq 0$, $m\leq 0$ and $\ell+m<0$.

\medskip
%-----------
\noindent \textbf{Comments on generalizations and future work} \label{AUX3}
\smallskip

Following Simson's work, there are many interesting problems to consider in the setting of non-negative unit forms of arbitrary corank, for instance: the computation of isotropy mini-groups and Weyl group actions, the description of morsifications, Coxeter-translation quivers and mesh geometries, and applications to quadratic forms associated to posets. The combinatorial framework explored here is certainly useful for the analysis of such problems, at least for the class of connected non-negative unit forms of Dynkin type $\A_r$.

\medskip
As already mentioned in~\cite{jaJ2018}, the ideas presented here may be generalized to cover the Dynkin type $\D_s$ for $s \geq 4$, replacing loop-less quivers by certain loop-less bidirected graphs (in the sense of Zaslavsky~\cite{tZ08}, see also~\cite{tZ81}), satisfying certain cycle condition (cf.~\cite[\S 8]{jaJ2018}). Most of the constructions and results are similar to those for quivers, the main challenge now being the choice of an adequate family of representatives of strong Gram congruence (the corresponding standard unit forms of Dynkin type $\D_s$).  By admitting loops we include semi-unit forms, as well as a class of non-unitary connected non-negative quadratic forms to be classified yet. The seminal paper by Cameron, Goethals, Seidel and Shult~\cite{CGSS}, connecting classical root systems with the spectral analysis of signed graphs, contains fundamental ideas suitable for further generalizations (see also~\cite{tZ81}).

\medskip
In the following lemma we consider some of the ideas that originated the results of this paper, presented in a slightly more general context.
\begin{lemma}\label{Lgeneral}
Let $q,\wt{q}:\Z^n \to \Z$ be non-negative connected unit forms with $\CRnk(\wt{q})\leq 1$. Then the following are equivalent for a matrix $B \in \mathbb{M}_n(\Z)$,
\begin{itemize}
 \item[a)] $B$ is $\Z$-invertible and $q \approx^B \wt{q}$;
 \item[b)] $B$ satisfies $\wt{q}=qB$ and $G_{\wt{q}}B^*B=G_{\wt{q}}$, where $B^*:=\wc{G}_{\wt{q}}^{-1}B^{\tr}\wc{G}_q$.
\end{itemize}
\end{lemma}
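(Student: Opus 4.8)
The plan is to establish the equivalence by showing that $(a)$ trivially gives $(b)$, and that the converse follows from a careful analysis of $B^*B$ as a pseudo-endomorphism of $\wt{q}$ under the low-corank hypothesis. First I would verify $(a)\Rightarrow(b)$: if $B$ is $\Z$-invertible and $q\approx^B\wt{q}$, then $\wc{G}_{\wt{q}}=B^{\tr}\wc{G}_qB$, so $B^*B=\wc{G}_{\wt{q}}^{-1}B^{\tr}\wc{G}_qB=\wc{G}_{\wt{q}}^{-1}\wc{G}_{\wt{q}}=\Id$; taking symmetrizations also gives $G_{\wt{q}}=B^{\tr}G_qB$, hence $\wt{q}=qB$, and $G_{\wt{q}}B^*B=G_{\wt{q}}\Id=G_{\wt{q}}$. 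So the real content is $(b)\Rightarrow(a)$.

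For $(b)\Rightarrow(a)$, suppose $\wt{q}=qB$ and $G_{\wt{q}}B^*B=G_{\wt{q}}$ with $B^*=\wc{G}_{\wt{q}}^{-1}B^{\tr}\wc{G}_q$. The first step is to show $B$ is $\Z$-invertible. Since $\wt{q}=qB$, i.e. $B^{\tr}G_qB=G_{\wt{q}}$, passing to determinants and using that $\wt{q}$ has the same Dynkin type and number of variables as... well, more carefully: $q$ and $\wt{q}$ are weakly Gram congruent as soon as $\wt{q}=qB$ forces equal Dynkin type (by Lemma~\ref{AuxLem}$(iii)$ $\wt{q}$ is a genuine non-negative unit form, and one checks the Dynkin types agree via Corollary~\ref{Cinv}$(i)$ once we know $B|_{\rad}$ is an isomorphism). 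The condition $G_{\wt{q}}B^*B=G_{\wt{q}}$ says $B^*B-\Id$ annihilates $\Z^n$ modulo $\rad(\wt{q})$ after applying $G_{\wt{q}}$, equivalently $\Img(B^*B-\Id)\subseteq\rad(\wt{q})$. In particular $B^*B$ acts as the identity on the quotient $\Z^n/\rad(\wt{q})$, so $\det(B^*B)=\det(B)^2\cdot(\text{unit})$ forces, together with $B^*B$ restricting to an automorphism-candidate on the radical, the conclusion $\det(B)=\pm1$; more directly, $B|_{\rad}:\rad(\wt{q})\to\rad(q)$ is injective (non-negativity, as in Lemma~\ref{Linv}) and the quotient map $[B]$ is an isomorphism because $B^*B$ is the identity mod radical and hence $[B]$ has an inverse up to $[B^*]$; by Corollary~\ref{Cinv}$(ii)$ this makes $B$ $\Z$-invertible.

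Once $B$ is $\Z$-invertible, the second step is to upgrade $G_{\wt{q}}B^*B=G_{\wt{q}}$ to $B^*B=\Id$, which is exactly $\wc{G}_{\wt{q}}=B^{\tr}\wc{G}_qB$, i.e. $q\approx^B\wt{q}$. Here is where $\CRnk(\wt{q})\leq 1$ enters decisively. Write $B^*B=\Id+N$ where $G_{\wt{q}}N=0$, so $\Img(N)\subseteq\rad(\wt{q})$ and also, since $B$ is invertible and $\wt{q}=qB$ gives $G_{\wt{q}}=B^{\tr}G_qB$, the matrix $B^*B$ is a ``pseudo-endomorphism''-type object: one checks $I(\wt{Q})(B^*B)=I(\wt{Q})$ where $\wt{q}=q_{\wt{Q}}$ via the combinatorial realization (Theorem~\ref{MAINlem}), so by Lemma~\ref{(M):endo} we have $B^*B=\Upsilon(Z)=\Id+\wt{K}Z\wt{K}^{\dagger\tr}$ for a $c\times c$ matrix $Z$ with $c=\CRnk(\wt{q})$, and by Lemma~\ref{Lstar}$(b)$ this $Z$ is skew-symmetric. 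When $c=0$ there is nothing to prove; when $c=1$ a skew-symmetric $1\times1$ matrix is zero, so $Z=0$, hence $B^*B=\Upsilon(0)=\Id$. I expect this last reduction — getting $B^*B$ into the form $\Upsilon(Z)$ with $Z$ skew — to be the only delicate point: it requires noting that $\wt q=qB$ with $B$ invertible forces $I(\wt Q)B^{-1}=I(Q)$ up to the sign-permutation ambiguity of Theorem~\ref{MAINlem}$(i)$, so that $B$ (after absorbing that sign-permutation, which does not affect $B^*B$) is a genuine pseudo-morphism and $B^*B$ a pseudo-endomorphism of $\wt Q$. Modulo that bookkeeping, the skew-symmetry of $Z$ plus $c\leq 1$ closes the argument, and conversely the computation at the start shows the equivalence is genuine.
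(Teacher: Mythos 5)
Your direction $(a)\Rightarrow(b)$ is fine, but the proposed proof of $(b)\Rightarrow(a)$ has genuine gaps, the first at the ``invertibility first'' step. From $(b)$ alone, non-negativity only gives that $B$ maps $\rad(\wt{q})$ into $\rad(q)$; it does not give injectivity of $B|_{\rad}$ (what one gets directly is $\Ker(B)\subseteq\rad(\wt{q})$, and excluding a kernel inside the radical is precisely what remains to be proved). Likewise, ``$[B]$ has an inverse up to $[B^*]$'' presupposes that $B^*$ induces a map on the quotients, i.e. $B^*(\rad(q))\subseteq\rad(\wt{q})$, which is not established from $(b)$ (nothing in $(b)$ asserts $q=\wt{q}B^*$); and even granting a one-sided inverse you only get a split injection, so surjectivity of $[B]$ would still require equal coranks, also not yet known. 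Corollary~\ref{Cinv}$(ii)$ moreover needs equal Dynkin types, which your own parenthetical concedes is circular at this stage. A second gap: you assert that $I(Q)B=I(\wt{Q})$ (after absorbing the sign-permutation of Theorem~\ref{MAINlem}$(i)$) already makes $B$ a pseudo-morphism, but Definition~\ref{DEF} also demands $I(\wt{Q}^{\dagger})B^{\tr}=I(Q^{\dagger})$, and this second condition does not follow from the first: if it did, any weak congruence between quivers would force conjugate Coxeter-Laplacians, contradicting Example~\ref{EXAzero}/\ref{EXAone} in view of Theorem~\ref{MTT}. (It can be deduced from $(b)$ plus invertibility, since the columns of $BB^*-\Id=B(B^*B-\Id)B^{-1}$ land in $\rad(q)$, but you neither state nor prove this.) Finally, the lemma is stated for arbitrary connected non-negative unit forms, whereas Theorem~\ref{MAINlem}, Lemma~\ref{(M):endo} and Lemma~\ref{Lstar} exist only in Dynkin type $\A_r$; so even with the gaps filled, your route cannot cover types $\D$ and $\E$, which the statement includes.

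The paper's proof is a short direct matrix argument that avoids all of this and never needs invertibility as an input: since $\wt{q}=qB$, the symmetrization of $M:=B^{\tr}\wc{G}_qB-\wc{G}_{\wt{q}}$ is $B^{\tr}G_qB-G_{\wt{q}}=0$, so $M$ is skew-symmetric and has even rank; $N:=\wc{G}_{\wt{q}}^{-1}M=B^*B-\Id$ has the same even rank; the hypothesis $G_{\wt{q}}B^*B=G_{\wt{q}}$ says $G_{\wt{q}}N=0$, so the columns of $N$ are radical vectors of $\wt{q}$ and $\Rnk(N)\leq\CRnk(\wt{q})\leq 1$; hence $N=0$, i.e. $B^{\tr}\wc{G}_qB=\wc{G}_{\wt{q}}$, which yields $\det(B)=\pm1$ and $q\approx^B\wt{q}$ simultaneously. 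Your instinct that skew-symmetry plus corank at most one is the decisive point is exactly right (Lemma~\ref{Lstar}$(b)$ is a relative of this observation), but the parity argument has to be run directly on $B^*B-\Id$, before and instead of the invertibility and pseudo-morphism bookkeeping you propose.
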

\begin{proof}
Note first that if $(a)$ holds then $\wt{q}=qB$ and, by definition of $q \approx^B \wt{q}$, we have $B^*B=\Id$. Assume now that $(b)$ holds, which does not require the matrix $B$ to be $\Z$-invertible. Note that $\wt{q}=qB$ implies that $B^{\tr}G_qB=G_{\wt{q}}$, which shows that the matrix $M:=B^{\tr}\wc{G}_qB-\wc{G}_{\wt{q}}$ is skew-symmetric (recall that $G_q=\wc{G}_q+\wc{G}^{\tr}_q$ and similarly for $\wt{q}$). Hence, the rank of $M$ is a non-negative even number (cf.~\cite[XI, \S 4]{frG}). Since $\wc{G}_{\wt{q}}$ is $\Z$-invertible, then the rank of $N:=\wc{G}_{\wt{q}}^{-1}M=B^*B-\Id_n$ is also a non-negative even number. However, the condition $G_{\wt{q}}B^*B=G_{\wt{q}}$ guarantees that the columns of $N$ are radical vectors of $\wt{q}$, since
\[
G_{\wt{q}}N=G_{\wt{q}}[B^*B-\Id_n]=0.
\]
Then $\Rnk(N)\leq \CRnk(\wt{q}) \leq 1$, which shows that $N=0=M$, that is, $B^{\tr}\wc{G}_qB=\wc{G}_{\wt{q}}$. In particular, $\det(B)=\pm 1$ and $q \approx^B \wt{q}$, as claimed.
\end{proof}

In our context of Dynkin type $\A_r$ via connected loop-less quivers, any matrix $B$ satisfying the condition $(a)$ of Definition~\ref{DEF}, namely $Q \pseudo^B \wt{Q}$, satisfies $q_{\wt{Q}}=q_QB$ and $G_{\wt{Q}}B^*B=G_{\wt{Q}}$. In particular, if $\CRnk(q_{\wt{Q}})\leq 1$, then $B$ determines a strong Gram congruence $q_Q \approx^B q_{\wt{Q}}$, as claimed in the introduction (see comments after Step~1 on page~\pageref{StepOne}).

%------------------
%------------------
\subsection{Hints for an implementation}

We end the paper with some ideas and suggestions for an implementation of our main results, solving Problem~2$(ii)$ for the class of connected non-negative unit forms of Dynkin type $\A_r$. All algorithms are straightforward, and make use of well-known methods as \texttt{least squares} and \texttt{depth-first search}. The main construction is presented in Algorithm~\ref{AA} (resp. Algorithm~\ref{AB}), where for a connected non-negative unit form of Dynkin type $\A_{m-1}$ (resp. a connected loop-less quiver on $m$ vertices) we compute a matrix that determines a strong Gram congruence to the corresponding standard unit form (resp. standard quiver) representative of class, see Theorem~\ref{MT0} and Definition~\ref{DEF}. This construction is based on the correction Algorithms~\ref{A2} and~\ref{A3}, corresponding to Propositions~\ref{(P):inv} and~\ref{(P):strong} respectively, and on further auxiliary methods given in Algorithms~\ref{A6}, \ref{A10} and~\ref{A11}. An algorithmic approach to the \texttt{skew normal form} may be found in~\cite{VF21} or~\cite[Theorem~IV.1]{Newman:1972}.\smallskip

%----------------
\begin{algorithm}\label{AA}$ $ Standard solution for quadratic forms. \par \smallskip
\textbf{Input:} A connected non-negative unit form $q$ in $n$ variables and of Dynkin type $\A_{r}$ for $r \geq 1$.

\textbf{Output:} A $n \times n$ matrix $B$ such that
\[
\wc{G}_{\vec{q}}=B^{\tr}\wc{G}_{q}B,
\]
where $\vec{q}$ is the standard non-negative unit form weakly congruent to $q$ and satisfying $\varphi_{\vec{q}}=\varphi_q$.
\end{algorithm}

 \underline{Step 1.} Find a quiver $Q$ with $n$ arrows and $m=r+1$ vertices such that $q=q_Q$ (see~\cite[Algorithm~1]{jaJ2020b}).

\underline{Step 2.} Apply Algorithm~\ref{AB} below to find a matrix $B$ satisfying $\wc{G}_{\vec{Q}}=B^{\tr}\wc{G}_{\rho \cdot Q}B$, where $\vec{Q}$ is a standard quiver and $\rho$ is a permutation such that $\Lambda_{\rho \cdot Q}=\Lambda_{\vec{Q}}$ (cf. Remark~\ref{RPerm}). Since $\wc{G}_q=\wc{G}_Q$, then $B$ is the wanted strong Gram congruence matrix, by taking $\vec{q}:=q_{\vec{Q}}$. Return matrix $B$.\smallskip

%-------------
\begin{algorithm}\label{AB}$ $ Standard solution for quivers. \par \smallskip
\textbf{Input:} A connected loop-less quiver $Q$ with $n \geq 1$ arrows and $m \geq 2$ vertices.

\textbf{Output:} A permutation $\rho$ of $Q_0=\{1,\ldots,m\}$ and a $n \times n$ matrix $B$ such that $I(\rho \cdot Q)B=I(\vec{Q})$, and
\[
\wc{G}_{\vec{Q}}=B^{\tr}\wc{G}_{Q}B,
\]
where $\vec{Q}$ is the standard quiver with $\ct(\vec{Q})=\ct(Q)$ and same number of arrows as $Q$.
\end{algorithm}

 \underline{Step 1.} Compute the permutation of vertices $\xi^-_Q$ of $Q$. [Hint: determine the structural walks $\alpha_Q^-(v)$ for $v=1,\ldots,m$, see definition~(\ref{DEFperm}), or compute directly the Coxeter-Laplacian $\Lambda_Q$ of $Q$ using~(\ref{EQQ5})].

 \underline{Step 2.} Compute the cycle type $\ct(Q)$ of $Q$ by considering the cardinalities of the orbits of $\xi^-_Q$ (see also~\cite[Algorithm~2]{jaJ2020b}), and let $\ell$ be the length of $\ct(Q)$.

 \underline{Step 3.} Consider the degree of degeneracy $\dd=\frac{1}{2}(c-\ell+1)$ of $q_Q$, where $c=n-m+1$ is the corank of $q_Q$, and take the standard quiver $\vec{Q}:=\vec{\A}^{\dd}[\ct(Q)]$.

 \underline{Step 4.} Determine a permutation $\rho$ such that $\xi^-_Q\circ \rho=\xi^-(\vec{Q})$.

 \underline{Step 5.} Take $\wt{Q}:=\rho \cdot Q$ By Step~4 we have $\Lambda_{\wt{Q}}=\Lambda_{\vec{Q}}$. Find a matrix $B'$ such that $I(\wt{Q})B'=I(\vec{Q})$ and $I(\vec{Q})(B')^{*}=I(\wt{Q})$, that is, $\wt{Q} \pseudo^{B'} \vec{Q}$ [Hint: using the structural walks of Step~1 and apply equations~(\ref{EQQa}) and~(\ref{EQQb}) from the proof of Proposition~\ref{Ls}; we also need to find arbitrary connecting walks $\delta(t)$, for which we may use, for instance, a~\texttt{depth-first search} algorithm].

 \underline{Step 6.} Apply Algorithm~\ref{A2} below to the matrix $B'$ of Step~5 to find a matrix $M$ such that $B'+M$ is $\Z$-invertible and $\wt{Q} \pseudo^{B'+M} \vec{Q}$.

 \underline{Step 7.} Apply Algorithm~\ref{A3} below to the matrix $B'+M$ of Step~6 to find a matrix $C$ such that if $B:=(B'+M)C$, then $I(\wt{Q})B=I(\vec{Q})$ and $[B]^*[B]=\Id$, as wanted. Return the pair $(\rho,B)$.

\smallskip

Using Corollary~\ref{CinDet}, the following correction Algorithms~\ref{A2} and~\ref{A3}, based on the proofs of Propositions~\ref{(P):inv} and~\ref{(P):strong} respectively, work for arbitrary connected loop-less quivers $Q$ and $\wt{Q}$ with same number of vertices and arrows (see Remark~\ref{Rgeneral}).

\smallskip
%------------
\begin{algorithm}\label{A2}Invertible pseudo-morphism.\par\smallskip
\textbf{Input:} Two connected loop-less quivers $Q$ and $\wt{Q}$ with the same number of arrows $n$, and a $n \times n$ matrix $B$ such that $Q \pseudo^B \wt{Q}$.

\textbf{Output:} A $n \times n$ matrix $M$ such that $B+M$ is $\Z$-invertible and $Q \pseudo^{B+M} \wt{Q}$.
\end{algorithm}

 \underline{Step 1.} Fix kernel matrices $K$ and $\wt{K}$ of $I(Q)$ and $I(\wt{Q})$ respectively, and consider the skew-symmetric matrices $W=K^{\tr}\wc{G}_QK$ and $\wt{W}=\wt{K}^{\tr}\wc{G}_{\wt{Q}}\wt{K}$.

 \underline{Step 2.} Make sure that the kernel matrix $K$ has the shape $K=[K' , K'']$, where $K''$ is a kernel matrix of the reduced radical $\rad_{re}(q_Q)$, and assume that $\wt{K}$ has a similar shape $\wt{K}=[\wt{K}',\wt{K}'']$.

 \underline{Step 3.} Find a matrix $L$ such that $B\wt{K}=KL$ (for instance, using a \texttt{least squares} algorithm). As indicated by the partitions $K=[K' , K'']$ and $\wt{K}=[\wt{K}',\wt{K}'']$, the matrices $L$ and $\wt{W}$ have the following shapes,
 \[
 L=\begin{pmatrix} L_1&0\\L_2&L_3 \end{pmatrix} \quad \text{and} \quad \wt{W}=\begin{pmatrix} \wt{W}'&0\\0&0 \end{pmatrix}.
 \]
By Corollary~\ref{CinDet}, the matrix $\wt{W}'$ is $\Z$-invertible.

 \underline{Step 4.} Take $Y_1:=(L_1-\Id)(\wt{W}')^{-1}$ and $Y_2:=L_2(\wt{W}')^{-1}$. Take also $Y:=\begin{pmatrix} Y_1&0\\Y_2&0 \end{pmatrix}$ and $M:=KY\wt{K}^{\dagger \tr}$, where $\wt{K}^{\dagger}:=\wc{G}_{\wt{Q}}\wt{K}$. Then $B+M$ satisfies the claim, as in Proposition~\ref{(P):inv}. Return matrix $M$.

\smallskip
%------------
\begin{algorithm}\label{A3}Strong congruence matrix.\par\smallskip
\textbf{Input:} Two connected loop-less quivers $Q$ and $\wt{Q}$ with the same number of arrows $n$, and a $\Z$-invertible $n \times n$ matrix $B$ such that $Q \pseudo^B \wt{Q}$.

\textbf{Output:} A $\Z$-invertible $n \times n$ matrix $C$ such that $I(Q)(BC)=I(\wt{Q})$ and $(BC)^*(BC)=\Id$.
\end{algorithm}

 \underline{Step 1.} Fix kernel matrices $K$ and $\wt{K}$ of $I(Q)$ and $I(\wt{Q})$ as in Steps~1 and~2 of Algorithm~\ref{A2}.

 \underline{Step 2.}  Take $B^*:=\wc{G}_{\wt{Q}}^{-1}B^{\tr}\wc{G}_Q$, and use Algorithm~\ref{A6} below to compute $Z:=\Xi(B^{*}B)$ and $\wt{Z}:=\Xi((B^{*}B)^{-1})$.

 \underline{Step 3.} Apply Algorithm~\ref{A10} below to the matrix $\wt{Z}$ of Step~2 in order to find a matrix $Y$ such that
 \[
 \wt{Z}=Y-Y^{\tr}+Y^{\tr}\wt{W}Y,
 \]
 where $\wt{W}:=\wt{K}^{\tr}\wc{G}_{\wt{Q}}\wt{K}$.

 \underline{Step 4.} Define $C:=\Id-\wt{K}Y^{\tr}\wt{K}^{\dagger \tr}$ where $Y$ is the matrix of Step~3, and $\wt{K}^{\dagger}:=\wc{G}_{\wt{Q}}\wt{K}$. Then $BC$ satisfies the claim as in the proof of Proposition~\ref{(P):strong}. Return matrix $C$.

\smallskip

The following are auxiliary constructions for Algorithm~\ref{A3}.

\smallskip
%------------------------------------------------------------------
\begin{algorithm}\label{A6}Construction of the bijection $\Xi$ of Lemma~\ref{(M):endo}.\par\smallskip
\textbf{Input:} A connected loop-less quiver $Q$ with $n$ arrows, a fixed $n \times c$ kernel matrix $K$ of $I(Q)$, and a pseudo-endomorphism $B$ of $Q$.

\textbf{Output:} A $c \times c$ matrix $Z:=\Xi(B)$ such that $B=\Id+KZK^{\dagger\tr}$, where $K^{\dagger}:=\wc{G}_QK$.
\end{algorithm}

 \underline{Step 1.} Find a solution $L$ to the equation $B-\Id=KL$ (for instance, using a \texttt{least squares} algorithm).

 \underline{Step 2.} Find similarly a solution $Z$ to the equation $L=ZK^{\dagger \tr}$ where $L$ is the matrix of Step~1. Return matrix $Z$.

\smallskip
%------------------------------------------------------------------
\begin{algorithm}\label{A10} Special decomposition of a skew-symmetric matrix.\par\smallskip
\textbf{Input:} A pair $(Z,W)$ of skew-symmetric $c \times c$ matrices, where $W$ is pure and in \texttt{skew normal form}.

\textbf{Output:} A $c \times c$ matrix $Y$ such that $Z=Y^{\tr}-Y+Y^{\tr}WY$.
\end{algorithm}

 \underline{Step 1.} Assume first that $W$ is $\Z$-invertible and take $\wt{Z}:=Z+W$. Using Algorithm~\ref{A11} below we find a matrix $\wt{Y}$ such that $\wt{Z}=\wt{Y}^{\tr}W\wt{Y}$, and take $Y:=\wt{Y}-W$ (see the first part of the proof of Lemma~\ref{(I):skewsym}$(b)$).

 \underline{Step 2.} Assume now that $W$ is not $\Z$-invertible. Then $W=W'\oplus \mathbf{0}$ for a $\Z$-invertible skew-symmetric matrix $W'$ and $\mathbf{0}$ a zero matrix of adequate size, and $Z$ has the following shape
 \[
 Z=\begin{pmatrix} Z_1& -Z_2^{\tr} \\Z_2&Z_3 \end{pmatrix}.
 \]

 \underline{Step 3.} Since $Z_1$ and $W'$ are skew-symmetric, and $W'$ is $\Z$-invertible, using Step~1 we find a matrix $Y_1$ such that $Z_1=Y_1-Y_1^{\tr}+Y^{\tr}_1W'Y_1$.

 \underline{Step 4.} Take $Y_2:=Z_2$, and let $Y_3$ be the upper triangular part of $Z_3$ (so that $Z_3=Y_3-Y_3^{\tr}$, since $Z_3$ is also skew-symmetric). Take
 \[
 Y:=\begin{pmatrix} Y_1&0\\Y_2&Y_3 \end{pmatrix}.
 \]
 Then $Y$ satisfies the claim, as in the proof of Lemma~\ref{(I):skewsym}. Return matrix $Y$.

\smallskip
%------------------------------------------------------------------
\begin{algorithm}\label{A11} Direct factorization of a skew-symmetric matrix.\par\smallskip
\textbf{Input:} A pair $(Z,W)$ of skew-symmetric $c \times c$ matrices, where $W$ is $\Z$-invertible and in \texttt{skew normal form}.

\textbf{Output:} A $c \times c$ matrix $Y$ such that $Z=Y^{\tr}WY$.
\end{algorithm}

 \underline{Step 1.} Since $W$ is $\Z$-invertible then $c=2r$ for some $r \geq 0$. Find a matrix $P$ such that
 \[
 P^{\tr}\wt{Z}P=f_1\begin{pmatrix}0&1\\-1&0\end{pmatrix} \oplus \ldots \oplus f_r\begin{pmatrix}0&1\\-1&0\end{pmatrix},
 \]
  for integers $f_1,\ldots,f_r$ (the product $P^{\tr}\wt{Z}P$ not necessarily in skew normal form).

 \underline{Step 2.} Consider the $2r \times 2r$ matrix $S=\diag(1,d_1,1,d_2,\ldots,1,d_r)$ and return $Y:=SP^{-1}$.

\acknowledgements{Thanks to the anonymous referees for their careful revision and useful suggestions. Thanks to the Instituto de Matem\'aticas UNAM, Mexico, for financial support. Part of this paper was completed during a research stay at the Faculty of Mathematics and Computer Science of Nicolaus Copernicus University in Toru\'n within project {\em University Centre of Excellence \lq\lq Dynamics, Mathematical Analysis and Artificial Intelligence''}. The author expresses his gratitude to NCU for the hospitality and for providing excellent working conditions.}

%------------------------------------------------------------------
%------------------------------------------------------------------
%------------------------------------------------------------------
%\section*{References.}

\bibliographystyle{fundam}

\end{document}